\newcommand\T{\rule{0pt}{2.7ex}}       
\newcommand{\tclb}{\textcolor{blue}}
\newcommand{\tclr}{\textcolor{red}}
\newcommand{\svwtwo}{\textcolor{black}}
\newcommand{\svw}{\textcolor{black}}
\newcommand{\sheila}{\textcolor{black}}
\newcommand{\sheilaFeb}{\textcolor{black}}  
\newcommand{\bQ}{\mathbb{Q}}
\newtheorem{theorem}{Theorem}[section]
\newtheorem{lemma}[theorem]{Lemma}
\newtheorem{proposition}[theorem]{Proposition}
\theoremstyle{definition}
\newtheorem{definition}[theorem]{Definition}
\newtheorem{example}[theorem]{Example}
\newtheorem{remark}[theorem]{Remark}
\newcommand{\poRI}{\preccurlyeq_{\mathcal{R}{\mathfrak{S}}^\ast _\alpha}}  
\newcommand{\poA}{\preccurlyeq_{\mathcal{A}^\ast _\alpha}}  
\newcommand{\poAbar}{\preccurlyeq_{\mathcal{\bar{A}}^\ast _\alpha} } 
\newcommand{\poRIcover}{\prec_{\mathcal{R}{\mathfrak{S}}^\ast _\alpha}}
\newcommand{\poIcover}{\prec_{{\mathfrak{S}}^\ast _\alpha}} 
\newlength{\cellsize}
\newcommand\tableau[1]{
\vcenter{
\let\\=\cr
\baselineskip=-16000pt
\lineskiplimit=16000pt
\lineskip=0pt
\halign{&\tableaucell{##}\cr#1\crcr}}}
\newcommand{\tableaucell}[1]{{%
\def \arg{#1}\def \void{}%
\ifx \void \arg
\vbox to \cellsize{\vfil \hrule width \cellsize height 0pt}%
\else
\unitlength=\cellsize
\begin{picture}(1,1)
\put(0,0){\makebox(1,1)[c]{$#1$}}
\put(0,0){\line(1,0){1}}
\put(0,1){\line(1,0){1}}
\put(0,0){\line(0,1){1}}
\put(1,0){\line(0,1){1}}
\end{picture}%
\fi}}
\DeclareMathOperator{\comp}{comp}
\DeclareMathOperator{\set}{set}
\newcommand{\dI}{\mathfrak{S}^*}
\newcommand{\rdI}{\mathcal{R}\mathfrak{S}^*}
\DeclareMathOperator{\QSym}{QSym}
\DeclareMathOperator{\Des}{Des}
\DeclareMathOperator{\DesI}{Des_{\dI}}
\DeclareMathOperator{\inv}{inv}
\DeclareMathOperator{\chr}{\mathrm{ch}}  
\DeclareMathOperator{\bA}{\bar{\mathcal{A}}} 
\DeclareMathOperator{\bpi}{\bar{\pi}} 
\newcommand{\Qsym}{\ensuremath{\operatorname{QSym}}}
\newcommand{\DI}{{\mathfrak{S}}^\ast}	         
\newcommand{\RI}{\mathcal{R}{\mathfrak{S}}^\ast} 
\newcommand{\SIT}{\ensuremath{\operatorname{SIT}}}
\newcommand{\SET}{\ensuremath{\operatorname{SET}}} 
\newcommand{\NSET}{\ensuremath{\operatorname{NSET}}}
\newcommand{\SRCT}{\ensuremath{\operatorname{SRCT}}}
\newcommand{\Nsym}{\ensuremath{\operatorname{NSym}}}
\newcommand{\nce}{\mathbf{e}}         	
\newcommand{\hn}{H_n(0)}
\newcommand{\des}{\mathrm{Des}} 
\newcommand{\desri}{\mathrm{Des}_{\mathcal{R}\mathfrak{S}^\ast}} 
\newcommand{\ninv}{\mathrm{inv}} 
\newcommand{\rtau}{{T}} 
\newcommand{\suchthat}{\;|\;}
\newcommand{\spam}{\operatorname{span}}
\DeclareFontFamily{U}{rcjhbltx}{}  
\DeclareFontShape{U}{rcjhbltx}{m}{n}{<->rcjhbltx}{}
\DeclareSymbolFont{hebrewletters}{U}{rcjhbltx}{m}{n}
\DeclareMathSymbol{\shin}{\mathord}{hebrewletters}{152}
\author[Niese, Sundaram, van Willigenburg, Vega, Wang]{Elizabeth Niese, Sheila Sundaram, Stephanie van Willigenburg,\\ Julianne Vega, Shiyun Wang}
\address{Elizabeth Niese: Marshall University, Huntington, WV 25755, USA} 
\email{niese@marshall.edu}
\address{Sheila Sundaram: Pierrepont School, Westport, CT 06880, USA}
\email{shsund@comcast.net}
\address{Stephanie van Willigenburg: University of British Columbia, \svw{Vancouver,} BC V6T 1Z2, Canada}
\email{steph@math.ubc.ca}
\address{Julianne Vega: Kennesaw State University, \svw{Kennesaw, GA 30144,} USA}
\email{jvega30@kennesaw.edu}
\address{Shiyun Wang: University of Southern California, \svw{Los Angeles, CA 90089-2532,} USA}
\email{shiyunwa@usc.edu}
\title[Row-strict 0-Hecke algebra modules]{0-Hecke modules for row-strict dual immaculate functions}
\date{\today}
\begin{document}

\subjclass{\svw{05E05, 05E10, 06A07, 16T05, 20C08.}}
\keywords{\svw{Quasisymmetric functions, dual immaculate functions, 0-Hecke algebra, indecomposable module.}}

\begin{abstract}{We introduce a new basis of quasisymmetric functions, the row-strict dual immaculate functions.  We construct a cyclic, indecomposable 0-Hecke algebra module for these functions.   Our row-strict immaculate functions are related to the dual immaculate functions of Berg-Bergeron-Saliola-Serrano-Zabrocki (2014-15) by the involution $\psi$ on the ring $\Qsym$ of quasisymmetric functions.  We give an explicit description of the effect of $\psi$ on the associated 0-Hecke modules, via the poset induced by the 0-Hecke action on standard immaculate tableaux.  This remarkable poset reveals other 0-Hecke submodules and quotient modules, often cyclic and indecomposable, notably for a row-strict analogue of the extended Schur functions studied in Assaf-Searles (2019). 

Like the dual immaculate function, the  row-strict dual immaculate function is the generating function of a suitable set of tableaux, corresponding to a specific  descent set. We give a complete combinatorial and representation-theoretic picture by constructing 0-Hecke modules for the remaining variations on descent sets,  and showing that \emph{all} the possible variations for generating functions of tableaux occur as characteristics of the 0-Hecke modules determined by these descent sets. }
\end{abstract}
\maketitle
\tableofcontents
\section{Introduction}\label{sec:Intro}
\svw{A recent flourishing area of research is that of Schur-like functions, whose properties are analogous to the ubiquitous Schur functions that arise in many areas, from enumerative combinatorics where they are generating functions for Young tableaux, to representation theory where they are the irreducible representations for the general linear groups, \sheila{as well as being intimately connected to representations of the symmetric group}.}

\svw{The area of Schur-like functions began with quasisymmetric Schur functions \cite{HLMvW2011},  followed by discoveries of row-strict quasisymmetric Schur functions \cite{MR2014}, Young quasisymmetric Schur functions \cite{LMvW2013, MN2015}, noncommutative Schur functions \cite{BLvW2011} and immaculate functions \cite{BBSSZ2014},  quasisymmetric Schur $Q$-functions \cite{JL2015},   quasisymmetric Macdonald polynomials \cite{CHMMW2022}, and Schur functions in noncommuting variables \cite{ALvW2021}.}

The results of this paper were  first announced in \cite{NSvWVW2022FPSAC}. 
In this paper and its companion \cite{NSvWVW2023}, we introduce a new family of quasisymmetric functions, the \emph{row-strict dual immaculate} functions.  Our focus in the present work is the study of the associated 0-Hecke algebra modules, which we define and analyse \svw{in order to develop the analogy with Schur functions in the representation theory context}. This programme was first carried out for the dual immaculate functions \cite{BBSSZ2015}, the quasisymmetric Schur functions \cite{TvW2015}, and subsequently for the row-strict Young quasisymmetric functions \cite{BS2021} and the extended Schur functions \cite{S2020}.

\svw{In further analogy with Schur functions, the row-strict dual immaculate functions $\rdI_\alpha$ are  defined in  \cite{NSvWVW2023} as generating functions for certain types of tableaux of composition shape $\alpha$; by identifying the correct descent set, it follows that the functions 
$\rdI_\alpha$ expand positively in the basis of fundamental quasisymmetric functions, and are the image,} under the involutive algebra automorphism $\psi$ of the Hopf algebra $\Qsym$ of quasisymmetric functions, of the well-studied dual immaculate functions of \cite{BBSSZ2015}.

The descent set  determines a 0-Hecke algebra action on the set $\SIT(\alpha)$ of standard immaculate tableaux of composition shape $\alpha$, yielding a cyclic indecomposable module whose quasisymmetric characteristic is $\rdI_\alpha$.  The action defines a partial order on the set $\SIT(\alpha)$ which turns out to be dual to the partial order in \cite{BBSSZ2015}, see Lemma~\ref{lem:sameposet}.  The resulting graded poset $P\rdI(\alpha)$, which we call the immaculate Hecke poset,  has remarkable properties, leading to the discovery of several other 0-Hecke modules with interesting quasisymmetric characteristics.  Among these is an analogue of the extended Schur functions defined by Assaf and Searles \cite{AS2019}. An examination of the poset (see Figure~\ref{fig:Poset}) reveals various subposets that are closed under either our action or the dual immaculate action of \cite{BBSSZ2015}.  We  investigate the associated submodules.

One of the key contributions of this paper, then, is the 
careful scrutiny of the partial order resulting originally from the 0-Hecke algebra action defined in \cite{BBSSZ2015}, and the discovery that hidden within it are special standard immaculate tableaux, which generate interesting 0-Hecke algebra submodules and quotient modules, for both the dual immaculate action and our new row-strict dual immaculate action.
The duality in the poset reflects the action of the involution $\psi$ on $\Qsym$: we prove first that the poset is graded, and has unique top and bottom elements, see Definition~\ref{def:bot-elt} and Definition~\ref{def:top-elt}. The cyclic generator for the dual immaculate module was shown to be the top element in \cite{BBSSZ2015};  for the row-strict dual immaculate module, we show that  the cyclic generator is the  bottom element of the poset.  Similarly, the cyclic generators for the extended Schur Hecke-module of \cite{S2020} and our row-strict extended Schur Hecke-module (see Theorem~\ref{thm:Z-indecomp}) are the top and bottom elements of the interval $[S^{col}_\alpha, S^{row}_\alpha]$, the column superstandard and row superstandard tableaux of   \svw{Definition~\ref{def:column-bot-elt} and Definition~\ref{def:top-elt}}, respectively.
Finally we show how another special standard tableau $S^{row*}_\alpha$, see Definition~\ref{def:top-elt-SIT*}, also generates a cyclic submodule of the dual immaculate 0-Hecke algebra module.
The poset duality phenomenon can also be used to explain, for example, the passage between the modules in \cite{TvW2015} and \cite{BS2021}, see  Section~\ref{sec:dualImmsubmodules}. 

Our proofs are technical,  relying heavily on two  straightening algorithms which produce saturated chains in the poset $P\rdI(\alpha).$   We prove  indecomposability by following the pioneering work in \cite{TvW2015} and  \cite{BBSSZ2015},  with considerable technical modifications.

The paper is organised as follows.  The basic definitions regarding quasisymmetric functions appear in Section~\ref{sec:Background}, which also includes the facts that we need about dual immaculate functions. Our new family of row-strict dual immaculate functions is introduced in Section~\ref{sec:RSImmFns}. In  Section~\ref{sec:Hecke-action-RSImm}, we briefly review the necessary facts about 0-Hecke algebras, and then define a new  0-Hecke algebra action on standard immaculate tableaux. Section~\ref{sec:partial-order} describes a partial order on these tableaux, which, by standard arguments, leads to a filtration showing that our 0-Hecke module has quasisymmetric characteristic equal to the row-strict dual immaculate function.

Section~\ref{sec:indecomp-module-and-poset} is devoted to showing that our new 0-Hecke module is cyclic (Theorem~\ref{thm:cyclic-rdI}) and indecomposable, culminating in Theorem~\ref{thm:Indecomp}.  The technicalities here hinge on two key straightening algorithms, described in Propositions~\ref{prop:bot-elt} and~\ref{prop:top-elt}, which we use to establish that the poset of Section~\ref{sec:partial-order} is bounded, with a unique minimal and maximal element.  The minimal element, defined in Definition~\ref{def:bot-elt}, is shown to be the cyclic generator of our module. The maximal element is the cyclic generator of the module of \cite{BBSSZ2015}.\footnote{It is not explicitly established in \cite{BBSSZ2015} that the poset of standard immaculate tableaux defined by the 0-Hecke action has a maximal element.} The straightening algorithms play an important role in the technical lemmas leading to the indecomposability proof.  The work of this section also reveals a remarkable connection, see  Lemma~\ref{lem:sameposet}, between our partial order and that of \cite{BBSSZ2015}, showing how the map $\psi$ between the quasisymmetric characteristics of the dual immaculate and row-strict dual immaculate modules, is reflected in the duality of the posets.

The motivation for Section~\ref{sec:row-strict-ext} and Section~\ref{sec:dualImmsubmodules} comes from  a closer examination of the poset of standard immaculate tableaux defined by the 0-Hecke action, and the remarkable properties to which we have alluded above.  In Section~\ref{sec:row-strict-ext} we show how the poset reveals a 0-Hecke module whose characteristic is the row-strict analogue of the extended Schur function defined in \cite{AS2019}, for which a 0-Hecke module was constructed in \cite{S2020}.   In fact we show that there are not one but two related modules, a submodule  (Theorem~\ref{thm:Z-indecomp}) and a quotient module (Theorem~\ref{thm:another-bigone}) of the row-strict dual immaculate action, both cyclic and indecomposable.  To complete this analogy,  in Section~\ref{sec:dualImmsubmodules} we show once again how the poset of standard immaculate tableaux leads to the discovery of  more cyclic modules, a submodule and two quotient modules of the (original) dual immaculate action, as well as a quotient module of the extended row-strict dual immaculate module (Theorem~\ref{thm:dualImm-submodule-quotient}, Theorem~\ref{thm:SET-intersect-SIT*}).     Figure~\ref{fig:Poset} indicates the essential representation-theoretic properties of the immaculate Hecke  poset.

\sheila{
We conclude in Section 9 by considering the two remaining choices for the descent set of a standard immaculate tableau, apart from the two which determine the dual immaculate and row-strict dual immaculate functions.  Although the corresponding quasisymmetric functions are no longer independent, interestingly, both of the new variants come with associated 0-Hecke actions.  Furthermore, these actions determine the same immaculate Hecke poset, and consequently our straightening algorithms apply, giving cyclic 0-Hecke modules as in Section~\ref{sec:indecomp-module-and-poset}, generated respectively by $S^0_\alpha$ and $S^{row}_\alpha$, as well as  submodules generated respectively by $S^{col}_\alpha$ and $S^{row*}_\alpha$ as in  Section~\ref{sec:row-strict-ext} and  Section~\ref{sec:dualImmsubmodules}.} \sheilaFeb{Our final result, Theorem~\ref{thm:8flavours-tableaux}, shows that these modules complete the combinatorial picture of  tableaux  considered in this paper, by accounting for all the possible variations on increasing rows and columns.  This information is captured in Figure~\ref{fig:4-descent-sets}. }

\begin{figure}[htb] \centering
\scalebox{0.7}{	
\begin{tikzpicture}
\newcommand*{\xdist}{*3}
			\newcommand*{\ydist}{*2.2}
			\node (P0) at (0.00\xdist,0\ydist) {\tclb{$\boldsymbol{\mathcal{T}_\alpha({\text{1st col}<, \text{rows}<):\bA_\alpha}}$}};
			\node (Q0) at (1.2\xdist, 0\ydist) {\ (Section~\ref{sec:new-descent-set})};
\node (P11) at (-2\xdist, 1\ydist) {\tclb{$\boldsymbol{\mathcal{T}_\alpha({\text{1st col}<, \text{rows}\le}):\dI_\alpha}$} \cite{BBSSZ2015}};
\node (P10) at (-2\xdist, 2\ydist) {\tclr{$\boldsymbol{\mathcal{T}_\alpha({\text{cols}<, \text{rows}\le}):\mathcal{E}_\alpha}$} \cite{S2020}, \cite{CFLSX2014}};
\node (P12) at (2.2\xdist, 2\ydist) {\tclb{$\boldsymbol{\mathcal{T}_\alpha({\text{1st col}\le, \text{rows}<}):\rdI_\alpha}$} (Section~\ref{sec:RSImmFns})};
\node (P2) at (0\xdist, 3\ydist) { \tclb{$\boldsymbol{\mathcal{T}_\alpha({\text{1st col}\le, \text{rows}\le}):\mathcal{A}_\alpha}$}};
\node (Q2) at (-1.2\xdist, 3\ydist) {(Section~\ref{sec:new-descent-set})\ \  };
\node (P20) at (2.2\xdist, 1\ydist) {\tclr{$\boldsymbol{\mathcal{T}_\alpha({\text{cols}\le, \text{rows}<}):\mathcal{R}\mathcal{E}_\alpha}$} (Section~\ref{sec:row-strict-ext})};
\node (P3) at (0.00\xdist, 1.5\ydist) {$\boldsymbol{\psi}$};
\node (P01) at (-2.5\xdist , 0\ydist) {\tclr{$\boldsymbol{\mathcal{T}_\alpha({\text{cols}<, \text{rows}<}):
\bar{\mathcal{A}}_{\SET(\alpha)} }$ }};
\node (P22) at (2.5\xdist , 3\ydist) {\tclr{$\boldsymbol{\mathcal{T}_\alpha({\text{cols}\le, \text{rows}\le}):\mathcal{A}_{\SET(\alpha)}}$}};
\draw[ultra thick, magenta, left hook-latex] (P20) -- (P12);
\draw[ultra thick, -{Stealth}{Stealth}] (P11) -- (P10); 
\draw[ultra thick, dashed, cyan, {Stealth}-] (P11) -- (P3);
\draw[ultra thick, dashed, cyan, -{Stealth}] (P3) -- (P12);
\draw[ultra thick, dashed, cyan, {Stealth}-] (P10) -- (P3);
\draw[ultra thick, dashed, cyan, -{Stealth}] (P3) -- (P20);
\draw[ultra thick, dashed, cyan, {Stealth}-] (P0) -- (P3);
\draw[ultra thick, dashed, cyan, -{Stealth}] (P3) -- (P2);
\draw[ultra thick, dashed, cyan, {Stealth}-] (P0) -- (P3);
\draw[ultra thick, dashed, cyan, -{Stealth}]  (P3) -- (P01);
\draw[ultra thick, dashed, cyan, -{Stealth}]  (P3) -- (P22);
\draw[ultra thick, -{Stealth}{Stealth}] (P0) -- (P01);
\draw[ultra thick, magenta, left hook-latex] (P22) -- (P2);
\end{tikzpicture}
}
\caption{The eight flavours of tableaux, their characteristics and  $\hn$-modules, related in pairs by the involution $\psi$, from the four descent sets. The double arrow-head indicates a quotient module, and the hooked arrow indicates a submodule.}\label{fig:4-descent-sets}
\end{figure}
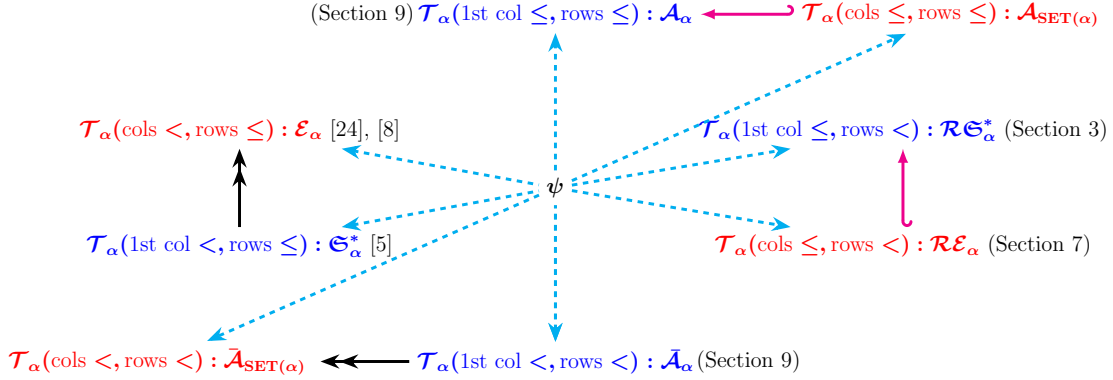

Table~\ref{table:standard-tableaux-acronyms} below provides a list of tableau  acronyms used in this paper.
\begin{table}[htbp]
\caption{Standard tableaux of composition shape $\alpha$ of $n$, with distinct entries $\{1,2,\ldots,n\}$, where \textbf{all rows increase (strictly)  left to right}:}
\begin{center}
\scalebox{1}{
\begin{tabular}{|c|c|}
\hline
$\NSET(\alpha)$ & at least one column does NOT increase bottom to top\\[2pt]
$\SIT(\alpha)$ & first column increases bottom to top\\[2pt]
$\SIT^*(\alpha)$ & first column filled bottom to top consecutively with $1,2,\ldots$\\[2pt]
$\SET(\alpha)$ & all columns increase bottom to top\\ \hline
\end{tabular}
}
\end{center}
\label{table:standard-tableaux-acronyms}
\end{table}

In Section~\ref{sec:new-descent-set}, Table~\ref{table:RSImm}  provides a summary of our results, as well as comparison with prior work.  Table~\ref{table:QSymfunctions-modules} compiles the known quasisymmetric bases to date, together with their 0-Hecke modules, as well as the new quasisymmetric functions, all of which expand positively in the fundamental basis, arising from the 0-Hecke submodules and quotient modules   discovered in this paper.

\noindent \svw{\textbf{Acknowledgements.} The authors would like to thank the Algebraic Combinatorics Research Community program at ICERM through which this research took place. The third author was supported in part by the National Sciences Research Council of Canada.  The authors also express their gratitude to the  FPSAC 2022 referees and the reviewer of this journal for their careful reading of the paper and detailed comments.}

\section{Background}\label{sec:Background}
We refer the reader to \cite{LMvW2013} for basic definitions.

A composition of $n$ is a positive sequence of integers $\alpha = (\alpha_1, \alpha_2, \ldots,\alpha_k)$ summing to $n$, which we depict as    a collection of left-justified boxes with $\alpha_i$ boxes in row $i$, where row $1$ is the bottom row, in the ``French'' convention.  We call this the diagram of $\alpha$.

It is well known that compositions of $n$ are in bijection with subsets of $\{1,2,\ldots,n-1\}$.  Write $\alpha\vDash n$ for a composition $\alpha = (\alpha_1, \alpha_2,\ldots,\alpha_k)$ of $n$; the corresponding set is $\set(\alpha) = \{\alpha_1,\alpha_1+\alpha_2,\ldots,\alpha_1+\cdots+\alpha_{k-1}\}$.
Given a subset $S=\{s_1,s_2,\ldots,s_j\}$ of $\{1,\ldots,n-1\}$, the corresponding composition of $n$ is $\comp(S)=(s_1,s_2-s_1,\ldots,s_j-s_{j-1},n-s_j)$. 
A function $f\in \bQ[[x_1,x_2,\ldots]]$ is {\em quasisymmetric} if the coefficient of $x_1^{\alpha_1}x_2^{\alpha_2}\cdots x_k^{\alpha_k}$ is the same as the coefficient of $x_{i_1}^{\alpha_1}x_{i_2}^{\alpha_2}\cdots x_{i_k}^{\alpha_k}$ for every $(\alpha_1, \alpha_2,\ldots,\alpha_k)$ and $i_1<i_2<\cdots <i_k$.  The set of all quasisymmetric functions forms a ring graded by degree, $\QSym = \bigoplus_{n} \QSym_n$, where each $\QSym_n$ is a vector space over $\bQ$ with bases indexed by compositions of $n$. 

Given a composition $\alpha=(\alpha_1,\alpha_2,\ldots,\alpha_k)$ of $n$, the {\em fundamental quasisymmetric function} indexed by $\alpha$ is 
\[F_\alpha(x_1,x_2,\ldots) = \sum_{\substack{i_1\leq i_2\leq \cdots \leq i_n\\i_j<i_{j+1} \text{ if } j\in\set(\alpha)}} x_{i_1}x_{i_2}\cdots x_{i_n}.\]
Note that 
$\{F_\alpha:\alpha\vDash n\}$ is a basis for $\QSym_n$, the \emph{fundamental} basis.

The {\em complement} of a composition $\alpha$, denoted $\alpha^c$, is the composition obtained from $\alpha$ by taking the complement of the set corresponding to  $\alpha$.  That is, 
$\alpha^c= \comp(\set(\alpha)^c)$.
 In $\QSym$ we have the involutive automorphism   $\psi,$ defined on the fundamental basis by
\begin{equation}
\psi(F_\alpha) = F_{\alpha^c} \label{eqn:psiF}.
\end{equation}

With notation as in \cite{LMvW2013}, the algebra $\Nsym=\bQ\langle\nce_1,\nce_2,\ldots\rangle$ of {\em noncommutative symmetric functions},  a Hopf algebra dual to $\Qsym$ \cite{GKLLRT1995},   
is generated by noncommuting indeterminates $\nce_n$ of degree $n$. 
We briefly review concepts we will need from the work of Berg-Bergeron-Saliola-Serrano-Zabrocki, who introduced the 
\emph{immaculate functions} $\mathfrak{S}_\alpha$ as a basis of $\Nsym$ formed by iterated creation operators~\cite{BBSSZ2014}.  
Their dual in $\QSym$ are the {\em dual immaculate functions}, $\dI_\alpha$.  These functions can be defined  as the generating function for {\em immaculate tableaux}.  
\begin{definition}\cite[Definition 2.1]{BBSSZ2015}
Given $\alpha\vDash n$, an immaculate tableau of shape $\alpha$ is a filling, $D$, of the cells of the diagram of $\alpha$ such that 
\begin{enumerate}[itemsep=1pt]
\item The leftmost column entries strictly increase from bottom to top.
\item The row entries weakly increase from left to right. 
\end{enumerate}
An immaculate tableau of shape $\alpha\vDash n$ is  a \emph{standard} immaculate tableau if it is filled with distinct entries taken from $\{1,2,\ldots,n\}.$ 
\end{definition}
Given an immaculate tableau $D$, define $x^D:=x_1^{d_1}x_2^{d_2}\cdots x_{k}^{d_k}$, where  $d_i$ is the number of $i$'s in the tableau $D$.  Thus if $D$ is standard and $\alpha\vDash n$, \svw{$x^D=x_1x_2\cdots x_n$.}
\begin{definition} \label{def:dIfunction}
The {\em dual immaculate function} indexed by $\alpha\vDash n$ is $\dI_\alpha = \sum_D x^D$, summed over all immaculate tableaux of shape $\alpha$.
\end{definition}
 
\begin{theorem}\cite[Definition~2.3, Proposition~3.1]{BBSSZ2015}  
The set $\{\dI_\alpha\}_{\alpha\vDash n}$ is a basis for $\Qsym_n.$
Given a standard immaculate tableau $S$, its $\dI$-{\em descent set}  $\DesI(S)$ of $S$ is 
\[\DesI(S):=\{i: i+1 \text{ appears strictly above }i \text{ in } S \}.\]  
Then 
$\dI_\alpha = \sum_S F_{\comp(\DesI(S))},$ summed  over all standard immaculate tableaux of shape $\alpha$.  
\end{theorem}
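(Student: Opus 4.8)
My plan is to separate the two assertions: that $\{\dI_\alpha\}_{\alpha\vDash n}$ is a basis of $\Qsym_n$, and the fundamental expansion $\dI_\alpha=\sum_S F_{\comp(\DesI(S))}$. I would prove the expansion first, by a content-preserving standardization bijection, and handle the basis claim either by duality or by unitriangularity against the monomial basis. For the basis, the cleanest route is duality: the immaculate functions $\{\mathfrak{S}_\alpha\}_{\alpha\vDash n}$ form a basis of $\Nsym_n$ (this is where their construction via creation operators, and their triangularity against the complete homogeneous basis, enters), and $\dI_\alpha$ is identified with the dual basis element under the pairing of $\Nsym$ with its graded dual $\Qsym$; since the graded dual of a basis is a basis, we are done. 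A more self-contained alternative is to expand $\dI_\alpha$ in the monomial quasisymmetric functions $M_\beta$, the coefficient of $M_\beta$ counting immaculate tableaux of shape $\alpha$ and content $\beta$. The unique immaculate tableau of shape and content both $\alpha$ (fill row $i$ entirely with $i$, which one checks is forced by first-column strictness and row monotonicity) gives diagonal coefficient $1$, and a short argument shows every other contributing $\beta$ is strictly larger than $\alpha$ in a dominance-type order, so the transition matrix is unitriangular, hence invertible.

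For the expansion, the plan is to construct a bijection between immaculate tableaux $D$ of shape $\alpha$ and pairs $(S,w)$, where $S\in\SIT(\alpha)$ and $w=(w_1\le\cdots\le w_n)$ is a weakly increasing word with $w_j<w_{j+1}$ for every $j\in\DesI(S)$. Send $D$ to $(\stdz(D),w)$, where $w$ is the weakly increasing rearrangement of the entries of $D$, and $\stdz(D)$ relabels the cells by $1,\ldots,n$ in increasing order of entry, breaking ties among equal entries from the top row downward and left to right within a row. Conversely, send $(S,w)$ to the filling $D(S,w)$ placing $w_k$ in the cell of $S$ containing $k$. Since the multiset of entries of $D$ equals $\{w_1,\ldots,w_n\}$, we have $x^D=x_{w_1}\cdots x_{w_n}$, so once the bijection is in hand, summing yields
\[
\dI_\alpha=\sum_{S\in\SIT(\alpha)}\ \sum_{\substack{w_1\le\cdots\le w_n\\ w_j<w_{j+1},\ j\in\DesI(S)}} x_{w_1}\cdots x_{w_n}=\sum_{S\in\SIT(\alpha)} F_{\comp(\DesI(S))},
\]
using $\set(\comp(T))=T$.

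The crux, and the step I expect to be the main obstacle, is verifying that $D(S,w)$ is genuinely an immaculate tableau exactly when $w$ is strict at the descents of $S$. Rows cause no trouble: row-adjacent cells of $S$ carry increasing labels, so weak monotonicity of $w$ makes the rows of $D(S,w)$ weakly increase automatically. The first column is the real work, and I would isolate it as a sublemma: if $a<b$ are the entries of two vertically adjacent first-column cells of $S$ (lower cell $a$, upper cell $b$), then $\{a,a+1,\ldots,b-1\}$ meets $\DesI(S)$. To prove it, suppose not; then for each $j$ in this range $j+1$ lies weakly below $j$ in $S$, and chaining these inequalities forces the cell of $b$ to lie weakly below the cell of $a$, contradicting that $b$ sits one row above $a$. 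Granting the sublemma, $w$ has a strict ascent somewhere in $[a,b)$, whence $w_a<w_b$ and the first column of $D(S,w)$ strictly increases, so $D(S,w)$ is immaculate.

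The reverse implication and the mutual-inverse property both reduce to matching the tie-breaking rule of $\stdz$ with the descent condition. An equality $w_j=w_{j+1}$ is permissible precisely when $j\notin\DesI(S)$, that is, when $j+1$ is weakly below $j$ in $S$, which is exactly the order in which $\stdz$ assigns consecutive labels to equal entries; conversely, standardizing any immaculate $D$ produces a tableau whose rows strictly increase (left-to-right tie-breaking) and whose first column strictly increases (it already did in $D$), so $\stdz(D)\in\SIT(\alpha)$, and a descent $j\in\DesI(\stdz(D))$ forces the entries of $D$ in those two cells to differ, giving $w_j<w_{j+1}$. Checking $\stdz(D(S,w))=S$ and $D(\stdz(D),w)=D$ is then careful but routine bookkeeping of this tie-breaking, completing the bijection and hence the expansion.
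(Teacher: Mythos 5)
The paper does not actually prove this theorem: it is imported verbatim from \cite{BBSSZ2015} as background, so there is no in-house proof to compare against. Your argument is essentially the argument of the cited source (and the same standardization/destandardization scheme the authors use for the row-strict analogue, Theorem 3.2, in the companion paper): the content-preserving bijection $D\leftrightarrow(S,w)$ is the right mechanism, your first-column sublemma is indeed the crux and is proved correctly, and your tie-breaking rule for $\stdz$ (equal entries labelled from the top row downward, left to right within a row) is exactly the one that makes ``$w_j=w_{j+1}$ allowed'' match ``$j\notin\DesI(S)$'' while keeping $\stdz(D)\in\SIT(\alpha)$.

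Two caveats on the basis claim, neither fatal. First, the duality route as you phrase it presupposes the identification of the tableau generating function $\sum_D x^D$ with the element dual to $\mathfrak{S}_\alpha$ under the $\Nsym$--$\Qsym$ pairing; starting from the paper's Definition 2.3 of $\dI_\alpha$, that identification is itself the substantive content (in \cite{BBSSZ2015} it comes out of the right Pieri rule), so only your self-contained unitriangularity alternative genuinely closes the argument from the given definition. Second, in that alternative the direction of your order is backwards: the compositions $\beta$ admitting an immaculate tableau of shape $\alpha$ and content $\beta$ satisfy $\beta\le\alpha$ lexicographically, not ``strictly larger in a dominance-type order.'' For instance $\dI_{(2,1)}=M_{(2,1)}+M_{(1,2)}+2M_{(1,1,1)}$, and $(1,2)$ is smaller than $(2,1)$ in both lex and dominance order. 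Since unitriangularity with respect to any total order yields invertibility, the proof survives, but the triangularity statement should be corrected; the clean argument is the one you sketch for the diagonal entry, iterated: all $1$'s must lie in row $1$, so $\beta_1\le\alpha_1$, with equality forcing row $1$ to be constant, after which one repeats on the remaining rows.
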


For $\alpha=(1,2),$ the unique standard immaculate tableau $\tableau{2&3\\1}$ 
 has $\dI$-descent set $\{1\}$; \quad thus $\dI_{(1,2)}=F_{\comp \{1\}}=F_{(1,2)}.$


\section{Row-strict dual immaculate functions}\label{sec:RSImmFns}
In this section we define a new quasisymmetric function, which we call the {\em row-strict dual immaculate function.}  These were introduced in the companion paper  \cite{NSvWVW2023}, where it is shown that they form a basis of $\Qsym$. 

\begin{definition}\label{def:rdIfunction} Let $\alpha\vDash n$.  A row-strict immaculate tableau of shape $\alpha$ is a filling $U$ such that 
\begin{enumerate}[itemsep=1pt]
\item The leftmost column entries weakly increase from bottom to top.
\item The row entries strictly increase from left to right.
\end{enumerate}
A  row-strict immaculate tableau  of shape $\alpha\vDash n$ is \emph{standard} if it  is filled with distinct entries taken from $\{1,2,\ldots,n\}.$  Note that standard row-strict immaculate tableaux coincide with standard immaculate tableaux.

The {\em row-strict dual immaculate function} indexed by $\alpha$ is $ \rdI_\alpha = \sum_U x^U$ where the sum is over all row-strict immaculate tableaux of shape $\alpha$, and $x^U:=x_1^{d_1}x_2^{d_2}\cdots x_{k}^{d_k}$, where  $d_i$ is the number of $i$'s in the tableau $U$.  
\end{definition}

\begin{theorem}\label{thm:rsfunddecomp}\cite{NSvWVW2023}
Let $\alpha \vDash n$.  Define the $\rdI$-descent set of a standard row-strict immaculate tableau $S$ as
\[\Des_{\rdI}(S):=\{i:i+1\text{ is weakly below } i \text{ in } S\}.\]Then 
\[\rdI_\alpha = \sum_{S} F_{\comp(\Des_{\rdI}(S))}\]
where the sum is over all standard row-strict immaculate tableaux of shape $\alpha$.
\end{theorem}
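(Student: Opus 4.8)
The plan is to prove Theorem~\ref{thm:rsfunddecomp} by a standardization argument, directly mirroring the proof of the fundamental expansion of $\dI_\alpha$. First I would define a standardization map $\stdz$ sending each row-strict immaculate tableau $U$ of shape $\alpha\vDash n$ to a standard one. Since the rows of $U$ are strictly increasing, no value is repeated within a row, so the cells carrying any fixed value $v$ lie in distinct rows; I break ties by assigning smaller standard labels to cells in lower rows, processing values $v=1,2,\dots$ in increasing order. Equivalently, $\stdz(U)$ ranks the cells of $U$ according to the pair (entry, row index) in lexicographic order. A short check shows that $\stdz(U)$ is a genuine standard immaculate tableau: rows remain strictly increasing (they already were, with distinct entries), and the leftmost column becomes strictly increasing from bottom to top because equal first-column entries, which weakly increase upward in $U$, are separated by the tie-break in bottom-to-top order. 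By the final remark of Definition~\ref{def:rdIfunction}, the image lies in the set $\SIT(\alpha)$.

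The heart of the argument is to analyse a single fibre. Fix $S\in\SIT(\alpha)$, and label the cells of $S$ by their entries $1,2,\dots,n$. I claim the tableaux $U$ with $\stdz(U)=S$ are in bijection with the weakly increasing sequences $f(1)\le f(2)\le\cdots\le f(n)$ of positive integers for which $f(k)<f(k+1)$ whenever $k\in\Des_{\rdI}(S)$, the bijection sending such a sequence to the filling that places $f(k)$ in the cell labelled $k$, so that $x^U=x_{f(1)}x_{f(2)}\cdots x_{f(n)}$. The forward direction is immediate from the lexicographic ranking: $\stdz(U)=S$ forces the entry sequence to be weakly increasing in the standard labelling, and a strict jump $f(k)<f(k+1)$ is forced precisely when the cell labelled $k+1$ does not lie strictly above the cell labelled $k$, i.e.\ precisely when $k\in\Des_{\rdI}(S)$.

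The hard part will be verifying the reverse direction, namely that every such sequence $f$ reconstructs a valid row-strict immaculate tableau $U$ with $\stdz(U)=S$. Row-strictness is the delicate point: if two cells in a common row of $S$ received equal values, then the entire run of intermediate labels would share that value, which by the descent condition would force those cells to occupy strictly increasing rows, contradicting that the two cells lie in one row; hence rows reconstruct strictly. The leftmost-column weak-increase condition and the identity $\stdz(U)=S$ then follow from the monotonicity of $f$ together with the same run analysis. Granting the bijection, the monomials in a fibre sum to $F_{\comp(\Des_{\rdI}(S))}$ by the defining formula for the fundamental quasisymmetric function with $\set(\comp(\Des_{\rdI}(S)))=\Des_{\rdI}(S)$, and summing over $S\in\SIT(\alpha)$ yields $\rdI_\alpha=\sum_U x^U=\sum_S F_{\comp(\Des_{\rdI}(S))}$. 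I would finally note the shortcut that, for any $S\in\SIT(\alpha)$, each $k\in\{1,\dots,n-1\}$ has $k+1$ either strictly above or weakly below $k$, so $\Des_{\rdI}(S)=\{1,\dots,n-1\}\setminus\DesI(S)$; combined with $\psi(F_\alpha)=F_{\alpha^c}$ and the expansion of $\dI_\alpha$, this gives the independent confirmation $\sum_S F_{\comp(\Des_{\rdI}(S))}=\psi(\dI_\alpha)$.
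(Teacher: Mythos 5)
Your proposal is correct and follows essentially the same route as the paper's own argument (deferred to the companion paper \cite{NSvWVW2022} and sketched in the source): standardize with ties broken bottom to top, observe that each fibre over $S\in\SIT(\alpha)$ consists exactly of the fillings obtained by replacing the standard labels with a weakly increasing sequence that jumps strictly at $\Des_{\rdI}(S)$, and use the fact that a run of equal values forms a vertical strip to recover row-strictness in the destandardization step. Your closing remark that $\Des_{\rdI}(S)=\Des_{\dI}(S)^c$ together with $\psi(F_\alpha)=F_{\alpha^c}$ is likewise the consistency check the paper records immediately after the theorem.
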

For $\alpha=(1,2),$ the unique standard immaculate tableau $\tableau{2&3\\1}$ 
has $\rdI$-descent set $\{2\}$; thus $\rdI_{(1,2)}=F_{\comp \{2\}}=F_{(2,1)}.$

Clearly for any standard immaculate tableau $S$, $\des_{\DI}(S) = \des_{\rdI}(S)^c$, and hence applying the involution $\psi$  immediately gives $\psi(\dI_\alpha) =\rdI_\alpha $. Consequently, we have: 

\begin{theorem}\label{the:RIasDI}\cite{NSvWVW2023}
 $\{ \RI _\alpha \suchthat \alpha \vDash n\}$ is a basis for $\Qsym _n$.
\end{theorem} 

\section{A new 0-Hecke algebra action} 
\label{sec:Hecke-action-RSImm}

Recall that the symmetric group $S_n$ can be defined via generators $s_i, 1\le i\le n-1,$ the adjacent transpositions, subject to the relations 
\begin{align*} {s_i}^2 &=1; \\
               s_i s_{i+1}s_i &=s_{i+1}s_is_{i+1};\\
                  s_is_j &=s_js_i, \ |i-j|\ge 2.
\end{align*}

\begin{definition}\cite{PamelaBromwichNorton1979, Mathas1999} Let $\mathbf{K}$ be any field.  The 0-Hecke algebra $H_n(0)$ is the $\mathbf{K}$-algebra with 
generators $\pi_i, 1\le i\le n-1$ and relations 
\begin{align*} {\pi_i}^2 &=\pi_i; \\
               \pi_i\pi_{i+1}\pi_i &=\pi_{i+1}\pi_i\pi_{i+1};\\
                  \pi_i\pi_j &=\pi_j\pi_i, \ |i-j|\ge 2.
\end{align*}
The algebra $H_n(0)$ has dimension $n!$ over  $\mathbf{K}$, with basis elements $\{\pi_\sigma: \sigma\in S_n\},$ where $\pi_\sigma=\pi_{i_1}\ldots \pi_{i_m}$ if $\sigma=s_{i_1}\ldots s_{i_m}$ is a reduced word. This is well-defined by standard Coxeter group theory, 
see \cite{BjBrenti2005}.
\end{definition}

From \cite{PamelaBromwichNorton1979}, the 0-Hecke algebra admits  $2^{n-1}$ irreducible one-dimensional modules $L_\alpha=\mathrm{Span}(v_\alpha)$, one for each composition $\alpha\vDash n,$ carrying an action defined by 
\begin{equation}\label{eqn:HeckeIrreps} \pi_i(v_\alpha)=\begin{cases} 0, \ i\in \set(\alpha),\\
  v_\alpha, \text{ otherwise.}\end{cases}
\end{equation}
Here $\set(\alpha)=\{\alpha_1, \alpha_1+\alpha_2,\ldots, \alpha_1+\ldots+\alpha_{k-1}\}$ is the subset of $[n-1]$ associated to the composition $\alpha=(\alpha_1, \ldots , \alpha_k)$ of $n$ of length $k.$  

\begin{definition}\cite{DKLT1996, KrobThibon1997}
The \textit{quasisymmetric characteristic} $\mathrm{ch}$ is an isomorphism from the Grothendieck ring of $H_n(0)$ with respect to the induction product, to the ring of quasisymmetric functions $\QSym,$ sending the isomorphism  class $[L_\alpha]$ of 
the 0-Hecke algebra irreducible module $L_\alpha$ indexed by the composition $\alpha\vDash n$  to 
the fundamental quasisymmetric function indexed by $\alpha$:
\[ \mathrm{ch}([L_\alpha])= F_\alpha.\]

For the purposes of this paper, the pertinent  description of the quasisymmetric characteristic is in \cite[Section 5.4]{KrobThibon1997}. Let $M$ be a finite-dimensional $\hn$-module;  let $M=M_1\supset M_2\supset \cdots \supset M_k\supset M_{k+1}=\mathbb{K}$ be a composition series of submodules for $M$, so that   each successive quotient $M_i/M_{i+1}$ is irreducible, and thus equal to $L_{\alpha^i}$ for some composition $\alpha^i\vDash n$. The \emph{quasisymmetric characteristic} of the module $M$, $\chr(M)$, is then defined to be  the sum of fundamental quasisymmetric functions  $\sum_{i=1}^k F_{\alpha^i}$.
In particular we will henceforth simply  write $\chr(L_\alpha)=F_\alpha$ for each $\alpha\vDash n$.
\end{definition}

The following is our restatement of the main result of \cite{BBSSZ2015}.
\begin{theorem}\label{thm:BBSSZ2dualImm}\cite[Theorem 3.12]{BBSSZ2015} There is an indecomposable cyclic 0-Hecke algebra module $\mathcal{W}_\alpha$ whose quasisymmetric characteristic is the dual immaculate function $\dI_\alpha, $  
\[ \chr(\mathcal{W}_\alpha)= \dI_\alpha.\]

The module $\mathcal{W}_\alpha$ has dimension equal to the number of standard immaculate tableaux of shape $\alpha.$

The $\dI$-action of the  0-Hecke algebra generator $\pi_i$  on the set of standard immaculate tableaux of shape $\alpha$,  for $\alpha\vDash n$, may be described as follows:
\begin{equation}\label{eqn:defn-dualImm-pi(T)}\pi_i^{\dI}(T)=\begin{cases} T, &\text{if $i+1$ is in a row weakly below }i,\\
0,  & \text{if $i,i+1$ are in column 1 of } T,\\
s_i(T), & \text{if $i+1$ is strictly above $i$ in $T$},\\ &\text{and $i,i+1$ are NOT in column 1},
\end{cases}\end{equation}
where $s_i(T)$ is the standard immaculate tableau obtained from $T$ by swapping $i$ and $i+1.$ 

Furthermore, the dual immaculate function $\dI_\alpha$ expands positively in the fundamental quasisymmetric functions as follows.  Define the descent set $\Des_{\dI}(T)$ of a standard immaculate tableau $T$ to be the set 
\[\mathrm{Des}_{\dI}(T):=\{i: i+1 \text{ is strictly above } i \text{ in } T\}.\]
Then 
\[\mathrm{ch}(\mathcal{W}_\alpha)=\dI_\alpha=\sum_T F_{\mathrm{comp}(\mathrm{Des}_{\dI}(T))},\]
where the sum runs over all standard immaculate tableaux $T$ of shape $\alpha.$
\end{theorem}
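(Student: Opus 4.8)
The plan is to prove the four assertions of the theorem in order: that the action in~\eqref{eqn:defn-dualImm-pi(T)} defines a genuine $\hn$-module structure on the space $\mathcal{W}_\alpha$ with basis $\SIT(\alpha)$; that $\chr(\mathcal{W}_\alpha)=\dI_\alpha$ with the stated fundamental expansion; that $\mathcal{W}_\alpha$ is cyclic; and that it is indecomposable. The dimension statement is then immediate, since $\SIT(\alpha)$ is a basis. I would first check that $\pi_i^{\dI}(T)\in\SIT(\alpha)\cup\{0\}$: the only case needing attention is the swap $s_i(T)$, where exchanging the consecutive entries $i,i+1$ (with $i+1$ strictly above $i$ and not both in column $1$) preserves the strict increase up column $1$ and the weak increase along rows. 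Then I would verify the three defining relations of $\hn$. The idempotent relation $(\pi_i^{\dI})^2=\pi_i^{\dI}$ is checked case by case: the fix and kill cases are clear, and in the swap case $s_i(T)$ now has $i+1$ strictly below $i$, so $\pi_i^{\dI}$ fixes it. The far-commutation relation for $|i-j|\ge 2$ is a locality argument, since the defining conditions and swaps for $\pi_i^{\dI}$ and $\pi_j^{\dI}$ involve the disjoint entry sets $\{i,i+1\}$ and $\{j,j+1\}$. The braid relation $\pi_i^{\dI}\pi_{i+1}^{\dI}\pi_i^{\dI}=\pi_{i+1}^{\dI}\pi_i^{\dI}\pi_{i+1}^{\dI}$, involving the three entries $i,i+1,i+2$, requires a systematic case analysis on their relative rows and column-$1$ membership; this is the most delicate of the relation checks.

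For the characteristic I would introduce the partial order on $\SIT(\alpha)$ generated by the covering relations $s_i(T)\prec T$ arising in the swap case, and observe that the action is triangular: $\pi_i^{\dI}(T)\in\{T,0,s_i(T)\}$ with $s_i(T)\prec T$. Hence the span of any order ideal is an $\hn$-submodule, and a linear extension of the order produces a filtration of $\mathcal{W}_\alpha$ whose successive one-dimensional quotients are spanned by the classes of the tableaux $T$. On the quotient at $T$, the generator $\pi_i^{\dI}$ acts as the identity precisely when $i\notin\Des_{\dI}(T)$ and as $0$ precisely when $i\in\Des_{\dI}(T)$ (the kill and swap cases both becoming $0$). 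Comparing with~\eqref{eqn:HeckeIrreps}, this quotient is the irreducible $[L_{\comp(\Des_{\dI}(T))}]$. Applying $\chr$ and using $\chr([L_\beta])=F_\beta$ gives $\chr(\mathcal{W}_\alpha)=\sum_T F_{\comp(\Des_{\dI}(T))}$, which is $\dI_\alpha$ by its generating-function description.

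For cyclicity I would take the row-superstandard tableau $T_\alpha$, filling the rows from bottom to top, left to right, with $1,2,\ldots,n$, verify that it is the unique maximal element of the poset, and show that every $T\prec T_\alpha$ is reached from $T_\alpha$ by a product of generators acting through swap moves along a saturated chain, so that $\mathcal{W}_\alpha=\hn\cdot T_\alpha$. The hardest part, I expect, is indecomposability. Following the method of \cite{TvW2015} and \cite{BBSSZ2015}, I would show that every idempotent in $\mathrm{End}_{\hn}(\mathcal{W}_\alpha)$ is $0$ or the identity. Since $\mathcal{W}_\alpha$ is cyclic, an endomorphism $\phi$ is determined by $\phi(T_\alpha)=\sum_T c_T\,T$; the relations $\pi_i^{\dI}\phi(T_\alpha)=\phi(\pi_i^{\dI}T_\alpha)$ for the generators that fix or swap $T_\alpha$, together with the triangularity of the action, force $\phi$ to be scalar on the leading term and then constrain all lower-order coefficients. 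Controlling these lower-order terms and ruling out a nontrivial idempotent is the technical crux, and is where the straightening and sorting properties of the poset do the real work.
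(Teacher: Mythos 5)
Your outline is sound, but note that the paper does not prove this theorem at all: it is quoted verbatim from \cite{BBSSZ2015} (``The following is our restatement of the main result of \cite{BBSSZ2015}''), and the paper only re-derives pieces of it later. Measured against the original argument and against the parallel arguments this paper develops for the row-strict analogue, your four-step plan (Hecke relations by case analysis, triangular filtration by order ideals giving irreducible quotients $[L_{\comp(\Des_{\dI}(T))}]$, cyclic generation from the row superstandard tableau, indecomposability via idempotent endomorphisms) is exactly the standard strategy. One genuine divergence is worth flagging: for cyclicity you propose to show directly that $S^{row}_\alpha$ is the unique maximal element and to reach every $T$ by a straightening walk along saturated chains. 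That is \emph{not} how \cite{BBSSZ2015} argues --- they work indirectly through a parent module on Yamanouchi-type words, and (as the paper notes in a footnote) never explicitly establish the existence of a maximal element --- but it \emph{is} precisely the route this paper takes in Proposition~\ref{prop:top-elt} and Theorem~\ref{thm:BBSSZ2-cyclic}; your version is the cleaner, self-contained one. Finally, your indecomposability sketch is under-specified where it matters most: the argument does not proceed by ``constraining lower-order coefficients'' in any iterative sense, but by a single key lemma (\cite[Lemma~3.11]{BBSSZ2015}, the analogue of Lemma~\ref{lem:Z-indecomplemma} here): for every $P\ne S^{row}_\alpha$ there is a generator $\pi_j^{\dI}$ fixing $S^{row}_\alpha$ but not $P$, combined with the observation (Lemma~\ref{lem:indecomplemma0}) that a tableau not fixed by $\pi_j$ cannot lie in the image of $\pi_j$; applying $\pi_j$ to $f(S^{row}_\alpha)=\sum_T a_T T$ then kills $a_P$ in one stroke. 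You would need to prove that lemma to complete the argument, and it, rather than any ordering on the remaining terms, is the technical crux.
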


Our goal in this section is to define, for each composition $\alpha$ of $n,$ a module $\mathcal{V}_\alpha$ whose image under the characteristic map is the quasisymmetric function $\mathcal{R}\mathfrak{S}^*_\alpha.$  Following \cite{BBSSZ2015}, we consider the vector space 
$\mathcal{V}_\alpha$ whose basis vectors are the standard immaculate tableaux of shape $\alpha.$ 
Define, for each $1\le i\le n-1$ and each standard immaculate tableau $T$ of shape $\alpha,$ the $\rdI$-action of the generator $\pi_i$ on $T$ to be 
\begin{equation}\label{eqn:defn-RSdualImm-pi(T)}\pi_i^{\rdI}(T)=\begin{cases} T, &\text{if } 
i\notin \mathrm{Des}_{\mathcal{R}\mathfrak{S^*}}(T), \\
0,& i\in \mathrm{Des}_{\mathcal{R}\mathfrak{S^*}}(T) \text{ and swapping } i \text{ and } i+1 \text{ in } T \\
\phantom{0,} & \text{does NOT result in a standard immaculate tableau},\\
s_i(T), & \text{otherwise},
\end{cases}\end{equation}
where $s_i(T)$ is the standard immaculate tableau obtained from $T$ by swapping $i$ and $i+1.$ 

As in \cite{BS2021}, we say an entry $j$ in a tableau is right-adjacent to an entry $i$ if $i,j$ are in the same row and in adjacent columns, and $j$ is to the right of $i$.    Note that from the definition of the descent set, if 
$i\in \mathrm{Des}_{\mathcal{R}\mathfrak{S^*}}(T)$ and $i, i+1$ are in the same row, then $i+1$ must be right-adjacent to $i.$ 

To avoid cumbersome notation, we write simply $\pi_i(T)$ for the row-strict immaculate action, using 
   $\pi_i^{\rdI}$ and $\pi_i^{\dI}$ only when there is explicit need to distinguish between the actions of~\eqref{eqn:defn-RSdualImm-pi(T)} and~\eqref{eqn:defn-dualImm-pi(T)}.  We refer to these as the $\rdI$-action and the $\dI$-action respectively.  Likewise we may refer to the resulting $\hn$-modules as the  $\rdI$-Hecke module and the $\dI$-Hecke module respectively.
\begin{lemma}\label{lem:prelim} Let $T$ be a standard immaculate tableau and let $i\in \mathrm{Des}_{\mathcal{R}\mathfrak{S^*}}(T).$  Then 
\begin{enumerate}
\item
$i, i+1$ cannot both be in the leftmost column of $T;$
\item 
$ i \text{ and } i+1 \text{ are in the same row } \iff i+1\text{ is right-adjacent to }i \iff \pi_i(T)=0;$
\item if $s_i(T)$ is a standard immaculate tableau, then $i\notin \mathrm{Des}_{\mathcal{R}\mathfrak{S^*}}(s_i(T)).$
\end{enumerate}
\end{lemma}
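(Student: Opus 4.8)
The three parts are local statements about the relative positions of the cells containing $i$ and $i+1$, so the plan is to unpack the hypothesis $i\in\Des_{\rdI}(T)$—that $i+1$ lies in a row weakly below the row of $i$—and read off each conclusion. For part (1) I would argue by contradiction: if $i$ and $i+1$ both lay in the leftmost column, then strict bottom-to-top increase of that column together with $i<i+1$ would put $i+1$ strictly \emph{above} $i$, contradicting that $i+1$ is weakly below $i$.

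For part (2) I would establish the chain of equivalences directly. The implication ``same row $\Rightarrow$ right-adjacent'' uses that rows strictly increase and that no integer lies strictly between $i$ and $i+1$, so no cell can separate them; the converse is immediate from the definition of right-adjacency. For ``right-adjacent $\Rightarrow \pi_i(T)=0$'', swapping would place $i+1$ immediately to the left of $i$ in a common row, breaking strict row increase, so $s_i(T)$ is not a standard immaculate tableau and the definition of the $\rdI$-action returns $0$. The remaining implication, ``$\pi_i(T)=0 \Rightarrow$ same row'', I would prove as the contrapositive of the following key claim.

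\textbf{Key claim (the main obstacle).} If $i\in\Des_{\rdI}(T)$ and $i,i+1$ are \emph{not} in the same row—so the descent hypothesis forces $i+1$ strictly below $i$—then $s_i(T)$ is a valid standard immaculate tableau. This is where the real work lies, and it also feeds part (3). I would verify that exchanging the two entries preserves strict row increase and strict first-column increase by inspecting only the affected cells. Raising $i+1$ into $i$'s cell: its new left neighbour was $<i<i+1$, and its new right neighbour was $>i$ and distinct from $i+1$, hence $>i+1$; the situation for $i$ moving down is symmetric. For the first column, part (1) guarantees at most one of $i,i+1$ sits in column $1$, and in either case the entries directly above and below it in that column remain correctly ordered after the swap. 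This yields $s_i(T)\in\SIT(\alpha)$, completing part (2).

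Finally, part (3) follows from the same geometric picture. Since $s_i(T)$ is assumed to be a standard immaculate tableau, part (2) tells us $i,i+1$ are not in the same row, so the descent hypothesis places $i+1$ strictly below $i$ in $T$; after the swap, $i+1$ sits strictly above $i$ in $s_i(T)$. Hence $i+1$ is not weakly below $i$ there, i.e.\ $i\notin\Des_{\rdI}(s_i(T))$. Apart from the neighbour-bookkeeping in the validity check above, every step is a direct translation of the descent condition.
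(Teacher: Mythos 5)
Your proposal is correct and follows essentially the same route as the paper: part (1) directly from the strict increase of the first column, part (2) via the observation that a swap of $i$ and $i+1$ preserves row and first-column increase because any other entry $a$ satisfies $a<i\iff a<i+1$ (your neighbour-by-neighbour check is just a slightly more explicit version of this), and part (3) by noting the swap places $i+1$ strictly above $i$. No gaps; the argument matches the paper's.
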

\begin{proof} The first claim is immediate from the definitions. 

For the second claim, one direction is clear: if $i+1$ is right-adjacent to $i$, clearly swapping $i$ and $i+1$ will make a non-increasing row, so $\pi_i(T)=0.$ 

Now suppose $i\in \mathrm{Des}_{\mathcal{R}\mathfrak{S^*}}(T)$ and $i, i+1$ are not in the same row, so that $i+1$ is in a row strictly below $i.$  Here it is clear that swapping $i, i+1$ does not violate row-increase (since $a<i<b\iff a<i+1<b$ for $a, b \notin \{i,i+1\}$).  Since by the first claim, $i, i+1$ are not both in the leftmost column, the latter is still increasing by the same argument.  Hence $s_i(T)$ is also an immaculate tableau, and $\pi_i(T)=s_i(T)\ne 0.$

Note from above that $s_i(T) $ is an immaculate tableau $\iff$ $i+1$ is strictly below $i$ in $T,$ and hence $i+1$ is strictly above $i$ in $s_i(T).$ This verifies the third and final claim.
\end{proof}
In particular, if $1\in \mathrm{Des}_{\mathcal{R}\mathfrak{S^*}}(T),$ then $\pi_1(T)=0.$ 

\begin{example} Consider the standard row-strict immaculate tableau
\[S=\tableau{6\\4&5&8&10\\3&7\\1&2&9}.\]   Then $\Des_{\rdI}(S)=\{1,4,6,8\}$ and 
$\mathrm{Des}_{\mathfrak{S^*}}(T)=\{2,3,5,7,9\}$ (the complement in the set $\{1,2,\ldots,9\}$).
Hence we have \[\pi^{\rdI}_i(T)=T \text{ for } i\in\{2,3,5,7,9\}, \ \ \pi^{\rdI}_1(T)=0=\pi^{\rdI}_4(T), \] and 
\[\pi^{\rdI}_6(T)=s_6(T)=\tableau{{\bf 7}\\4&5& 8&10\\3& {\bf 6}\\1&2&9} , \quad  \pi^{\rdI}_8(T)=s_8(T)=\tableau{6\\4&5&  {\bf 9}&10\\3&7\\1&2& {\bf 8}}.\]
\end{example}

We can therefore reformulate the description of the action of $\pi_i^{\rdI}$ on $T$ more succinctly as follows:

\begin{equation}\label{eqn:defn-pi(T)}\pi_i(T)=\pi_i^{\rdI}(T)=\begin{cases} T, & \text{if $i+1$ is strictly above $i$ },\\
0,  & \text{if $i+1$ is right-adjacent to $i$},\\
s_i(T), & \text{if $i+1$ is strictly below $i$ in $T$}.
\end{cases}\end{equation}

\begin{theorem}\label{thm:Hecke-action-RSImm} The operators $\pi^{\rdI}_i$ define an action of the 0-Hecke algebra on the vector space $\mathcal{V}_\alpha.$ 
\end{theorem}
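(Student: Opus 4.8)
The plan is to verify directly that the operators $\pi_i^{\rdI}$ of \eqref{eqn:defn-pi(T)} satisfy the three defining relations of $\hn$: idempotency $\pi_i^2 = \pi_i$, the commutation $\pi_i\pi_j = \pi_j\pi_i$ for $|i-j|\ge 2$, and the braid relation $\pi_i\pi_{i+1}\pi_i = \pi_{i+1}\pi_i\pi_{i+1}$. The essential simplification, already packaged in Lemma~\ref{lem:prelim}, is that two consecutive values lie in the same row of a standard immaculate tableau if and only if they are right-adjacent; hence the effect of $\pi_i$ on $T$ is governed entirely by the relative heights of the rows containing $i$ and $i+1$, namely it fixes $T$ when $i+1$ sits strictly above $i$, kills $T$ when they share a row, and returns $s_i(T)$ when $i+1$ sits strictly below $i$. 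I would record this reformulation first, since it lets me phrase every subsequent case purely in terms of row heights.

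Idempotency and the far commutation are quick. For $\pi_i^2 = \pi_i$, the fix and kill cases are immediate, while in the swap case Lemma~\ref{lem:prelim}(3) guarantees that $i+1$ lies strictly above $i$ in $s_i(T)$, so a second application of $\pi_i$ fixes $s_i(T)$; thus $\pi_i^2(T) = \pi_i(T)$ in all three cases. For $|i-j|\ge 2$ the pairs $\{i,i+1\}$ and $\{j,j+1\}$ are disjoint, so $\pi_i$ reads and moves only the cells holding $i,i+1$ while $\pi_j$ reads and moves only those holding $j,j+1$; these operations act on disjoint data and manifestly commute, including in the cases where one of them returns $0$.

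The braid relation is where the real work lies. Both words $\pi_i\pi_{i+1}\pi_i$ and $\pi_{i+1}\pi_i\pi_{i+1}$ only ever permute the values $i,i+1,i+2$ among the three fixed cells they originally occupy, so the outcome depends solely on the configuration of these three cells, which I would organise by how many of them share a row. When all three cells have distinct heights no pair ever lands in a common row, so no generator returns $0$ and the action restricts to the usual $0$-Hecke action of $H_3(0)$ on the three values read in order of increasing height; the braid relation then follows from the Coxeter theory underlying $H_3(0)$, with every intermediate swap valid by Lemma~\ref{lem:prelim}. The remaining configurations are those in which some pair among $i,i+1,i+2$ is right-adjacent, and these I would check by hand, tracking how each admissible swap relocates a value and recomputing the next generator's behaviour from the updated heights.

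I expect the main obstacle to be precisely these shared-row cases, because a swap performed by one generator can move a value into a row it shares with another, turning a later generator into the annihilating case; the bookkeeping must confirm that both words then collapse to $0$ (or to the same tableau) simultaneously. A representative instance is $i,i+1$ right-adjacent with $i+2$ strictly below: the left word dies at the very first step since $\pi_i$ kills $T$, while the right word survives two swaps before its final $\pi_{i+1}$ meets the right-adjacent pair $(i+1,i+2)$ that the earlier operators have created, and so returns $0$; hence both sides vanish. Running through the handful of such configurations, indexed by which pair is adjacent and whether the third value lies above or below, completes the verification, and since the validity of every intermediate swap is supplied by Lemma~\ref{lem:prelim}, no separate standardness check is required.
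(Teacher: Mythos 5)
Your proposal is correct in strategy and would yield a complete proof, but your treatment of the braid relation takes a genuinely different route from the paper's. The paper splits into four cases according to whether $i$ and $i+1$ lie in $\Des_{\rdI}(T)$ and verifies each case by tracking positions by hand. You instead split according to whether the three cells holding $i,i+1,i+2$ occupy three distinct rows. In the generic (distinct-rows) situation you observe that no operator can ever return $0$, that every swap stays inside $\SIT(\alpha)$ by Lemma~\ref{lem:prelim}, and hence that the induced action on the three values, read by increasing row height, is the standard sorting action of $H_3(0)$ on arrangements of a three-letter word, whose braid relation is classical; the paper never makes this identification and instead re-derives those sub-cases positionally (portions of its Cases 1--3, and Case 4b). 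What your route buys is a conceptual reason for the generic braid relation and a shorter list of genuinely new verifications, namely the degenerate configurations in which some pair among $i,i+1,i+2$ shares a row. What it costs is that those degenerate cases must still be enumerated and checked individually --- the right-adjacent pairs $(i,i+1)$, $(i+1,i+2)$ and also $(i,i+2)$, each with the third value above or below, plus the configuration with all three consecutive in one row --- and you carry out only one representative (which agrees with the paper's Case~4). The remaining checks do go through in the same manner, including the delicate ones where a swap newly creates a right-adjacent consecutive pair, so nothing is wrong in principle; but as written the hand-checked portion is an outline rather than a finished argument, whereas the paper writes out each configuration explicitly.
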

\begin{proof}  Clearly from the preceding analysis, $\pi^{\rdI}_i(T)=\pi_i(T)\in \mathcal{V}_\alpha$  for every standard immaculate tableau $T$ of shape $\alpha.$ We must verify that the operators satisfy the 0-Hecke algebra relations.

To show $\pi_i^2(T)=\pi_i(T),$ we need only check the case when $i+1$ is strictly below $i$ in $T$. In this case $\pi_i(T)=s_i(T),$ and $i$ is now strictly below $i+1$. Hence $\pi_i(s_i(T))=s_i(T)$ and we are done.

Let $1\le i,j\le n-1$ with $|i-j|\ge 2.$ Then $\{i, i+1\}\cap \{j, j+1\}=\emptyset,$ so the actions of $\pi_i$ and $\pi_j$ are independent of each other, and hence commute.

It remains to show that 
\begin{equation}\label{eqn:keyclaim}\pi_i \pi_{i+1} \pi_i(T)=\pi_{i+1} \pi_i \pi_{i+1}(T).
\end{equation}  We examine  four separate cases.

\begin{enumerate}
\item[Case 1:] Assume $i\notin \mathrm{Des}_{\mathcal{R}\mathfrak{S^*}}(T), i+1\notin \mathrm{Des}_{\mathcal{R}\mathfrak{S^*}}(T)$: 
Then $\pi_i(T)=T, \pi_{i+1}(T)=T, $ and the claim is clear.

\item[Case 2:] Assume $i\in \mathrm{Des}_{\mathcal{R}\mathfrak{S^*}}(T), \text{ but }i+1\notin \mathrm{Des}_{\mathcal{R}\mathfrak{S^*}}(T)$: Then $\pi_{i+1}(T)=T.$ 
If $\pi_i(T)=0,$ then the left-hand side of \eqref{eqn:keyclaim} is 0, and so is the right-hand side.

If $\pi_i(T)\ne 0,$ then $\pi_i(T)=s_i(T),$ and the left-hand side of \eqref{eqn:keyclaim} equals 
$\pi_i\pi_{i+1}(s_i(T)),$ while the right-hand side is $\pi_{i+1}(s_i(T)).$ Hence \eqref{eqn:keyclaim} becomes 
\begin{equation}\label{eqn:step1}\pi_i \pi_{i+1} ( s_i(T))= \pi_{i+1}(s_i(T)),
\end{equation}
which we need to verify.

Assume $i+1\notin \mathrm{Des}_{\mathcal{R}\mathfrak{S^*}}(s_i(T)).$ 
The left-hand side then equals $\pi_i(s_i(T))=s_i(T)$  by Lemma~\ref{lem:prelim}, and this is also the right-hand side.

Finally assume $i+1\in \mathrm{Des}_{\mathcal{R}\mathfrak{S^*}}(s_i(T)).$   We now have $i+1$ strictly below $i$ in $T,$ so that $i+1$ is strictly above $i$ in $s_i(T),$ and $i+2$ weakly below $i+1$ in $s_i(T).$   Also recall that $i+2$ was strictly above $i+1$ in $T.$ It follows that 
\begin{equation}\label{eqn:step2} \text{In } s_i(T),  i+2 \text{ is now strictly above } i 
\text{ and weakly below }i+1. \end{equation}

If $i+2$ is right-adjacent to $i+1,$ then $\pi_{i+1}(s_i(T))=0$ and Equation~\eqref{eqn:step1} is immediate.

If not, then 
\begin{equation}\label{eqn:step3} \text{In } s_i(T),  i+2 \text{ is now strictly above } i 
\text{ and strictly below }i+1. \end{equation} 
This implies $\pi_{i+1}(s_i(T))=s_{i+1}(s_i(T)),$ and in the latter we now have $i$ below $i+1,$ which in turn is below $i+2.$ In particular $i$ is not a descent of $\pi_{i+1}(s_i(T))=s_{i+1}(s_i(T)),$ and hence the latter tableau is fixed by $\pi_i.$ Equation~\eqref{eqn:step1} is thus verified.

\item[Case 3:]  Assume $i\notin \mathrm{Des}_{\mathcal{R}\mathfrak{S^*}}(T), \text{ but }i+1\in \mathrm{Des}_{\mathcal{R}\mathfrak{S^*}}(T)$: 

Then $\pi_i(T)=T$ and the left-hand side of Equation~\eqref{eqn:keyclaim} is $\pi_i(\pi_{i+1}(T))$ 
\[=\begin{cases} 0, & i+2 \text{ is right-adjacent to } i+1 \text{ in } T, \\
                          \pi_i(s_{i+1}(T)), 
                          &\text{otherwise.} \end{cases}\]
The right-hand side of Equation~\eqref{eqn:keyclaim} is $\pi_{i+1}\pi_i\pi_{i+1}(T)$ 
\[=\begin{cases} 0, &i+2 \text{ is right-adjacent to } i+1 \text{ in } T, \\
                          \pi_{i+1}(\pi_i(s_{i+1}(T))), &\text{otherwise.} \end{cases}\]
                          
Thus we need only consider the case when $i+2$  is not right-adjacent to $i+1$, so that necessarily $i+2$ is strictly below $i+1$ in $T.$ Also $i$ is  strictly BELOW $i+1$ in $T$. That is, $i+1$ is strictly above both $i$ and $i+2$ in $T,$ and thus  $i+2$ is strictly above both $i,i+1$ in $s_{i+1}(T).$ 
Hence we have  two possibilities for $s_{i+1}(T)$:

 Either $i$ is below $i+1$, which is below $i+2$, and hence 
$\pi_i$ and $\pi_{i+1}$ both fix $s_{i+1}(T),$
or $i+1$ is below $i$, and $i$ is below $i+2$.  In the latter case, applying $\pi_i$ to $s_{i+1}(T)$ switches $i$ and $i+1$, so that in $\pi_i(s_{i+1}(T))$ we now have $i$ below $i+1$, and $i+1$ (still) below $i+2$.  But then $\pi_{i+1}$ fixes $\pi_i(s_{i+1})(T)$.  Equation~\eqref{eqn:keyclaim} has been established.

\item[Case 4:] Assume $i, i+1\in \mathrm{Des}_{\mathcal{R}\mathfrak{S^*}}(T)$:

First suppose $i+1$ is right-adjacent to $i,$ so that $\pi_i(T)$ equals zero, and so  does the left-hand side of 
Equation~\eqref{eqn:keyclaim}.  If $\pi_{i+1}(T) =0,$ we are done; if not $\pi_{i+1}(T)=s_{i+1}(T),$ and the right-hand side of Equation~\eqref{eqn:keyclaim} is 
\[\pi_{i+1}\pi_i (s_{i+1}(T)).\]

Since in $T$ we have $i+1$ right-adjacent to $i$ and $i+2$ strictly below both, this means in $s_{i+1}(T)$ we have $i+2$ right-adjacent to $i$ and $i+1$ strictly below them, forcing $i$ to be a descent of $s_{i+1}(T).$ Hence in $\pi_i (s_{i+1}(T)) $, we have $i+2$ right-adjacent to $i+1$ 
and $i$ strictly below.  But then $\pi_{i+1} (  \pi_i (s_{i+1}(T)) ) =0, $ as desired.

Finally, suppose $\pi_i(T)\ne 0,$ so that $i+1$ is strictly below $i$  in $T$ and $i+2$ is weakly below $i+1.$ For clarity we consider two sub-cases:
\begin{enumerate}
\item[Case 4a:] $i+2$ is right-adjacent to $i+1$ in $T$. This immediately makes the right-hand side of Equation~\eqref{eqn:keyclaim} equal to 0, since $\pi_{i+1}(T)=0.$ 
 Then the left-hand side of 
 Equation~\eqref{eqn:keyclaim} equals $\pi_i\pi_{i+1} s_i(T) =\pi_i s_{i+1}(s_i(T))$ since 
$i+1$ is now above $i+2$ in $s_i(T).$   But now $i+1$ is right-adjacent to $i$ in $s_{i+1}(s_i(T))$,  so $\pi_i s_{i+1}(s_i(T))$ reduces to 0, as desired.

\item[Case 4b:]  $i+2$ is strictly below $i+1,$ which is strictly below $i$ in $T,$ so that $\pi_{i+1}(T)\ne 0, \pi_i(T)\ne 0.$ Then Equation~\eqref{eqn:keyclaim} becomes 
\begin{equation}\label{eqn:step4} s_i s_{i+1} s_i(T) = s_{i+1} s_i s_{i+1}(T), \end{equation}
and it is easy to see that this is indeed true.  
\end{enumerate}
\end{enumerate}
We have verified Equation~\eqref{eqn:keyclaim} in all cases, thereby completing the proof that the action of the generators $\pi_i$ extends to an action of $H_n(0)$ on $\mathcal{V}_\alpha.$ 
\end{proof}

\section{A partial order and a $\hn$-module $\mathcal{V}_{\alpha}$ for $\rdI_\alpha$}\label{sec:partial-order} 

Let $\alpha \vDash n$, and let $\SIT(\alpha)$ denote the set of all standard immaculate tableaux of shape $\alpha$. Given $T\in \SIT(\alpha)$, let $\sigma (T) \in S_n$ be the permutation in one-line notation obtained from $T$ by reading the entries of $T$ from \emph{right to left} in each row, and the rows from \emph{top to bottom}.

\begin{example}\label{ex:perm}
If $T= \tableau{4&5\\2\\1&3}$ then $\sigma(T) = 5\ 4\ 2\ 3\ 1$.
\end{example}

Given $\sigma \in S_n$ recall that its \emph{inversion set} is
$$\mathrm{Inv} (\sigma) = \{ (p,q)\suchthat 1\leq p < q \leq n \mbox{ and } \sigma(p) > \sigma (q)\}.$$The number of inversions is denoted by $\ninv(\sigma) = |\mathrm{Inv}(\sigma)|.$

\begin{example}\label{ex:inv} 
If $\sigma(T) = 5\ 4\ 2\ 3\ 1$, then $\ninv(\sigma) = 9$ from 
$$\mathrm{Inv}(\sigma) = \{(1,2), (1,3), (1,4), (1,5),  (2,3), (2,4), (2,5), (3,5), (4,5)\}.$$
\end{example}

Observe by our definition of $\pi _i$ that if $T_1, T_2 \in \SIT(\alpha)$ and
$$\pi _i (T_1) = T_2$$with $T_1\neq T_2$, this means that
\begin{enumerate}
\item in $T_1$ we have that $i$ appears strictly above $i+1$, so in $\sigma(T_1)$ we have that $i$ appears left of $i+1$.
\item in $T_2 = \pi _i(T_1)$ we have that $i$ appears strictly below $i+1$, so in $\sigma(T_2)$ we have that $i$ appears right of $i+1$.
\end{enumerate}
Consequently, since all  entries other than $i$ and $i+1$ are fixed by $\pi _i$, we have that if $T_2 = \pi _i(T_1)$ with $T_1\neq T_2$, then
\begin{equation}\label{eq:inv}
\ninv (\sigma (T_2)) = \ninv (\sigma (T_1))+1.
\end{equation}

\begin{proposition}\label{prop:poset} Let $\alpha \vDash n$ and $T_1, T_2 \in \SIT (\alpha)$. Define $\poRI$ on $\SIT(\alpha)$ by
$$T_1 \poRI T_2 \mbox{ if and only if there exists a permutation } s_{i_1} \cdots s_{i_\ell} \in S_n$$such that
$$\pi _{i_1} \cdots \pi _{i_\ell} (T_1) = T_2.$$Then $\poRI$ is a partial order on $\SIT(\alpha)$.
\end{proposition}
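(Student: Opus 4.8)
The plan is to verify directly the three defining conditions of a partial order: reflexivity, antisymmetry, and transitivity. Reflexivity is immediate, since the empty word (equivalently the identity permutation $\id\in S_n$) gives the identity operator, so $T_1\poRI T_1$ for every $T_1\in\SIT(\alpha)$. Transitivity I would handle by concatenation: if $\pi_{i_1}\cdots\pi_{i_\ell}(T_1)=T_2$ and $\pi_{j_1}\cdots\pi_{j_m}(T_2)=T_3$, then $\pi_{j_1}\cdots\pi_{j_m}\pi_{i_1}\cdots\pi_{i_\ell}(T_1)=T_3$, and the corresponding product of generators $s_{j_1}\cdots s_{j_m}s_{i_1}\cdots s_{i_\ell}$ is again an element of $S_n$, so $T_1\poRI T_3$.

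The crux of the argument, and the step I expect to require the most care, is antisymmetry; here I would use the inversion statistic $\ninv\circ\sigma$ as a monotone rank function along $\pi$-chains. The key observation is that whenever $\pi_{i_1}\cdots\pi_{i_\ell}(T_1)=T_2$ holds with $T_2$ a genuine standard immaculate tableau, none of the intermediate results can be $0$: once a product of $\pi$'s evaluates to $0$, every further application keeps it $0$ (as $\pi_i(0)=0$), which would force the final output to be $0$ rather than the tableau $T_2$. Consequently each of the single-generator steps in the chain either fixes its input tableau or replaces it by a genuine swap $s_{i_j}(\cdot)$; by the description of the action in~\eqref{eqn:defn-pi(T)} together with~\eqref{eq:inv}, the former leaves $\ninv\circ\sigma$ unchanged while the latter increases it by exactly $1$. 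In either case $\ninv\circ\sigma$ is nondecreasing along the chain, so $T_1\poRI T_2$ forces $\ninv(\sigma(T_2))\ge\ninv(\sigma(T_1))$.

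To finish antisymmetry, suppose both $T_1\poRI T_2$ and $T_2\poRI T_1$. The monotonicity just established gives $\ninv(\sigma(T_2))\ge\ninv(\sigma(T_1))$ and $\ninv(\sigma(T_1))\ge\ninv(\sigma(T_2))$, hence equality. Reconsidering any chain realising $T_1\poRI T_2$, since $\ninv\circ\sigma$ is nondecreasing at each step while the net change from $T_1$ to $T_2$ is $0$, every single step must leave $\ninv\circ\sigma$ fixed; that is, no genuine swap ever occurs and each $\pi_{i_j}$ fixes its input, whence $T_2=T_1$. Combining the three verifications shows that $\poRI$ is a partial order on $\SIT(\alpha)$. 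The only genuinely substantive point is the antisymmetry step, and it rests entirely on having the rank function $\ninv\circ\sigma$ available from~\eqref{eq:inv} plus the elementary remark that a nonzero endpoint precludes ever passing through $0$.
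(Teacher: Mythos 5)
Your proposal is correct and follows essentially the same route as the paper: reflexivity and transitivity are immediate from the definition, and antisymmetry is obtained by using Equation~\eqref{eq:inv} to show that $\ninv\circ\sigma$ is nondecreasing along any $\pi$-chain and strictly increases at every non-trivial step. Your additional remark that no intermediate result of the chain can be $0$ (since $\pi_i(0)=0$ would force the endpoint to be $0$) is a useful explicit justification of a point the paper leaves implicit, but it does not change the argument.
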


\begin{proof} That $\poRI$ is reflexive and transitive is immediate from the definition. To prove antisymmetry, if  $T_1 \poRI T_2$ then by Equation~\eqref{eq:inv}
$$\ninv (\sigma (T_1)) \leq \ninv (\sigma (T_2))$$and if $T_2 \poRI T_1$ then by Equation~\eqref{eq:inv}
$$\ninv (\sigma (T_2)) \leq \ninv (\sigma (T_1)).$$ Thus $\ninv (\sigma (T_1)) = \ninv (\sigma (T_2)).$ However, by Equation~\eqref{eq:inv} we know each non-zero, non-identity action of a generator $\pi_i$  increases the number of inversions, hence $T_1=T_2$ as desired.
\end{proof}

We will now use our partial order to define an $\hn$-module indexed by a composition $\alpha \vDash n$, whose quasisymmetric characteristic is the row-strict dual immaculate function $\RI _\alpha$. More precisely, given a composition $\alpha \vDash n$ extend the partial order $\poRI$ on $\SIT (\alpha)$ to an arbitrary total order on $\SIT (\alpha)$, denoted by $\poRI ^t$. Let the elements of $\SIT (\alpha)$ under $\poRI ^t$ be $$\{\rtau _1 \poRI ^t  \cdots \poRI ^t \rtau _m  \}.$$ Now let $\mathcal{V}_{\rtau_i}$ be the $\mathbb{C}$-linear span 
 $$\mathcal{V}_{\rtau_i} = \spam \{ \rtau _j \suchthat \rtau _i \poRI ^t \rtau _j \}\quad \text{ for } 1\leq i \leq m$$
and observe that the definition of $\poRI ^t$ implies that $\pi_{i_1}\cdots \pi _{i_\ell}\mathcal{V}_{\rtau_i}\subseteq \mathcal{V}_{\rtau_i}$ for any $s_{i_1}\cdots s _{i_\ell}\in S_n$. This observation combined with the fact that the operators $\{\pi_i\}_{i=1}^{n-1}$ satisfy the same relations as $\hn$ by Theorem~\ref{thm:Hecke-action-RSImm} gives the following result.
\begin{lemma}\label{lem:hnmodule}
$\mathcal{V}_{\rtau_i}$ is an $\hn$-module.
\end{lemma}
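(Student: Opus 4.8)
The plan is to exhibit $\mathcal{V}_{\rtau_i}$ as an $\hn$-submodule of the full space $\mathcal{V}_\alpha$, which is already an $\hn$-module by Theorem~\ref{thm:Hecke-action-RSImm}. Since the generators $\pi_1,\ldots,\pi_{n-1}$ satisfy the $\hn$-relations globally on $\mathcal{V}_\alpha$, those same relations hold automatically on any subspace that is stable under their action; thus the module axioms come for free once stability is established. The only thing to prove, therefore, is that $\mathcal{V}_{\rtau_i}$ is closed under each generator, i.e. $\pi_k\,\mathcal{V}_{\rtau_i}\subseteq\mathcal{V}_{\rtau_i}$ for every $1\le k\le n-1$; the full statement $\pi_{i_1}\cdots\pi_{i_\ell}\mathcal{V}_{\rtau_i}\subseteq\mathcal{V}_{\rtau_i}$ then follows by iteration, since $\hn$ is generated as an algebra by the $\pi_k$.

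First I would reduce closure to the basis. Because $\mathcal{V}_{\rtau_i}=\spam\{\rtau_j\suchthat \rtau_i\poRI^t\rtau_j\}$, it suffices by linearity to check that $\pi_k(\rtau_j)\in\mathcal{V}_{\rtau_i}$ whenever $\rtau_i\poRI^t\rtau_j$. Here I would invoke the trichotomy of Equation~\eqref{eqn:defn-pi(T)}: the image $\pi_k(\rtau_j)$ is one of $\rtau_j$, $0$, or $s_k(\rtau_j)$. In the first two cases the image lies in $\mathcal{V}_{\rtau_i}$ trivially. The substantive case is $\pi_k(\rtau_j)=s_k(\rtau_j)\neq\rtau_j$: here the definition of $\poRI$ in Proposition~\ref{prop:poset} gives $\rtau_j\poRI s_k(\rtau_j)$, since the single generator $\pi_k$ carries $\rtau_j$ to $s_k(\rtau_j)$.

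The key point is that $\poRI^t$ is a \emph{linear extension} of the partial order $\poRI$, so $\rtau_j\poRI s_k(\rtau_j)$ forces $\rtau_j\poRI^t s_k(\rtau_j)$; combining this with the hypothesis $\rtau_i\poRI^t\rtau_j$ and transitivity of $\poRI^t$ yields $\rtau_i\poRI^t s_k(\rtau_j)$, so $s_k(\rtau_j)\in\mathcal{V}_{\rtau_i}$ as required. This establishes stability and hence the lemma. I do not anticipate a genuine obstacle: the representation-theoretic content was already discharged in Theorem~\ref{thm:Hecke-action-RSImm} (verification of the $\hn$-relations), while Proposition~\ref{prop:poset} ensures that $\poRI$ is a bona fide partial order and hence admits the linear extension $\poRI^t$. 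The only point requiring care is the compatibility of $\poRI^t$ with the covering steps produced by the $\pi_k$-action; this is exactly what Equation~\eqref{eq:inv} secures, since it forces every non-trivial action to strictly increase $\ninv$, so that the up-sets of $\poRI^t$ are genuinely stable under the generators.
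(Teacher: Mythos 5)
Your proposal is correct and follows essentially the same route as the paper: the paper likewise observes that the definition of $\poRI^t$ (as a linear extension of $\poRI$) forces $\pi_{i_1}\cdots\pi_{i_\ell}\mathcal{V}_{\rtau_i}\subseteq\mathcal{V}_{\rtau_i}$, and then combines this stability with the fact that the operators satisfy the $\hn$-relations by Theorem~\ref{thm:Hecke-action-RSImm}. You have simply spelled out the stability check (the trichotomy of Equation~\eqref{eqn:defn-pi(T)} plus transitivity of the total order) that the paper leaves as an observation.
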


Note that $\mathcal{V}_{\rtau_1}$ is precisely the module $\mathcal{V}_\alpha$ of the preceding section.

Given the above construction, define $\mathcal{V}_{\rtau_{m+1}}$ to be the trivial $\hn$-module, and consider the following filtration of $\hn$-modules.
$$
\mathcal{V}_{\rtau_{m+1}}\subset \mathcal{V}_{\rtau_m} \subset \cdots \subset \mathcal{V}_{\rtau_{2}}\subset \mathcal{V}_{\rtau_1} 
$$
Then the quotient modules $\mathcal{V}_{\rtau_{i-1}}/\mathcal{V}_{\rtau_{i}}$ for $2\leq i\leq m+1$ are $1$-dimensional $\hn$-modules spanned by $\rtau_{i-1}$. Furthermore, they are irreducible modules. We can identify which $\hn$-module they are by looking at the action of $\pi_{j}$ on $\mathcal{V}_{\rtau_{i-1}}/\mathcal{V}_{\rtau_{i}}$ for $1\leq j\leq n-1$. We have
\begin{eqnarray*}
\pi_{j}(\rtau_{i-1})&=& \left \lbrace \begin{array}{ll}0 & j\in \desri(\rtau_{i-1})\\\rtau_{i-1} & \text{otherwise.}\end{array}\right.
\end{eqnarray*}
Thus, as an $\hn$-representation, $\mathcal{V}_{\rtau_{i-1}}/\mathcal{V}_{\rtau_{i}}$ is isomorphic to the irreducible representation $L_{\beta}$ where $\beta$ is the composition corresponding to the descent set $\desri(\rtau_{i-1})$. Hence 
$\mathrm{ch}(\mathcal{V}_{\rtau_{i-1}}/\mathcal{V}_{\rtau_{i}})=F_{\comp(\desri(\rtau_{i-1}))}$, and 
\begin{eqnarray*}
\mathrm{ch}(\mathcal{V}_\alpha)=\mathrm{ch}(\mathcal{V}_{\rtau_1})
&=& \displaystyle \sum_{i=2}^{m+1}F_{\comp(\desri(\rtau_{i-1}))}\nonumber\\ 
&=& \displaystyle \sum_{\rtau \in \SIT(\alpha)}F_{\comp(\desri(\rtau))}\nonumber\\ 
&=& \RI_{\alpha}.
\end{eqnarray*}
Consequently, we have established the following.  

\begin{theorem}\label{the:bigone}
Let $\alpha \vDash n$, and let  $\rtau _1 \in \SIT (\alpha)$ be the minimal element under the total order $\poRI ^t$. Then $\mathcal{V}_\alpha=\mathcal{V}_{\rtau_1}$ is an $\hn$-module whose quasisymmetric characteristic is the row-strict dual immaculate function $\RI_{\alpha}$.\end{theorem}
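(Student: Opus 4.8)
The plan is to run the standard Grothendieck-group filtration argument for $0$-Hecke modules, exactly along the lines set up in the paragraphs preceding the statement. Since $\rtau_1$ is the minimal element of the total order $\poRI ^t$, reflexivity gives $\rtau_1 \poRI ^t \rtau_j$ for every $j$, so $\mathcal{V}_{\rtau_1} = \spam\{\rtau_1,\ldots,\rtau_m\}$ is the full vector space on $\SIT(\alpha)$; by Lemma~\ref{lem:hnmodule} it is an $\hn$-module. I would then consider the chain $\mathcal{V}_{\rtau_{m+1}}\subset \mathcal{V}_{\rtau_m}\subset\cdots\subset\mathcal{V}_{\rtau_1}$ of submodules and compute $\chr(\mathcal{V}_{\rtau_1})$ by summing the characteristics of the one-dimensional successive quotients.

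The crux is to identify each quotient $\mathcal{V}_{\rtau_{i-1}}/\mathcal{V}_{\rtau_i}$ as a specific irreducible $[L_\beta]$. First I would record that it is spanned by the image of $\rtau_{i-1}$, hence one-dimensional. Then I would compute the induced action of each $\pi_j$ on this line using the reformulated action in Equation~\eqref{eqn:defn-pi(T)}: if $j\notin\desri(\rtau_{i-1})$ then $\pi_j$ fixes $\rtau_{i-1}$, while if $j\in\desri(\rtau_{i-1})$ then $\pi_j(\rtau_{i-1})$ is either $0$ or $s_j(\rtau_{i-1})$. In the latter case Equation~\eqref{eq:inv} gives $\ninv(\sigma(s_j(\rtau_{i-1})))=\ninv(\sigma(\rtau_{i-1}))+1$, so $\rtau_{i-1}\poRI s_j(\rtau_{i-1})$ strictly; since $\poRI ^t$ refines $\poRI$, the tableau $s_j(\rtau_{i-1})$ occurs later in the total order and therefore lies in $\mathcal{V}_{\rtau_i}$. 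Consequently, modulo $\mathcal{V}_{\rtau_i}$ the generator $\pi_j$ annihilates $\rtau_{i-1}$ exactly when $j\in\desri(\rtau_{i-1})$ and fixes it otherwise. Matching against Equation~\eqref{eqn:HeckeIrreps} identifies the quotient with $[L_\beta]$ for $\beta=\comp(\desri(\rtau_{i-1}))$, whence $\chr(\mathcal{V}_{\rtau_{i-1}}/\mathcal{V}_{\rtau_i})=F_{\comp(\desri(\rtau_{i-1}))}$.

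To finish, I would use additivity of $\chr$ on short exact sequences along the filtration to obtain
$$\chr(\mathcal{V}_{\rtau_1})=\sum_{i=2}^{m+1}F_{\comp(\desri(\rtau_{i-1}))}=\sum_{\rtau\in\SIT(\alpha)}F_{\comp(\desri(\rtau))},$$
and then invoke Theorem~\ref{thm:rsfunddecomp} to recognise this sum as $\RI_\alpha$.

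The main obstacle — really the only substantive point — is the compatibility of the chosen total order with the Hecke action: one must be sure that whenever $\pi_j$ moves $\rtau_{i-1}$ to a genuinely different tableau, that image lands strictly higher in the order and hence inside $\mathcal{V}_{\rtau_i}$. This is precisely what the inversion-count Equation~\eqref{eq:inv} together with the antisymmetry established in Proposition~\ref{prop:poset} guarantee; everything else is the routine bookkeeping of the Grothendieck-group calculation.
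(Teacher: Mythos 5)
Your proposal is correct and follows essentially the same route as the paper: the same filtration $\mathcal{V}_{\rtau_{m+1}}\subset\cdots\subset\mathcal{V}_{\rtau_1}$, identification of the one-dimensional quotients with the irreducibles $[L_{\comp(\desri(\rtau_{i-1}))}]$, and additivity of $\chr$ over the short exact sequences. The only difference is that you spell out explicitly (via Equation~\eqref{eq:inv} and the compatibility of $\poRI^t$ with $\poRI$) why a nonzero image $s_j(\rtau_{i-1})$ vanishes in the quotient, a point the paper leaves implicit.
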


\section{The 0-Hecke module structure of $\mathcal{V}_\alpha$}\label{sec:indecomp-module-and-poset}

Our next goal is to analyse the structure of the module $\mathcal{V}_\alpha$ in Theorem~\ref{the:bigone}. 
We will show that our partial order, and hence any linear extension of it, has a unique bottom element $S^0_\alpha$. We will also show that $\mathcal{V}_\alpha$  is cyclic, and generated by the 
standard tableau $S^0_\alpha$. 

\begin{lemma}\label{lem:uniqueness1} Let $\alpha\vDash n$ and $T\in \SIT(\alpha).$  If $\pi_i(T)=s_i(T), \pi_j(T)=s_j(T)\in \SIT(\alpha)$ and  $\pi_i(T)=\pi_j(T),$ then necessarily $i=j.$
\end{lemma}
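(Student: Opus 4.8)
The plan is to argue by contradiction, tracking the location of a single entry. Suppose $\pi_i(T)=s_i(T)$ and $\pi_j(T)=s_j(T)$ are both standard immaculate tableaux that coincide, and suppose toward a contradiction that $i\neq j$; by the symmetry of the two hypotheses we may assume without loss of generality that $i<j$.

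The key observation is that $s_i$ interchanges only the entries $i$ and $i+1$ and fixes every other entry, while $s_j$ interchanges only $j$ and $j+1$. Since $i<j$ we have $i\notin\{j,j+1\}$, so the entry $i$ is fixed by $s_j$. Let $c$ be the cell of $T$ containing the entry $i$. Then in $s_j(T)$ the cell $c$ still contains $i$, whereas in $s_i(T)$ the swap moves $i$ out of $c$ and places $i+1$ into $c$. As $i+1\neq i$, the fillings $s_i(T)$ and $s_j(T)$ disagree at cell $c$, contradicting $\pi_i(T)=\pi_j(T)$. Hence $i=j$, as claimed.

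I do not anticipate a genuine obstacle here. The statement follows purely from the fact that the distinct adjacent transpositions $s_i$ and $s_j$ move overlapping but non-identical pairs of values, so comparing the images of the smallest affected value $i$ already separates the two resulting fillings. In particular, the hypotheses that $\pi_i(T)$ and $\pi_j(T)$ lie in $\SIT(\alpha)$ and arise from the nontrivial $s_i$-case of the action are used only to guarantee that $\pi_i(T)=s_i(T)$ and $\pi_j(T)=s_j(T)$ (rather than $T$ or $0$); once that form is secured, the separation argument is a short direct comparison at a single cell.
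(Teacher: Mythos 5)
Your proof is correct and is essentially the same argument as the paper's: both reduce to comparing the two images at a single cell, using that distinct adjacent transpositions move non-identical sets of values. The only cosmetic difference is that you track the entry $i$ (fixed by $s_j$ but displaced by $s_i$), whereas the paper tracks the entry $j+1$ (fixed by $s_i$ but displaced by $s_j$); either choice works.
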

\begin{proof} If $i\ne j,$ we may assume $i< j.$ Suppose $j+1$ occupies cell $x$ in $T.$ In $\pi_i(T)$,  $j+1>i+1$ is unchanged from its position in $T.$  However, in $\pi_j(T)=s_j(T)$, $j+1$ is now strictly above $j,$ and cell $x$ is now occupied by $j.$ It follows that $\pi_i(T),\pi_j(T),$ differ at least in cell $x,$ a contradiction.
\end{proof}

The cover relation for our poset $P\rdI_\alpha$ for row-strict dual immaculate tableaux of shape $\alpha\vDash n$  is 
\begin{equation}\label{eqn:rdIcover} 
S\poRIcover  T
\iff \exists\, i \text{ such that }  T=\pi_i^{\rdI}(S),
\end{equation}
 \noindent with respect to the  \textbf{row-strict} 0-Hecke action defined by Theorem~\ref{thm:Hecke-action-RSImm}.

On the other hand, the cover relation for the poset $P\dI_\alpha$ for dual immaculate tableaux of shape $\alpha\vDash n,$ as described in \cite{BBSSZ2015}, is
\begin{equation}\label{eqn:dIcover} 
S\poIcover  T
\iff \exists\, i \text{ such that }  S=\pi^{\dI}_i(T).
\end{equation}
\noindent with respect to the 
 \textbf{dual immaculate} 0-Hecke action defined  by Theorem~\ref{thm:BBSSZ2dualImm}.

Note that by Lemma~\ref{lem:uniqueness1}, in each case, a cover relation is determined by a unique generator of the 0-Hecke algebra.

\begin{lemma}\label{lem:sameposet} Let $\alpha\vDash n$ and $S,T\in \SIT(\alpha).$ Then 
\[S\poIcover  T \iff S\poRIcover  T.\]
Hence  the two posets $P\dI_\alpha$ and $P\rdI_\alpha$ are isomorphic.
\end{lemma}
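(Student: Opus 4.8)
The plan is to prove the two cover relations \eqref{eqn:rdIcover} and \eqref{eqn:dIcover} single out exactly the same pairs $(S,T)$ in $\SIT(\alpha)$. Once the covers coincide, the partial orders agree — each of $\poRI$ and $\poI$ is the reflexive–transitive closure of its cover relation, since by \eqref{eq:inv} every nonidentity generator step changes the number of inversions by exactly one — and then $P\dI_\alpha$ and $P\rdI_\alpha$ are equal as posets, hence isomorphic via the identity map.

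First I would unpack the row-strict cover using the succinct form \eqref{eqn:defn-pi(T)} of the $\rdI$-action. A relation $T=\pi_i^{\rdI}(S)$ with $T\neq S$ forces $\pi_i^{\rdI}(S)=s_i(S)$, which by \eqref{eqn:defn-pi(T)} occurs precisely when $i+1$ lies strictly below $i$ in $S$. Thus $S\poRIcover T$ holds if and only if there is an $i$ with $i+1$ strictly below $i$ in $S$ and $T=s_i(S)$. Next I would unpack the dual immaculate cover via \eqref{eqn:defn-dualImm-pi(T)}: a relation $S=\pi_i^{\dI}(T)$ with $S\neq T$ forces $\pi_i^{\dI}(T)=s_i(T)$, which occurs precisely when $i+1$ is strictly above $i$ in $T$ and $i,i+1$ are not both in the first column of $T$. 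Since $s_i$ is an involution, $S=s_i(T)$ is equivalent to $T=s_i(S)$; swapping the entries $i$ and $i+1$ interchanges their cells, so ``$i+1$ strictly above $i$ in $T$'' becomes ``$i+1$ strictly below $i$ in $S$'', while the two cells occupied by $\{i,i+1\}$ are unchanged, so ``not both in column $1$ of $T$'' becomes ``not both in column $1$ of $S$''. Hence $S\poIcover T$ holds if and only if there is an $i$ with $i+1$ strictly below $i$ in $S$, with $i,i+1$ not both in the first column of $S$, and $T=s_i(S)$.

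Comparing the two descriptions, they differ only in the extra first-column clause attached to $\poIcover$, and this is the one point carrying genuine content. The clause is automatic for the row-strict cover: if $i+1$ is strictly below $i$ in $S$, then $i$ and $i+1$ occupy different rows and in particular cannot both lie in the strictly increasing first column, since there the larger value $i+1$ would be forced above $i$. This is exactly Lemma~\ref{lem:prelim}(1), applied to $i\in \mathrm{Des}_{\mathcal{R}\mathfrak{S^*}}(S)$. Therefore the two clauses coincide, giving $S\poRIcover T \iff S\poIcover T$, and the stated isomorphism follows.

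I expect the main (and essentially sole) obstacle to be bookkeeping the orientation flip between the two actions: the $\rdI$-action raises inversions while the $\dI$-action lowers them, so one cover is phrased as $T=\pi_i(S)$ and the other as $S=\pi_i(T)$. Handling this cleanly requires applying the involution $s_i$ to rewrite $S=s_i(T)$ as $T=s_i(S)$ and tracking how the swap reverses the relative heights of $i$ and $i+1$ while preserving their cells; after that, verifying that the first-column restriction drops out is immediate from Lemma~\ref{lem:prelim}(1).
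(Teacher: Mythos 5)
Your proof is correct and follows essentially the same route as the paper: the paper's argument is a chain of equivalences that unwinds both cover relations via the explicit descriptions \eqref{eqn:defn-dualImm-pi(T)} and \eqref{eqn:defn-pi(T)}, uses that $s_i$ is an involution to pass between $S=s_i(T)$ and $T=s_i(S)$, and absorbs the first-column clause exactly as you do (it is automatic once $i+1$ lies strictly below $i$, by Lemma~\ref{lem:prelim}(1)). Your write-up merely makes explicit a couple of points the paper leaves implicit, such as why the column-$1$ condition drops out and why matching covers suffices to identify the posets.
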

\begin{proof} We have 
\begin{align*} S\poIcover  T&\iff \pi_i^{\dI}(T)=S, \text{ where }S\ne T, S\ne 0\\
& \iff i+1 \text{ is strictly above $i$ in } T, \\ 
& \qquad\quad i, i+1 \text{ not both in column 1, by } \eqref{eqn:defn-dualImm-pi(T)}\\
&\iff i+1 \text{ is strictly BELOW $i$ in } S\\
&\iff \pi_i^{\rdI} (S)=s_i(S)=T, \text{ by } \eqref{eqn:defn-pi(T)}\\
&\iff S\poRIcover  T.
\end{align*}
The claim follows.
\end{proof}

From  Equation~\eqref{eq:inv} and remarks preceding Proposition~\ref{prop:poset}, it follows that the posets $P\dI_\alpha\simeq P\rdI_\alpha$ are graded by the number of inversions. We call the poset $P\dI_\alpha\simeq P\rdI_\alpha$ the \emph{immaculate Hecke poset} associated to the composition $\alpha$.

Next we show that the poset has a unique bottom element $S_\alpha^0$. 
From this we will show that the unique bottom element $S_\alpha^0$ is the cyclic generator for the module $\mathcal{V}_\alpha$ in   Theorem~\ref{the:bigone}, whose quasisymmetric characteristic is $\rdI_\alpha.$  Later we will also show that it has a unique top element $S^{row}_\alpha$, see Definition~\ref{def:top-elt}.

\begin{definition}\label{def:bot-elt} Let $\alpha\vDash n$ be of length $\ell=\ell(\alpha).$ Define $S^0_\alpha$ to be the standard tableau of shape $\alpha$ with entries $1,\ldots, \ell$ in column 1, increasing bottom to top, and then fill the remaining rows, top to bottom, left to right with consecutive integers starting at $\ell+1$ and ending at $n.$ Thus 
\begin{enumerate}
    \item the top row, row $\ell,$ contains the entry $\ell$ followed by the interval $[\ell+1, \ell+\alpha_\ell-1];$ note that 
    $n=\sum_{i=1}^\ell \alpha_i\ge (\ell-1)+\alpha_\ell;$
    \item the next row, row $\ell-1,$ contains the entry $\ell-1$ followed by the interval $[\ell+\alpha_\ell, \ell+\alpha_\ell+\alpha_{\ell-1}-2];$
    \item row $i$ (from the bottom) contains the entry $i$ followed by the interval     $[\ell+\alpha_\ell+\alpha_{\ell-1}+\cdots +\alpha_{i+1}-(\ell-i-1), \ell+\alpha_\ell+\cdots+\alpha_{i}-(\ell-i+1)]$ (note that there are $\alpha_i -(\ell-i+1)+(\ell-i-1)+1 +1=\alpha_i$ entries; also note that $n=\sum_{i=1}^\ell \alpha_i\ge (i-1)+ \sum_{j=i}^\ell \alpha_i$);
    \item the bottom row, row 1, contains 1 followed by the interval $[\ell+\alpha_\ell+\alpha_{\ell-1}+\cdots +\alpha_{2}-(\ell-2), \ell+\alpha_\ell+\cdots+\alpha_{1}-\ell].$
\end{enumerate}
\end{definition}
\begin{example}\label{ex:bot-elt1} We have 
\[S^0_{43423}=\tableau{\bf{5} &6 &7\\
                      \bf{4} &8\\
                      \bf{3}&9 & 10 & 11\\
                      \bf{2} & 12 & 13\\
                      \bf{1} & 14 & 15 & 16}\ , \qquad  
S^0_{43411}=\tableau{\bf{5} \\
                      \bf{4} \\
                      \bf{3}&6 & 7 & 8\\
                      \bf{2} & 9 & 10\\
                      \bf{1} & 11 & 12 & 13}\ .\]
\end{example}
 The complement of the descent set of $S^0_\alpha$ is precisely $[\ell-1]$; all entries not in column 1 are descents, and $\ell$ is also a descent if $\ell\ne n$.  Furthermore, the entries $i$ at the end of a row of size 2 or more, are the only ones for which 
$\pi_i^{\rdI}(S^0_\alpha)=s_i(S^0_\alpha)\ne 0.$  The following facts are a consequence of \eqref{eqn:defn-pi(T)}, and are illustrated by the above examples.
\begin{lemma}\label{lem:bot-elt-facts} Let $\alpha\vDash n$ and $\ell=\ell(\alpha).$ Then
\begin{enumerate}
\item 
\begin{equation}\label{eqn:fix-bot-elt}  
\pi_i^{\rdI}(S^0_\alpha)=S^0_\alpha\iff i\in [\ell-1].
\end{equation}
\item Assume $\alpha_\ell\ge 2.$ Then 
\begin{equation}\label{eqn:fix-bot-elt1}  
\pi_\ell^{\rdI}(S^0_\alpha)=0, \text { and }
\end{equation}
\noindent
\begin{equation}\label{eqn:annihilate-bot-elt1}
 \pi_j^{\rdI}(S^0_\alpha)\notin \{S^0_\alpha, 0\}
 \iff j\in \{\ell+\alpha_\ell+\cdots+\alpha_{i}-(\ell-i+1), 2\le i\le \ell\}.
\end{equation}
(Note that $i=1$ corresponds to $j=n.$) 
\item Assume $\alpha_\ell=1$ and $k\le \ell-1$ is maximal such that $\alpha_k\ge 2.$   Then
\begin{equation}\label{eqn:annihilate-bot-elt2}
\pi_j^{\rdI}(S^0_\alpha)\notin \{S^0_\alpha, 0\}
\iff j=\ell \text{ or } j\in \{\ell+\alpha_\ell+\cdots+\alpha_{i}-(\ell-i+1),\, 2\le i\le k\}.
\end{equation}
\item If there is no $j$ such that $\pi_j(S^0_\alpha)=0,$ then $\alpha_\ell=1$ and $\alpha_k\le 2$ for all $k\le \ell-1.$
\item Finally if $\alpha$ is a hook of the form $ (1^{\ell-1},n-\ell+1), 1\le \ell\le n$, then $S^0_\alpha$ is the unique standard tableau of shape $\alpha.$
\end{enumerate}
\end{lemma}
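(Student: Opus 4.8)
The plan is to prove all five statements by reading off, for each $i\in[n-1]$, the relative vertical position of $i+1$ with respect to $i$ in $S^0_\alpha$ directly from Definition~\ref{def:bot-elt}, and then invoking the action formula~\eqref{eqn:defn-pi(T)}: the generator $\pi_i$ fixes, annihilates, or swaps $S^0_\alpha$ according as $i+1$ is strictly above, right-adjacent to, or strictly below $i$. The entries of $S^0_\alpha$ partition into the column-$1$ entries $1,\dots,\ell$ (with $c$ in row $c$) and the filling entries $\ell+1,\dots,n$, which occupy the cells in columns $\ge2$ and are inserted top to bottom, left to right. The key preliminary observation I would record is a monotonicity statement for the filling entries: if $\ell+1\le i\le n-1$, then either $i$ and $i+1$ lie in a common row, so that $i+1$ is right-adjacent to $i$, or else $i$ is the last entry of its row and $i+1$ opens the next filled row, which lies strictly below; in particular no filling entry $i+1$ is ever strictly above $i$.

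Part~(1) then follows at once: for $1\le i\le\ell-1$ the entries $i,i+1$ sit in column $1$ with $i+1$ directly above $i$, so $\pi_i$ fixes $S^0_\alpha$, whereas for $i\ge\ell$ the successor $i+1$ is a filling entry lying weakly below $i$ (the case $i=\ell$ being immediate and the case $i\ge\ell+1$ coming from the monotonicity observation), so that $i\in\Des_{\rdI}(S^0_\alpha)$. This gives~\eqref{eqn:fix-bot-elt}.

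For Parts~(2) and~(3) I would determine the indices with $\pi_i(S^0_\alpha)=s_i(S^0_\alpha)\ne0$, namely those $i\in[n-1]$ for which $i+1$ is strictly below $i$. By the trichotomy above these consist of the index $i=\ell$ in the case $\alpha_\ell=1$ (where $\ell+1$ drops to a strictly lower row, while if $\alpha_\ell\ge2$ then $\ell+1$ is right-adjacent to $\ell$, forcing $\pi_\ell=0$ as in~\eqref{eqn:fix-bot-elt1}), together with the last filling entry of each filled row other than the bottom-most one. The right endpoint of the interval in Definition~\ref{def:bot-elt} gives this last entry of row $r$ as $L_r=\ell+\alpha_\ell+\cdots+\alpha_r-(\ell-r+1)$; restricting to $2\le r\le\ell$, and respectively to $2\le r\le k$ in Part~(3), where rows $k+1,\dots,\ell$ are singletons carrying no filling, produces the sets in~\eqref{eqn:annihilate-bot-elt1} and~\eqref{eqn:annihilate-bot-elt2}. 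I would also record the identity $L_r-L_{r+1}=\alpha_r-1$, so that a singleton row $r$ has $L_r=L_{r+1}$ and thus contributes nothing new to the set, and note that the global maximum $n$ (which may coincide with some $L_r$) is discarded since $\pi_n$ is undefined, this being the point of the parenthetical remark that $i=1$ corresponds to $j=n$.

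For Part~(4) I would enumerate the right-adjacencies in $S^0_\alpha$: two consecutive filling cells can share a row only if some part is at least $3$, and the sole column-$1$ cell right-adjacent to a filling cell of consecutive value is the pair $\ell,\ell+1$ in the top row, which arises exactly when $\alpha_\ell\ge2$. Hence if no $\pi_i$ annihilates $S^0_\alpha$ then $\alpha_\ell=1$ and every part is at most $2$, which is the claim. Finally, for Part~(5), when $\alpha=(1^{\ell-1},n-\ell+1)$ the cells of column $1$ and the cells of the top row overlap in a single cell and together form one chain; the first-column and row strict-increase conditions force the unique saturated filling $1,2,\dots,n$ along it, which is precisely $S^0_\alpha$. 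The only genuinely delicate part of the argument is the bookkeeping in Parts~(2) and~(3): matching the closed formula $L_r$ to the last entry of a \emph{filled} row, absorbing singleton rows through $L_r=L_{r+1}$, and discarding the out-of-range maximal value.
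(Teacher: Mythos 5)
Your proof is correct and follows the same route the paper intends: the paper offers no written argument for this lemma, asserting only that the facts "are a consequence of" the action formula \eqref{eqn:defn-pi(T)} and the explicit description of $S^0_\alpha$ in Definition~\ref{def:bot-elt}, which is precisely the position-by-position case analysis you carry out. Your bookkeeping — the collapse $L_r=L_{r+1}$ for singleton rows and the exclusion of the out-of-range index $j=n$ — supplies details the paper leaves implicit (indeed $L_i=n$ can occur for $i\ge 2$ whenever $\alpha_1=\cdots=\alpha_{i-1}=1$, a case the parenthetical remark in the statement does not mention), so your write-up is if anything more careful than the source.
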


To avoid trivialities, unless otherwise stated, in everything that follows we will assume that  $\alpha\notin\{(1^{\ell-1},n-\ell+1): 1\le \ell \le n\}$.

\begin{proposition}\label{prop:bot-elt} Let $\alpha\vDash n$ and consider the standard immaculate tableau $S^0_\alpha$.  Then for any $T\in \SIT(\alpha), T\ne S^0_\alpha,$ there is a sequence of generators $\pi_{j_i}, $ and tableaux $T_i\in \SIT(\alpha),$ $i=1,\ldots, r,$  such that $\pi_{j_i}(T_{i})=T_{i-1}, i=1,2,\ldots ,r,$ where we set $T_{0}=T$ and $T_r=S^0_\alpha.$ Hence we conclude 
\[ S^0_\alpha \poRI T \text{ and } T=\pi_{j_1}\pi_{j_{2}}\cdots\pi_{j_r}(S^0_\alpha).\]
In particular $S^0_\alpha$ is the unique minimal element of the poset $P\rdI_\alpha.$
\end{proposition}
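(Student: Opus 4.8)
The plan is to prove the ostensibly stronger fact that $S^0_\alpha$ is the \emph{unique} tableau in $\SIT(\alpha)$ admitting no admissible ``downward swap,'' and then to descend to it from an arbitrary $T$ by iterating such swaps. Call an index $j$ \emph{reducible} for $U\in\SIT(\alpha)$ if $j+1$ lies strictly above $j$ in $U$ and $V:=s_j(U)$ is again a standard immaculate tableau. In that case $j+1$ lies strictly below $j$ in $V$, so $\pi_j^{\rdI}(V)=s_j(V)=U$ by~\eqref{eqn:defn-pi(T)}, giving $V\poRIcover U$, and $\ninv(\sigma(V))=\ninv(\sigma(U))-1$ by~\eqref{eq:inv}. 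Thus every reduction strictly lowers the nonnegative statistic $\ninv\circ\sigma$, so iterated reduction must terminate; the whole statement then follows once I show that the only tableau with no reducible index is $S^0_\alpha$.

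First I would record the elementary criterion, proved exactly as in Lemma~\ref{lem:prelim}, that when $j+1$ is strictly above $j$ the swap $s_j(U)$ is a standard immaculate tableau if and only if $j$ and $j+1$ are \emph{not both} in the first column: since $j,j+1$ are consecutive integers the row-strict conditions are automatically preserved, and the first column is destroyed precisely when it contains both of them. Hence $j$ is reducible for $U$ exactly when $j+1$ is strictly above $j$ and the two do not both lie in column $1$. For $U=S^0_\alpha$ this never occurs: by Lemma~\ref{lem:bot-elt-facts}(1) the indices $j$ with $j+1$ strictly above $j$ are exactly $j\in[\ell-1]$, and for these $j,j+1\in\{1,\dots,\ell\}$ both lie in column $1$. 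So $S^0_\alpha$ has no reducible index, and nothing lies strictly below it in $P\rdI_\alpha$.

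The crux is the converse: if $T\in\SIT(\alpha)$ has no reducible index then $T=S^0_\alpha$. Write the first column of $T$ from bottom to top as $c_1<c_2<\cdots<c_\ell$, so $c_i$ occupies row $i$; minimality of $1$ forces $c_1=1$. Suppose column $1$ were not $\{1,\dots,\ell\}$, and let $m$ be least with $c_m\neq m$; then $c_i=i$ for $i<m$ and $c_m\ge m+1$. The value $c_m-1$ is not in column $1$, since $m-1=c_{m-1}<c_m-1<c_m$ places it strictly between consecutive column entries; hence the leftmost entry of its row is some $c_t<c_m-1$ with $t\le m-1$, so $\mathrm{row}(c_m-1)\le m-1<m=\mathrm{row}(c_m)$. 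Then $c_m$ lies strictly above $c_m-1$ and the two are not both in column $1$, making $c_m-1$ reducible, a contradiction. Therefore column $1$ is $\{1,\dots,\ell\}$ with $i$ in row $i$. Now for every $j\ge\ell$ the value $j+1\ge\ell+1$ is outside column $1$, so non-reducibility forbids $j+1$ from lying strictly above $j$; that is, $\mathrm{row}(\ell)\ge\mathrm{row}(\ell+1)\ge\cdots\ge\mathrm{row}(n)$. Since row $i$ has exactly $\alpha_i-1$ non-first-column cells, the multiset of these rows is $\{i^{\,\alpha_i-1}\}_{i=1}^{\ell}$, whose weakly decreasing arrangement is unique; this forces the row of each of $\ell+1,\dots,n$, and row-strictness then forces its column. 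The result is precisely the filling of Definition~\ref{def:bot-elt}, so $T=S^0_\alpha$.

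Assembling the pieces: given $T\neq S^0_\alpha$, repeatedly pick a reducible index $j_i$ for $T_{i-1}$ and set $T_i:=s_{j_i}(T_{i-1})$, starting from $T_0=T$. Each step drops $\ninv\circ\sigma$ by one, so after finitely many, say $r$, steps the tableau $T_r$ has no reducible index and hence equals $S^0_\alpha$, while $\pi_{j_i}^{\rdI}(T_i)=T_{i-1}$ throughout. Composing yields $T=\pi_{j_1}\pi_{j_2}\cdots\pi_{j_r}(S^0_\alpha)$ and thus $S^0_\alpha\poRI T$; since $T$ was arbitrary and $S^0_\alpha$ itself has no reducible index, $S^0_\alpha$ is the unique minimal element of $P\rdI_\alpha$. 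I expect the main obstacle to be the first-column rigidity in the third paragraph: the remaining steps are bookkeeping, but showing that any gap in column $1$ necessarily produces a reducible index (located at $c_m-1$) is the essential point.
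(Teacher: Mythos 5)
Your proof is correct, but it takes a genuinely different route from the paper's. The paper proves Proposition~\ref{prop:bot-elt} by exhibiting an explicit straightening algorithm that builds one canonical saturated chain from $S^0_\alpha$ up to $T$: it first forces the entries of column~$1$ to become $1,\dots,\ell$ (repeatedly decrementing the topmost mismatched entry), and then corrects the remaining rows from top to bottom, left to right, with the induction measured by the number of cells already agreeing with $S^0_\alpha$. You instead run a greedy descent: you define a ``reducible index,'' observe via Equation~\eqref{eq:inv} that each reduction strictly lowers $\ninv(\sigma(\cdot))$ so the descent terminates, and then prove the key structural fact that the only tableau in $\SIT(\alpha)$ with no reducible index is $S^0_\alpha$ (your column-$1$ rigidity argument locating a reducible index at $c_m-1$, followed by the weakly-decreasing-rows argument for the entries $\ell+1,\dots,n$, is sound; the edge cases $m\ge 2$ and $c_m-1>c_{m-1}$ are handled by $c_1=1$ and $c_m\ne m$ respectively). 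What each approach buys: yours is shorter, does not require choosing a particular reduction at each step, and yields uniqueness of the minimal element of $P\rdI_\alpha$ as an immediate corollary of the characterization of locally minimal tableaux. The paper's explicit algorithm, on the other hand, produces a specific chain whose structure is reused later --- it is adapted in Lemma~\ref{lem:Zcyclic} to straighten within $\SET(\alpha)$, and its step-by-step bookkeeping underlies the technical lemmas (e.g.\ Lemma~\ref{lem:indecomplemma4-c}) feeding the indecomposability proof --- so the constructive version is not merely a stylistic choice in the context of the whole paper. As a standalone proof of this proposition, however, your argument is complete and valid.
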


The proof consists of a straightening algorithm which we first illustrate with an example.  Let $T\in \SIT(\alpha)\ne S^0_\alpha.$  We describe how to work backwards from $T$ to $S^0_\alpha$ in the poset along a saturated chain. This can be viewed as a \lq\lq straightening" of the tableau $T$, which  transforms it into $S^0_\alpha.$

\begin{example}\label{ex:rdI-straightening1}  Let $\alpha=223,$ so that 
 $S^0_\alpha=\tableau{3 &4 &5\\2 &6\\1 &7},$ and let $T=\tableau{5 &6 &7\\2 &4\\1&3}$.
 First we straighten column 1 of $T$ to match column 1 of $S^0_\alpha$: Start with the lowest
  entry $a_j$  in column 1 of $T$ such that $a_j\ne j,$ and exchange it with $a_j-1.$ 
  Continue in this manner until column 1 has the entries $3,2,1$ from top to bottom.
 \[\small{T=T_0\stackrel{\pi_4}{\longleftarrow} T_1=\tableau{\bf{4} &6 &7\\2 &5\\1&3}
 \stackrel{\pi_3}{\longleftarrow} T_2=\tableau{\bf{3} &6 &7\\2 &5\\1&4}}.\]
 Next we work on the top row of $T_2,$ starting with the smallest entry which differs from the corresponding entry in the same cell of $S^0_\alpha,$ and continuing until the top rows match:
 \[T_2 \!  \stackrel{\pi_5}{\longleftarrow}\! T_3=\tableau{3 &\bf{5} &7\\2 &6\\1&4}\!\!\!
  \stackrel{\pi_4}{\longleftarrow} T_4=\tableau{3 &\bf{4} &7\\2 &6\\1&5}\!\!\!
  \stackrel{\pi_6}{\longleftarrow} T_5=\tableau{3 &4 &\bf{6}\\2 &7\\1&5}\!\!\!
 \stackrel{\pi_5}{\longleftarrow} T_6=\tableau{3 &4 &\bf{5}\\2 &7\\1&6}\!\!.
  \]
  Now move down to the next row from the top, and proceed in the same manner, finding the smallest entry that differs from the corresponding entry in $S^0_\alpha$:
\[T_6   \stackrel{\pi_6}{\longleftarrow} T_7=\tableau{3 &4 &5\\2 &\bf{6}\\1&7}=S^0_\alpha.
  \]
Hence we have 
\[T=\pi_4\pi_3\pi_5\pi_4\pi_6\pi_5\pi_6(S^0_\alpha).\]
\end{example}

\begin{proof}[Proof of Proposition~\ref{prop:bot-elt}]  As illustrated by Example~\ref{ex:rdI-straightening1}, the following algorithm identifies a unique saturated  chain from $S^0_\alpha$ to $T.$  In what follows, when an integer  $a$ occupies row $i$ and column $j$ of $T,$ by the counterpart of $a$ in $S^0_\alpha$ we will mean the integer occupying the same cell, row $i$ and column $j$, in $S^0_\alpha.$ 
\begin{enumerate}
    \item[Step 1:] We begin by making the first column of $T$ match the first column of $S^0_\alpha.$  Find the least $j$, $2\le j\le \ell,$ such that the entry $x$ in cell $(j,1)$ is not equal to $j.$ Then $x-1$ is in a lower row, not in column 1 by minimality of $j$. Hence $T=\pi_{x-1}(T_1),$ such that in $T_1\in\SIT(\alpha),$  $x-1$ is now a descent strictly \textit{higher} than $x.$  Now repeat this procedure until $x$ is replaced by $j.$ Then continue with the next entry in column 1 of $T$ which does not match 
    in $S^0_\alpha.$  Clearly this process ends with a tableau $T_r=\pi_{x_r}\pi_{x_{r-1}}\ldots \pi_{x_1}(T),$ whose first column matches  column 1 of $S^0_\alpha.$
    
    Note that $T=T_r$ if $T$ and $S^0_\alpha$ already agree in the first column.
    \item[Step 2:] First observe that $T_r$ and $S^0_\alpha$ now agree for all entries less than or equal to $\ell=\ell(\alpha).$ Now consider the top-most row of length greater than 1, say row $k$.  Find the least entry, say $y,$ in this row of $T_r$ which differs from the corresponding entry in $S^0_\alpha.$ Note that $y$ is then necessarily larger than its counterpart in $S^0_\alpha,$ by definition of the latter. Then $y\ge \ell+1$ and $y-1$ is strictly below $y$ in $T_r.$  Hence 
    $T_r=\pi_{y-1}(T_{r+1})$ for $T_{r+1}\in \SIT(\alpha),$ such that $y-1$ is a descent in $T_{r+1}$  strictly \textit{higher} than $y$.  We repeat this step until $y$ has been replaced by its counterpart in $S^0_\alpha.$  
    
    \item[Step 3:] Continue in this manner to the end of the row. We now have a sequence of operators $\pi_{i_j}$ and tableaux $T_j\in \SIT(\alpha)$ such that $T_{j-1}=\pi_{i_j}(T_j)$, and the final tableaux $T_s$ agrees with $S^0_\alpha$ for all entries $\le \ell+(\alpha_k-1).$ 
    
    \item[Step 4:] Proceed  downwards to the next row where an entry in $T_s$ differs from its counterpart in $S^0_\alpha,$ and repeat Steps 2 and 3, until all rows are exhausted. 
\end{enumerate}
Since at the end of each iteration of Step 3,  the number of entries that are in agreement  with $S^0_\alpha$ increases, we see that the algorithm produces a saturated chain from $S^0_\alpha$ to $T$ in the poset $P\rdI_\alpha$ as claimed.
\end{proof}

We now immediately have:
\begin{theorem}\label{thm:cyclic-rdI}  The module $\mathcal{V}_\alpha$ of Theorem~\ref{the:bigone}, whose quasisymmetric characteristic is $\rdI_\alpha$,  is cyclic and generated by the unique minimal element $S^0_\alpha$  of the poset $P\rdI_\alpha.$
\end{theorem}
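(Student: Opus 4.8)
The plan is to deduce the result almost entirely from Proposition~\ref{prop:bot-elt}, which already carries the combinatorial weight. First I would pin down the cyclic generator. Since the total order $\poRI ^t$ is by construction a linear extension of the partial order $\poRI$, and Proposition~\ref{prop:bot-elt} shows that $S^0_\alpha$ is the \emph{unique} minimal element of $\poRI$, the minimum $\rtau _1$ of $\SIT(\alpha)$ under $\poRI ^t$ must coincide with $S^0_\alpha$. Consequently the module $\mathcal{V}_{\rtau_1} = \spam\{\rtau _j \suchthat \rtau _1 \poRI ^t \rtau _j\}$ of Theorem~\ref{the:bigone} is the full span of all standard immaculate tableaux of shape $\alpha$, and the natural candidate for a cyclic generator is $S^0_\alpha$ itself.

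Next I would verify cyclicity directly. By Proposition~\ref{prop:bot-elt}, for every $T\in \SIT(\alpha)$ there is a product of generators realising $T=\pi_{j_1}\pi_{j_2}\cdots\pi_{j_r}(S^0_\alpha)$; thus each basis vector $T$ lies in the cyclic submodule $\hn\cdot S^0_\alpha$. Since these tableaux form a $\mathbb{C}$-basis of $\mathcal{V}_{\rtau_1}$, the submodule $\hn\cdot S^0_\alpha$ already contains a spanning set and therefore equals $\mathcal{V}_{\rtau_1}$. This shows the module is cyclic, generated by $S^0_\alpha$, which is exactly the claim.

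I expect essentially no obstacle here: the genuinely hard content---the existence of a saturated chain from $S^0_\alpha$ up to an arbitrary $T$, realised by an \emph{explicit} sequence of nonzero $\pi_i$-moves---is precisely what the straightening algorithm of Proposition~\ref{prop:bot-elt} supplies. The only point deserving a moment's care is the identification $\rtau _1 = S^0_\alpha$, which relies on $S^0_\alpha$ being the unique minimum, and not merely \emph{a} minimal element, of $\poRI$; this uniqueness is what forces every linear extension $\poRI ^t$ to place $S^0_\alpha$ at the bottom, so that $\mathcal{V}_{\rtau_1}$ is indeed the whole space rather than a proper submodule.
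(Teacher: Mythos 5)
Your proof is correct and follows essentially the same route as the paper: both deduce cyclicity directly from Proposition~\ref{prop:bot-elt}, which exhibits every $T\in\SIT(\alpha)$ as $\pi_{j_1}\cdots\pi_{j_r}(S^0_\alpha)$. Your extra remark identifying $\rtau_1$ with $S^0_\alpha$ via uniqueness of the minimal element is a point the paper leaves implicit, but it is the same argument.
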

\begin{proof} This is clear since  Proposition~\ref{prop:bot-elt} shows that 
\[T\in \SIT(\alpha), T\ne S^0_\alpha\Rightarrow S^0_\alpha\poRI T,\] and that there is a sequence of generators $\pi_{j_1}\pi_{j_1}\cdots \pi_{j_r} $ such that $T=\pi_{j_1}\pi_{j_1}\cdots \pi_{j_r}(S^0_\alpha).$
\end{proof}
Before proceeding with our analysis of the structure of $\mathcal{V}_\alpha$, some remarks on the analogous results in \cite{BBSSZ2015} are in order. There   the existence of the unique top element  of $P\dI_\alpha$ is implicitly deduced via a map between an analogue of Yamanouchi words which the authors call $\mathcal{Y}$-words, which are in  bijection with standard tableaux. The authors   show indirectly that the top element of the poset $P\dI_\alpha\simeq P\rdI_\alpha$ is a cyclic generator for the module $\mathcal{W}_\alpha$ of Theorem~\ref{thm:BBSSZ2dualImm}, by appealing to a parent module generated by a special Yamanouchi word; they invoke the fact that  $\mathcal{W}_\alpha$ is a quotient of this module.

In order to give a complete and self-contained analysis of the immaculate Hecke poset $P\dI_\alpha\simeq P\rdI_\alpha,$  we will explicitly establish the existence of the unique top element  by means of a straightening algorithm similar to Proposition~\ref{prop:bot-elt}.  In analogy with Theorem~\ref{thm:cyclic-rdI}, we will also be able to deduce that the $\hn$-module $\mathcal{W}_\alpha$  is cyclically generated by this top element.

\begin{definition}\label{def:top-elt} Define   
$ S^{row}_\alpha$ to be the \textit{row superstandard} tableau in $\SIT(\alpha),$ whose entries left to right, bottom to top, are $1,2, \ldots, n-1, n$ in consecutive order. Note that if $\alpha$ is a hook of the form $ (1^{\ell-1},n-\ell+1), 1\le \ell\le n$,  then $\SIT(\alpha)$ has cardinality one and $S^0_\alpha=S^{row}_\alpha$.
\end{definition}
\begin{example}\label{ex:top-elt1} We have 
\[S^{row}_{43423}=\tableau{\bf{14} &15 &16\\
                      \bf{12} &13\\
                      \bf{8}&9 & 10 & 11\\
                      \bf{5} & 6 & 7\\
                      \bf{1} & 2 & 3 & 4}\ , \qquad 
                      S^{row}_{43411}=\tableau{\bf{13} \\
                      \bf{12} \\
                      \bf{8}&9 & 10 & 11\\
                      \bf{5} & 6 & 7\\
                      \bf{1} & 2 & 3 & 4}\ .\]
\end{example}

\begin{proposition}\label{prop:top-elt} Let $\alpha\vDash n$ and consider the standard immaculate tableau $ S^{row}_\alpha$.  Then for any $T\in \SIT(\alpha), T\ne S^{row}_\alpha,$ there is a sequence of generators $\pi_{j_i}, $ and tableaux $T_i\in \SIT(\alpha),$ $i=1,\ldots, r,$ such that $\pi_{j_i}(T_{i-1})=T_{i}, i=1,2,\ldots ,r,$ where we set $T_{0}=T$ and 
$T_r=S^{row}_\alpha.$ Hence we conclude 
\[ T \poRI S^{row}_\alpha \text{ and } \pi_{j_r}\pi_{j_{r-1}}\cdots\pi_{j_1}(T)=S^{row}_\alpha.\]
In particular $ S^{row}_\alpha$ is the unique maximal element of the poset $P\rdI_\alpha.$
\end{proposition}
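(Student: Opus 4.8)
The plan is to mirror the straightening algorithm of Proposition~\ref{prop:bot-elt}, but now working \emph{forwards} (increasing inversions) to transform an arbitrary $T \in \SIT(\alpha)$ into the row superstandard tableau $S^{row}_\alpha$. Since $S^{row}_\alpha$ is filled with $1, 2, \ldots, n$ in reading order bottom-to-top, left-to-right, it is the unique tableau maximizing $\ninv(\sigma(T))$ among $\SIT(\alpha)$; by Equation~\eqref{eq:inv}, each application of a nontrivial $\pi_i$ increases $\ninv$ by one, so a maximal element is exactly what we seek. The key observation driving the algorithm is the third case of \eqref{eqn:defn-pi(T)}: applying $\pi_i$ moves $i+1$ from a position strictly above $i$ to a position strictly below $i$, increasing inversions. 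Thus I want to repeatedly locate some $i$ with $i+1$ strictly above $i$ in the current tableau and push $i+1$ downward.

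First I would fix the order of repair: process cells in the target reading order of $S^{row}_\alpha$, i.e.\ fill in the correct entries row by row from the bottom, left to right within each row. At each stage, suppose all cells preceding the current one (in reading order) already agree with $S^{row}_\alpha$, so the current cell should contain some value $v$ equal to its reading-order index. The entry $v$ currently sits somewhere weakly later in the tableau; because every earlier cell is already correct (holding $1, \ldots, v-1$), the value $v$ must actually lie in a cell weakly above and to the right, and crucially $v-1$ sits in the immediately preceding (already-correct) cell. I would then argue that $v$ is strictly above $v-1$ — more precisely, that repeatedly applying $\pi_{v-1}, \pi_{v}, \ldots$ (the \emph{bubble-up} of $v$ toward its target cell) is legal, meaning at each step the relevant pair is in ``$i+1$ strictly above $i$'' position so that $\pi_i$ acts as $s_i$ and never annihilates. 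This is the reverse of Steps~1--4 in the proof of Proposition~\ref{prop:bot-elt}.

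The main obstacle, and the place needing genuine care, is verifying that no step returns $0$: I must show the pair being swapped is never right-adjacent (the annihilation case of \eqref{eqn:defn-pi(T)}), and that the swap preserves membership in $\SIT(\alpha)$ (row-strictness and first-column-increase). The invariant that makes this work is that when I am about to place $v$, everything smaller is already in its final superstandard position occupying an initial segment of the reading order; hence the cell targeted for $v$ is empty of any value $< v$, its leftward and downward neighbors hold smaller (correct) values, and $v$ can only be displaced upward, never landing right-adjacent to $v-1$. I would establish this as a loop invariant and check the two descent-type cases of Lemma~\ref{lem:prelim} to rule out annihilation. Once correctness of each swap is confirmed, termination is immediate because the number of cells agreeing with $S^{row}_\alpha$ strictly increases at the end of each inner loop, exactly as in Proposition~\ref{prop:bot-elt}.

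Finally, having produced a saturated chain from $T$ up to $S^{row}_\alpha$ for every $T \ne S^{row}_\alpha$, I conclude that $T \poRI S^{row}_\alpha$ for all $T$, so $S^{row}_\alpha$ is the unique maximal element; uniqueness follows since any maximal element must be comparable upward to $S^{row}_\alpha$ yet maximality forbids a strict ascent, forcing equality. This parallels the deduction of minimality in Proposition~\ref{prop:bot-elt} and completes the argument.
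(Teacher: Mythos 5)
Your proposal has a genuine gap, and it lies in the central mechanical step. You assert that at each bubble step ``the relevant pair is in `$i+1$ strictly above $i$' position so that $\pi_i$ acts as $s_i$.'' This is backwards for the action that defines the poset: by \eqref{eqn:defn-pi(T)}, when $i+1$ is strictly above $i$ the operator $\pi_i^{\rdI}$ \emph{fixes} $T$; the swap $s_i$ occurs only when $i+1$ is strictly \emph{below} $i$. So every step of your proposed bubble-up leaves the tableau unchanged and nothing ever moves. (If you instead meant $\pi_i^{\dI}$, which does swap in that configuration, then by \eqref{eqn:dIcover} and Lemma~\ref{lem:sameposet} you would be descending in $P\rdI_\alpha$, not ascending toward the claimed maximum.) There is a second, independent problem: even with the direction corrected, starting the bubble with $\pi_{v-1}$ swaps $v-1$ and $v$, displacing the entry $v-1$ that your loop invariant says is already correctly placed. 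Concretely, for $\alpha=(2,2,3)$ and $T=S^0_{223}$ the first non-trivial target is $v=2$ in cell $(1,2)$, while $2$ sits at $(2,1)$ (above but to the \emph{left} of its target, contrary to your ``weakly above and to the right''); here $\pi_1^{\rdI}(T)=T$ and $\pi_1^{\dI}(T)=0$, so your algorithm stalls at the very first step.

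The paper's proof runs in the opposite bookkeeping order and avoids all of this: it processes rows top to bottom and, within the topmost row that disagrees with $S^{row}_\alpha$, takes the \emph{largest} entry $x$ differing from its counterpart. Because everything larger is already in place (above or to the right), $x+1$ necessarily lies in a strictly lower row, so $\pi_x=s_x$ is a legitimate ascent that increases the entry in that cell from $x$ to $x+1$; iterating right to left, top to bottom, drives each cell up to its target value. If you want to salvage a bottom-up version, the workable move is not to ferry the value $v$ to its cell but to \emph{decrease the occupant} $w$ of the target cell by applying $\pi_{w-1},\pi_{w-2},\dots,\pi_v$ (each time the larger value sits strictly below the smaller one, so the swap is valid), which is essentially the reverse of Step~1--2 of Proposition~\ref{prop:bot-elt}; but that is a different algorithm from the one you wrote down and still requires the non-annihilation checks you only sketch.
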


We illustrate the straightening algorithm with an example.
\begin{example}\label{ex:rdI-top-straightening}
We have 
 $ S^{row}_{223}=\tableau{5 &6 &7\\3 &4\\1&2}.$ Let  $T=T_0=\tableau{3 &4 &7\\2 &6\\1 &5}$. Start with the largest entry in the top-most row of $T$ that differs from its counterpart in $ S^{row}_{223},$ and repeat:
 \[T_0\stackrel{\pi_4}{\longrightarrow} \tableau{3 &\bf{5} &7\\2 &6\\1 &4}\!\!\!\!=T_1
 \stackrel{\pi_5}{\longrightarrow}\tableau{3 &\bf{6} &7\\2 &5\\1 &4}\!\!\!\!=T_2
 \stackrel{\pi_3}{\longrightarrow} \tableau{\bf{4} &6 &7\\2 &5\\1 &3}\!\!\!\!=T_3
 \stackrel{\pi_4}{\longrightarrow} \tableau{\bf{5} &6 &7\\2 &4\\1 &3}\!\!\!\!=T_4.
 \]
Once the top row coincides in both, move down to the next row where there is disagreement, again starting with the largest such element in that row:
\[T_4   \stackrel{\pi_2}{\longrightarrow} \tableau{5 &6 &7\\\bf{3} &4\\1 &2}=S^{row}_\alpha.\]
Hence $\pi_2\pi_4\pi_3\pi_5\pi_4(T)=S^{row}_\alpha.$
\end{example}

\begin{proof}[Proof of Proposition~\ref{prop:top-elt}]
In this case the algorithm is simpler to describe. We start with the top-most row of $T$ which differs from $ S^{row}_\alpha,$ and find the \textit{largest} entry, say $x$, in that row which differs from its counterpart in $ S^{row}_\alpha.$ Then necessarily $x+1$ is in a lower row of $T=T_0,$ and hence $T_1=\pi_x(T_0)$ for the tableau $T_1\in \SIT(\alpha)$ obtained from $T$ by switching $x, x+1.$  We continue in this manner along each row, top to bottom, right to left.  Note that at the start of a new row $j,$ the tableau $T_k$ agrees with $ S^{row}_\alpha$ for all entries in the complement of $[\alpha_1+\cdots+\alpha_j].$

It is clear that this algorithm terminates in the tableau $ S^{row}_\alpha$, producing a saturated chain in the poset from $T$ to $ S^{row}_\alpha.$
\end{proof}

In view of Lemma~\ref{lem:sameposet}, and combined with the filtration in \cite{BBSSZ2015}, analogous to our filtration of Lemma~\ref{lem:hnmodule}, we recover Theorem~\ref{thm:BBSSZ2dualImm}, specifically the results of \cite[Theorem 3.5, Lemma 3.10]{BBSSZ2015}:

\begin{theorem}\label{thm:BBSSZ2-cyclic}\cite[Lemma 3.10]{BBSSZ2015} The module $\mathcal{W}_\alpha$ whose  quasisymmetric characteristic is equal to $\dI_\alpha,$ is cyclically  generated by the standard immaculate tableau $ S^{row}_\alpha.$
\end{theorem}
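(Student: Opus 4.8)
The plan is to exploit the poset duality established in Lemma~\ref{lem:sameposet} together with the straightening algorithm of Proposition~\ref{prop:top-elt}. The essential observation is that the two 0-Hecke actions traverse the common immaculate Hecke poset in \emph{opposite} directions: by Equation~\eqref{eq:inv} the $\rdI$-action raises elements (increasing inversions), whereas the cover relation \eqref{eqn:dIcover} shows that the $\dI$-action lowers them. Concretely, for a covering pair in which $S$ lies below $T$, Lemma~\ref{lem:sameposet} supplies \emph{simultaneously} $T = \pi_i^{\rdI}(S)$ and $S = \pi_i^{\dI}(T)$, governed by the \emph{same} generator index $i$. This is the crux that lets us convert a $\rdI$-straightening chain into a $\dI$-generation of the whole module.

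First I would record that, by Proposition~\ref{prop:top-elt}, $S^{row}_\alpha$ is the unique maximal element of $P\rdI_\alpha$, hence also the unique maximal element of the isomorphic poset $P\dI_\alpha$ by Lemma~\ref{lem:sameposet}. Next I would fix an arbitrary $T\in\SIT(\alpha)$ with $T\neq S^{row}_\alpha$ and apply Proposition~\ref{prop:top-elt} to obtain a saturated chain
\[
T = T_0 \poRIcover T_1 \poRIcover \cdots \poRIcover T_r = S^{row}_\alpha,
\qquad \pi_{j_i}^{\rdI}(T_{i-1}) = T_i.
\]
Applying Lemma~\ref{lem:sameposet} to each cover relation, each step is simultaneously a $\dI$-cover $T_{i-1}\poIcover T_i$, and definition \eqref{eqn:dIcover} then yields $T_{i-1} = \pi_{j_i}^{\dI}(T_i)$ for $1\le i\le r$.

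Finally, reading the chain from the top element downward and composing these identities, I obtain
\[
T = T_0 = \pi_{j_1}^{\dI}\pi_{j_2}^{\dI}\cdots\pi_{j_r}^{\dI}(S^{row}_\alpha),
\]
exhibiting $T$ as an element of the cyclic $\hn$-submodule of $\mathcal{W}_\alpha$ generated by $S^{row}_\alpha$ under the $\dI$-action. Since $T$ was an arbitrary standard immaculate tableau, this submodule contains every basis vector of $\mathcal{W}_\alpha$, and is therefore all of $\mathcal{W}_\alpha$; that is, $\mathcal{W}_\alpha$ is cyclically generated by $S^{row}_\alpha$.

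The main obstacle is purely one of orientation-bookkeeping rather than genuine difficulty: one must be careful that the \emph{same} index $i$ governs both the $\rdI$-raising $T=\pi_i^{\rdI}(S)$ and the $\dI$-lowering $S=\pi_i^{\dI}(T)$ at each covering step (which is precisely the content of the chain of equivalences proving Lemma~\ref{lem:sameposet}), and that the $\dI$-operators are composed in the \emph{reverse} order from the order in which the $\rdI$-straightening of Proposition~\ref{prop:top-elt} produces them. Once these conventions are tracked correctly, the argument is immediate and requires no new combinatorics.
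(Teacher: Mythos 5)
Your argument is correct and is essentially the paper's own proof: the paper likewise deduces this theorem by combining the straightening algorithm of Proposition~\ref{prop:top-elt} (which produces a saturated $\rdI$-chain from any $T$ up to the unique top element $S^{row}_\alpha$) with Lemma~\ref{lem:sameposet} (which converts each cover $T_{i-1}\poRIcover T_i$ realized by $\pi_{j_i}^{\rdI}$ into the identity $T_{i-1}=\pi_{j_i}^{\dI}(T_i)$ with the same index), and then composes the $\dI$-operators in reverse order. Your bookkeeping of the common generator index and of the reversed order of composition matches what the paper intends.
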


Comparing the two filtrations which give the $\hn$-modules, we may summarise the situation as follows:

We may take the  total order on the poset $P\dI_\alpha\simeq P\rdI_\alpha$ to be  the same in both cases, thanks to Lemma~\ref{lem:sameposet}:
\[S^0_\alpha= T_1\preccurlyeq^t T_2 \preccurlyeq^t\dots \preccurlyeq^t T_m=S^{row}_\alpha,\]
where $m=|\SIT(\alpha)|.$

For $\mathcal{W}_\alpha,$ in \cite{BBSSZ2015} the authors use 
the filtration  
\[(0)\subset \mathrm{span}([S^0_\alpha, T_1])
\subset \dots \subset 
\mathrm{span}([S^0_\alpha, T_i])\subset \dots \subset \mathrm{span}([S^0_\alpha, S^{row}_\alpha]),\]
while for $\mathcal{V}_\alpha,$
our filtration (see Lemma~\ref{lem:hnmodule}) is
\[ (0)\subset\mathrm{span}([T_m,S^{row}_\alpha]) 
\subset\dots\subset \mathrm{span}([T_i,S^{row}_\alpha])\subset \dots \subset 
\mathrm{span}([S^0_\alpha, S^{row}_\alpha]).\]
\begin{remark}\label{rem:poset-rank}
The poset $P\rdI_\alpha$ is ranked by the function $\inv$. By computing the difference $\inv(S^{row}_\alpha)-\inv(S^0_\alpha)$, we see that 
its rank is given by 
\[\binom{n}{2}+\binom{\ell}{3}-\sum_{i=1}^{\ell} \binom{\alpha_{i}+i-1}{2}\]
\end{remark}
Returning to the row-strict dual immaculate functions, our final task in this section is to show that 
the cyclic module $\mathcal{V}_\alpha$ generated by $S^0_\alpha$ is in fact indecomposable.
Since in any ring $R$ with unity, if $e$ is an idempotent we have 
the decomposition $R=eR\oplus (1-e)R,$ to show indecomposability, it is enough to show that if $f$ is an idempotent endomorphism of the module $V,$ then $f=0$ or $f$ is  the identity \cite[Proposition 3.1]{Jacobson1989}.

\begin{theorem}\label{thm:Indecomp} Let $\mathcal{V}_\alpha$ be the cyclic $\hn$-module generated by $S^0_\alpha$. Then $\mathcal{V}_\alpha$ is indecomposable, and $\chr(\mathcal{V}_\alpha)=\rdI_\alpha$.
\end{theorem}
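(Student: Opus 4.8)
The plan is to use the criterion recalled just before the statement (\cite[Proposition 3.1]{Jacobson1989}): $\mathcal{V}_\alpha$ is indecomposable as soon as its only idempotent endomorphisms are $0$ and the identity. So I would take an idempotent $\hn$-endomorphism $f$ of $\mathcal{V}_\alpha$ and show $f=0$ or $f=\mathrm{id}$. Since $\mathcal{V}_\alpha$ is cyclic with generator $S^0_\alpha$ (Theorem~\ref{thm:cyclic-rdI}), $f$ is determined by the single vector $f(S^0_\alpha)$, which I expand in the tableau basis as $f(S^0_\alpha)=\sum_{T\in\SIT(\alpha)} c_T\,T$. As $S^0_\alpha$ is the unique minimal element of the immaculate Hecke poset (Proposition~\ref{prop:bot-elt}), every $T$ appearing here satisfies $S^0_\alpha \poRI T$.

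The first step is to record the triangularity of the action: by~\eqref{eqn:defn-pi(T)} each $\pi_i$ sends a basis tableau to itself, to $0$, or to a strictly higher tableau in inversion number (Equation~\eqref{eq:inv}), so any product $\pi_\sigma$ sends a tableau $U$ either to $0$ or to a tableau $\succeq U$; nothing is ever pushed below the global minimum $S^0_\alpha$. Writing $T=\pi_{\sigma_T}(S^0_\alpha)$ for $T\succ S^0_\alpha$, this shows that the coefficient of $S^0_\alpha$ in $f(T)=\pi_{\sigma_T}f(S^0_\alpha)$ is $0$. Applying $f=f^2$ to the generator and comparing the coefficient of $S^0_\alpha$ on both sides then yields $c_{S^0_\alpha}=c_{S^0_\alpha}^2$, hence $c_{S^0_\alpha}\in\{0,1\}$. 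Replacing $f$ by the idempotent $\mathrm{id}-f$ if necessary (whose generator-coefficient is $1-c_{S^0_\alpha}$), I may assume $c_{S^0_\alpha}=0$, and it suffices to prove that $c_{S^0_\alpha}=0$ forces every $c_T$ to vanish, i.e. $f=0$.

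For this I would exploit that $f$ is a homomorphism, so $h\cdot f(S^0_\alpha)=f(h\cdot S^0_\alpha)=0$ for every $h\in\hn$ that annihilates $S^0_\alpha$. The idea is to eliminate the $c_T$ one at a time, ordered by inversion number (a linear extension of the poset), by producing for each surviving $T\neq S^0_\alpha$ an element $h_T\in\hn$ with $h_T(S^0_\alpha)=0$ but $h_T(T)\neq0$, chosen so that $h_T$ does not revive the other surviving support tableaux; then $0=h_T f(S^0_\alpha)=c_T\,h_T(T)$ gives $c_T=0$. The building blocks of $h_T$ are read off from the saturated chain $S^0_\alpha\poRI T$ furnished by the straightening algorithm of Proposition~\ref{prop:bot-elt}: following that chain one arrives, after an extra generator, at a configuration in which a right-adjacent pair $j,j+1$ appears and kills $S^0_\alpha$ (cf. Lemma~\ref{lem:bot-elt-facts}), while the same word applied to $T$ performs only fixes and raises and so is nonzero. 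The uniqueness of the cover-generator (Lemma~\ref{lem:uniqueness1}) keeps the relevant images distinct basis tableaux, which is what prevents cancellation and lets the inversion grading control interference.

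The main obstacle is precisely this last combinatorial construction: one must exhibit, uniformly in $T\succ S^0_\alpha$, a word that annihilates the generator but not $T$, and verify that it does not resurrect any other tableau in the support, so that the peeling induction actually closes. This is where the explicit shape of $S^0_\alpha$ (Definition~\ref{def:bot-elt}, Lemma~\ref{lem:bot-elt-facts}) and careful bookkeeping along the straightening chain are indispensable, and it is exactly the part demanding the ``considerable technical modifications'' of the arguments of \cite{TvW2015} and \cite{BBSSZ2015}. Once all $c_T$ are shown to vanish we get $f(S^0_\alpha)=0$, hence $f=0$ by cyclicity; combined with the reduction above, the only idempotents are $0$ and $\mathrm{id}$, and therefore $\mathcal{V}_\alpha$ is indecomposable.
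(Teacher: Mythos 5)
Your overall strategy is the paper's strategy: reduce to showing that an idempotent endomorphism $f$ of the cyclic module is $0$ or the identity, expand $f(S^0_\alpha)$ in the tableau basis, and kill the off-generator coefficients by producing, for each support tableau $T\ne S^0_\alpha$, a word $\pi_\sigma$ with $\pi_\sigma(S^0_\alpha)=0$ but $\pi_\sigma(T)\ne 0$, using the rank (inversion) grading plus an injectivity statement to rule out cancellation. Your preliminary observations (triangularity of the action, $c_{S^0_\alpha}\in\{0,1\}$, the reduction via $\mathrm{id}-f$) are correct, and the last of these is a small simplification the paper does not bother with, since in its argument the $S^0_\alpha$-term drops out automatically once $\pi_\sigma(S^0_\alpha)=0$.

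However, there is a concrete gap: you rely on a \emph{single} mechanism (annihilating words) to eliminate every $c_T$, whereas the paper needs two distinct mechanisms, and the one you omit is essential. The first and indispensable step in the paper is Lemma~\ref{lem:indecomplemma1}: if $T$ has a descent $j\in\Des_{\rdI}(T)\setminus\Des_{\rdI}(S^0_\alpha)$ (equivalently $j\in[\ell-1]$), then $\pi_j$ \emph{fixes} $S^0_\alpha$ but does not fix $T$, so by Lemma~\ref{lem:indecomplemma0} the coefficient $c_T$ vanishes; this restricts the support of $f(S^0_\alpha)$ to $\SIT^*(\alpha)$, the tableaux whose first column is exactly $1,\dots,\ell$. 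Only after this reduction does the paper construct annihilating words, and its construction (Lemmas~\ref{lem:indecomplemma4-a}--\ref{lem:indecomplemma4}) is carried out \emph{only} for $P\in\SIT^*(\alpha)$: it hinges on comparing where $\ell+1$ sits in $P$ versus $S^0_\alpha$ given that the two first columns agree. For a tableau whose first column differs from $[\ell]$, the distinguishing feature is a descent in $[\ell-1]$, and generators $\pi_j$ with $j\in[\ell-1]$ fix $S^0_\alpha$ rather than annihilate it, so it is not at all clear that an annihilating word separating such a $T$ from $S^0_\alpha$ exists; your plan as stated does not address this. Two smaller points: the injectivity you need to prevent interference is Lemma~\ref{lem:injectivity} (about rank-preserving words $\pi_\sigma$), not Lemma~\ref{lem:uniqueness1} (which concerns two different single generators acting on the same tableau); and the ``peeling one at a time'' should be replaced by the paper's cleaner argument of taking a support tableau $\hat T$ of \emph{maximal} rank and using the exact rank increment $r+\mathrm{rank}(\hat T)$ from Lemma~\ref{lem:indecomplemma4}(2) to show no other support tableau can map to $\pi_\sigma(\hat T)$.
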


The proof of this theorem will require a series of technical lemmas.

\begin{lemma}\label{lem:indecomplemma0}\cite[Proof of Theorem 3.12]{BBSSZ2015} Let $P\in \SIT(\alpha)$ and let $j\in [n-1]$ such that $\pi_j(P)\ne P.$ Then $P$ cannot equal $\pi_j(T)$ for any tableau $T\in \SIT(\alpha).$
\end{lemma}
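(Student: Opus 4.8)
The plan is to deduce the statement directly from the idempotency of the generators. Recall that by Theorem~\ref{thm:Hecke-action-RSImm} the operators $\pi_j=\pi_j^{\rdI}$ satisfy $\pi_j^2=\pi_j$ as endomorphisms of $\mathcal{V}_\alpha$. I would argue by contradiction: suppose some $T\in\SIT(\alpha)$ satisfies $\pi_j(T)=P$. Since $P$ is a genuine standard immaculate tableau it is a nonzero basis vector, so this forces $\pi_j(T)\neq 0$, ruling out the middle (annihilating) case of \eqref{eqn:defn-pi(T)} for $T$. Applying $\pi_j$ to both sides and using idempotency gives
\[
\pi_j(P)=\pi_j\bigl(\pi_j(T)\bigr)=\pi_j^2(T)=\pi_j(T)=P,
\]
which contradicts the hypothesis $\pi_j(P)\neq P$. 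Hence no such $T$ can exist, which is exactly the claim.

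As an independent check at the combinatorial level, I would unwind the explicit action \eqref{eqn:defn-pi(T)}. The only nonzero outputs of $\pi_j$ on a tableau $T$ are $T$ itself, occurring exactly when $j+1$ lies strictly above $j$, and $s_j(T)$, occurring exactly when $j+1$ lies strictly below $j$. If $P=\pi_j(T)=T$, then trivially $\pi_j(P)=\pi_j(T)=T=P$; if $P=\pi_j(T)=s_j(T)$, then interchanging $j$ and $j+1$ places $j+1$ strictly above $j$ in $P$, so once more $\pi_j$ fixes $P$. In either case $\pi_j(P)=P$, the desired contradiction. This recovers the conclusion through the trichotomy of the relative positions of $j$ and $j+1$ rather than through the abstract relation.

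I do not expect a genuine obstacle here: the entire content is carried by the relation $\pi_j^2=\pi_j$, and the only point demanding a word of care is that $P\in\SIT(\alpha)$ is nonzero, so the annihilating case $\pi_j(T)=0$ never produces $P$ and the idempotency identity applies verbatim.
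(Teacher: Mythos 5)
Your first paragraph is exactly the paper's proof: assume $P=\pi_j(T)$, apply $\pi_j$, and use idempotency to get $\pi_j(P)=\pi_j^2(T)=\pi_j(T)=P$, contradicting the hypothesis. The combinatorial double-check in your second paragraph is correct but redundant, since the idempotency argument already covers all cases.
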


\begin{proof} Immediate since otherwise we would have $P=\pi_j(T)\Rightarrow \pi_j(P)=\pi_j^2(T)=\pi_j(T)=P,$ 
a contradiction.
\end{proof}

\begin{definition}\label{def:SIT-1st-col} For a composition $\alpha\vDash n$ of length $\ell,$ denote by $\SIT^*(\alpha)$ the set of standard immaculate tableaux whose first (left-most) column consists of the integers 
$1,\ldots,\ell.$ Equivalently, $\SIT^*(\alpha)$ is the set of standard immaculate tableaux such that $\ell=\ell(\alpha)$ is in row $\ell,$ the top row of $\alpha.$
\end{definition}

\begin{lemma}\label{lem:indecomplemma1} Suppose $f$ is an idempotent  endomorphism of the $\hn$-module $\mathcal{V}_\alpha$, with 
\[f(S^0_\alpha)=\sum_{T\in \SIT(\alpha)} a_T T. \] 
Let $P\in \SIT(\alpha), P\ne S^0_\alpha,$ and 
suppose there is a $j$ such that $j\in \mathrm{Des}_{\rdI}(P)\setminus\Des_{\rdI}(S^0_\alpha).$
Then $a_P=0.$ 

As a consequence,  we have 
\[f(S^0_\alpha)=\sum_{T\in \SIT^*(\alpha)} a_T T.\]
\end{lemma}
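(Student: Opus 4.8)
The plan is to exploit that $f$, being an endomorphism of the $\hn$-module $\mathcal{V}_\alpha$, commutes with every generator $\pi_j$, combined with the fact recorded in \eqref{eqn:fix-bot-elt} that $\pi_j^{\rdI}$ fixes $S^0_\alpha$ exactly when $j\in[\ell-1]$, that is, exactly when $j\notin\Des_{\rdI}(S^0_\alpha)$.

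First I would observe that the hypothesis $j\in\Des_{\rdI}(P)\setminus\Des_{\rdI}(S^0_\alpha)$ forces $j\notin\Des_{\rdI}(S^0_\alpha)$, so $j\in[\ell-1]$ and hence $\pi_j(S^0_\alpha)=S^0_\alpha$ by \eqref{eqn:fix-bot-elt}. Applying $f$ and using $f\pi_j=\pi_jf$ yields
\[f(S^0_\alpha)=f\bigl(\pi_j(S^0_\alpha)\bigr)=\pi_j\bigl(f(S^0_\alpha)\bigr)=\sum_{T\in\SIT(\alpha)}a_T\,\pi_j(T).\]
Comparing the coefficient of $P$ on the two sides, the left-hand side contributes $a_P$, while the right-hand side contributes $\sum_{T:\,\pi_j(T)=P}a_T$. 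Since $j\in\Des_{\rdI}(P)$ we have $\pi_j(P)\neq P$, so Lemma~\ref{lem:indecomplemma0} guarantees that no $T\in\SIT(\alpha)$ satisfies $\pi_j(T)=P$; the right-hand coefficient is therefore an empty sum, giving $a_P=0$. This settles the first assertion, and it is essentially formal once \eqref{eqn:fix-bot-elt} and Lemma~\ref{lem:indecomplemma0} are invoked.

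For the consequence I would prove the combinatorial characterization that, for $P\in\SIT(\alpha)$,
\[P\in\SIT^*(\alpha)\iff [\ell-1]\cap\Des_{\rdI}(P)=\emptyset.\]
The forward direction is immediate: if the first column of $P$ reads $1,2,\dots,\ell$ from bottom to top, then for each $i\le\ell-1$ the entry $i+1$ sits directly above $i$, so no such $i$ is an $\rdI$-descent. For the converse I would argue by induction on $i$ that $[\ell-1]\cap\Des_{\rdI}(P)=\emptyset$ forces $1,2,\dots,i$ to occupy the bottom $i$ cells of column $1$. The inductive step is the one place requiring care, since one must rule out $i+1$ lying outside column $1$: as $i\notin\Des_{\rdI}(P)$, the entry $i+1$ lies strictly above $i$, and were it in a column $\ge 2$ the first-column entry of its (higher) row would be $<i+1$, yet all of $1,\dots,i$ already fill rows $1,\dots,i$ of column $1$, a contradiction. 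Granting this, any $P\notin\SIT^*(\alpha)$ admits some $j\in[\ell-1]$ with $j\in\Des_{\rdI}(P)$, i.e.\ $j\in\Des_{\rdI}(P)\setminus\Des_{\rdI}(S^0_\alpha)$ since $\Des_{\rdI}(S^0_\alpha)^c=[\ell-1]$; the first part then gives $a_P=0$, so only the tableaux of $\SIT^*(\alpha)$ survive in $f(S^0_\alpha)$, as claimed. The only genuine obstacle is this combinatorial identification of $\SIT^*(\alpha)$ by its descents; the representation-theoretic core reduces to the single coefficient comparison above.
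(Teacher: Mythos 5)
Your proof is correct and follows essentially the same route as the paper's: apply $f\pi_j=\pi_j f$ with $\pi_j$ fixing $S^0_\alpha$, then use Lemma~\ref{lem:indecomplemma0} to see that $P$ cannot appear in $\sum_T a_T\pi_j(T)$, and finally identify $\SIT^*(\alpha)$ as exactly the tableaux with no $\rdI$-descent in $[\ell-1]$. The only difference is that you spell out with an induction the combinatorial step the paper dismisses as ``easy to see,'' which is a harmless (and correct) elaboration.
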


\begin{proof} From Equation~\eqref{eqn:defn-pi(T)}, we have the following implications:
\[j\in \Des_{\rdI}(P) \Rightarrow \pi_j(P)\ne P,\] 
 and 
\[  j\notin \Des_{\rdI}(S^0_\alpha) \Rightarrow \pi_j(S^0_\alpha)=S^0_\alpha.   \]

Hence
\[f(S^0_\alpha)=f(\pi_j(S^0_\alpha))=\pi_j(f(S^0_\alpha))=\sum_T a_T \pi_j(T),\]
and by Lemma~\ref{lem:indecomplemma0}, this does not contain $P$ in its expansion. 
That is, 
$f(S^0_\alpha)=\sum_T a_T T$ does not contain $P$ in its expansion, and hence $a_P=0.$
It follows that $a_P=0$ unless $\mathrm{Des}_{\rdI}(P)$ is contained in $\Des_{\rdI}(S^0_\alpha),$ which equals the complement of $[\ell-1]$ from Equation~\eqref{eqn:fix-bot-elt}. In particular, $a_P=0$ unless $1, \ldots , \ell-1$ are NOT descents of $P.$ But then it is easy to see (since rows must increase left to right) that $1, \ldots , \ell$ must occupy the first column of $P.$  Hence we have 
\[a_P\ne 0 \Rightarrow 1, \ldots , \ell \text{ occupy the first column of } P,\]
which is the claim. 
\end{proof}

\begin{lemma}\label{lem:indecomplemma4-a} Let $\alpha\vDash n, \ \ell(\alpha)=\ell,$ and let $P \in \SIT^*(\alpha)$ be such that $\ell+1$ is right-adjacent to $i$ for some $1\le i\le \ell.$    Then 
either $\pi_\ell(P)=0$ or there is an $i\in[\ell-1]$ such that 
\[P\poRIcover P_1=\pi_\ell(P)\! \poRIcover P_2=\pi_{\ell-1}(P_1)\!\poRIcover
\cdots \!\poRIcover P_{\ell-i}=\pi_{i+1}(P_{\ell-i-1})\]
is a saturated chain in the poset $P\rdI_\alpha,$  with $i+1$ right-adjacent to $i$ in $P_{\ell-i}.$
In either case, there is a sequence of generators $\pi_{i_1},\ldots, \pi_{i_r}$  such that $\pi_{i_1}\cdots \pi_{i_r}(P)=0.$
\end{lemma}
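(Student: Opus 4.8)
The plan is to prove this by an explicit straightening that slides the first-column entries downward, one at a time. First I would locate the relevant cells: since $P\in\SIT^*(\alpha)$, the entries $1,\dots,\ell$ fill column $1$ with $m$ in row $m$ (Definition~\ref{def:SIT-1st-col}), so $\ell+1$ is the smallest entry lying outside column $1$. The hypothesis that $\ell+1$ is right-adjacent to $i$ therefore places $\ell+1$ in row $i$, column $2$. If $i=\ell$, then $\ell+1$ is right-adjacent to $\ell$ and $\pi_\ell(P)=0$ by \eqref{eqn:defn-pi(T)}, which is the first alternative; from here on I assume $i\le\ell-1$.

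Next I would build the chain inductively, setting $P_0=P$ and $P_k=\pi_{\ell-k+1}(P_{k-1})$ for $1\le k\le\ell-i$, and prove by induction the invariant that $P_k\in\SIT(\alpha)$ has column $1$ reading $1,\dots,\ell-k,\ell-k+2,\dots,\ell+1$ from bottom to top, while the entry $\ell-k+1$ occupies row $i$, column $2$. For the inductive step, in $P_{k-1}$ the entry $\ell-k+1$ lies in column $1$ at row $\ell-k+1$, while $\ell-k+2$ lies in row $i$, column $2$; since $1\le k\le\ell-i$ forces $\ell-k+1\ge i+1>i$, the entry $\ell-k+2$ is strictly below $\ell-k+1$, so $\pi_{\ell-k+1}$ swaps them and $P_{k-1}\poRIcover P_k$ is a genuine cover by \eqref{eqn:defn-pi(T)} and \eqref{eqn:rdIcover}. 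Verifying $P_k\in\SIT(\alpha)$ reduces to row-strictness after the swap: column $1$ stays increasing because a single entry is promoted past its lower neighbour, and every column-$2$ entry in a row other than $i$ is at least $\ell+2$ (as $\ell+1$ sits in row $i$), hence remains above all the shifted column-$1$ entries, none of which exceed $\ell+1$.

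Finally I would read off the termination. At $k=\ell-i$ the invariant puts the entry $\ell-(\ell-i)+1=i+1$ in row $i$, column $2$, right-adjacent to $i$ in column $1$; this is exactly the claimed final tableau $P_{\ell-i}=\pi_{i+1}(P_{\ell-i-1})$, and $\pi_i(P_{\ell-i})=0$ by \eqref{eqn:defn-pi(T)}. Composing the operators gives $\pi_i\pi_{i+1}\cdots\pi_\ell(P)=\pi_i(P_{\ell-i})=0$, so with $\sigma=s_is_{i+1}\cdots s_\ell$ (and $\sigma=s_\ell$ in the earlier case $i=\ell$) we obtain $\pi_\sigma(P)=0$, as required.

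I expect the only genuine obstacle to be the tableau-validity check in the inductive step: one must confirm that repeatedly promoting column-$1$ entries never breaks row-strictness. The clean reason is the minimality of $\ell+1$ among entries outside column $1$, which guarantees that all other column-$2$ entries strictly exceed every value that can appear in column $1$ during the slide; everything else is bookkeeping on cell positions and a direct appeal to the case analysis in \eqref{eqn:defn-pi(T)}.
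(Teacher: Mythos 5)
Your proposal is correct and follows essentially the same route as the paper: apply $\pi_\ell,\pi_{\ell-1},\dots,\pi_{i+1}$ in turn, each swap moving the next first-column entry into the cell $(i,2)$ until $i+1$ is right-adjacent to $i$, then $\pi_i$ annihilates the result, so $\pi_\sigma(P)=0$ for $\pi_\sigma=\pi_i\pi_{i+1}\cdots\pi_\ell$. The only difference is presentational — you make explicit the invariant on column $1$ and the row-strictness check that the paper treats as clear — so nothing further is needed.
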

\begin{proof} Since $\pi_\ell(P)=0 \iff \ell+1$ is right-adjacent to $\ell,$ we may assume $\ell+1$ is in row $i\le \ell-1.$ 
Thus $\ell+1$ is strictly below $\ell$, and hence $\pi_\ell(P)=s_\ell(P)$ swaps $\ell$ and $\ell+1.$ That is, in 
$P_1=\pi_\ell(P),$ $\ell$ is now right-adjacent to $i\le \ell-1$.  
If $i=\ell-1,$  the argument stops here.  If not, 
now  $\ell$ is strictly below $\ell-1,$ and hence applying $\pi_{\ell-1}$ produces a tableau $s_{\ell-1}(P_1)$ in which $\ell-1$ is now right-adjacent to $i\le \ell-2.$  Clearly we can continue this process until $i+1$ becomes right-adjacent to $i,$ with the appropriate replacements in the first column above the entry $i,$ 
producing the saturated chain as claimed.

Now applying $\pi_i$ results in 0, and hence 
 $\pi_i \pi_{i+1}\cdots\pi_\ell(P)=0$.
\end{proof}

\begin{example}\label{ex:indecomplemma4-a} Let $\ell=5$, $\alpha=23223.$ 
  We have the following saturated chain beginning at $P\in \SIT^*(\alpha)$:
\[P=\tableau{5 & 7 & 8\\ 4 & 10\\3 & 9\\2 &\bf{6} & 12\\1 & 11}\stackrel{\pi_5}{\longrightarrow} P_1=\tableau{6 & 7 & 8\\ 4 & 10\\3 & 9\\2 &\bf{5} & 12\\1 & 11}\stackrel{\pi_4}{\longrightarrow}
P_2=\tableau{6 & 7 & 8\\ 5 & 10\\3 & 9\\2 &\bf{4} & 12\\1 & 11}
\stackrel{\pi_3}{\longrightarrow}
P_3=\tableau{6 & 7 & 8\\ 5 & 10\\4 & 9\\2 &\bf{3} & 12\\1 & 11},\]
and $\pi_2\pi_3\pi_4\pi_5(P)=\pi_2(P_3)= 0.$
\end{example}

\begin{lemma}\label{lem:indecomplemma4-b} Let $\alpha\vDash n, \ \ell(\alpha)=\ell,$ and let $P,Q \in \SIT^*(\alpha)$ be such that $\ell+1$ is 
\begin{itemize}
\item right-adjacent to $i$ in $P$ for some $2\le i\le \ell,$
\item right-adjacent to $j$ in $Q$ for some $1\le j\le i-1.$ 
\end{itemize}
Let $\hat\pi$ denote the sequence of generators $\pi_i \pi_{i+1}\pi_{i+2}\cdots\pi_\ell.$ Then 
$\hat\pi(P)=0, \hat\pi(Q)\ne 0,$ and 
$\mathrm{rank}(\hat\pi(Q))=(\ell-i+1)+\mathrm{rank}(Q).$  
\end{lemma}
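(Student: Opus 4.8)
The plan is to treat the two tableaux separately, since the first assertion is essentially already available while the real work lies with $Q$.

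For $P$ I would simply invoke Lemma~\ref{lem:indecomplemma4-a}. Since $\ell+1$ is right-adjacent to $i$ in $P\in\SIT^*(\alpha)$ with $2\le i\le\ell$, that lemma constructs precisely the saturated chain obtained by applying $\pi_\ell,\pi_{\ell-1},\ldots,\pi_{i+1}$ in turn, at the end of which $i+1$ is right-adjacent to $i$; the final generator $\pi_i$ then annihilates the tableau. As $\sigma=\pi_i\pi_{i+1}\cdots\pi_\ell$ is exactly this composite (remembering that $\pi_\ell$ acts first), we obtain $\pi_\sigma(P)=0$ with no further argument.

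For $Q$ I would trace the action of $\sigma$ explicitly, proceeding by induction on the number of generators applied. The claim to carry along is that after applying $\pi_\ell,\ldots,\pi_{k+1}$ to $Q$, the entry $k+1$ sits in cell $(j,2)$ while column one reads $1,\ldots,k,k+2,\ldots,\ell+1$ from bottom to top, the entry $j$ itself being undisturbed because $j<i\le k$. The base case $k=\ell$ is just the hypothesis on $Q$. Granting the inductive description, applying $\pi_k$ compares $k$ (in cell $(k,1)$) with $k+1$ (in cell $(j,2)$): because $j\le i-1<i\le k$, the entry $k+1$ lies strictly below $k$ and is not right-adjacent to it, so by the reformulated action \eqref{eqn:defn-pi(T)} together with Lemma~\ref{lem:prelim} this is a genuine swap $s_k$ yielding a standard immaculate tableau, and the description persists with $k$ replaced by $k-1$. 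Running this from $k=\ell$ down to $k=i$ shows that every one of the $\ell-i+1$ generators acts as a nontrivial swap, so in particular $\pi_\sigma(Q)\ne0$. The rank statement then reads off from Equation~\eqref{eq:inv}: each nontrivial swap raises the inversion number, hence the poset rank, by exactly one, and there are $\ell-i+1$ of them, giving $\mathrm{rank}(\pi_\sigma(Q))=(\ell-i+1)+\mathrm{rank}(Q)$.

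The main point to get right, and where the hypothesis is really used, is the contrast between $P$ and $Q$. In both cases $\sigma$ launches a cascade of swaps that walks the ``traveling'' entry up the first column while parking a decreasing sequence of values in the second column of a fixed row. For $P$ this cascade inevitably brings $i+1$ right-adjacent to $i$, so the terminal $\pi_i$ kills the tableau; for $Q$ the strict inequality $j<i$ guarantees that the traveling entry $k+1$ stays strictly below $k$ at every stage and never becomes right-adjacent to it, which is exactly what stops the cascade from ever producing $0$. So the crux is to track the cell $(j,2)$ and verify that $j<i\le k$ forbids right-adjacency throughout; everything else is routine bookkeeping with the three cases of \eqref{eqn:defn-pi(T)}.
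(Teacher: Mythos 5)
Your proof is correct and follows essentially the same route as the paper: the claim for $P$ is delegated to Lemma~\ref{lem:indecomplemma4-a}, while for $Q$ one tracks the cascade of swaps, observing that the entry in cell $(j,2)$ is diminished by $1$ at each step and, since $j<i\le k$ throughout, never becomes right-adjacent to the column-one entry being compared, so every factor of $\pi_\sigma$ acts as a genuine $s_k$ and the rank statement follows from Equation~\eqref{eq:inv}. The only difference is presentational: you formalize as an explicit induction what the paper states as ``clear from the preceding proof.''
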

\begin{proof} This is clear from the preceding proof, since it takes 
$(\ell-i)$ consecutive applications of the $\pi_j$ to make 
$i+1$ right-adjacent to $i$ in $P;$  applying $\pi_\ell, \pi_{\ell-1}, \ldots, \pi_i$ successively to $Q$ changes the first column, and, in column 2, it changes only the entry in row $j$ right-adjacent to $j$, diminishing the latter by 1 at each step.  Hence after applying the  $(\ell-i+1)$ factors in the sequence  $\hat\pi$, the entry right-adjacent to $j$ in $Q$ is $i,$ and this is greater than $j$ by hypothesis.  This means that one final application of $\pi_i$ 
gives $\hat \pi(P)=0$, but $\hat \pi(Q)\ne 0.$ 

The statement about the ranks is clear since at each step we are applying a $\pi_k$ to the standard tableau to produce another standard tableau.
\end{proof}

\begin{example}\label{ex:indecomplemma4-b} Let $\ell=5$, $\alpha=23223,$ take $P$ as in Example~\ref{ex:indecomplemma4-a} and consider the 
   saturated chain beginning at $Q\in \SIT^*(\alpha)$:
\[Q=\tableau{5 & 7 & 8\\ 4 & 10\\3 & 9\\2 &11 & 12\\1 & \bf{6}}\stackrel{\pi_5}{\longrightarrow} Q_1=\tableau{6 & 7 & 8\\ 4 & 10\\3 & 9\\2 &11 & 12\\1 & \bf{5}}\stackrel{\pi_4}{\longrightarrow}
Q_2=\tableau{6 & 7 & 8\\ 5 & 10\\3 & 9\\2 &11 & 12\\1 & \bf{4}}
\stackrel{\pi_3}{\longrightarrow}
Q_3=\tableau{6 & 7 & 8\\ 5 & 10\\4 & 9\\2 &11 & 12\\1 & \bf{3}}\]
\[\stackrel{\pi_2}{\longrightarrow}
Q_4=\tableau{6 & 7 & 8\\ 5 & 10\\4 & 9\\3 &11 & 12\\1 & \bf{2}};
\qquad \text{ hence for the sequence }\hat\pi=\pi_2\pi_3\pi_4\pi_5,\  \hat\pi(Q)=Q_4\ne 0.\]
\end{example}

Before proceeding with the proof of the next lemma, it is instructive to work through two more examples.
\begin{example}\label{ex:indecomplemma4-c} Let $\alpha=33223.$ 
\[\text{Then } S^0_\alpha=\tableau{5 & 6 & 7\\4 &8\\3 &9\\2 &\bf{10} &11\\ 1 &12 &13};
\qquad \text{ we take } P=\tableau{5 & 6 & 7\\4 &8\\3 &9\\2 &12 &13\\ 1 &10 &11}.\]

Here $p=\bf{10}$ is right-adjacent to 2 in $S^0_\alpha$ and to 1 in $P$.

Applying $\pi_{p-1}, \pi_{p-2},\ldots$ in succession we have 
\[P \stackrel{\pi_9}{\longrightarrow} \tableau{5 & 6 & 7\\4 &8\\3 &10\\2 &12 &13\\ 1 &\bf{9} &11}
\stackrel{\pi_8}{\longrightarrow} \tableau{5 & 6 & 7\\4 &9\\3 &10\\2 &12 &13\\ 1 &\bf{8} &11}
\stackrel{\pi_7}{\longrightarrow} \tableau{5 & 6 & 8\\4 &9\\3 &10\\2 &12 &13\\ 1 &\bf{7} &11}
\stackrel{\pi_6}{\longrightarrow} \tableau{5 & 7 & 8\\4 &9\\3 &10\\2 &12 &13\\ 1 &\bf{6} &11},\]
so that $\ell+1=6$ ends up right-adjacent to 1 in $\pi_6\pi_7\pi_8\pi_9(P).$

However, it is clear that the same sequence applied to $S^0_\alpha$ results in 
\[S^0_\alpha \stackrel{\pi_9}{\longrightarrow} \tableau{5 & 6 & 7\\4 &8\\3 &10\\2 &\bf{9} &11\\ 1 &12 &13}
\stackrel{\pi_8}{\longrightarrow}\tableau{5 & 6 & 7\\4 &9\\3 &10\\2 &\bf{8} &11\\ 1 &12 &13}
\stackrel{\pi_7}{\longrightarrow}\tableau{5 & 6 & 8\\4 &9\\3 &10\\2 &\bf{7} &11\\ 1 &12 &13}
\stackrel{\pi_6}{\longrightarrow}\tableau{5 & 7 & 8\\4 &9\\3 &10\\2 &\bf{6} &11\\ 1 &12 &13}\]
\end{example}

Hence $\bf{6}$ ends up right-adjacent to 1 in $\pi_6\pi_7\pi_8\pi_9(P)$ but to $2$ in $\pi_6\pi_7\pi_8\pi_9(S^0_\alpha).$

\begin{example}\label{ex:indecomplemma4-d} This is the case $p-1=\ell+1$ in the lemma below.

Let $S^0_\alpha=\tableau{9\\8\\7\\6&10\\5 & \bf{11}\\ 4& 12\\3 \\ 2 &13\\ 1}$, $\ P=\tableau{9\\8\\7\\6&10\\5 & 13\\ 4& 12\\3 \\ 2 &\bf{11}\\ 1}.$\qquad  Then we have 
$P\stackrel{\pi_{10}}{\longrightarrow}\tableau{9\\8\\7\\6&11\\5 & 13\\ 4& 12\\3 \\ 2 &\bf{10}\\ 1}.$

Again, $\bf{10}$ ends up right-adjacent to 2 in $\pi_{10}(P)$, but is right-adjacent to a larger entry, 6, in $S^0_\alpha.$
\end{example}

We are now ready to precisely formulate and prove the facts illustrated by the above examples. 

\begin{lemma}\label{lem:indecomplemma4-c} Let $\alpha\vDash n, \ \ell(\alpha)=\ell,$ and let $P \in \SIT^*(\alpha), P\ne S^0_\alpha$ be such that for all $j,$ whenever $j, j+1$ are right-adjacent in $S^0_\alpha,$ they are also right-adjacent in $P.$  
Let $p$ be the smallest entry in $S^0_\alpha$, say in cell $x,$ which differs from the entry in cell $x$ of $P.$ Then 
\begin{enumerate}
    \item $p\ge \ell+1;$
    \item $S^0_\alpha$ and $P$ coincide in the top-most row, row $\ell;$
    \item  the cell $x$ is located in column 2 and row $i$ for some $i$ such that $\alpha_i\ge 2;$ in Cartesian coordinates, cell $x$ is in position $(i,2)$.  
    \item $S^0_\alpha$ and $P$ coincide in all rows above the row $i$ containing cell $x,$ and hence all entries less than $p$ occupy the same cells in both tableaux;
    \item the entry $p$ must occur in $P$ in column 2 and in a lower row $j$ where $j<i$ and $\alpha_j\ge 2;$ i.e. $p$ occupies the cell with coordinates $(j,2), j<i.$
    \item When $p\ge \ell+2,$ let $\tau$ be the permutation $\tau:= s_{\ell+1 }s_{\ell+2}\cdots s_{p-2}s_{p-1}$ (a reduced word of length $p-\ell-1$); then applying $\pi_\tau=\pi_{\ell+1 }\pi_{\ell+2}\cdots \pi_{p-2}\pi_{p-1}$ to $P$ (resp. $S^0_\alpha$) replaces $p$ with $\ell+1,$ by diminishing it by 1 at each step, making $\ell+1$ right-adjacent to $j$ in $P$ (resp. right-adjacent to $i$ in $S^0_\alpha$.)  When $p= \ell+1,$ $\tau$ is the identity since $\ell+1$ is already right-adjacent to $j$ in $P$ (resp. right-adjacent to $i$ in $S^0_\alpha$), with $i>j$; this is because $\ell+1$ is already in a lower row in $P$ than in $S^0_\alpha$. 
\end{enumerate}
\end{lemma}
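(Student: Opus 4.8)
The plan is to drive everything from one elementary observation combined with the explicit description of $S^0_\alpha$ in Definition~\ref{def:bot-elt}. The observation is that, by the choice of $p$ as the smallest value whose cell in $S^0_\alpha$ disagrees with $P$, \emph{every} value $v<p$ must occupy the identical cell in $P$ and in $S^0_\alpha$: if $v<p$ sat in a cell $c$ of $S^0_\alpha$ with $P(c)\neq v$, then $c$ would be a disagreeing cell carrying an $S^0_\alpha$-value smaller than $p$, contradicting minimality. This gives the second assertion of (4) at once, and since column $1$ of both tableaux is $1,\dots,\ell$ (because $P\in\SIT^*(\alpha)$) it forces $p\ge\ell+1$, which is (1).

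For (2) and (3) I would use the hypothesis that right-adjacent consecutive pairs of $S^0_\alpha$ remain right-adjacent in $P$. In $S^0_\alpha$ the top row reads $\ell,\ell+1,\dots,\ell+\alpha_\ell-1$ in consecutive cells, so all of its consecutive pairs are right-adjacent; since $\ell$ caps column $1$ in $P$, the adjacencies $(\ell,\ell+1),(\ell+1,\ell+2),\dots$ propagate left to right and pin the entire top row of $P$, giving (2) and hence $p\ge\ell+\alpha_\ell$. For (3), suppose the cell $x$ holding $p$ in $S^0_\alpha$ were in column $c\ge 3$, say in row $i$. Its left neighbour in $S^0_\alpha$ is then $p-1$ (consecutive within a tail), so $(p-1,p)$ is right-adjacent in $S^0_\alpha$ and therefore in $P$; but $p-1<p$ occupies the same cell $(i,c-1)$ in $P$, which forces $P(x)=p$, contradicting that $x$ disagrees. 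Hence $x$ lies in column $2$, in a row $i$ with $\alpha_i\ge2$, and $i<\ell$ by (2).

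The positional claims (4) and (5) then follow from the filling order. Because the tails of $S^0_\alpha$ are filled top to bottom and $x=(i,2)$ is the first tail cell of row $i$, every cell of rows $i+1,\dots,\ell$ carries a value $<p$; as all such values are placed identically, these rows coincide in $P$ and $S^0_\alpha$, completing (4). For (5), the cells occupied by $\{1,\dots,p-1\}$ are the same set in both tableaux, namely all of rows $i+1,\dots,\ell$ together with the column-$1$ entries of rows $1,\dots,i$; the only cells into which the next value $p$ can legally be added (keeping column $1$ increasing and rows increasing) are the column-$2$ cells $(r,2)$ with $r\le i$ and $\alpha_r\ge2$. In $S^0_\alpha$ this cell is $(i,2)$, whereas in $P$ we have $P(i,2)>p$, so $p$ must sit at $(j,2)$ for some $j<i$ with $\alpha_j\ge2$, which is (5).

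Finally (6) is a straightening computation. Setting $\tau=s_{\ell+1}\cdots s_{p-1}$, I would track the entry of the cell $(j,2)$ (resp.\ $(i,2)$) as the factors $\pi_{p-1},\pi_{p-2},\dots,\pi_{\ell+1}$ are applied in turn. By (4) each of the values $\ell+1,\dots,p-1$ lies in a row above $i$, hence above $j$, in both tableaux, so at every stage the value currently at $(j,2)$ is strictly below the value one smaller; by Equation~\eqref{eqn:defn-pi(T)} the relevant $\pi$ acts as the genuine transposition, lowering the entry of $(j,2)$ by exactly one, and by Lemma~\ref{lem:prelim} the result stays in $\SIT(\alpha)$. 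An induction over the $p-\ell-1$ steps, showing that the smaller value targeted at each step has not yet been disturbed, yields that the process terminates with $\ell+1$ in cell $(j,2)$ (resp.\ $(i,2)$), i.e.\ right-adjacent to $j$ (resp.\ $i$); when $p=\ell+1$ there is nothing to apply and $\ell+1$ already sits at $(j,2)$ with $j<i$ by (5). I expect this last part---the bookkeeping verifying that each $\pi_{p-t}$ really performs a swap and that the two tableaux march in lockstep---to be the most delicate point, while the conceptual crux is the column-$2$ localization in (3), where the right-adjacency hypothesis is indispensable.
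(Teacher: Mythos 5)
Your proof is correct and takes essentially the same route as the paper's: the minimality of $p$ pins all smaller entries to identical cells, the right-adjacency hypothesis forces the top row and the column-$2$ localization of the first disagreement, and item (6) is the same step-by-step induction applying $\pi_{p-1},\pi_{p-2},\dots,\pi_{\ell+1}$ to lower the entry in cell $(j,2)$ (resp. $(i,2)$) by one at each stage. Your argument for item (3) via the left neighbour $p-1$ is a slightly more explicit phrasing of the paper's observation that tail entries are consecutive from column $2$ onwards, but it is the same idea.
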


\begin{proof} Note first that the hypothesis includes the case when there is no $j$ such that $j, j+1$ are right-adjacent in $S^0_\alpha.$ This forces all parts of $\alpha$ to be at most 2. 

Item (1) is clear.   Item (2) follows by hypothesis, since all the entries of  $S^0_\alpha$ are consecutive  beginning in column 1 for the top row.  

Rows of length $\ge 3$ have consecutive entries from  column 2 onwards in both tableaux, by hypothesis.  The entry in column 2 of such a row is uniquely determined by any other entry in that row, and hence the least entry that differs must occur in column 2.  This is Item (3).

Item (4) is clear by minimality of the entry $p$ in cell $x,$ since all entries in $S^0_\alpha$ occurring to the left of cell $x,$ or higher than it,  must be less than $p.$ Also rows increase left to right, so in $P$, if the entry $p$ were not in column 2, there would be an entry $q,$ $\ell+1\le q<p,$ immediately to its left in its row.  But this contradicts the fact that all entries strictly less than $p$ occupy exactly the same cells in both tableaux.  This establishes Item (5).

Now note that when $p=\ell+1,$ Item (6) is immediate for $\tau$ equal to the identity permutation.  See Example~\ref{ex:indecomplemma4-d} with $P$ replaced with $\pi_{10}(P)$.  Thus we may assume $p\ge \ell+2.$

It is now clear from Example~\ref{ex:indecomplemma4-c} what happens; applying $\pi_{p-1}$ puts $p-1$ into the position formerly occupied by $p,$ in cell $(j,2)$, of $P$, $j<i,$ and moves $p$ UP to the cell formerly occupied by $p-1.$ But in $S^0_\alpha,$ it puts $p-1$ in cell $(i,2)$, and moves $p$ to the same cell as in $P$. This is by virtue of Item (4).

Similarly, applying $\pi_{p-2}$ next will move $p-2$ into the position $(j,2)$, and $p-1$ will now move UP.  In 
$S^0_\alpha,$  it puts $p-2$ into cell $(i,2)$ and moves $p-1$ into the same cell as in $P.$ 

In fact a comparison of the two tableaux after each application of $\pi_k$ shows that they coincide in all rows above the $i$th row and they differ in rows $i$ and $j$, in column 2.

The first step of our induction is depicted in Example~\ref{ex:indecomplemma4-e} below.  Subsequent steps are completely analogous, with $p$ diminished by 1 at each step.

Assume by induction that after applying $\pi_{p-k}$, $(p-k)$ was in cell $(j,2)$ of $P$ and in cell $(i,2)$ of $S^0_\alpha$, (recall $j<i$) and the two tableaux coincide in rows above row $i$. By induction hypothesis, $p-k-1$ occupies the same cell in each tableau, since it is in a row higher than row $i.$ Now apply $\pi_{p-k-1}.$  This swaps $p-k-1$ and $p-k$ in each tableau, putting $p-k-1$ into 
cell $(j,2)$ of $P$ and in cell $(i,2)$ of $S^0_\alpha$, and moving $p-k$ UP to the same cell in both tableaux.

In $P,$ cell $(j,2)$ changes $p$ to $\ell+1.$
In $S^0_\alpha,$ cell $(i,2)$ changes from $p$ to $\ell+1.$
We have thus  established Item (6). 
\end{proof}

\begin{example}\label{ex:indecomplemma4-e} 

\[\text{Let } S^0_\alpha=\tableau{\ell &\scriptstyle{\ell+1}\\ \ldots\\ \ldots\\ \ldots &\scriptstyle{p-1}\\ \ldots\\i & p\\
\ldots &\ldots\\
 j & \scriptstyle{y>p}\\
\ldots\\
\ldots &\ldots \\2\\1}\, ,\  
P=\tableau{\ell &\scriptstyle{\ell+1}\\  \ldots\\ \ldots\\ \ldots &\scriptstyle{p-1}\\ \ldots\\i &\scriptstyle{z>p}\\
\ldots &\ldots\\
 j & p\\
\ldots\\
\ldots &\ldots\\2\\1}\, .\ 
\text{ Then } \pi_{p-1}(P)=\tableau{\ell &\scriptstyle{\ell+1}\\  \ldots\\ \ldots\\ \ldots &\scriptstyle{p}\\ \ldots\\i 
& \scriptstyle{z>p}\\
\ldots &\ldots\\
 j & \scriptstyle{p-1}\\
\ldots\\
\ldots &\ldots\\2\\1}\, .\]

\end{example}

\begin{lemma}\label{lem:indecomplemma4} Let $P\ne S^0_\alpha,$ such that $P\in \SIT^*(\alpha).$    Then there is a sequence $\hat\pi$ of  generators   
$\pi_{{i_1}}\cdots \pi_{{i_r}}$ such that 
\begin{enumerate}
\item  $\hat\pi(S^0_\alpha)=0;$ 
\item $\hat\pi(P)$ is nonzero and has rank $r+\mathrm{rank}(P)$ in the poset $P\rdI_\alpha.$
\end{enumerate}
\end{lemma}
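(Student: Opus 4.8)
The plan is to isolate the smallest entry that is misplaced and then reduce to the two straightening lemmas, Lemma~\ref{lem:indecomplemma4-b} and Lemma~\ref{lem:indecomplemma4-c}. Let $p$ be the smallest integer occupying different cells in $P$ and in $S^0_\alpha$; this exists since $P\neq S^0_\alpha$, and $p\ge \ell+1$ because both tableaux lie in $\SIT^*(\alpha)$ and so agree on the first column $1,\dots,\ell$. First I would pin down the location of $p$. Since $1,\dots,p-1$ occupy identical cells in the two tableaux, and in $S^0_\alpha$ those cells consist of the whole first column together with a reading-order prefix of the cells in columns $\ge 2$ (filling row $\ell$ completely, then row $\ell-1$, and so on), the rows strictly above $r_0$ are completely filled, so $p$ sits in $S^0_\alpha$ in the next reading-order cell, say $x_0=(r_0,c_0)$ in Cartesian coordinates. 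In $P$ the cell holding $p$ must have its left neighbour already filled by an entry $<p$ (row-strictness); the only empty cells with a filled left neighbour are $x_0$ itself and the column-two cells $(r,2)$ with $r<r_0$. As $P$ differs from $S^0_\alpha$ at $p$, the entry $p$ therefore lies in $P$ in a cell $(j,2)$ with $j<r_0$. Write $i=r_0$, so $j<i$.

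With $p$ located, I would split according to whether $\pi_{p-1}(S^0_\alpha)=0$, i.e. whether $p-1$ and $p$ are right-adjacent in $S^0_\alpha$. This happens exactly when $c_0\ge 3$, or when $c_0=2$ and $p=\ell+1$. In this first case take $\sigma=s_{p-1}$. Then $\pi_{p-1}(S^0_\alpha)=0$ by \eqref{eqn:defn-pi(T)}, while in $P$ the entry $p-1$ still occupies its $S^0_\alpha$-cell, lying in a row strictly above row $j$; hence $p$ is strictly below $p-1$ in $P$, so $\pi_{p-1}(P)=s_{p-1}(P)\neq 0$, which by \eqref{eq:inv} has rank one more than $P$. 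This settles the case $r=1$.

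In the remaining case $c_0=2$ and $p\ge \ell+2$, which forces $i=r_0<\ell$, I would run the straightening of Lemma~\ref{lem:indecomplemma4-c}: applying $\tau=s_{\ell+1}\cdots s_{p-1}$ lowers $p$ down to $\ell+1$ one step at a time, acting by genuine (nonzero, non-fixing) swaps in both tableaux, and leaves $\ell+1$ right-adjacent to $i$ in $\pi_\tau(S^0_\alpha)$ and right-adjacent to $j$ in $\pi_\tau(P)$. Since $j<i$, Lemma~\ref{lem:indecomplemma4-b} applied to the pair $\pi_\tau(S^0_\alpha),\pi_\tau(P)$ with $\sigma'=s_i s_{i+1}\cdots s_\ell$ yields $\pi_{\sigma'}(\pi_\tau(S^0_\alpha))=0$ and $\pi_{\sigma'}(\pi_\tau(P))\neq 0$ with rank raised by $\ell-i+1$. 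Taking $\sigma=\sigma'\tau=s_i s_{i+1}\cdots s_{p-1}$, a single consecutive run of $r=p-i$ generators, the rank of $\pi_\sigma(P)$ exceeds that of $P$ by $(p-\ell-1)+(\ell-i+1)=p-i=r$, as required, while the final factor annihilates $S^0_\alpha$.

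The main obstacle is the location step: proving, purely from the shape $\alpha$ and row-strictness, that the misplaced entry $p$ must drop to column two of a strictly lower row of $P$. A delicate point is that this must hold for an \emph{arbitrary} $P\in\SIT^*(\alpha)$, whereas Lemma~\ref{lem:indecomplemma4-c} is phrased under the stronger hypothesis that every right-adjacency of $S^0_\alpha$ persists in $P$; I would check that its Items~(4)--(6), the only parts invoked here, depend only on the conclusion $c_0=2$ of its Item~(3), and that $c_0=2$ is precisely the standing assumption of this case rather than the full right-adjacency hypothesis. The second delicate point is the rank bookkeeping: one must verify that each of the $r$ generators of $\sigma$ acts on the evolving image of $P$ as a genuine swap, so that \eqref{eq:inv} accumulates to exactly $r$, and that the same word $\sigma$ reaches a right-adjacency in $S^0_\alpha$ at its last step so that $\pi_\sigma(S^0_\alpha)=0$.
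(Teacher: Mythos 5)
Your argument reaches the same two workhorses as the paper, Lemma~\ref{lem:indecomplemma4-b} and Lemma~\ref{lem:indecomplemma4-c}, but organizes the reduction differently. The paper splits on whether \emph{some} right-adjacent pair $j,j+1$ of $S^0_\alpha$ fails to be right-adjacent in $P$ (in which case $\sigma=s_j$ already works, by~\eqref{eqn:defn-pi(T)} and Lemma~\ref{lem:prelim}) versus the complementary situation, which is verbatim the hypothesis of Lemma~\ref{lem:indecomplemma4-c}. You instead split on the position of the smallest misplaced entry $p$, which obliges you to observe that only Items (4)--(6) of Lemma~\ref{lem:indecomplemma4-c} are invoked and that these rest solely on the locations of $p$ (at $(i,2)$ in $S^0_\alpha$ and $(j,2)$ in $P$ with $j<i$) rather than on the full right-adjacency hypothesis. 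That observation is correct, since the paper derives Items (4)--(6) from the minimality of $p$ alone, and your location argument for $p$ is sound; what your version buys is a single explicit word $\sigma=s_i\cdots s_{p-1}$ with transparent rank bookkeeping, at the cost of having to reopen Lemma~\ref{lem:indecomplemma4-c}.

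There is, however, a concrete error in your dichotomy. You claim $p-1$ and $p$ are right-adjacent in $S^0_\alpha$ exactly when $c_0\ge 3$, or when $c_0=2$ and $p=\ell+1$. When $c_0=2$ the left neighbour of $p$ in $S^0_\alpha$ is the column-one entry $r_0$, so right-adjacency forces $p-1=r_0$, i.e.\ $p=\ell+1$ \emph{and} $r_0=\ell$, equivalently $\alpha_\ell\ge 2$. If $\alpha_\ell=1$ then $p=\ell+1$ lies in a row $r_0<\ell$ while $p-1=\ell$ sits atop column one, so they are not adjacent, $\pi_{p-1}(S^0_\alpha)\ne 0$, and your Case A fails; yet this configuration is also excluded from your Case B, which you restrict to $p\ge\ell+2$. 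A concrete instance is $\alpha=(2,2,1)$ with $P=\tableau{3\\2&5\\1&4}$, where $p=4=\ell+1$, $c_0=2$, $r_0=2<3=\ell$. The repair is already contained in your own summary of Item (6): in this configuration $\tau$ is the identity and Lemma~\ref{lem:indecomplemma4-b} applies directly with $\sigma=s_{r_0}s_{r_0+1}\cdots s_\ell$. So your case split should be drawn as ``$p-1,p$ right-adjacent in $S^0_\alpha$'' versus not, allowing $\tau$ to be empty in the second case; with that adjustment the proof is complete.
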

\begin{proof} By hypothesis, $P$ and $S^0_\alpha$ have identical first columns.  

Suppose there is a $j\le n-1$ such that $j, j+1$ are right adjacent in $S^0_\alpha,$ but NOT right-adjacent in $P.$ 
Then from Equation~\eqref{eqn:defn-pi(T)}, $\pi_j(S^0_\alpha)=0$ but $\pi_j(P)\ne 0,$ so we are done.

Otherwise, whenever $j, j+1$ are right-adjacent in $S^0_\alpha,$ they are also right-adjacent in $P.$ Note that the case when $j$ is never adjacent to $j+1$ is included here.  But now Lemma~\ref{lem:indecomplemma4-c}, followed by Lemma~\ref{lem:indecomplemma4-b}, establish the claim.
\end{proof}

\begin{lemma}\label{lem:injectivity} Let $T_1, T_2\in \SIT(\alpha)$ and let
$\hat\pi$ denote the sequence of generators $\pi_{{i_1}}\cdots \pi_{{i_r}}$  
such that $\hat\pi^{\rdI}(T_1)=\hat\pi^{\rdI}(T_2)=U,$ and    $\mathrm{rank}(U)=r+\mathrm{rank}(T_1)=r+\mathrm{rank}(T_2).$ 
Then $T_1=T_2.$
\end{lemma}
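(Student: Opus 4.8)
The plan is to exploit the fact, recorded in Equation~\eqref{eq:inv} and the remarks preceding Proposition~\ref{prop:poset}, that the poset $P\rdI_\alpha$ is graded by the number of inversions. The hypothesis $\mathrm{rank}(U)=r+\mathrm{rank}(T_i)$ will then force every one of the $r$ generators in the product $\pi^{\rdI}_\sigma=\pi_{i_1}\cdots\pi_{i_r}$ to act, at each stage of the computation, by a \emph{genuine swap} (the $s_i(T)$ case of \eqref{eqn:defn-pi(T)}) rather than by fixing or annihilating the tableau. Once this is known, each step becomes invertible, and since the chains produced from $T_1$ and from $T_2$ use the \emph{same} index sequence and terminate at the \emph{same} tableau $U$, they must coincide at every stage, yielding $T_1=T_2$.

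First I would record the two chains explicitly. For $T_1$, set $W_0=T_1$ and $W_k=\pi^{\rdI}_{i_{r-k+1}}(W_{k-1})$ for $1\le k\le r$, so that $W_r=\pi^{\rdI}_\sigma(T_1)=U$; define the analogous chain $W'_0=T_2,\ldots,W'_r=U$. By the action \eqref{eqn:defn-pi(T)}, each step satisfies $W_k\in\{W_{k-1},\,0,\,s_{i_{r-k+1}}(W_{k-1})\}$. Since $U\in\SIT(\alpha)$ is nonzero, no step can produce $0$, for otherwise every later tableau, and hence $U$ itself, would be $0$. Thus each step either fixes the tableau, leaving $\ninv(\sigma(\cdot))$ unchanged, or is a genuine swap, raising the rank by exactly $1$ by Equation~\eqref{eq:inv}. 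As the total rank increase over the $r$ steps equals $\mathrm{rank}(U)-\mathrm{rank}(T_1)=r$ and each increment is $0$ or $1$, every increment must equal $1$; that is, $W_k=s_{i_{r-k+1}}(W_{k-1})\neq W_{k-1}$ for all $k$, and likewise for the $W'_k$.

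The second ingredient is invertibility. The value-swap $s_i$, interchanging the entries $i$ and $i+1$ in a filling, is an involution on fillings, so the relation $W_k=s_{i_{r-k+1}}(W_{k-1})$ rearranges to $W_{k-1}=s_{i_{r-k+1}}(W_k)$. Hence each $W_{k-1}$ is uniquely recovered from $W_k$ together with the index $i_{r-k+1}$. I would then finish with a downward induction on $k$: since $W_r=U=W'_r$ and both chains use the identical index sequence $i_1,\ldots,i_r$, the equality $W_k=W'_k$ propagates from $k=r$ down to $k=0$, giving in particular $T_1=W_0=W'_0=T_2$.

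I expect the only delicate point to be the rank bookkeeping of the second paragraph, namely confirming that the action \eqref{eqn:defn-pi(T)} offers exactly the three listed outcomes, that the ``fix'' outcome leaves the number of inversions unchanged, and that the ``swap'' outcome increases it by precisely $1$. All of this is already packaged in Equation~\eqref{eq:inv} and the observations preceding Proposition~\ref{prop:poset}, so it is a matter of assembling those facts correctly rather than proving anything new; the remainder of the argument is a purely formal inversion of a sequence of involutions.
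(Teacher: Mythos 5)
Your proof is correct and follows essentially the same route as the paper: the rank hypothesis forces every application of a generator to be a genuine swap, and then the chain is reversed step by step. The paper phrases the reversal via the identity $\pi^{\rdI}_k(S)=T \iff S=\pi^{\dI}_k(T)$ from Lemma~\ref{lem:sameposet} (so that $T_1=\pi^{\dI}_{\sigma^{-1}}(U)=T_2$), whereas you invert each step directly using the involution $s_j$; since a genuine $\dI$-swap is exactly $s_j$, these are the same inverse map, and your more explicit rank bookkeeping is a welcome elaboration of a point the paper leaves implicit.
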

\begin{proof} By hypothesis, each application of $\pi_{{i_j}}$ produces a tableau in $\SIT(\alpha)$.  The proof of Lemma~\ref{lem:sameposet} shows that when $S,T\in \SIT(\alpha)$, 
\[\pi^{\rdI}_k(S)=T \iff S=\pi^{\dI}_k(T).\]
Hence we have 
\[T_1=\pi^{\dI}_{{i_r}}\pi^{\dI}_{{i_{r-1}}}\cdots \pi^{\dI}_{{i_1}} (U)  =T_2,\]
as claimed.
\end{proof}

Now  the argument is nearly identical to \cite[Proof of Theorem 4.1]{BS2021}.

\begin{proof}[Proof of Theorem~\ref{thm:Indecomp}] From Lemma~\ref{lem:indecomplemma1}, we have  the equation 
\[f(S^0_\alpha) = \sum_{T\in \SIT^*(\alpha)} a_T T.\]
In the above sum, let $\hat T$ be a tableau of maximal rank in the poset $P\rdI_{\alpha}$ such that $a_{\hat{T}}\ne 0.$ Assume $\hat T\ne S^0_\alpha,$ and let $\hat\pi$ denote the sequence of generators $\pi_{i_1}\cdots \pi_{{i_r}}$  guaranteed by Lemma~\ref{lem:indecomplemma4} for the tableau $\hat T.$ 
 Let $\hat\pi(\hat T)=T'\ne 0.$  In particular, $\mathrm{rank}(T')=r+\mathrm{rank}(\hat T).$
 
We have 
\begin{equation}\label{eq:thm:indecomp2}0=f(\hat\pi(S^0_\alpha))=\hat\pi(f(S^0_\alpha))
 =\sum_{T\in \SIT^*(\alpha)} a_T\ \hat\pi(T).\end{equation}
 Suppose $a_T\ne 0$  and $\hat\pi(T)=T'=\hat\pi(\hat T).$ 

 Comparing ranks, we obtain \[r+\mathrm{rank}(\hat T)=\mathrm{rank}(T')=\mathrm{rank}(\hat\pi( T))\le r+\mathrm{rank}(T),\]
 the weak inequality being due to the fact that some $\pi_{j}$ can fix a tableau.
 This implies $\mathrm{rank}(\hat T)\le \mathrm{rank}( T).$ 
 If the ranks are unequal, however, and $a_T\ne 0,$ we have a contradiction to the maximality of $\mathrm{rank}(\hat T).$ 
 Hence $\mathrm{rank}(\hat T)= \mathrm{rank}( T).$ 
 
By Lemma~\ref{lem:injectivity}, this forces  $T=\hat T.$ 

 It follows that the coefficient of $T'=\hat\pi(\hat T)$ in the right-hand side of  Equation~\eqref{eq:thm:indecomp2} is $a_{\hat T},$ which must therefore be zero, a contradiction.

 Hence \[f(S^0_\alpha)= a_{S^0_\alpha}\ S^0_\alpha;\]
 since $f$ is idempotent, this implies $a_{S^0_\alpha}=0 \text{ or } 1$, finishing the proof.
\end{proof}
\begin{figure}[htb] \centering
		\scalebox{0.6}{			
			\begin{tikzpicture}
			\newcommand*{\xdist}{*3}
			\newcommand*{\ydist}{*2.2}
			\node (n0) at (0.00\xdist,0\ydist) 
			{ \textcolor{blue}{
				$S^0_{223} =
				\tableau{ 
				 3 & 4 & 5 \\ 
				 2 & 6 \\
                      1 & 7}  $ }
			}; 
			\node (n11) at (-1\xdist,1\ydist) 
			{\textcolor{blue}{
				$\tableau{ 
				 3 & 4 & 6 \\ 
				 2 & 5 \\
                      1 & 7}  $  }
			}; 
\node (n12) at (1\xdist,1\ydist) 
			{\textcolor{blue}{
				$\tableau{ 
				 3 & 4 & 5 \\ 
				 2 & 7 \\
                      1 & 6}  $ }
			}; 
			\node (n21) at (-1.5\xdist,2\ydist) 
			{\textcolor{blue}{
				$\tableau{ 
				 3 & 5 & 6 \\ 
				 2 & 4 \\
                      1 & 7}  $ }
			}; 
\node (n22) at (0\xdist,2\ydist) 
			{\textcolor{blue}{
				$\tableau{ 
				 3 & 4 & 7 \\ 
				 2 & 5 \\
                      1 & 6}  $ }
			}; 
\node (n23) at (1.5\xdist,2\ydist) 
			{ \textcolor{blue}{
				$\tableau{ 
				 3 & 4 & 6 \\ 
				 2 & 7 \\
                      1 & 5}  $ }
			}; 
						\node (n31) at (-2\xdist,3\ydist) 
			{
				$\tableau{ 
				 4 & 5 & 6 \\ 
				 2 & 3 \\
                      1 & 7}  $
			}; 
			\node (n32) at (-1\xdist,3\ydist) 
			{\textcolor{blue}{
				$\tableau{ 
				 3 & 5 & 7 \\ 
				 2 & 4 \\
                      1 & 6}  $ }
			}; 
\node (n33) at (1\xdist,3\ydist) 
			{\textcolor{blue}{
				$\tableau{ 
				 3 & 4 & 7 \\ 
				 2 & 6 \\
                      1 & 5}  $}
			}; 
\node (n34) at (2\xdist,3\ydist) 
			{\textcolor{blue}{
				$\tableau{ 
				 3 & 5 & 6\\ 
				 2 & 7 \\
                      1 & 4}  $ }
			}; 

						\node (n41) at (-2\xdist,4\ydist) 
			{
				$\tableau{ 
				 4 & 5 & 7 \\ 
				 2 & 3 \\
                      1 & 6}  $
			}; 
			\node (n42) at (-.5\xdist,4\ydist) 
			{\textcolor{blue}{
				$\tableau{ 
				 3 & 6 & 7 \\ 
				 2 & 4 \\
                      1 & 5}  $ }
			}; 
\node (n43) at (.5\xdist,4\ydist) 
			{\textcolor{blue}{
				$\tableau{ 
				 3 & 5 & 7 \\ 
				 2 & 6 \\
                      1 & 4}  $ }
			}; 
\node (n44) at (2\xdist,4\ydist) 
			{
				$\tableau{ 
				 4 & 5 & 6\\ 
				 2 & 7 \\
                      1 & 3}  $
			}; 

						\node (n51) at (-2\xdist,5\ydist) 
			{
				$\tableau{ 
				 4 & 6 & 7 \\ 
				 2 & 3 \\
                      1 & 5}  $
			}; 
			\node (n52) at (-1\xdist,5\ydist) 
			{\textcolor{magenta}{
				$S^{col}_{223}=\tableau{ 
				 3 & 6 & 7 \\ 
				 2 & 5 \\
                      1 & 4}  $}
			}; 
\node (n53) at (1\xdist,5\ydist) 
			{
				$\tableau{ 
				 4 & 5 & 7 \\ 
				 2 & 6 \\
                      1 & 3}  $
			}; 
\node (n54) at (2\xdist,5\ydist) 
			{
				$\tableau{ 
				 4 & 5 & 6\\ 
				 3 & 7 \\
                      1 & 2}  $
			}; 

	\node (n61) at (-1.5\xdist,6\ydist) 
			{
				$\tableau{ 
				 5 & 6 & 7 \\ 
				 2 & 3 \\
                      1 & 4}  $
			}; 
			\node (n62) at (0\xdist,6\ydist) 
			{\textcolor{red}{
				$\tableau{ 
				 4 & 6 & 7 \\ 
				 2 & 5 \\
                      1 & 3}  $}
			}; 
\node (n63) at (1.5\xdist,6\ydist) 
			{
				$\tableau{ 
				 4 & 5 & 7 \\ 
				 3 & 6 \\
                      1 & 2}  $
			}; 

\node (n71) at (-1\xdist,7\ydist) 
			{\textcolor{red}{
				$\tableau{ 
				 5 & 6 & 7 \\ 
				 2 & 4 \\
                      1 & 3}  $ }
			}; 
\node (n72) at (1\xdist,7\ydist) 
			{\textcolor{red}{
				$\tableau{ 
				  4 & 6 & 7 \\ 
				 3& 5 \\
                      1 & 2}  $ }
			}; 

			\node (n8) at (0\xdist,8\ydist) 
			{\textcolor{red}{
				$S^{row}_{223} =
				\tableau{ 
				 5 & 6 & 7 \\ 
				 3 & 4 \\
                      1 & 2}  $ }
			}; 
			
			\draw [thick, ->] (n0) -- (n11) node [near start, left] {$\pi_{5}$};
                 \draw [thick, ->] (n0) -- (n12) node [near start, right] {$\pi_{6}$};
			\draw [thick, ->] (n11) -- (n21) node [near start, left] {$\pi_{4}$}; 
                \draw [thick, ->] (n11) -- (n22) node [near start, right] {$\pi_{6}$};
			\draw [thick, ->] (n12) -- (n23) node [near start, right] {$\pi_{5}$}; 
			
                \draw [thick, ->] (n21) -- (n31) node [near start, left] {$\pi_{3}$}; 
                \draw [thick, ->] (n21) -- (n32) node [near start, right] {$\pi_{6}$}; 

                \draw [thick, ->] (n22) -- (n32) node [near start, left] {$\pi_{4}$}; 
                \draw [thick, ->] (n22) -- (n33) node [near start, right] {$\pi_{5}$}; 

                \draw [thick, ->] (n23) -- (n33) node [near start, left] {$\pi_{6}$}; 
                \draw [thick, ->] (n23) -- (n34) node [near start, right] {$\pi_{4}$}; 

\draw [thick, ->] (n31) -- (n41) node [near start, left] {$\pi_{6}$}; 
\draw [thick, ->] (n32) -- (n41) node [near start, left] {$\pi_{3}$}; 
\draw [thick, ->] (n32) -- (n42) node [near start, right] {$\pi_{5}$}; 

\draw [thick, ->] (n33) -- (n43) node [near start, left] {$\pi_{4}$}; 
\draw [thick, ->] (n34) -- (n43) node [near start, left] {$\pi_{6}$}; 
\draw [thick, ->] (n34) -- (n44) node [near start, right] {$\pi_{3}$}; 

\draw [thick, ->] (n41) -- (n51) node [near start, left] {$\pi_{5}$}; 
\draw [thick, ->] (n42) -- (n51) node [near start, left] {$\pi_{3}$}; 
\draw [thick, ->] (n42) -- (n52) node [near start, right] {$\pi_{4}$}; 

\draw [thick, ->] (n43) -- (n52) node [left, near start] {$\pi_{5}$};

\draw [thick, ->] (n43) -- (n53) node [near start, right] {$\pi_{3}$}; 
\draw [thick, ->] (n44) -- (n53) node [near start, left] {$\pi_{6}$}; 
\draw [thick, ->] (n44) -- (n54) node [near start, right] {$\pi_{2}$}; 

\draw [thick, ->] (n51) -- (n61) node [near start, left] {$\pi_{4}$}; 
\draw [thick, ->] (n52) -- (n62) node [near start, left] {$\pi_{3}$}; 
\draw [thick, ->] (n53) -- (n62) node [near start, right] {$\pi_{5}$};

\draw [thick, ->] (n53) -- (n63) node [near start, right] {$\pi_{2}$}; 
\draw [thick, ->] (n54) -- (n63) node [near start, left] {$\pi_{6}$}; 

\draw [thick, ->] (n61) -- (n71) node [near start, right] {$\pi_{3}$}; 
\draw [thick, ->] (n62) -- (n71) node [near start, left] {$\pi_{4}$}; 
\draw [thick, ->] (n62) -- (n72) node [near start, right] {$\pi_{2}$};
\draw [thick, ->] (n63) -- (n72) node [near start, left] {$\pi_{5}$}; 

  \draw [thick, ->] (n71) -- (n8) node [near start, right] {$\pi_{2}$}; 
\draw [thick, ->] (n72) -- (n8) node [near start, left] {$\pi_{4}$};               	
			\end{tikzpicture}	
}			
\caption{\small{The row-strict dual immaculate poset $P\rdI_{223}$; the red tableaux and $S^{col}_{223}$ are increasing along rows and up columns, so are in $\textcolor{red}{\SET(223)=[S^{col}_{223}, S^{row}_{223}]}$; the blue tableaux and $S^{col}_{223}=S^{row*}_{223}$ are the elements in $\textcolor{blue}{\SIT^*(223)=[S^0_{223}, S^{row*}_{223}]}$, with the smallest elements in the first column.}}\label{fig:Poset}	
\end{figure}
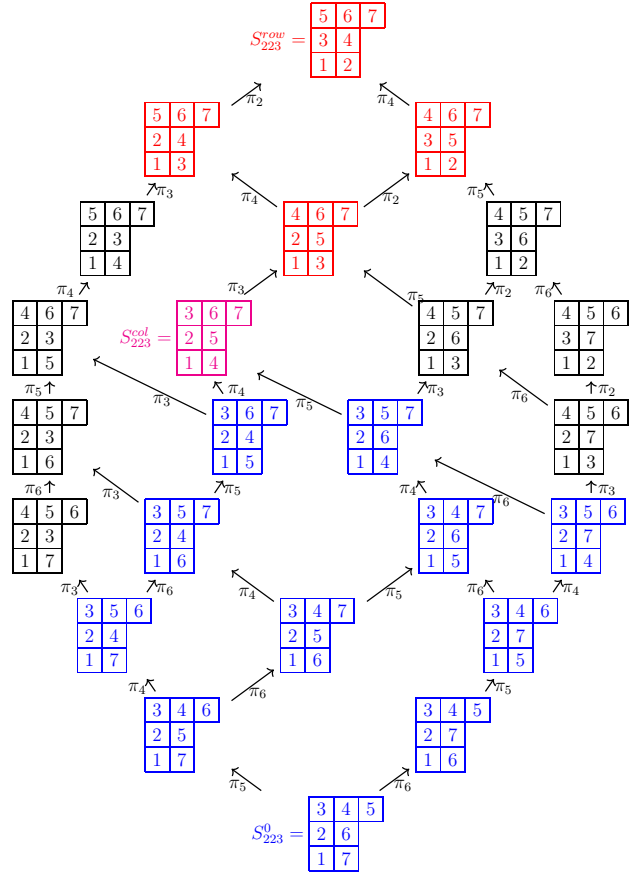

\section{A row-strict analogue for extended Schur functions}\label{sec:row-strict-ext}

This section is motivated by \cite{S2020} and a closer examination of the immaculate Hecke poset $P\rdI_\alpha.$ 
We consider again the $\hn$-module $\mathcal{V}_{\alpha}$ whose quasisymmetric characteristic is $\rdI_\alpha$.

\begin{definition}\label{def:SET} Let $\mathrm{SET}(\alpha)$ be the set of all standard immaculate tableaux of shape $\alpha$ in which ALL columns increase from bottom to top. Equivalently, $\mathrm{SET}(\alpha)$ is simply the set of all standard tableaux of shape $\alpha$ with strictly increasing rows, left to right, and strictly increasing columns, bottom to top. 
\end{definition}

Thus $\mathrm{SET}(\alpha)\subset \SIT(\alpha).$ When the composition $\alpha$ is in fact a partition of $n,$ the set $\mathrm{SET}(\alpha)$ coincides with the set of standard Young tableaux of shape $\alpha.$  In \cite{S2020}, the tableaux in $\SET(\alpha)$ are called 
\textit{s}tandard \textit{e}xtended \textit{t}ableaux.   

Let $\mathcal{Z}_\alpha:=\mathrm{span}\langle\SET(\alpha)\rangle.$ Figure~\ref{fig:Poset} shows the elements of $\SET(223)$ in red; notice that they form a closed interval $[S^{col}_\alpha, S^{row}_\alpha]$ of $P\rdI_{223}$  with respect to our $\rdI$-Hecke action defined 
in~\eqref{eqn:defn-RSdualImm-pi(T)}.  This is no accident.

\begin{lemma}\label{lem:rdIsubmodule} $\mathcal{Z}_\alpha$ is an $\hn$-submodule of $\mathcal{V}_{\alpha}$ for the $\rdI$-action. 
\end{lemma}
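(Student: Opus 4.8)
The plan is to show that $\mathcal{Z}_\alpha$ is closed under the action of each generator $\pi_i^{\rdI}$; since $\mathcal{Z}_\alpha$ is by definition a linear span of standard immaculate tableaux, this closure is exactly what is needed to conclude that $\mathcal{Z}_\alpha$ is an $\hn$-submodule of $\mathcal{V}_\alpha$. So fix $T\in\SET(\alpha)$ and $1\le i\le n-1$. By the reformulation of the $\rdI$-action in~\eqref{eqn:defn-pi(T)}, we have $\pi_i^{\rdI}(T)\in\{0,\,T,\,s_i(T)\}$. The first two possibilities lie in $\mathcal{Z}_\alpha$ trivially, so the entire content of the lemma is the claim that in the remaining case, namely when $i+1$ is strictly below $i$ in $T$ and $\pi_i^{\rdI}(T)=s_i(T)$, the swapped tableau $s_i(T)$ again belongs to $\SET(\alpha)$.

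The first step I would carry out is to observe that in this case $i$ and $i+1$ must occupy distinct columns of $T$. Indeed, if they shared a column then, since $i+1$ lies strictly below $i$ while $i+1>i$, that column would fail to increase from bottom to top, contradicting $T\in\SET(\alpha)$. Write $c$ for the cell of $i$ and $c'$ for the cell of $i+1$; these lie in different columns and, since $i+1$ is strictly below $i$, in different rows. We already know $s_i(T)\in\SIT(\alpha)$, since this is precisely the case in which the $\rdI$-action returns a standard immaculate tableau (Theorem~\ref{thm:Hecke-action-RSImm} and Lemma~\ref{lem:prelim}); thus the rows increase strictly, the first column increases, and it remains only to verify that every column of $s_i(T)$ increases from bottom to top, which is the extra condition defining $\SET(\alpha)$.

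Since passing from $T$ to $s_i(T)$ changes only the entries in cells $c$ and $c'$, it suffices to examine the columns through $c$ and through $c'$. In the column through $c$ the entry $i$ is replaced by $i+1$: the neighbour immediately below $c$ (if any) was smaller than $i$ in $T$ and is not $i+1$ (as $i+1$ lies in a different column), hence it is at most $i-1<i+1$; the neighbour immediately above $c$ (if any) was larger than $i$ and, being different from $i+1$, is at least $i+2>i+1$. So this column still increases. Symmetrically, in the column through $c'$ the entry $i+1$ is replaced by $i$: the neighbour below was smaller than $i+1$ and different from $i$, hence at most $i-1<i$, while the neighbour above was larger than $i+1>i$. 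Thus every column of $s_i(T)$ increases from bottom to top, so $s_i(T)\in\SET(\alpha)\subseteq\mathcal{Z}_\alpha$.

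The only genuinely delicate point, and the step I would flag as the crux, is the observation that $i$ and $i+1$ lie in distinct columns in the swap case; once this is in hand the column-increase condition reduces to a routine local check on the two affected cells. Having established that $\pi_i^{\rdI}$ maps $\SET(\alpha)$ into $\{0\}\cup\SET(\alpha)$ for every $i$, linearity gives $\pi_i^{\rdI}(\mathcal{Z}_\alpha)\subseteq\mathcal{Z}_\alpha$, and we conclude that $\mathcal{Z}_\alpha$ is an $\hn$-submodule of $\mathcal{V}_\alpha$ for the $\rdI$-action.
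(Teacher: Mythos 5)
Your proof is correct and follows essentially the same route as the paper's: reduce to the case $\pi_i^{\rdI}(T)=s_i(T)$, observe that $i$ and $i+1$ must lie in distinct columns of $T\in\SET(\alpha)$, and verify that the swap preserves increasing columns. The only cosmetic difference is that the paper eliminates the configuration with $i+1$ strictly left of $i$ and then appeals to symmetry, whereas you check the four affected vertical adjacencies directly; both amount to the same local verification.
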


\begin{proof} It is enough to verify that $T\in \SET(\alpha)\Rightarrow \pi_i^{\rdI}(T)\in \SET(\alpha).$ From Equation~\eqref{eqn:defn-pi(T)}, we need only consider the case when 
$i+1$ is strictly below $i$ in $T.$ Note that $i+1$ is not in the same column as $i,$ by defnition of $\SET$. The two possible configurations are:
\[T_1=\tableau{\ldots &\ldots &x &\ldots &i\\ 
                 \ldots &\ldots &\ldots  \\
                 \ldots &\ldots &\scriptstyle{i+1} &\ldots &(y) &\ldots} \text{ and } 
T_2=\tableau{\ldots &\ldots &i &\ldots &(x)\\ 
                 \ldots &\ldots &\ldots  \\
                 \ldots &\ldots &y &\ldots &\scriptstyle{i+1} &\ldots}.
\]
The parentheses  indicate the possibility that there may be no cell under $i$ in $T_1,$ and there may be no cell above $(i+1)$ in $T_2.$ But the definition of $\SET(\alpha)$ eliminates $T_1,$ since it forces the contradiction $i+1<x<i.$ Hence $T_2$ is the only possibility, implying $y<i(<x)$ and $y<i+1(<x).$ These conditions are symmetric in $i$ and $i+1$ and hence 
\[\pi^{\rdI}_i(T_2)=s_i(T_2)\in \SET(\alpha).\]
Clearly switching $i$ and $i+1$ does not affect the increasing property of the rows and columns, so this finishes the proof.  \end{proof}

Now $\SET(\alpha)$ is a subposet of $P\rdI(\alpha),$ and it contains the unique top element $ S^{row}_\alpha.$  Thus we can take an arbitrary total order 
\[\{\rtau _1\preccurlyeq^{t}_{\rdI} \cdots \preccurlyeq^{t}_{\rdI}\rtau _m=S^{row}_\alpha\} \]
of $\SET(\alpha)$ exactly as in Section~\ref{sec:partial-order}, obtaining the corresponding  filtration of $\hn$-modules
\[
0\subset \mathcal{Z}_{\rtau_m} \subset \cdots \subset \mathcal{Z}_{\rtau_{2}}\subset \mathcal{Z}_{\rtau_1}=[\rtau_1,S^{row}_\alpha]=\mathcal{Z}_\alpha,
\]
where as before, $\mathcal{Z}_{\rtau_i}$ is the $\mathbb{C}$-linear span 
 $$\mathcal{Z}_{\rtau_i} = \spam \{ \rtau _j \suchthat \rtau _i \poRI ^t \rtau _j \}=[T_i, S^{row}_\alpha]\quad \text{ for } 1\leq i \leq m.$$
The fact that each $\mathcal{Z}_{\rtau_i}$ is an $\hn$-submodule is inherited from the poset $P\rdI(\alpha),$ since we still have $\pi^{\rdI}_{i_1}\cdots \pi^{\rdI} _{i_\ell}\mathcal{Z}_{\rtau_i}\subseteq \mathcal{Z}_{\rtau_i}$ for any $s_{i_1}\cdots s _{i_\ell}\in S_n$.

The same calculation as in the proof of Theorem~\ref{the:bigone} now gives us:
\begin{theorem}\label{thm:rowstrictext-bigone} $\mathcal{Z_\alpha}$ is an $\hn$-submodule of $\mathcal{V}_\alpha$ with quasisymmetric characteristic 
\[\chr(\mathcal{Z_\alpha})=\sum_{T\in\SET(\alpha)} F_{\comp(\Des_{\rdI}(T))}.\]
\end{theorem}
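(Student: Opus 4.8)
The plan is to transcribe, almost verbatim, the filtration argument that established Theorem~\ref{the:bigone}, now applied to the subposet $\SET(\alpha)$ of $P\rdI(\alpha)$ in place of all of $\SIT(\alpha)$. The submodule assertion is already in hand from Lemma~\ref{lem:rdIsubmodule}, and the chain $0\subset \mathcal{Z}_{\rtau_m}\subset\cdots\subset\mathcal{Z}_{\rtau_1}=\mathcal{Z}_\alpha$ of $\hn$-submodules has been recorded in the discussion preceding the statement. So the only remaining task is to compute the characteristic by identifying each successive quotient.

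First I would fix the total order $\rtau_1\preccurlyeq^{t}_{\rdI}\cdots\preccurlyeq^{t}_{\rdI}\rtau_m=S^{row}_\alpha$ on $\SET(\alpha)$ and set $\mathcal{Z}_{\rtau_{m+1}}=0$. For each $i$ the quotient $\mathcal{Z}_{\rtau_{i-1}}/\mathcal{Z}_{\rtau_i}$ is one-dimensional, spanned by the class of $\rtau_{i-1}$, and I would compute the induced action of each generator $\pi_j$. If $j\notin\Des_{\rdI}(\rtau_{i-1})$ then $\pi_j(\rtau_{i-1})=\rtau_{i-1}$ by Equation~\eqref{eqn:defn-pi(T)}. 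If $j\in\Des_{\rdI}(\rtau_{i-1})$ then either $\pi_j(\rtau_{i-1})=0$, or $\pi_j(\rtau_{i-1})=s_j(\rtau_{i-1})$; in the latter case Lemma~\ref{lem:rdIsubmodule} guarantees $s_j(\rtau_{i-1})\in\SET(\alpha)$, while by Equation~\eqref{eq:inv} it has strictly more inversions and hence lies strictly above $\rtau_{i-1}$ in the poset, so it belongs to $\mathcal{Z}_{\rtau_i}$. Thus $\pi_j(\rtau_{i-1})\equiv 0 \pmod{\mathcal{Z}_{\rtau_i}}$ exactly when $j\in\Des_{\rdI}(\rtau_{i-1})$, so as an $\hn$-module the quotient is the irreducible $[L_\beta]$ with $\beta=\comp(\Des_{\rdI}(\rtau_{i-1}))$, giving $\chr(\mathcal{Z}_{\rtau_{i-1}}/\mathcal{Z}_{\rtau_i})=F_{\comp(\Des_{\rdI}(\rtau_{i-1}))}$.

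Finally, invoking additivity of $\chr$ along the short exact sequences of the filtration exactly as in Theorem~\ref{the:bigone}, I would sum:
\[\chr(\mathcal{Z}_\alpha)=\sum_{i=2}^{m+1}\chr(\mathcal{Z}_{\rtau_{i-1}}/\mathcal{Z}_{\rtau_i})=\sum_{i=2}^{m+1}F_{\comp(\Des_{\rdI}(\rtau_{i-1}))}=\sum_{T\in\SET(\alpha)}F_{\comp(\Des_{\rdI}(T))}.\]
The only point that genuinely requires $\SET(\alpha)$ rather than an arbitrary subset is that the filtration consist of submodules of $\mathcal{Z}_\alpha$ and that each quotient action be well-defined; this is precisely the closure supplied by Lemma~\ref{lem:rdIsubmodule}, so I expect no serious obstacle here. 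The real work of the theorem was already discharged in that lemma, and the characteristic computation is a direct bookkeeping transcription of the proof of Theorem~\ref{the:bigone} with $\SIT(\alpha)$ replaced by $\SET(\alpha)$.
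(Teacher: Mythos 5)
Your proof is correct and is essentially the paper's own argument: the paper likewise obtains the submodule structure from Lemma~\ref{lem:rdIsubmodule}, imposes a total order on $\SET(\alpha)$ to get the filtration $0\subset\mathcal{Z}_{\rtau_m}\subset\cdots\subset\mathcal{Z}_{\rtau_1}=\mathcal{Z}_\alpha$, and then states that ``the same calculation as in the proof of Theorem~\ref{the:bigone}'' identifies each one-dimensional quotient with $[L_{\comp(\Des_{\rdI}(\rtau_{i-1}))}]$ and sums the characteristics. You have simply written out that transcription explicitly, with the correct justification at each step.
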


As mentioned at the start of this section, these results were motivated by the observation that $\SET(\alpha)$ is invariant under the $\hn$-action, and by the paper of \cite{S2020}.  In that paper, Searles constructs an $\hn$-module whose quasisymmetric characteristic is the  extended Schur function $\mathcal{E}_\alpha$ defined by Assaf and Searles in \cite{AS2019}.  We restate Searles' formulation of their theorem as follows:

\begin{theorem}\label{thm:Assaf-Searles} \cite{AS2019}, \cite[Theorem 2.7]{S2020}.  Let $\alpha\vDash n.$ Then the extended Schur function 
$\mathcal{E}_\alpha$ expands positively in the fundamental basis of $\QSym$ as follows:
\begin{equation}\label{eq:ext-Schur} 
\mathcal{E}_\alpha= \sum_{T\in \SET(\alpha)} F_{\comp(\Des_{\dI}(T))}.
\end{equation}
\end{theorem}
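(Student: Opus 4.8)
The plan is to reproduce the standardization argument used for Theorem~\ref{thm:rsfunddecomp}, now applied to the combinatorial definition of $\mathcal{E}_\alpha$ given in \cite{AS2019}. There the extended Schur function is $\mathcal{E}_\alpha=\sum_T x^T$, summed over all \emph{extended tableaux} of shape $\alpha$, i.e.\ the fillings of the diagram of $\alpha$ with positive integers whose rows weakly increase left to right and whose columns strictly increase bottom to top (the class $\mathcal{T}_\alpha(\text{cols}<,\text{rows}\le)$ of Figure~\ref{fig:4-descent-sets}). Standardizing such a filling produces an object with strictly increasing rows and strictly increasing columns, that is, an element of $\SET(\alpha)$ as in Definition~\ref{def:SET}; so the natural strategy is to fibre the monomials of $\mathcal{E}_\alpha$ over their standardizations and identify each fibre with the monomial support of a single fundamental function.

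First I would fix the standardization map $\stdz$ sending an extended tableau $T$ to $S=\stdz(T)\in\SET(\alpha)$. Since columns of $T$ are strict, equal entries of $T$ lie in distinct columns; I relabel the entries with $1,\ldots,n$ so that distinct values keep their order, and equal values receive consecutive integers assigned from the highest row to the lowest (breaking ties within a row from left to right). This choice has two consequences that I would verify directly: (i) $S$ has strictly increasing rows and columns, so $S\in\SET(\alpha)$; and, crucially, (ii) whenever the labels $i$ and $i+1$ come from equal entries of $T$, the label $i+1$ lies weakly below $i$ in $S$, so that $i\notin\Des_{\dI}(S)$. Property (ii) is what pins the expansion to the dual immaculate descent set $\Des_{\dI}$ rather than its complement.

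The core step is to show, for a fixed $S\in\SET(\alpha)$, that the monomials $x^T$ over all extended tableaux $T$ with $\stdz(T)=S$ are exactly the monomials of $F_{\comp(\Des_{\dI}(S))}$. For the forward inclusion, if $\stdz(T)=S$ and $i\in\Des_{\dI}(S)$, then $i+1$ is strictly above $i$ in $S$, and by the construction of $\stdz$ the entries of $T$ in the cells carrying labels $i$ and $i+1$ cannot be equal; hence the value at label $i$ is strictly smaller than the value at label $i+1$, so $x^T$ has a strict ascent at each position of $\Des_{\dI}(S)$, which is precisely the condition defining a monomial of $F_{\comp(\Des_{\dI}(S))}$. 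For the reverse inclusion I would destandardize: given a weakly increasing sequence $c_1\le\cdots\le c_n$ that is strict at the positions of $\Des_{\dI}(S)$, assign the value $c_j$ to the cell of $S$ carrying label $j$. Here one checks that rows stay weakly increasing (labels increase along rows, so the $c$'s weakly increase) and that columns stay strictly increasing: for two vertically adjacent cells with lower label $a$ and upper label $b>a$, the chain $a,a+1,\ldots,b$ cannot be weakly decreasing in row-height, so some $j\in[a,b-1]$ is a descent of $S$, forcing $c_a\le c_j<c_{j+1}\le c_b$. Thus destandardization is a two-sided inverse to $\stdz$ on each fibre, and summing $\sum_T x^T=\sum_{S\in\SET(\alpha)}\sum_{\stdz(T)=S}x^T=\sum_{S\in\SET(\alpha)}F_{\comp(\Des_{\dI}(S))}$ yields \eqref{eq:ext-Schur}.

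I expect the main obstacle to be the bookkeeping in property (ii) and in the destandardization: one must check that the tie-breaking rule for equal entries is simultaneously well defined (consistent with strict columns and weak rows) and compatible with $\Des_{\dI}$. Getting the direction of the tie-break right is the crux, since the opposite convention would produce $\Des_{\rdI}$ and hence the row-strict expansion of Theorem~\ref{thm:rowstrictext-bigone} instead; indeed, once \eqref{eq:ext-Schur} is in hand, applying the involution $\psi$ together with the identity $\Des_{\dI}(S)=\Des_{\rdI}(S)^c$ for standard tableaux recovers the row-strict analogue, which is a useful consistency check.
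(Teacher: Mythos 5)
Your standardization argument is sound, but note that the paper does not actually prove Theorem~\ref{thm:Assaf-Searles}: it is quoted from \cite{AS2019} and \cite{S2020}, and the only verification supplied in the text is that Searles' descent statistic agrees with $\Des_{\dI}$ on $\SET(\alpha)$. The closest the paper comes to your argument is Proposition~\ref{prop:gf-row-ext}, where the logic runs in the opposite direction: the fundamental expansion is taken as known, and the description of $\mathcal{R}\mathcal{E}_\alpha$ as a generating function for semistandard tableaux is what gets proved, by precisely your standardization/destandardization scheme with equal entries read \emph{bottom to top}; Part~(3) there (your starting point for $\mathcal{E}_\alpha$) is attributed to the Pieri-rule computation of \cite{CFLSX2014}, with the remark that the same argument adapts. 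So your proof is exactly that adaptation, run backwards, and your top-to-bottom, left-to-right tie-breaking is the correct complement of the bottom-to-top rule: it forces $i\notin\Des_{\dI}(S)$ within a constant block, and your location of a descent $j\in[a,b-1]$ between vertically adjacent labels $a<b$ is what makes destandardized columns strict. The one caveat is logical rather than combinatorial: your argument proves that the generating function of column-strict, row-weak fillings equals $\sum_{T\in\SET(\alpha)}F_{\comp(\Des_{\dI}(T))}$, so it establishes the theorem only if you take that generating function as the \emph{definition} of $\mathcal{E}_\alpha$; under the original definition in \cite{AS2019} (via Kohnert/lock polynomials) or as the dual of the shin basis, you would still owe the identification of $\mathcal{E}_\alpha$ with that tableau generating function, which is where \cite{CFLSX2014} and \cite{S2020} come in.
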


A remark about this restatement is in order.  Searles defines the descent set of a standard extended tableau $T\in \SET(\alpha)$ as the set 
 \[\{i : i \text{ is weakly to the right of $i+1$ in T}\}.\] 
This coincides with the descent set $\Des_{\dI}(T)$ for dual immaculate tableaux as defined in Theorem~\ref{thm:BBSSZ2dualImm}, since it is easily verified, by an argument analogous to the proof of Lemma~\ref{lem:rdIsubmodule}, that if $T\in \SET(\alpha),$ then 
\[  i  \text{ weakly  right of } i+1 \iff 
i \text{ strictly below } i+1 \iff i\in \Des_{\dI}(T).\]

Assaf and Searles also show that 
\begin{theorem}\label{thm:AS19}\cite{AS2019}  The extended Schur functions $\{\mathcal{E}_\alpha\}_{\alpha\vDash n}$ form a basis of $\QSym,$ with the property that when the composition $\alpha$  is a partition $\lambda$ of $n,$  $\mathcal{E}_\lambda$ coincides with the Schur function $s_\lambda.$
\end{theorem}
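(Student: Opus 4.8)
The plan is to prove the two assertions separately: that $\{\mathcal{E}_\alpha\}_{\alpha\vDash n}$ is a basis of $\QSym_n$, and that $\mathcal{E}_\lambda = s_\lambda$ when $\alpha=\lambda\vdash n$ is a partition. For both I would work from the combinatorial definition of \cite{AS2019}, namely $\mathcal{E}_\alpha = \sum_T x^T$ summed over all \emph{semistandard} extended tableaux of shape $\alpha$ --- fillings of the diagram of $\alpha$ with positive integers whose rows weakly increase from left to right and whose columns strictly increase from bottom to top --- which agrees with the fundamental expansion of Theorem~\ref{thm:Assaf-Searles} by the standardization argument (exactly as in the proof of Theorem~\ref{thm:rsfunddecomp}, with $\SET(\alpha)$ the set of standardizations). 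These are the tableaux $\mathcal{T}_\alpha(\text{cols}<,\text{rows}\le)$ of Figure~\ref{fig:4-descent-sets}.

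For the basis property I would expand $\mathcal{E}_\alpha$ in the monomial quasisymmetric basis $\{M_\beta\}_{\beta\vDash n}$, writing $\mathcal{E}_\alpha = \sum_{\beta\vDash n} K_{\alpha\beta}\, M_\beta$, where $K_{\alpha\beta}$ counts the semistandard extended tableaux of shape $\alpha$ of content $\beta$ (value $j$ used $\beta_j$ times). I would introduce the \emph{composition dominance} order $\beta\trianglelefteq\alpha$ defined by $\sum_{i\le k}\beta_i \le \sum_{i\le k}\alpha_i$ for every $k\ge 1$ (parts beyond the length taken to be $0$); a short check, comparing partial sums, shows this is a genuine partial order on the compositions of $n$. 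The heart of the argument is then the pair of claims $K_{\alpha\beta}\neq 0 \Rightarrow \beta\trianglelefteq\alpha$ and $K_{\alpha\alpha}=1$.

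To establish the claims, fix a semistandard extended tableau $T$ of shape $\alpha$ and content $\beta$, and consider, for each $k$, the set of cells of $T$ with entry at most $k$. Column-strictness forces these cells to form a bottom segment in each column, and each column contains at most $k$ of them (its entries are distinct); hence they all lie in the bottom $k$ rows of $\alpha$. Counting yields $\sum_{i\le k}\beta_i = \#\{\text{entries}\le k\} \le \#\{\text{cells in rows } 1,\dots,k\} = \sum_{i\le k}\alpha_i$, which is precisely $\beta\trianglelefteq\alpha$. When $\beta=\alpha$ all of these inequalities are forced to be equalities, so the entries equal to $k$ must fill exactly row $k$ for every $k$; thus the unique tableau of content $\alpha$ is the one whose $i$th row is constant equal to $i$, giving $K_{\alpha\alpha}=1$. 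Listing the compositions of $n$ in any order refining $\trianglerighteq$ (larger elements first), the transition matrix $(K_{\alpha\beta})$ is then upper unitriangular, hence invertible over $\mathbb{Q}$; since $\{M_\beta\}$ is a basis of $\QSym_n$, so is $\{\mathcal{E}_\alpha\}$.

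For the partition case, observe that when $\alpha=\lambda$ is a partition the diagram of $\lambda$ is an ordinary Young diagram, and the semistandard extended tableaux of shape $\lambda$ are exactly the semistandard Young tableaux of shape $\lambda$ (rows weakly increasing, columns strictly increasing). Hence $\mathcal{E}_\lambda = \sum_T x^T = s_\lambda$ directly from the classical combinatorial description of the Schur function. I expect the main obstacle to be the triangularity claim $K_{\alpha\beta}\neq 0 \Rightarrow \beta\trianglelefteq\alpha$: everything hinges on the bottom-justified, column-strict packing of the small entries, and on recognizing that composition dominance via partial sums, rather than the refinement order, is what makes the matrix triangular --- refinement alone fails, as already the incomparable shapes $(2,1)$ and $(1,2)$ show, since $\mathcal{E}_{(2,1)}=s_{(2,1)}$ contains the term $M_{(1,2)}$.
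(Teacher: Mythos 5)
You should first note that the paper does not prove this statement at all: Theorem~\ref{thm:AS19} is imported from \cite{AS2019} and stated without proof, so there is no in-paper argument to compare against. Your proposal is therefore judged on its own terms, and its overall strategy (monomial expansion, unitriangularity with respect to a dominance order on compositions, and the identification with SSYT in the partition case) is a sensible and essentially standard way to prove the result. The reduction to the tableau generating function, the verification that $\trianglelefteq$ is a partial order, the unitriangularity bookkeeping, and the partition case are all fine.

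There is, however, a genuine gap in the justification of the key inequality $\sum_{i\le k}\beta_i\le\sum_{i\le k}\alpha_i$. You argue that the cells with entry at most $k$ form a bottom segment of each column, that each column contains at most $k$ of them, and that \emph{hence} they all lie in the bottom $k$ rows. That last inference is false for composition diagrams, because columns other than the first need not occupy consecutive rows. Take $\alpha=(1,3)$: the second column consists of a single cell, and that cell sits in row $2$, so a ``bottom segment of size $1$'' of that column is not contained in the bottom one row. More generally the column-by-column bound you invoke gives $\#\{\text{entries}\le k\}\le\sum_c\min(k,\mathrm{len}(c))$, and this quantity can strictly exceed $\sum_{i\le k}\alpha_i$ (for $\alpha=(1,3)$ and $k=1$ it is $1+1+1=3$ versus $\alpha_1=1$), so it cannot yield the triangularity you need. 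The inequality itself is true, but for a different reason: the \emph{first} column is contiguous (rows $1,\dots,\ell$) and strictly increasing, so the entry in cell $(i,1)$ is at least $i$, and weak increase along rows then forces \emph{every} entry in row $i$ to be at least $i$. Hence entries $\le k$ are confined to rows $1,\dots,k$, which gives $\sum_{i\le k}\beta_i\le\sum_{i\le k}\alpha_i$ directly, and when $\beta=\alpha$ forces row $k$ to be filled with $k$'s, so $K_{\alpha\alpha}=1$. With that one-line repair the rest of your argument goes through.
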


Recall from Equation~\eqref{eqn:psiF} that $\psi$ is the $\QSym$-involution which sends the fundamental quasisymmetric function $F_\beta$ to $F_{\beta^c},$ where $\beta^c$ is the complement of the composition $\beta.$  See also \cite[Section 3.6]{LMvW2013}. 
We now immediately have the following:
\begin{proposition}\label{prop:rowstrict-ext-basis} Write $\mathcal{R}\mathcal{E}_\alpha$ for the characteristic of the $\hn$-submodule $\mathcal{Z}_\alpha.$ Then we have 
\[\mathcal{R}\mathcal{E}_\alpha=\sum_{T\in\SET(\alpha)} F_{\comp(\Des_{\rdI}(T))} = \psi(\mathcal{E}_\alpha).\]
 Hence the functions $\{\mathcal{R}\mathcal{E}_\alpha\}_{\alpha\vDash n}$ also form a basis for 
$\QSym$. Furthermore, 
when the composition $\alpha$  is a partition $\lambda$ of $n,$  $\mathcal{R}\mathcal{E}_\lambda$ coincides with the Schur function $s_{\lambda^t}.$
\end{proposition}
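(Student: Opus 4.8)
The plan is to deduce all three assertions from the Assaf--Searles expansion (Theorem~\ref{thm:Assaf-Searles}) together with the definition $\psi(F_\beta)=F_{\beta^c}$ from Equation~\eqref{eqn:psiF}. The first equality is immediate: $\mathcal{R}\mathcal{E}_\alpha$ is by definition $\chr(\mathcal{Z}_\alpha)$, and this characteristic was already computed in Theorem~\ref{thm:rowstrictext-bigone}.

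For the middle equality $\mathcal{R}\mathcal{E}_\alpha=\psi(\mathcal{E}_\alpha)$, I would apply $\psi$ termwise to the expansion $\mathcal{E}_\alpha=\sum_{T\in\SET(\alpha)}F_{\comp(\Des_{\dI}(T))}$. Each summand becomes $\psi(F_{\comp(\Des_{\dI}(T))})=F_{\comp(\Des_{\dI}(T))^c}$. The one bookkeeping step is to verify that $\comp(\Des_{\dI}(T))^c=\comp(\Des_{\rdI}(T))$: since $\set$ and $\comp$ are mutually inverse bijections between subsets of $[n-1]$ and compositions of $n$, we have $\comp(S)^c=\comp(S^c)$ for every subset $S$, and combining this with the observation $\Des_{\dI}(T)=\Des_{\rdI}(T)^c$ recorded just before Theorem~\ref{the:RIasDI} gives the claim. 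Summing over $\SET(\alpha)$ then yields $\psi(\mathcal{E}_\alpha)=\sum_{T\in\SET(\alpha)}F_{\comp(\Des_{\rdI}(T))}=\mathcal{R}\mathcal{E}_\alpha$.

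The basis assertion follows formally, exactly as in the proof of Theorem~\ref{the:RIasDI}: $\psi$ is an involution, hence a linear automorphism of $\QSym$, so it sends the basis $\{\mathcal{E}_\alpha\}_{\alpha\vDash n}$ of Theorem~\ref{thm:AS19} to another basis $\{\psi(\mathcal{E}_\alpha)\}_{\alpha\vDash n}=\{\mathcal{R}\mathcal{E}_\alpha\}_{\alpha\vDash n}$.

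Finally, for the specialisation to a partition $\lambda$, Theorem~\ref{thm:AS19} gives $\mathcal{E}_\lambda=s_\lambda\in\Sym$, so $\mathcal{R}\mathcal{E}_\lambda=\psi(s_\lambda)$. The step I expect to require the most care is the identification of $\psi$ on the subring $\Sym$: the map $F_\beta\mapsto F_{\beta^c}$ restricts there to the classical involution $\omega$, a standard fact I would cite from \cite{LMvW2013}. Granting it, $\psi(s_\lambda)=\omega(s_\lambda)=s_{\lambda^t}$, as required. If one prefers to avoid invoking $\omega$, the same conclusion can be reached directly: for a partition $\lambda$ the set $\SET(\lambda)$ is exactly the set of standard Young tableaux of shape $\lambda$, and the descent-complementation $\Des_{\dI}(T)\mapsto\Des_{\rdI}(T)$ matches transposition of standard Young tableaux, which interchanges the fundamental expansions of $s_\lambda$ and $s_{\lambda^t}$.
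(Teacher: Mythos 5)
Your proposal is correct and follows essentially the same route as the paper, which proves the proposition in one line by observing that $\Des_{\dI}(T)$ and $\Des_{\rdI}(T)$ are complementary in $[n-1]$ and then invoking $\psi(F_\beta)=F_{\beta^c}$ together with Theorems~\ref{thm:Assaf-Searles} and~\ref{thm:AS19}. You have merely made explicit the bookkeeping ($\comp(S)^c=\comp(S^c)$, the basis argument, and the identification $\psi|_{\Sym}=\omega$) that the paper leaves implicit.
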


\begin{proof} Immediate from the fact that the descent sets $\Des_{\dI}(T)$, $\Des_{\rdI}(T)$ are complements of each other in $[n-1].$ 
\end{proof}

We call the  $\mathcal{R}\mathcal{E}_\alpha$  \textit{row-strict extended Schur} functions.

The extended Schur functions are dual to the \textit{shin} basis of noncommutative symmetric functions  \cite{CFLSX2014}.   It follows that our 
row-strict extended Schur functions $\mathcal{R}\mathcal{E}_\alpha$ also give rise to a dual basis of $\Nsym,$ which we may call the \textit{Rshin} basis.

In analogy with the row superstandard tableau $S^{row}_\alpha$ of Definition~\ref{def:top-elt}, we make the following 
\begin{definition}\label{def:column-bot-elt} Let
$S^{col}_\alpha\in\SET(\alpha)$ denote the \textit{column superstandard} tableau of shape $\alpha,$ whose columns are  filled   bottom to top and left to right, beginning with the first column, with the numbers $\{1,2,\ldots, n\}$  taken in \textbf{consecutive order}. 
As before we note that if $\alpha$ is a hook of the form $ (1^{\ell-1},n-\ell), 1\le \ell\le n$,  then $\SIT(\alpha)$ has cardinality one and $S^0_\alpha=S^{row}_\alpha=S^{col}_\alpha$.
\end{definition}
In Figure~\ref{fig:Poset}, $S^{col}_\alpha$ is the lowest-ranked  tableau in red.   As observed previously,  $S^{row}_\alpha$  is the top element  of the poset $P\rdI_\alpha,$ and hence also of the subposet $\SET(\alpha).$ 

We show next that the module  $\mathcal{Z}_\alpha $ is cyclic and indecomposable.
The following observation is key:
\begin{lemma}\label{lem:Z-keylemma}Suppose  $S=\pi_i(T)$ with $S\in \SET(\alpha)$ and  
$T\in \SIT(\alpha)$.  If $i, i+1$ are NOT in the same column of $S,$ then $T$ must also be in $\SET(\alpha)$, i.e. $T$ must have all columns increasing bottom to top. 
\end{lemma}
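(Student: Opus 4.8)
The plan is to reduce immediately to a local swap argument using the explicit action formula~\eqref{eqn:defn-pi(T)}. Since $S\in\SET(\alpha)\subseteq\SIT(\alpha)$ is in particular nonzero, the equation $S=\pi_i(T)$ leaves only two possibilities: either $\pi_i$ fixes $T$, so $S=T$ and then $T=S\in\SET(\alpha)$ trivially; or $S=s_i(T)$, equivalently $T=s_i(S)$ is obtained from $S$ by interchanging the entries $i$ and $i+1$. So the content of the lemma is entirely in the second case: given $S\in\SET(\alpha)$ with $i$ and $i+1$ in distinct columns, I must show that swapping them produces a tableau all of whose columns still increase from bottom to top.

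First I would record what is already known about $T=s_i(S)$: because $T\in\SIT(\alpha)$ by hypothesis, its rows increase left to right and its first column increases bottom to top, so the only thing left to verify is that \emph{every} column of $T$ increases. Next I would localize the problem. Passing from $S$ to $T$ changes only the two cells holding $i$ and $i+1$; call them $a$ (holding $i$ in $S$) and $b$ (holding $i+1$ in $S$). By the hypothesis these cells lie in different columns, so the swap affects the column of $a$ and the column of $b$ independently, leaving all other columns untouched and hence still increasing. It therefore suffices to check these two columns one at a time.

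The heart of the argument is the pair of local checks, and this is exactly where the ``different column'' hypothesis is used. In the column of $a$, after the swap the entry becomes $i+1$: every entry below it was $<i$ in $S$, and every entry above it was $>i$; crucially, since $i+1$ sits in the column of $b\ne$ column of $a$, none of these entries equals $i+1$, so the entries above are in fact $\ge i+2$, and strict increase survives. A symmetric check works at $b$, where the entry becomes $i$: the entries below were $<i+1$ and none equals $i$ (as $i$ lives in the column of $a$), so they are $\le i-1$, while the entries above exceed $i+1>i$. I do not expect a genuine obstacle here; the only thing requiring care is the bookkeeping of which cells can contain $i$ or $i+1$, and the hypothesis precisely rules out the one bad configuration, namely $i$ and $i+1$ stacked vertically in a single column, whose swap would destroy column-increase. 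Concluding that all columns of $T$ increase then gives $T\in\SET(\alpha)$.
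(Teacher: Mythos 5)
Your proof is correct and takes essentially the same route as the paper: reduce to $T=s_i(S)$, observe that only the two cells holding $i$ and $i+1$ change, and use the different-column hypothesis to check that each of the two affected columns still increases strictly. The paper compresses the local column check into a single sentence (and leaves the trivial case $\pi_i(T)=T$ implicit, which you handle explicitly), but the underlying argument is identical.
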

\begin{proof} First note that $i$ must be in $\Des_{\rdI}(T),$ and in fact $i+1$ is strictly below $i$ in $T.$ The proof of Lemma~\ref{lem:rdIsubmodule} shows, by definition of $\pi_i,$ that $T$ and $S$ must coincide for all entries not equal to $i,i+1.$ Since switching $i, i+1$ does not affect the increase of their respective columns, it follows that  $T,$ like $S,$ must have all columns increasing.
\end{proof}

\begin{lemma}\label{lem:Zcyclic} Let $\alpha \vDash n.$ Then $S^{col}_\alpha$ is the unique minimal element of the subposet $\SET(\alpha)$, and generates $\mathcal{Z}_\alpha$ as a cyclic 
$\hn$-module.  In particular, $\SET(\alpha)$ coincides with the interval $[S^{col}_\alpha, S^{row}_\alpha]$ in the poset $P\rdI(\alpha).$
\end{lemma}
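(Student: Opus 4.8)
The plan is to reduce the whole statement to the single claim that $S^{col}_\alpha$ is the minimum of the subposet $\SET(\alpha)$, and then to supply the straightening needed for that claim. First I would record that $\SET(\alpha)$ is an order filter (up-set) of the immaculate Hecke poset $P\rdI_\alpha$: by Lemma~\ref{lem:rdIsubmodule} the set $\SET(\alpha)$ is closed under every $\pi^{\rdI}_i$, so any saturated chain moving upward from a tableau in $\SET(\alpha)$ stays in $\SET(\alpha)$, whence $T\in\SET(\alpha)$ and $T\poRI T'$ force $T'\in\SET(\alpha)$. Since $S^{row}_\alpha$ is the global maximum of $P\rdI_\alpha$ by Proposition~\ref{prop:top-elt} and lies in $\SET(\alpha)$, it is the maximum of this filter. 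A finite order filter with a unique minimal element $M$ is exactly the interval from $M$ to the top; hence, once $S^{col}_\alpha$ is shown to be the unique minimal element of $\SET(\alpha)$, we obtain simultaneously that $S^{col}_\alpha=\min\SET(\alpha)$ and that $\SET(\alpha)=[S^{col}_\alpha,S^{row}_\alpha]$, which is the last assertion of the lemma.

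Next I would check that $S^{col}_\alpha$ is a minimal element of $\SET(\alpha)$. By Definition~\ref{def:column-bot-elt} the entries of $S^{col}_\alpha$ increase consecutively up each column, so whenever $i+1$ lies strictly above $i$ the two entries occupy the same column. A lower cover $N\poRIcover S^{col}_\alpha$ would require $S^{col}_\alpha=\pi^{\rdI}_i(N)=s_i(N)$ for some such $i$, i.e.\ $N=s_i(S^{col}_\alpha)$; but swapping $i,i+1$ within a single column either violates the first-column condition or destroys that column's increase, so $N\notin\SET(\alpha)$. (Indeed Lemma~\ref{lem:Z-keylemma} shows that a downward move out of a $\SET$-tableau remains in $\SET(\alpha)$ only when $i,i+1$ lie in different columns.) Hence $S^{col}_\alpha$ has no lower cover inside $\SET(\alpha)$ and is minimal.

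The heart of the argument, and the step I expect to be the main obstacle, is uniqueness of the minimal element. Here I would give a straightening algorithm in the spirit of Proposition~\ref{prop:bot-elt}: given $T\in\SET(\alpha)$ with $T\neq S^{col}_\alpha$, I would locate a \emph{cross-column ascent}, that is an index $i$ with $i+1$ strictly above $i$ but in a different column, and apply the downward move $s_i$. Since $i,i+1$ are not in the same column, Lemma~\ref{lem:Z-keylemma} guarantees that $s_i(T)$ is again in $\SET(\alpha)$, and by~\eqref{eq:inv} its rank drops by one. Iterating produces a saturated descending chain inside $\SET(\alpha)$, organized so that $T$ is made to agree with $S^{col}_\alpha$ one value (equivalently one column) at a time, terminating exactly at $S^{col}_\alpha$. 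The delicate points to verify are that a tableau of $\SET(\alpha)$ all of whose ascents are within a single column must already be $S^{col}_\alpha$ (so that a cross-column ascent is available as long as $T\neq S^{col}_\alpha$), and that the prescribed swaps really drive $T$ monotonically toward $S^{col}_\alpha$; the subtlety is concentrated in composition shapes where the cells of a fixed column are non-contiguous, so that several cells are simultaneously addable. Granting this, every $T\in\SET(\alpha)$ satisfies $S^{col}_\alpha\poRI T$, so $S^{col}_\alpha$ is the unique minimal element and the reduction of the first paragraph applies.

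Finally, cyclicity is immediate from the interval description. For any $T\in\SET(\alpha)=[S^{col}_\alpha,S^{row}_\alpha]$, the poset is graded by the number of inversions and $S^{col}_\alpha\poRI T$, so there is a saturated chain $S^{col}_\alpha=T_0\poRIcover T_1\poRIcover\cdots\poRIcover T_r=T$ lying in the filter $\SET(\alpha)$. Each cover has the form $T_k=\pi^{\rdI}_{j_k}(T_{k-1})$, whence $T=\pi^{\rdI}_{j_r}\cdots\pi^{\rdI}_{j_1}(S^{col}_\alpha)$. Thus $S^{col}_\alpha$ generates $\mathcal{Z}_\alpha=\mathrm{span}\langle\SET(\alpha)\rangle$ as an $\hn$-module, completing the proof.
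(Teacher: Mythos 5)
Your overall architecture is sound and close in spirit to the paper's: both arguments turn on Lemma~\ref{lem:Z-keylemma} together with a descending straightening inside $\SET(\alpha)$, and your packaging of the last assertion via the observation that $\SET(\alpha)$ is an order filter of $P\rdI_\alpha$ (so that a unique minimal element $M$ forces $\SET(\alpha)=[M,S^{row}_\alpha]$) is a clean and correct reduction. The rank argument for termination and the deduction of cyclicity from the interval description are also fine.

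However, there is a genuine gap, and you have flagged it yourself: the entire mathematical content of the lemma sits in the claim you introduce with ``The delicate points to verify are \dots'' and then dispatch with ``Granting this''. Concretely, you must prove that every $T\in\SET(\alpha)$ with $T\ne S^{col}_\alpha$ admits a cross-column ascent, i.e.\ an index $i$ with $i+1$ strictly above $i$ and in a different column (equivalently, that $S^{col}_\alpha$ is the \emph{only} tableau of $\SET(\alpha)$ all of whose ascents lie within a single column). Without this, you have not excluded the possibility of a second minimal element of $\SET(\alpha)$, and the order-filter reduction, the interval description, and cyclicity all collapse. Note that your second ``delicate point'' (monotone progress toward $S^{col}_\alpha$) is actually dispensable: by finiteness and gradedness, iterated descent must terminate at \emph{some} minimal element, so uniqueness of the minimal element is all that is needed. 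But that first point is essential and is not a routine verification -- for instance, one natural proof requires an induction showing that the entries $1,\dots,j$ occupy the first $j$ cells in column-reading order, and the inductive step uses the no-cross-column-ascent hypothesis not at the pair $(j,j+1)$ but at a pair $(z-1,z)$ where $z$ is the smallest entry in the unfilled part of the current column, followed by a row-walking argument to reach a contradiction. The paper instead gives an explicit algorithm: take the largest $j$ such that $T$ and $S^{col}_\alpha$ agree on all entries $\le j$, show $j$ is not at the top of its column, let $x$ be the entry immediately above $j$ in $T$, verify that $x>j+1$ and that $x-1$ lies strictly to the right of, hence strictly below, $x$, and apply $s_{x-1}$ (which stays in $\SET(\alpha)$ by Lemma~\ref{lem:Z-keylemma}) repeatedly until $x$ is replaced by $j+1$. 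Supplying either that algorithm or the induction sketched above is what your proposal is missing.
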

\begin{proof}   Let $T\in \SET(\alpha), T\ne S^{col}_\alpha.$  We claim that the straightening algorithm of  Proposition~\ref{prop:bot-elt} can be modified to show that there is a sequence of generators $\pi_{j_i}, $ and tableaux $T_i\in \SET(\alpha),$ $i=1,\ldots, r,$  such that $\pi_{j_i}(T_{i})=T_{i-1}, i=1,2,\ldots ,r,$ where we set $T_{0}=T$ and $T_r=S^{col}_\alpha$. 

Note that $S^{col}_\alpha\in \SIT^*(\alpha)\cap\SET(\alpha),$ i.e. the first column of $S^{col}_\alpha$ consists of the entries $[\ell].$ We begin by following Step 1 of the straightening algorithm of  Proposition~\ref{prop:bot-elt}, which  shows how to generate a tableau $T_1$ and a saturated chain in the interval  $[T_1, T]$ of the poset $P\rdI_\alpha,$  such that $T_1$ has the same entries $[\ell]$ in column 1.  Lemma~\ref{lem:Z-keylemma} guarantees that the elements of this chain are in fact in $\SET(\alpha).$ 

We describe the inductive step. Once the first $k-1$ columns have been matched, we proceed to the $k$th column.  Here we  diverge from the proof of Proposition~\ref{prop:bot-elt}, and instead continue the procedure of Step 1. Let $j$ be the largest entry such that $T$ and $S^{col}_\alpha$ coincide for all entries $i\le j.$ 
Then $j\le n-2,$ and $T$ and $S^{col}_\alpha$ coincide for all entries weakly below or to the left of $j$ in $S^{col}_\alpha.$ 
 Note that $j$ cannot be the top-most entry in its column, because this would force $j+1$ into the bottom cell of the next column, and this in turn would contradict the maximality of $j.$  Let $x$ be the entry immediately above $j$ in $T.$  By hypothesis $x>j+1,$ and $x-1$ is strictly to the right, and hence also below, $x$ in $T.$  Hence $T=\pi_{x-1}(T'),$ $T'\in \SET(\alpha)$ by Lemma~\ref{lem:Z-keylemma}, and  now the first disagreement between $T'$ and $S^{col}_\alpha$ has been diminished by 1. This inductive step is repeated until the original entry $x$ is replaced with $j+1,$ so that now our tableau coincides with $S^{col}_\alpha$ for all entries $i\le j+1.$

Since the number of entries in agreement  with $S^{col}_\alpha$ increases at the end of each such step,  continuing in this manner produces the saturated chain in $[S^{col}_\alpha, T]$ as claimed.   
The statement of the lemma now follows.
\end{proof}
The subposet $\SET(223)$ in Figure~\ref{fig:Poset} is very small.  We illustrate the preceding straightening algorithm with two larger examples, using $\alpha=232.$ 
\begin{example}\label{ex:ext-straightening1} Here $S^{col}_{232}=\tableau{3 &6\\2 &5 & 7\\1 &4}$; let $T=S^{row}_{232}=\tableau{6 &7\\ 3 & 4 & 5\\ 1 &2}.$ We have the sequence of operators 
\[T=\tableau{6 &7\\ \bf{3} & 4 & 5\\ 1 &2}\stackrel{\pi_2}{\leftarrow} T_1
=\tableau{\bf{6} &7\\ 2 & 4 & 5\\ 1 &3}
\stackrel{\pi_5}{\leftarrow} T_2=\tableau{\bf{5} &7\\ 2 & 4 & 6\\ 1 &3}
\stackrel{\pi_4}{\leftarrow} T_3=\tableau{\bf{4} &7\\ 2 & 5 & 6\\ 1 &3}\]
\[\stackrel{\pi_3}{\leftarrow} T_4=\tableau{3 &\bf{7}\\ 2 & 5 & 6\\ 1 &4}
\stackrel{\pi_6}{\leftarrow} T_5=\tableau{3 &6\\ 2 & 5 & 7\\ 1 &4}=S^{col}_{232}, \text{and thus } 
  T=\pi_2\pi_5\pi_4\pi_3\pi_6(S^{col}_{232}).\]
\end{example}

\begin{example}\label{ex:ext-straightening2} Here $S^{col}_{232}=\tableau{3 &6\\2 &5 & 7\\1 &4}$; let $T=\tableau{5 &7\\ 3 & 4 & 6\\ 1 &2}.$ We have the sequence of operators 
\[T=\tableau{5 &7\\ \bf{3} & 4 & 6\\ 1 &2}\stackrel{\pi_2}{\leftarrow} T_1
=\tableau{\bf{5} &7\\ 2 & 4 & 6\\ 1 &3}
\stackrel{\pi_4}{\leftarrow} T_2=\tableau{\bf{4} &7\\ 2 & 5 & 6\\ 1 &3}
\stackrel{\pi_3}{\leftarrow} T_3=\tableau{3 &\bf{7}\\ 2 & 5 & 6\\ 1 &4}\]
\[\stackrel{\pi_6}{\leftarrow} T_4=\tableau{3 &6\\ 2 & 5 & 7\\ 1 &4}=S^{col}_{232}, \quad
 \text{and thus } 
  T=\pi_2\pi_4\pi_3\pi_6(S^{col}_{232}).\]
\end{example}

\begin{lemma}\label{lem:Z-indecomplemma} Let $P\in\SET(\alpha),$ $P\ne S^{col}_\alpha.$ Then there is a $j\in [n-1]$ such that $\pi_j(S^{col}_\alpha)=S^{col}_\alpha$ but $\pi_j(P)\ne P.$
\end{lemma}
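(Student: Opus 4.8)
The plan is to translate the desired conclusion into a statement about descent sets and then exploit the rigid structure of the column superstandard tableau. By the reformulation~\eqref{eqn:defn-pi(T)} of the $\rdI$-action, $\pi_j(T)=T$ precisely when $j+1$ lies strictly above $j$ in $T$, i.e.\ when $j\notin\Des_{\rdI}(T)$. Thus finding the required index is equivalent to exhibiting a
\[ j\in \Des_{\rdI}(P)\setminus \Des_{\rdI}(S^{col}_\alpha),\]
so it suffices to prove $\Des_{\rdI}(P)\not\subseteq \Des_{\rdI}(S^{col}_\alpha)$ whenever $P\in\SET(\alpha)$ and $P\ne S^{col}_\alpha$.

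First I would pin down $\Des_{\rdI}(S^{col}_\alpha)$ explicitly. Since the columns of $S^{col}_\alpha$ are filled bottom to top, left to right with $1,2,\dots,n$ in consecutive order (Definition~\ref{def:column-bot-elt}), two consecutive entries $j,j+1$ lying in the same column occupy consecutive cells of that column, so $j+1$ sits strictly above $j$ and $j\notin\Des_{\rdI}(S^{col}_\alpha)$. On the other hand, if $j$ is the top entry of column $k$ and $j+1$ the bottom entry of column $k+1$, then $j$ occupies row $r_k=\max\{i:\alpha_i\ge k\}$ while $j+1$ occupies row $b_{k+1}=\min\{i:\alpha_i\ge k+1\}$; since $\{i:\alpha_i\ge k+1\}\subseteq\{i:\alpha_i\ge k\}$ we get $b_{k+1}\le r_k$, so $j+1$ is weakly below $j$ and $j\in\Des_{\rdI}(S^{col}_\alpha)$. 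Hence the \emph{non-descents} of $S^{col}_\alpha$ are exactly the pairs $j,j+1$ occupying the same column of $S^{col}_\alpha$.

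Arguing by contradiction, I would then suppose $\Des_{\rdI}(P)\subseteq \Des_{\rdI}(S^{col}_\alpha)$; by the previous paragraph this forces, for every within-column pair $j,j+1$ of $S^{col}_\alpha$, the entry $j+1$ to lie strictly above $j$ in $P$ as well. Writing $I_k=[c_{k-1}+1,c_k]$ for the set of entries filling column $k$ of $S^{col}_\alpha$ (so that $c_k$ is the sum of the heights of the first $k$ columns, with $c_0=0$), this says precisely that within each $I_k$ the entries occupy strictly increasing rows of $P$. The claim is that this already forces $P=S^{col}_\alpha$, which I would establish by induction on $k$, showing that columns $1,\dots,k$ of $P$ agree cell-by-cell with those of $S^{col}_\alpha$. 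Granting that columns $1,\dots,k-1$ already match, the entries $\le c_{k-1}$ occupy exactly the cells of columns $1,\dots,k-1$ in $P$; since $P\in\SET(\alpha)$ has strictly increasing rows and columns, the smallest remaining entry $c_{k-1}+1$ cannot sit in a column $>k$ (the cell to its left in column $k$ of the same row would carry an entry $\le c_{k-1}$), so it occupies the bottommost cell of column $k$, as in $S^{col}_\alpha$. Feeding in the hypothesis that the entries of $I_k$ ascend in $P$, an inner induction up column $k$ then places $c_{k-1}+t$ in the $t$-th cell of column $k$ for every $t$, exactly matching $S^{col}_\alpha$.

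The main obstacle is this last inductive bookkeeping: one must repeatedly combine both the row-increase and column-increase conditions defining $\SET(\alpha)$ with the ``ascending rows'' hypothesis to rule out any entry of $I_k$ escaping column $k$ or landing out of order, while carefully tracking which cells are already occupied. Once the induction closes we conclude $P=S^{col}_\alpha$, contradicting $P\ne S^{col}_\alpha$; therefore some within-column non-descent $j$ of $S^{col}_\alpha$ must in fact be a descent of $P$, giving $\pi_j(S^{col}_\alpha)=S^{col}_\alpha$ and $\pi_j(P)\ne P$, as required.
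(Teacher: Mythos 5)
Your proof is correct, but it is organized differently from the paper's. The paper argues directly: it takes $j$ to be the largest entry such that $P$ and $S^{col}_\alpha$ agree on all entries $\le j$, and then verifies in a few lines that this single $j$ lies in $\Des_{\rdI}(P)$ (because $j+1$ must sit strictly to the right of $j$ in $P$, hence weakly below it) but not in $\Des_{\rdI}(S^{col}_\alpha)$ (a descent there would force $j+1$ to the bottom of the next column in both tableaux, contradicting maximality). You instead pass to the contrapositive: you first characterize $\Des_{\rdI}(S^{col}_\alpha)$ exactly (non-descents are precisely the within-column consecutive pairs), and then prove a rigidity statement — if every within-column pair of $S^{col}_\alpha$ ascends in $P$, then a two-level induction on columns forces $P=S^{col}_\alpha$. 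Your column-by-column induction is sound (the row-increase condition pins $c_{k-1}+1$ into the bottom cell of column $k$, and the column-increase condition rules out any gap in the subsequent placements), and the rigidity statement you prove is in fact slightly stronger than what the lemma needs, since it shows that the descent set of an element of $\SET(\alpha)$ containing no within-column non-descent of $S^{col}_\alpha$ determines the tableau. The cost is a longer bookkeeping argument; the paper's maximal-agreement trick extracts the witness $j$ in one step and is the same device it uses in the straightening algorithm of Lemma~\ref{lem:Zcyclic}. Both proofs ultimately rest on the same structural facts about row- and column-increase in $\SET(\alpha)$, so I would count yours as a valid, mildly more roundabout variant rather than a fundamentally new argument.
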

\begin{proof} By definition of the $\rdI$-action, for any $T\in \SET(\alpha)$, we have 
$\pi_j(T)\ne T\iff j\in \Des_{\rdI}(T).$  In particular, the definition of $S^{col}_\alpha$ implies that 
\[\pi_j(S^{col}_\alpha)\ne S^{col}_\alpha \iff \text{ $j$ is at the top of a column in }S^{col}_\alpha.\]

Let $P\in\SET(\alpha), P\ne S^{col}_\alpha.$  Let $j$ be the largest entry such that $P$ and $S^{col}_\alpha$ coincide for all entries $i\le j.$ Then $j\le n-2,$ and $P$ and $S^{col}_\alpha$ coincide for all entries weakly below or to the left of $j$ in $S^{col}_\alpha.$ Hence, in $P$,  $j+1$ must be strictly to the right of $j$; it cannot be in the same column as $j$ by maximality of $j$ and the fact that $P$ is increasing in rows and in columns.  This also forces  $j+1$ to be either strictly below $j$  or right-adjacent to $j$ in $P$.  In particular $j\in \Des_{\rdI}(P),$ and hence  $\pi_j(P)=s_j(P)\ne P$ or  $\pi_j(P)= 0\ne P$.

If $j\in \Des_{\rdI}(S^{col}_\alpha)$ then $j$ must be at the top of some column of $S^{col}_\alpha,$ but in that case,
 since rows and columns are increasing in $\SET(\alpha)$, $j+1$ must be at the bottom of the next column in both $S^{col}_\alpha$ and $P.$ This contradicts the maximality of $j.$  Hence $j\notin \Des_{\rdI}(S^{col}_\alpha)$ and 
$\pi_j(S^{col}_\alpha)=S^{col}_\alpha.$
\end{proof}

\begin{theorem}\label{thm:Z-indecomp} The cyclic  $\hn$-submodule $\mathcal{Z}_\alpha$ of $\mathcal{V}_\alpha$ is indecomposable.
\end{theorem}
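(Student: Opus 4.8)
The plan is to follow the idempotent strategy used for Theorem~\ref{thm:Indecomp}, but the argument here is considerably shorter because Lemma~\ref{lem:Z-indecomplemma} already supplies the only structural input we need. Since $\mathcal{Z}_\alpha$ is cyclic, generated by $S^{col}_\alpha$ (Lemma~\ref{lem:Zcyclic}), and since a module is indecomposable precisely when its only idempotent endomorphisms are $0$ and the identity, it suffices to let $f$ be an idempotent $\hn$-endomorphism of $\mathcal{Z}_\alpha$ and prove that $f(S^{col}_\alpha)$ is a scalar multiple of $S^{col}_\alpha$; idempotency then forces the scalar into $\{0,1\}$.

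First I would expand $f(S^{col}_\alpha)=\sum_{T\in\SET(\alpha)} a_T\, T$ in the tableau basis of $\mathcal{Z}_\alpha$. The goal is to show $a_P=0$ for every $P\in\SET(\alpha)$ with $P\ne S^{col}_\alpha$. Fix such a $P$ and invoke Lemma~\ref{lem:Z-indecomplemma} to obtain an index $j$ with $\pi_j(S^{col}_\alpha)=S^{col}_\alpha$ but $\pi_j(P)\ne P$. Because $f$ is a module map,
\[ f(S^{col}_\alpha)=f(\pi_j(S^{col}_\alpha))=\pi_j\big(f(S^{col}_\alpha)\big)=\sum_{T\in\SET(\alpha)} a_T\,\pi_j(T). \]
Now I would extract the coefficient of $P$ from both expansions. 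On the left it is $a_P$; on the right it is $\sum_{T:\,\pi_j(T)=P} a_T$. But Lemma~\ref{lem:indecomplemma0} (whose proof is purely formal and applies verbatim inside $\SET(\alpha)$, since $\mathcal{Z}_\alpha$ is a submodule by Lemma~\ref{lem:rdIsubmodule}) says that $\pi_j(P)\ne P$ forces $P\ne\pi_j(T)$ for every $T$. Hence the right-hand coefficient is an empty sum and $a_P=0$.

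Consequently $f(S^{col}_\alpha)=a_{S^{col}_\alpha}\,S^{col}_\alpha$, and applying $f$ again gives $a_{S^{col}_\alpha}=a_{S^{col}_\alpha}^2$, so $a_{S^{col}_\alpha}\in\{0,1\}$; since $S^{col}_\alpha$ generates $\mathcal{Z}_\alpha$, the map $f$ is either $0$ or the identity, proving indecomposability. The point worth emphasizing is that, unlike the bottom-generated module $\mathcal{V}_\alpha$ of Theorem~\ref{thm:Indecomp}, no rank-comparison or injectivity machinery is required here: the main obstacle, namely producing for each $P\ne S^{col}_\alpha$ a generator that fixes $S^{col}_\alpha$ but moves $P$, was entirely resolved in Lemma~\ref{lem:Z-indecomplemma}. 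The only thing I would double-check is that every $\pi_j(T)$ appearing on the right lands back in $\mathcal{Z}_\alpha$ (true by Lemma~\ref{lem:rdIsubmodule}), so that the coefficient extraction is legitimate within the submodule.
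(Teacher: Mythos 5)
Your proposal is correct and follows essentially the same route as the paper: the paper's proof of Theorem~\ref{thm:Z-indecomp} likewise expands $f(S^{col}_\alpha)$ over $\SET(\alpha)$, applies the $\pi_j$ furnished by Lemma~\ref{lem:Z-indecomplemma}, and invokes Lemma~\ref{lem:indecomplemma0} to kill every coefficient $a_P$ with $P\ne S^{col}_\alpha$, finishing by idempotency. Your added remark that the computation stays inside the submodule by Lemma~\ref{lem:rdIsubmodule} is a harmless (and correct) extra check.
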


\begin{proof}  Let $f$ be an idempotent $\hn$-module endomorphism of $\mathcal{Z}_\alpha.$ 
Our starting point here is the equation 
\[f(S^{col}_\alpha)=\sum_{T\in\SET(\alpha)} a_T T.\]

Lemma~\ref{lem:Z-indecomplemma} is the analogue of 
\cite[Lemma~3.11]{BBSSZ2015}.
To make our work self-contained, we reproduce  the brief argument in \cite[Theorem~3.12]{BBSSZ2015}.   
Let $P\in \SET(\alpha), P\ne S^{col}_\alpha$; we claim that $a_P= 0.$ Let $j$ be the integer guaranteed by Lemma~\ref{lem:Z-indecomplemma} for this $P,$ and apply $\pi_j$ to the preceding equation.  We then have, since $\pi_j$ fixes $S^{col}_\alpha,$ 
\[f(S^{col}_\alpha)=f(\pi_j(S^{col}_\alpha))=\sum_{T\in \SET(\alpha)} a_T \pi_j(T),\]
but $P\ne \pi_j(P),$  so $P$ does not appear in the right-hand side by Lemma~\ref{lem:indecomplemma0}. Hence $P$ does not appear in the expansion of $f(S^{col}_\alpha),$ i.e. $a_P=0.$  
We conclude  
 that $f(S^{col}_\alpha)$ equals a scalar multiple of $S^{col}_\alpha,$ which can only be 0 or 1 since $f$ is idempotent.   \end{proof}

Next we examine   the quotient module ${\mathcal{V}_\alpha/\mathcal{Z}_\alpha}$ arising from the submodule $\mathcal{Z}_\alpha$  of $\mathcal{V}_\alpha.$ 

\begin{definition}\label{def:NSET} Let $\mathrm{NSET}(\alpha)$ be the set of all standard  tableaux of shape $\alpha$ in which all  rows increase left to right, but at least one \textbf{column} does NOT increase from bottom to top. 
\end{definition}

\begin{lemma}\label{lem:rdIquotientmodule} The quotient module $\mathcal{\bar V}_{\alpha}:={\mathcal{V}_\alpha/\mathcal{Z}_\alpha}$ is also an 
 $\hn$-module  for the $\rdI$-action, with basis of cosets whose  representatives constitute the set $\NSET(\alpha)\cap \SIT(\alpha).$  The module $\mathcal{\bar V}_{\alpha}$ is nonzero if and only if $\alpha$ has at least two parts of size greater than or equal to 2. When it is nonzero, it is cyclically generated by (the coset represented by) $S^0_\alpha.$
\end{lemma}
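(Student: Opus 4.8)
The plan is to treat the four assertions in turn, with the genuine content concentrated in the basis identification and the nonvanishing criterion. Since $\mathcal{Z}_\alpha$ is an $\hn$-submodule of $\mathcal{V}_\alpha$ by Lemma~\ref{lem:rdIsubmodule}, the quotient $\mathcal{\bar V}_\alpha = \mathcal{V}_\alpha/\mathcal{Z}_\alpha$ inherits an $\hn$-action in the standard way, and the $\rdI$-action descends to it; this settles the first assertion with no further work.

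For the basis, I would use that $\mathcal{V}_\alpha$ has basis $\SIT(\alpha)$ while $\mathcal{Z}_\alpha$ has basis $\SET(\alpha)\subseteq\SIT(\alpha)$; hence the cosets of the tableaux in $\SIT(\alpha)\setminus\SET(\alpha)$ form a basis of $\mathcal{\bar V}_\alpha$. It then suffices to observe that $T\in\SIT(\alpha)\setminus\SET(\alpha)$ means exactly that $T$ has strictly increasing rows and increasing first column (so $T\in\SIT(\alpha)$) while some column fails to increase from bottom to top (so $T\notin\SET(\alpha)$); this is precisely the defining condition for $T\in\NSET(\alpha)\cap\SIT(\alpha)$. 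Thus $\SIT(\alpha)\setminus\SET(\alpha)=\NSET(\alpha)\cap\SIT(\alpha)$, as required.

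The nonvanishing criterion is where the combinatorics enters, and I regard it as the main point. The quotient is nonzero exactly when $\SIT(\alpha)\ne\SET(\alpha)$. If $\alpha$ has at most one part of size $\ge2$, then for every $j\ge2$ the column $j$ meets only those rows $i$ with $\alpha_i\ge j$, of which there is at most one; such a column, having at most one cell, increases trivially, while the first column increases by the $\SIT$ condition. Hence $\SIT(\alpha)=\SET(\alpha)$ and $\mathcal{\bar V}_\alpha=0$. Conversely, if $\alpha$ has at least two parts of size $\ge2$, I would exhibit $S^0_\alpha$ itself as an element of $\NSET(\alpha)$: by Definition~\ref{def:bot-elt} the cells outside the first column are filled with consecutive integers proceeding top to bottom, so among the two or more rows of length $\ge2$ a higher row receives a strictly smaller entry in column $2$ than a lower one. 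Reading column $2$ from bottom to top therefore produces a strictly decreasing sequence, so this column does not increase and $S^0_\alpha\in\NSET(\alpha)\cap\SIT(\alpha)$, forcing $\mathcal{\bar V}_\alpha\ne0$.

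Finally, cyclicity is essentially formal once the previous step is in hand. The canonical projection $q\colon\mathcal{V}_\alpha\to\mathcal{\bar V}_\alpha$ is a surjective morphism of $\hn$-modules, so the image under $q$ of any cyclic generator of $\mathcal{V}_\alpha$ generates $\mathcal{\bar V}_\alpha$. By Theorem~\ref{thm:cyclic-rdI}, $S^0_\alpha$ generates $\mathcal{V}_\alpha$; and in the nonzero case $S^0_\alpha\notin\SET(\alpha)$, so $q(S^0_\alpha)$ is a nonzero basis vector and hence a genuine cyclic generator of $\mathcal{\bar V}_\alpha$. The only step requiring real care is the nonvanishing criterion, specifically the verification that the descending filling of $S^0_\alpha$ in column $2$ violates column-increase exactly when two parts of $\alpha$ are large; everything else is formal module theory.
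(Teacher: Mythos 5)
Your proposal is correct and follows essentially the same route as the paper's (much terser) proof: identify the basis of the quotient as $\SIT(\alpha)\setminus\SET(\alpha)=\NSET(\alpha)\cap\SIT(\alpha)$, observe this is empty precisely when $\alpha$ has at most one part of size at least $2$, and deduce cyclicity from the fact that $S^0_\alpha$ generates $\mathcal{V}_\alpha$. Your explicit verification that $S^0_\alpha$ itself lies in $\NSET(\alpha)$ when two parts are large (column $2$ decreases bottom to top by construction) is a nice touch the paper leaves implicit, but it is not a different argument.
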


\begin{proof} The first statement is  clear, since a basis for the quotient module is the complement of $\SET(\alpha)$ in $\SIT(\alpha)$, which is empty if $\alpha$ has at most one part of size greater than or equal 2; recall that the first column is always increasing by definition of $\SIT(\alpha).$  The second statement follows since $S^0_\alpha$ generates all of $\mathcal{V}_\alpha.$
\end{proof}

It is helpful to record the extreme cases of $\mathcal{\bar V}_\alpha.$ 
\begin{lemma}\label{lem:extremes} Let $\alpha\vDash n.$  
\begin{enumerate}
\item  If $\alpha$ has at most one part greater than 1, then 
$\mathcal{\bar V}_\alpha=(0).$
\item If $\alpha$ has exactly two parts greater than 1, and both of these have size 2, then 
$\mathcal{\bar V}_\alpha$ is one-dimensional, and hence irreducible and  indecomposable. 
\end{enumerate}
\end{lemma}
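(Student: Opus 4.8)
The plan is to read off both statements from the description of the basis of $\mathcal{\bar V}_\alpha$ recorded in Lemma~\ref{lem:rdIquotientmodule}, namely $\NSET(\alpha)\cap\SIT(\alpha)$, and to count this set in each of the two regimes. For part~(1) I would show directly that $\NSET(\alpha)\cap\SIT(\alpha)=\emptyset$. If $\alpha$ has at most one part exceeding $1$, then at most one row of the diagram of $\alpha$ has a cell outside the first column, so every column other than the first contains at most one cell and is therefore vacuously increasing from bottom to top; since the first column of any $T\in\SIT(\alpha)$ increases by definition, every such $T$ already lies in $\SET(\alpha)$. Hence the complement $\NSET(\alpha)\cap\SIT(\alpha)$ is empty and $\mathcal{\bar V}_\alpha=(0)$, which also re-derives the ``only if'' direction of the nonvanishing criterion in Lemma~\ref{lem:rdIquotientmodule}.

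For part~(2) the whole task reduces to showing that $\NSET(\alpha)\cap\SIT(\alpha)$ is a singleton, since a one-dimensional $\hn$-module has no proper nonzero subspace and so is automatically irreducible and indecomposable. Because every part of $\alpha$ is at most $2$, the diagram has only two columns: a first column of $\ell=\ell(\alpha)$ cells and a second column of exactly two cells, one in each of the two rows of size $2$. For $T\in\SIT(\alpha)$ the rows and the first column already increase, so whether $T$ lies in $\SET(\alpha)$ or in $\NSET(\alpha)$ is governed solely by the two second-column entries: we have $T\in\NSET(\alpha)$ precisely when the lower of these entries exceeds the upper one. I would then impose this single reversed inequality and observe that, together with the first-column inequalities and the two row inequalities, it chains the entries into one increasing sequence and thereby forces the filling uniquely; the resulting tableau is exactly $S^0_\alpha$ of Definition~\ref{def:bot-elt}, whose second-column entries are the two largest, with the larger placed in the lower row. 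Thus $\NSET(\alpha)\cap\SIT(\alpha)=\{S^0_\alpha\}$, so $\mathcal{\bar V}_\alpha$ is one-dimensional and hence irreducible and indecomposable. (In practice this is quickest to confirm by simply listing the handful of standard immaculate tableaux and checking that exactly one fails the second-column condition.)

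The main obstacle is precisely the uniqueness count underlying part~(2): one must check carefully that the non-increasing second column, the strictly increasing rows, and the strictly increasing first column interlock with no remaining freedom. The cleanest route is the chaining of inequalities into a single total order, after which the tableau is determined cell by cell and seen to be $S^0_\alpha$; this is the only place where real work is needed, as the implications ``one-dimensional $\Rightarrow$ irreducible $\Rightarrow$ indecomposable'' are immediate.
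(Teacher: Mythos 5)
Your part (1) is correct and is the same argument the paper gives: with at most one part exceeding $1$, every column beyond the first has at most one cell, so $\SIT(\alpha)=\SET(\alpha)$ and $\NSET(\alpha)\cap\SIT(\alpha)=\emptyset$.

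Part (2) has a genuine gap, located precisely at the chaining step you single out as the crux. Let $i<j$ be the two rows of size $2$, let $a_1<\cdots<a_\ell$ be the first column of $T\in\SIT(\alpha)$, and let $x,y$ be the second-column entries in rows $i,j$ respectively. The available inequalities are $a_i<x$, $a_j<y$ and, for $T\in\NSET(\alpha)$, $x>y$; these chain only $a_1<\cdots<a_j<y<x$. The first-column entries $a_{j+1},\ldots,a_\ell$ of the rows \emph{above} row $j$ are required only to exceed $a_j$ and are not comparable with $y$ or $x$, so they may interleave with them freely. Hence the filling is forced only when $j=\ell$, i.e.\ when the top row is one of the two rows of size $2$. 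For $\alpha=(2,2,1)$ one finds three tableaux in $\NSET(\alpha)\cap\SIT(\alpha)$, namely $\tableau{3\\2&4\\1&5}$\,, $\tableau{4\\2&3\\1&5}$\,, $\tableau{5\\2&3\\1&4}$\,, so $\mathcal{\bar V}_{221}$ is three-dimensional and the statement itself fails as printed; no argument can close this gap without adding the hypothesis that the higher of the two size-$2$ rows is the top row, as in the shapes $(1,2,2)$ and $(2,1,2)$ appearing in Remark~\ref{rem:TODO}, where the count really is one. (The paper's own one-line justification --- that a two-cell column ``can decrease bottom to top in only one way'' --- pins down only the relative order of the two column-$2$ entries, not the whole tableau, so it is open to the same objection.) Your identification of the unique tableau with $S^0_\alpha$ is correct in the cases where uniqueness does hold.
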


\begin{proof} Item (1) is clear since in this case any $T\in \SIT(\alpha)$ has only one column of length greater than 1, the first, and so all its columns are increasing.  Hence $\NSET(\alpha)\cap\SIT(\alpha)=\emptyset.$

For Item (2), column 2 of any $T\in \SIT(\alpha)$ has only two cells, and hence can  decrease bottom to top in only one way. 
\end{proof}

For the remainder of this section we will assume $\NSET(\alpha)\cap\SIT(\alpha)\ne \emptyset.$  
Equivalently, by Lemma~\ref{lem:extremes}, we assume $\alpha$ has at least two parts greater  than or equal to 2.

 We define a relation on $\NSET(\alpha)\cap\SIT(\alpha)$ by setting, for $S, T \in \NSET(\alpha)\cap\SIT(\alpha),$ $S\preccurlyeq^{\NSET}_{\rdI} T$ if there is a sequence $\hat\pi$ of generators $\pi_{i_1}\cdots \pi_{i_r}$ such that $T=\hat\pi^{\rdI}(S).$   This is simply the relation induced on the subposet   of $P\rdI_\alpha$ consisting of the elements in $\NSET(\alpha)\cap\SIT(\alpha).$  
For any $S,T\in  \NSET(\alpha)\cap\SIT(\alpha),$  since $\SET(\alpha)$ is invariant under the $\hn$-action, the intervals $[S,T]$ in $\SIT(\alpha)$ and $\NSET(\alpha)\cap\SIT(\alpha)$ coincide, and hence the induced subposet $\NSET(\alpha)\cap\SIT(\alpha)$ is also ranked.  However, it has no top element since $ S^{row}_\alpha\notin \NSET(\alpha).$  See the Hasse diagram of $P\rdI_\alpha$ for $\alpha=223,$  Figure~\ref{fig:Poset}.

We therefore immediately have 
\begin{lemma}\label{lem:NSET-partial-order} The relation $\preccurlyeq^{\NSET}_{\rdI}$ is a partial order on $\NSET(\alpha)\cap\SIT(\alpha),$ with minimal element $S^0_\alpha.$
\end{lemma}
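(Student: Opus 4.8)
The plan is to recognise $\preccurlyeq^{\NSET}_{\rdI}$ as the restriction of the partial order $\poRI$ of Proposition~\ref{prop:poset} to the subset $\NSET(\alpha)\cap\SIT(\alpha)$, and then to place $S^0_\alpha$ inside this subset as its global minimum. First I would note that for $S,T\in\NSET(\alpha)\cap\SIT(\alpha)$ the defining condition ``$T=\pi^{\rdI}_\sigma(S)$ for some $\sigma\in S_n$'' is word-for-word the condition $S\poRI T$; the intermediate tableaux produced along the way (which may lie in $\SET(\alpha)$) play no role in the definition. Hence $S\preccurlyeq^{\NSET}_{\rdI} T$ holds precisely when $S,T\in\NSET(\alpha)\cap\SIT(\alpha)$ and $S\poRI T$, so $\preccurlyeq^{\NSET}_{\rdI}$ is literally the induced suborder. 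Since reflexivity, antisymmetry and transitivity all descend from a partial order to any subset, Proposition~\ref{prop:poset} immediately gives that $\preccurlyeq^{\NSET}_{\rdI}$ is a partial order.

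Next I would verify $S^0_\alpha\in\NSET(\alpha)\cap\SIT(\alpha)$. By construction $S^0_\alpha\in\SIT^*(\alpha)\subseteq\SIT(\alpha)$. Under the standing assumption that $\alpha$ has at least two parts $\ge 2$, its second column has at least two cells; by Definition~\ref{def:bot-elt} the non-first-column cells are filled with consecutive integers from the top row downward, so the higher rows receive the smaller values and the column-$2$ entries strictly \emph{decrease} as one reads from the bottom row upward (for instance column $2$ of $S^0_{43423}$ reads $14,12,9,8,6$ bottom to top, see Example~\ref{ex:bot-elt1}). Thus column $2$ fails to increase, so $S^0_\alpha\in\NSET(\alpha)$, and therefore $S^0_\alpha\in\NSET(\alpha)\cap\SIT(\alpha)$.

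Finally, minimality is immediate from Proposition~\ref{prop:bot-elt}, which shows $S^0_\alpha\poRI T$ for every $T\in\SIT(\alpha)$; restricting to the subset, $S^0_\alpha\preccurlyeq^{\NSET}_{\rdI} T$ for all $T\in\NSET(\alpha)\cap\SIT(\alpha)$, so $S^0_\alpha$ is the unique minimum of the subposet.

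I anticipate no serious obstacle, as the lemma is a bookkeeping consequence of Propositions~\ref{prop:poset} and~\ref{prop:bot-elt}. The only step meriting care is confirming $S^0_\alpha\in\NSET(\alpha)$ --- that is, that $S^0_\alpha$ is not accidentally one of the fully column-increasing tableaux of $\SET(\alpha)$ --- which is exactly the point at which the standing hypothesis that $\alpha$ has two parts $\ge 2$ is needed.
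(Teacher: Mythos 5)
Your proof is correct and follows essentially the same route as the paper, which obtains the lemma directly from the observation that $\preccurlyeq^{\NSET}_{\rdI}$ is the suborder of $P\rdI_\alpha$ induced on $\NSET(\alpha)\cap\SIT(\alpha)$, together with Proposition~\ref{prop:bot-elt}. Your explicit verification that $S^0_\alpha$ actually lies in $\NSET(\alpha)$ under the standing hypothesis that $\alpha$ has at least two parts of size at least $2$ (via the strictly decreasing second column) is a detail the paper leaves implicit, and it is exactly right.
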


Given a composition $\alpha \vDash n,$ extend the partial order $\preccurlyeq^{\NSET}_{\rdI}$ on  $\NSET(\alpha)\cap\SIT(\alpha)$ to an arbitrary total order on  $\NSET(\alpha)\cap\SIT(\alpha),$ denoted by $\preccurlyeq^{\NSET^t}_{\rdI}$. Let the elements of $\NSET(\alpha)\cap\SIT (\alpha)$ under $\preccurlyeq^{\NSET^t}_{\rdI}$ be 
$$\{S^0_\alpha=\rtau _1\preccurlyeq^{\NSET^t}_{\rdI} \cdots \preccurlyeq^{\NSET^t}_{\rdI}\rtau _m  \}.$$
 Now let $\mathcal{\bar V}_{\rtau_i}$ be the $\mathbb{C}$-linear span 
 $$\mathcal{\bar V}_{\rtau_i} = \spam \{ \rtau _j \suchthat \rtau _i \poRI^{\NSET^t} \rtau _j \}\quad \text{ for } 1\leq i \leq m$$
and observe that the definition of $\preccurlyeq^{\NSET^t}_{\rdI}$ implies that $\pi^{\rdI}_{i_1}\cdots \pi^{\rdI} _{i_\ell}\mathcal{\bar V}_{\rtau_i}\subseteq \mathcal{\bar V}_{\rtau_i}$ for any $s_{i_1},\ldots, s _{i_\ell}\in S_n$. This observation combined with the fact that the operators $\{\pi^{\rdI}_i\}_{i=1}^{n-1}$ satisfy the same relations as $\hn$ by Theorem~\ref{thm:Hecke-action-RSImm} gives the following result.
\begin{lemma}\label{lem:hnquotientmodule}
$\mathcal{\bar V}_{\rtau_i}$ is an $\hn$-module, and gives us a filtration of $\hn$-modules 
$$
0\subset \mathcal{\bar V}_{\rtau_m} \subset \cdots \subset \mathcal{\bar V}_{\rtau_{2}}\subset \mathcal{\bar V}_{\rtau_1}.
$$
\end{lemma}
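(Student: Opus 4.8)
The plan is to follow verbatim the filtration argument that established Lemma~\ref{lem:hnmodule}, now transplanted into the quotient module $\mathcal{\bar V}_\alpha=\mathcal{V}_\alpha/\mathcal{Z}_\alpha$. By Lemma~\ref{lem:rdIsubmodule} the subspace $\mathcal{Z}_\alpha$ is an $\hn$-submodule of $\mathcal{V}_\alpha$, so the $\rdI$-action descends to $\mathcal{\bar V}_\alpha$ (Lemma~\ref{lem:rdIquotientmodule}), with basis the cosets of the tableaux in $\NSET(\alpha)\cap\SIT(\alpha)$, and by Theorem~\ref{thm:Hecke-action-RSImm} the induced operators still satisfy the $\hn$-relations. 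Hence it suffices to show that each $\mathcal{\bar V}_{\rtau_i}$ is stable under every generator $\pi^{\rdI}_k$, $1\le k\le n-1$; stability under arbitrary products $\pi^{\rdI}_{i_1}\cdots\pi^{\rdI}_{i_\ell}$ then follows at once.

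First I would fix a spanning tableau $\rtau_j$ with $\rtau_i\preccurlyeq^{\NSET^t}_{\rdI}\rtau_j$ and compute $\pi^{\rdI}_k(\rtau_j)$ in $\mathcal{\bar V}_\alpha$, namely the image of $\pi^{\rdI}_k(\rtau_j)\in\mathcal{V}_\alpha$. By Equation~\eqref{eqn:defn-pi(T)} there are three possibilities in $\mathcal{V}_\alpha$: the value is $\rtau_j$, or $0$, or $s_k(\rtau_j)\in\SIT(\alpha)$. The first two land in $\{\rtau_j,0\}\subseteq\mathcal{\bar V}_{\rtau_i}$. For the third, either $s_k(\rtau_j)\in\SET(\alpha)$, in which case it lies in $\mathcal{Z}_\alpha$ and its image in $\mathcal{\bar V}_\alpha$ is $0$; or $s_k(\rtau_j)=\rtau_{j'}\in\NSET(\alpha)\cap\SIT(\alpha)$, and then $\rtau_j\poRIcover\rtau_{j'}$ is a cover relation of $P\rdI_\alpha$ between two elements of $\NSET(\alpha)\cap\SIT(\alpha)$, so $\rtau_j\preccurlyeq^{\NSET}_{\rdI}\rtau_{j'}$ by definition of the induced order. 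Since $\preccurlyeq^{\NSET^t}_{\rdI}$ refines $\preccurlyeq^{\NSET}_{\rdI}$, transitivity with $\rtau_i\preccurlyeq^{\NSET^t}_{\rdI}\rtau_j$ yields $\rtau_i\preccurlyeq^{\NSET^t}_{\rdI}\rtau_{j'}$, so $\rtau_{j'}\in\mathcal{\bar V}_{\rtau_i}$. In every case $\pi^{\rdI}_k(\rtau_j)\in\mathcal{\bar V}_{\rtau_i}$, giving the required stability.

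The filtration is then immediate: since $\preccurlyeq^{\NSET^t}_{\rdI}$ is a total order with $\rtau_1=S^0_\alpha$ minimal, one has $\mathcal{\bar V}_{\rtau_i}=\spam\{\rtau_j:i\le j\le m\}$, so $\mathcal{\bar V}_{\rtau_{i+1}}\subsetneq\mathcal{\bar V}_{\rtau_i}$ (the coset $\rtau_i$ lies in the larger space but not the smaller), while $0\subset\mathcal{\bar V}_{\rtau_m}=\spam\{\rtau_m\}$. Reading these inclusions from the bottom gives the displayed chain of $\hn$-submodules.

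The one place requiring care is the third case above: a priori, applying $\pi^{\rdI}_k$ to a tableau of $\NSET(\alpha)\cap\SIT(\alpha)$ might escape this set. This is exactly where the submodule property of $\mathcal{Z}_\alpha$ does the work — an output in $\SET(\alpha)$ is annihilated in the quotient, whereas an output remaining in $\NSET(\alpha)\cap\SIT(\alpha)$ produces a cover relation compatible with the chosen linear extension. Everything else is the routine filtration bookkeeping already performed for Lemma~\ref{lem:hnmodule}.
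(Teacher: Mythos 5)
Your proposal is correct and follows essentially the same route as the paper: the action descends to the quotient because $\mathcal{Z}_\alpha$ is a submodule, the Hecke relations are inherited from Theorem~\ref{thm:Hecke-action-RSImm}, and each $\mathcal{\bar V}_{\rtau_i}$ is the span of an up-set in a linear extension of the induced order on $\NSET(\alpha)\cap\SIT(\alpha)$, hence stable under every $\pi^{\rdI}_k$. The paper compresses all of this into a single ``observe that'' sentence, whereas you spell out the three-way case analysis for $\pi^{\rdI}_k(\rtau_j)$ (fixed, zero, or a swap that either dies in $\mathcal{Z}_\alpha$ or moves up the order) — but the underlying argument is identical.
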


Set $\mathcal{\bar V}_{\alpha}:= \mathcal{\bar V}_{\rtau_1}.$

Essentially the same arguments  leading to  Theorem~\ref{the:bigone} now give us:

\begin{theorem}\label{thm:another-bigone}
Let $\alpha \vDash n$ be such that $\alpha$ has at least two parts of size greater than or equal to 2, and let $\rtau _1=S^0_\alpha \in \SIT (\alpha)$ be the unique minimal element of $\NSET(\alpha)\cap\SIT(\alpha) $. Then $\mathcal{\bar V}_{\alpha}$ is an $\hn$-module, cyclically generated by $S^0_\alpha,$ whose quasisymmetric characteristic is the quasisymmetric function 
\begin{equation}\label{eq:new-ext-Schur}\overline{\mathcal{R}\mathcal{E}}_\alpha= \sum_{T\in \NSET(\alpha)\cap\SIT(\alpha)} F_{\mathrm{comp}(\Des_{\rdI}(T))}.\end{equation} 
Equivalently,
\begin{equation}\label{eqn:rdI-ext-diff} \rdI_\alpha-\overline{\mathcal{R}\mathcal{E}}_\alpha=\sum_{\rtau \in \SET(\alpha)}F_{\comp(\desri(\rtau))}.\end{equation}
\end{theorem}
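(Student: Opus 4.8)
The plan is to mirror the filtration argument that established Theorem~\ref{the:bigone}, now carried out for the quotient module $\mathcal{\bar V}_\alpha = \mathcal{V}_\alpha/\mathcal{Z}_\alpha$. By Lemma~\ref{lem:rdIquotientmodule} this is an $\hn$-module with basis $\NSET(\alpha)\cap\SIT(\alpha)$, nonzero precisely when $\alpha$ has at least two parts of size at least $2$; cyclic generation by $S^0_\alpha$ is also immediate from that lemma, since $S^0_\alpha$ generates $\mathcal{V}_\alpha$ and hence its image generates the quotient. It remains only to compute the characteristic.

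First I would invoke the filtration of Lemma~\ref{lem:hnquotientmodule},
\[0 \subset \mathcal{\bar V}_{\rtau_m} \subset \cdots \subset \mathcal{\bar V}_{\rtau_1} = \mathcal{\bar V}_\alpha,\]
indexed by a linear extension $S^0_\alpha = \rtau_1 \preccurlyeq^{\NSET^t}_{\rdI} \cdots \preccurlyeq^{\NSET^t}_{\rdI} \rtau_m$ of the induced subposet on $\NSET(\alpha)\cap\SIT(\alpha)$, so that each successive quotient $\mathcal{\bar V}_{\rtau_{i-1}}/\mathcal{\bar V}_{\rtau_i}$ is one-dimensional, spanned by the image of $\rtau_{i-1}$.

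Next I would identify the $\hn$-action on each such quotient. For $j \notin \desri(\rtau_{i-1})$ the generator $\pi_j$ fixes $\rtau_{i-1}$; for $j \in \desri(\rtau_{i-1})$ the tableau $\pi_j(\rtau_{i-1})$ is either $0$ or $s_j(\rtau_{i-1})$, and in the latter case $s_j(\rtau_{i-1})$ has strictly larger rank, so it lies in $\mathcal{\bar V}_{\rtau_i}$ (or is itself $0$ in the quotient when it belongs to $\SET(\alpha)$). In every case $\pi_j$ acts as $0$ on the quotient, so $\mathcal{\bar V}_{\rtau_{i-1}}/\mathcal{\bar V}_{\rtau_i} \cong [L_\beta]$ with $\beta = \comp(\desri(\rtau_{i-1}))$. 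Additivity of $\chr$ over short exact sequences then gives
\[\chr(\mathcal{\bar V}_\alpha) = \sum_{i=2}^{m+1} F_{\comp(\desri(\rtau_{i-1}))} = \sum_{T \in \NSET(\alpha)\cap\SIT(\alpha)} F_{\comp(\Des_{\rdI}(T))} = \overline{\mathcal{R}\mathcal{E}}_\alpha,\]
which is Equation~\eqref{eq:new-ext-Schur}.

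Finally, the equivalent form Equation~\eqref{eqn:rdI-ext-diff} follows by bookkeeping: since $\SIT(\alpha)$ is the disjoint union of $\SET(\alpha)$ and $\NSET(\alpha)\cap\SIT(\alpha)$, splitting the expansion of $\rdI_\alpha$ from Theorem~\ref{the:bigone} yields $\rdI_\alpha = \sum_{\rtau\in\SET(\alpha)} F_{\comp(\desri(\rtau))} + \overline{\mathcal{R}\mathcal{E}}_\alpha$, and rearranging gives the claim. I do not expect a serious obstacle, since every structural ingredient has already been assembled; the one point that requires care is confirming that $\pi_j$ annihilates each successive quotient even in the boundary case where $s_j(\rtau_{i-1})$ escapes into $\SET(\alpha)$ and is therefore killed in the quotient --- but this is precisely where passing to the quotient by $\mathcal{Z}_\alpha$ does the work.
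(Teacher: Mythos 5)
Your proposal is correct and follows essentially the same route as the paper: the filtration from Lemma~\ref{lem:hnquotientmodule} with one-dimensional successive quotients identified as the irreducibles $[L_\beta]$ indexed by the $\rdI$-descent compositions, additivity of $\chr$, and the disjoint-union bookkeeping $\SIT(\alpha)=\SET(\alpha)\sqcup(\NSET(\alpha)\cap\SIT(\alpha))$ for Equation~\eqref{eqn:rdI-ext-diff}. Your explicit remark that $\pi_j$ still annihilates the successive quotient when $s_j(\rtau_{i-1})$ lands in $\SET(\alpha)$ is a point the paper passes over silently, but it is exactly the right thing to check and changes nothing in the argument.
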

\begin{proof} In Lemma~\ref{lem:hnquotientmodule}, letting $\mathcal{\bar V}_{\rtau_{m+1}}=0,$ the quotient modules $\mathcal{\bar V}_{\rtau_{i-1}}/\mathcal{\bar V}_{\rtau_{i}}$ for $2\leq i\leq m+1$ are $1$-dimensional $\hn$-modules spanned by $\rtau_{i-1}$.
Since
\begin{eqnarray*}
\pi_{j}(\rtau_{i-1})&=& \left \lbrace \begin{array}{ll}0, & j\in \desri(\rtau_{i-1}),\\\rtau_{i-1}, & \text{otherwise,}\end{array}\right.
\end{eqnarray*}
 as an $\hn$-module, $\mathcal{\bar V}_{\rtau_{i-1}}/\mathcal{\bar V}_{\rtau_{i}}$ is isomorphic to the irreducible module $L_{\beta}$, where $\beta$ is the composition corresponding to the descent set $\desri(\rtau_{i-1})$. 
Hence 
$\mathrm{ch}(\mathcal{\bar V}_{\rtau_{i-1}}/\mathcal{\bar V}_{\rtau_{i}})=F_{\comp(\desri(\rtau_{i-1}))}$
and 
\begin{eqnarray*}
\mathrm{ch}(\mathcal{\bar V}_{\alpha})&=\displaystyle\sum_{i=2}^{m+1}\mathrm{ch}(\mathcal{\bar V}_{\rtau_{i-1}}/\mathcal{\bar V}_{\rtau_{i}})
= \displaystyle \sum_{i=2}^{m+1}F_{\comp(\desri(\rtau_{i-1}))}\nonumber\\ 
&= \displaystyle \sum_{\rtau \in \NSET(\alpha)\cap\SIT(\alpha)}F_{\comp(\desri(\rtau))}
= \overline{\mathcal{R}\mathcal{E}}_\alpha,
\end{eqnarray*}
as claimed.  Equation~\eqref{eqn:rdI-ext-diff} follows from the fact that 
\[\rdI_\alpha= \sum_{\rtau \in \SIT(\alpha)}F_{\comp(\desri(\rtau))}.\] The statement about the cyclic generator was established in Lemma~\ref{lem:rdIquotientmodule}. \end{proof}

We now  show that the module $\mathcal{\bar V}_{\alpha}$ is indecomposable.  This will require a  careful analysis of the technical lemmas in Section~\ref{sec:indecomp-module-and-poset}. In particular,  we need to show that the analogue of Lemma~\ref{lem:indecomplemma4} holds.

\begin{lemma}\label{lem:indecomplemma*}  Let $P\ne S^0_\alpha,$ such that $P\in \NSET(\alpha)\cap\SIT^*(\alpha).$   Assume $\alpha$ has at least three parts of size greater than or equal to 2. 
Then there  is a sequence $\hat\pi$ of generators $\pi_{i_1}\cdots \pi_{i_r}$ such that 
\begin{enumerate}
\item  $\hat\pi(S^0_\alpha)=0;$  
\item $\hat\pi(P)$ is nonzero and has rank $r+\mathrm{rank}(P)$ in the subposet $P\rdI_\alpha\cap\NSET(\alpha)\cap\SIT^*(\alpha).$   
\end{enumerate}
\end{lemma}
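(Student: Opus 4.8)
The statement is the $\NSET$-analogue of Lemma~\ref{lem:indecomplemma4}, and the plan is to follow that proof, reusing the straightening permutations of Lemmas~\ref{lem:indecomplemma4-b} and~\ref{lem:indecomplemma4-c}, while adding one new ingredient: control of the columns so that the resulting saturated chain never leaves $\NSET(\alpha)$. Since $P\in\NSET(\alpha)\cap\SIT^*(\alpha)\subseteq\SIT^*(\alpha)$, Lemma~\ref{lem:indecomplemma4} already produces a permutation $\sigma=s_{i_1}\cdots s_{i_r}$ with $\pi_\sigma(S^0_\alpha)=0$ and $\pi_\sigma(P)\neq 0$ of rank $r+\mathrm{rank}(P)$ in the full poset $P\rdI_\alpha$. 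It therefore remains only to promote this to the subposet by verifying that $\pi_\sigma(P)\in\NSET(\alpha)$. (Note that $\pi_\sigma(P)$ need not stay in $\SIT^*(\alpha)$, since $\sigma$ alters the first column; this is harmless, as in Theorem~\ref{thm:Indecomp} only $P$ and the support of an idempotent image need lie in $\SIT^*(\alpha)$, while $\mathrm{rank}$ is the inversion number and is defined for all of $\SIT(\alpha)$.)

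First I would reduce the claim about the whole chain to a claim about its endpoint. Because $\mathcal{Z}_\alpha=\mathrm{span}\langle\SET(\alpha)\rangle$ is an $\hn$-submodule (Lemma~\ref{lem:rdIsubmodule}), the set $\SET(\alpha)$ is closed under every $\pi_k$; hence once a tableau in the chain enters $\SET(\alpha)$ it stays there. Consequently the chain from $P$ to $\pi_\sigma(P)$ lies entirely in $\NSET(\alpha)$ if and only if its endpoint does. Granting $\pi_\sigma(P)\in\NSET(\alpha)$, the chain is a saturated chain in the subposet $P\rdI_\alpha\cap\NSET(\alpha)$, and since the rank is the inherited inversion number, $\pi_\sigma(P)$ has rank $r+\mathrm{rank}(P)$ there, as required. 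This isolates the one real point: $\pi_\sigma(P)\notin\SET(\alpha)$.

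To prove $\pi_\sigma(P)\in\NSET(\alpha)$ I would use Lemma~\ref{lem:Z-keylemma}: the only way a single generator carries a tableau of $\NSET(\alpha)$ into $\SET(\alpha)$ is by a within-column swap of $k,k+1$ that repairs the last remaining column-descent. Thus it suffices to exhibit in $\pi_\sigma(P)$ a column that is not increasing from bottom to top. Here the hypothesis that $\alpha$ has at least three parts of size $\ge 2$ is essential: it guarantees that the second column of every tableau of $\SIT^*(\alpha)$ has at least three entries, leaving room for a column-descent to survive. Concretely, the straightening $\sigma$ only rearranges the first column together with the descending chain that $\tau$ threads through column $2$ (and the higher-column cells it temporarily displaces); by tracking these entries one checks that a descent persists, either in column $2$ among the rows untouched by the right-adjacency produced at row $j$, or in an untouched higher column inherited from $P$. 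A useful first reduction is to treat separately the easy case where some right-adjacency of $S^0_\alpha$ is already broken in $P$ (Case~1 of Lemma~\ref{lem:indecomplemma4}, where $\sigma$ has length one and moving $\ell+1$ down by one leaves every other column-$2$ entry fixed, so any $P$-descent is preserved), reserving the genuine work for the case handled by Lemmas~\ref{lem:indecomplemma4-b}–\ref{lem:indecomplemma4-c}.

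The main obstacle is exactly this last bookkeeping, and it is where the hypothesis earns its keep. When $\alpha$ has only two parts of size $\ge 2$, column $2$ carries exactly two cells and the unique possible descent can be flattened by the very swap that repairs rows $i$ and $j$, so $\pi_\sigma(P)$ may land in $\SET(\alpha)$; this is why the lemma requires at least three parts of size $\ge 2$ rather than the two that merely force $\mathcal{\bar V}_\alpha\neq 0$. I expect the crux of the verification to be showing that, with the surplus third column-$2$ entry available, one can always arrange the straightening so that at least one vertically adjacent pair remains out of order throughout, and I would carry this out by inducting along the same descending-chain replacement used in Lemma~\ref{lem:indecomplemma4-c}, comparing $\pi_\sigma(P)$ against $P$ cell by cell in column $2$. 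Once Lemma~\ref{lem:indecomplemma*} is established, the indecomposability of $\mathcal{\bar V}_\alpha$ follows by the argument of Theorem~\ref{thm:Indecomp}, now run inside the quotient.
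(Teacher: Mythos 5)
Your skeleton is the same as the paper's: invoke Lemma~\ref{lem:indecomplemma4} (equivalently, its ingredients Lemmas~\ref{lem:indecomplemma4-a}--\ref{lem:indecomplemma4-c}) to get the permutation $\sigma$, and then show that the resulting saturated chain never enters $\SET(\alpha)$. Your one new observation --- that since $\SET(\alpha)$ is closed under the action (Lemma~\ref{lem:rdIsubmodule}), the whole chain lies in $\NSET(\alpha)$ if and only if its endpoint $\pi_\sigma(P)$ does --- is correct and is a tidier way to organise the bookkeeping than the paper's step-by-step verification. But it buys you little in practice: the only handle you have on $\pi_\sigma(P)$ is running the straightening one generator at a time, so you end up tracking the same cells either way.

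The genuine gap is that this tracking, which is the entire mathematical content of the lemma beyond ``apply Lemma~\ref{lem:indecomplemma4}'', is announced but not carried out (``by tracking these entries one checks\ldots'', ``I expect the crux of the verification to be\ldots''). The paper's proof consists precisely of that verification, and it has a definite shape you should reproduce. First, one reduces to the case where column $2$ is the unique non-increasing column of $P$: the algorithms of Lemmas~\ref{lem:indecomplemma4-a}--\ref{lem:indecomplemma4-c} leave any offending pair in a column $j\ge 3$ untouched. Then, for the algorithms of Lemmas~\ref{lem:indecomplemma4-a} and~\ref{lem:indecomplemma4-b}, the only column-$2$ cell that changes is the one in row $i$ (resp.\ row $j$), whose entry is diminished by $1$ at each step; since every other entry outside column $1$ exceeds $\ell+1$, any cell of column $2$ below it already witnesses non-increase and continues to do so, while the case $i=1$ is handled by a witness strictly above row $1$ that the algorithm never touches. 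For the algorithm of Lemma~\ref{lem:indecomplemma4-c}, the persistent witness is the pair consisting of cell $(i,2)$, which holds $z>p$ and is never moved, and the column-$2$ cell directly above it, whose entry stays below $p$ throughout; the hypothesis on the parts of $\alpha$ (together with $p\ge\ell+2$) is what guarantees such a cell exists, which matches your intuition about needing a third column-$2$ entry but must be stated and checked rather than expected. Until this case analysis is written out, you have a plan, not a proof.
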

\begin{proof} We follow the argument of Lemma~\ref{lem:indecomplemma4}: again, we may assume that whenever $j,j+1$ are adjacent in $S^0_\alpha,$ they are also adjacent in $P.$

Now  we  need the analogues of Lemma~\ref{lem:indecomplemma4-a},  Lemma~\ref{lem:indecomplemma4-b} and Lemma~\ref{lem:indecomplemma4-c}.
It is enough to show that each of these lemmas holds when $P\in \NSET(\alpha)\cap\SIT^*(\alpha).$  

In that case, note first that if column 2 of $P$ increases bottom to top, then there must be a non-increasing column, column $j,$ for $j\ge 3.$  The important observation is that in each of the three lemmas, Lemma~\ref{lem:indecomplemma4-a},  Lemma~\ref{lem:indecomplemma4-b} and Lemma~\ref{lem:indecomplemma4-c}, the algorithms described in the proofs affect only the first two columns of the tableau $P.$ Hence we need only address the case when column 2 of $P$  (is the only column which) does NOT increase from bottom to top.

Consider Lemma~\ref{lem:indecomplemma4-a} and the algorithm which produces the saturated chain in the statement. It is clear that if $\ell+1$ is right-adjacent to $i\le \ell-1$ in $P$, then the algorithm affects only columns 1 and 2, and furthermore in column 2 it changes only the entry in the cell $(i,2),$ diminishing it each time by 1. Clearly if $i\ge 2,$ all entries below row $i$ in column 2 are greater than $\ell+1,$ and hence the algorithm  preserves the fact that column 2 is NOT increasing bottom to top.  If $i=1,$ then in column 2 above the first row there 
must be entries $x<y$ such that $x$ is in a higher row than $y.$ But these entries are untouched by the algorithm, so the intermediate tableaux in the saturated chain are all in $\NSET(\alpha).$ 

The analogous analysis for Lemma~\ref{lem:indecomplemma4-b} brings us to the same conclusion. 

Finally we examine Lemma~\ref{lem:indecomplemma4-c}. This is somewhat more intricate. 
Note that we have assumed $\alpha$ has at least three parts greater than 1. It is also clear from the proof of  Lemma~\ref{lem:indecomplemma4-c} that once again, the algorithm only affects the first two columns.
As before, we assume column 2 of $P$ is NOT increasing.   

Let $p$ be the smallest entry in $S^0_\alpha$ where there is disagreement with $P,$  and say it is in cell $(i,2).$ Then $P$ has an entry $z$, $z>p$ in this cell, $(i,2)$, and $p$ appears in $P$ in cell $(j,2)$ for $j<i.$ Again as in the proof of  Lemma~\ref{lem:indecomplemma4-c}, we may assume $p\ge\ell+2.$  The algorithm of  Lemma~\ref{lem:indecomplemma4-c} successively replaces $P$ with $\pi_j(P)$ for some $j$.  Our claim is that the second column of $\pi_j(P)$ continues to be non-increasing, i.e. 
at each step, $\pi_j(P)\in \NSET(\alpha).$

Consulting Example~\ref{ex:indecomplemma4-e}, we note that all rows above row $i$ coincide in both $S^0_\alpha$ and in $P,$ and  that if the second column of $P$ is non-decreasing, clearly the same is true of $\pi_{p-1}(P).$   At each step the entry in cell $(j,2)$ of $P$ is diminished by 1, and hence continues to be smaller than the entry $z$ in cell $(i,2),$ which is unchanged.  Also, at each step, entries in $P$ above cell $(i,2)$ continue to be smaller than $p.$ It follows that column 2 is always non-increasing.

Our argument is now complete.
\end{proof}

To finish the proof that $\mathcal{\bar V}_\alpha$ is indecomposable, we note first that Lemma~\ref{lem:injectivity} still holds, as does the following  analogue of Lemma~\ref{lem:indecomplemma1}:

\begin{lemma}\label{lem:indecomplemma1*} Suppose $f$ is an idempotent  endomorphism of the $\hn$-module $\mathcal{\bar V}_\alpha$, with 
\[f(S^0_\alpha)=\sum_{T\in \NSET(\alpha)\cap\SIT(\alpha)} a_T T. \] 
Let $P\in \NSET(\alpha)\cap\SIT(\alpha), P\ne S^0_\alpha,$ and 
suppose there is a $j$ such that $j\in \mathrm{Des}_{\rdI}(P)\setminus\Des_{\rdI}(S^0_\alpha).$
Then $a_P=0.$ 

In particular,  we have 
\[f(S^0_\alpha)=\sum_{T\in \NSET(\alpha)\cap\SIT^*(\alpha)} a_T T.\]
\end{lemma}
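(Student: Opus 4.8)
The plan is to mirror the proof of Lemma~\ref{lem:indecomplemma1} almost verbatim, the only genuinely new ingredient being the verification that the passage to the quotient $\mathcal{\bar V}_\alpha = \mathcal{V}_\alpha/\mathcal{Z}_\alpha$ does not disturb the coefficient-extraction argument. First I would record that, under the standing assumption that $\alpha$ has at least two parts of size $\ge 2$ (Lemma~\ref{lem:extremes}), the tableau $S^0_\alpha$ itself lies in $\NSET(\alpha)\cap\SIT(\alpha)$ — indeed column $2$ of $S^0_\alpha$ decreases from bottom to top, as the entries there are filled top to bottom — so that $S^0_\alpha$ is a genuine basis vector of $\mathcal{\bar V}_\alpha$ and the expansion $f(S^0_\alpha) = \sum_{T\in\NSET(\alpha)\cap\SIT(\alpha)} a_T\,T$ is meaningful.

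Next I would run the core argument. From~\eqref{eqn:defn-pi(T)} the hypotheses give $\pi_j(P)\ne P$ (since $j\in\Des_{\rdI}(P)$) and $\pi_j(S^0_\alpha)=S^0_\alpha$ (since $j\notin\Des_{\rdI}(S^0_\alpha)$), and both facts descend to the quotient because the projection is $\hn$-equivariant. Hence, using that $f$ commutes with $\pi_j$,
\[ f(S^0_\alpha) = f(\pi_j(S^0_\alpha)) = \pi_j(f(S^0_\alpha)) = \sum_{T} a_T\,\pi_j(T). \]
The key point is to read off the coefficient of $P$ on the right. Since each $\pi_j(T)$ is a single tableau (one of $T$, $s_j(T)$, or $0$) and $P\in\NSET(\alpha)\cap\SIT(\alpha)$ is a basis tableau, the image of $\pi_j(T)$ equals $P$ in $\mathcal{\bar V}_\alpha$ if and only if $\pi_j(T)=P$ as tableaux in $\SIT(\alpha)$; a term landing in $\SET(\alpha)$ or equal to $0$ simply vanishes in the quotient and cannot contribute $P$. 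By Lemma~\ref{lem:indecomplemma0}, applied in $\SIT(\alpha)$ together with $\pi_j(P)\ne P$, there is no $T\in\SIT(\alpha)$ with $\pi_j(T)=P$. Therefore $P$ does not occur on the right-hand side, so its coefficient $a_P$ on the left is $0$.

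Finally I would deduce the concluding statement. The vanishing just established shows $a_P=0$ whenever $\Des_{\rdI}(P)\not\subseteq\Des_{\rdI}(S^0_\alpha)$, and by the remarks following Definition~\ref{def:bot-elt} together with~\eqref{eqn:fix-bot-elt} we have $\Des_{\rdI}(S^0_\alpha)=[n-1]\setminus[\ell-1]$. Thus $a_P\ne 0$ forces $1,\dots,\ell-1\notin\Des_{\rdI}(P)$, i.e. $i+1$ lies strictly above $i$ for each $1\le i\le\ell-1$. A short induction (rows increase left to right, and $1,\dots,i$ then occupy strictly increasing rows, so each is leftmost in its row) places $1,\dots,\ell$ in the first column; hence $P\in\SIT^*(\alpha)$, and since $P\in\NSET(\alpha)$ as well, we obtain $P\in\NSET(\alpha)\cap\SIT^*(\alpha)$, giving the reduced expansion.

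I expect the one place needing care to be the middle step: ensuring that collapsing $\SET(\alpha)$ to zero in $\mathcal{\bar V}_\alpha$ does not accidentally create a spurious copy of $P$ inside $\pi_j(f(S^0_\alpha))$. This is why it is worth stating explicitly that each $\pi_j(T)$ is a single basis tableau, so that the coset identity ``$\pi_j(T)=P$'' reduces to the tableau identity required to invoke Lemma~\ref{lem:indecomplemma0}. Once that is pinned down, every remaining step transfers directly from Lemma~\ref{lem:indecomplemma1}.
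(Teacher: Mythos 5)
Your proof is correct and follows the same route as the paper, which simply asserts that the proof of Lemma~\ref{lem:indecomplemma1} goes through verbatim in the quotient $\mathcal{\bar V}_\alpha$. Your extra step — checking that a term $\pi_j(T)$ landing in $\SET(\alpha)$ collapses to zero in the quotient and so cannot produce a spurious copy of $P$, which is what legitimises the appeal to Lemma~\ref{lem:indecomplemma0} — is exactly the point the paper leaves implicit, and making it explicit is a sound refinement rather than a deviation.
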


\begin{proof} The proof of Lemma~\ref{lem:indecomplemma1} goes through verbatim. \end{proof}

It can now be verified that the argument following Lemma~\ref{lem:injectivity} carries through unchanged,  thanks to Lemma~\ref{lem:indecomplemma*}.  Combined with Lemma~\ref{lem:rdIquotientmodule}, this gives us the following:

\begin{theorem}\label{thm:rdI-ext-quotient-indecomp} The  $\hn$-module $\mathcal{\bar V}_\alpha$ 
is nonzero and indecomposable if $\alpha$ has at least two parts greater than or equal to 2, and is zero otherwise.
\end{theorem}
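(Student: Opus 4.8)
The plan is to reduce everything to the cyclic generator $S^0_\alpha$ and then replay the proof of Theorem~\ref{thm:Indecomp} inside the quotient. First I would dispose of the structural claims, since these are already in hand: Lemma~\ref{lem:rdIquotientmodule} records that $\mathcal{\bar V}_\alpha$ has basis $\NSET(\alpha)\cap\SIT(\alpha)$, that it vanishes exactly when $\alpha$ has at most one part $\ge 2$, and that when nonzero it is cyclic with generator $S^0_\alpha$. Thus only indecomposability in the nonzero case remains, and I would split on the number of parts of $\alpha$ that are $\ge 2$. When there are exactly two such parts and both equal $2$, Lemma~\ref{lem:extremes}(2) gives that $\mathcal{\bar V}_\alpha$ is one-dimensional, hence trivially indecomposable.

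For the generic case I would let $f$ be an idempotent $\hn$-endomorphism and imitate Theorem~\ref{thm:Indecomp}. By Lemma~\ref{lem:indecomplemma1*} I may write $f(S^0_\alpha)=\sum_{T\in\NSET(\alpha)\cap\SIT^*(\alpha)} a_T\,T$. Choose $\hat T$ of maximal rank in $P\rdI_\alpha$ with $a_{\hat T}\ne 0$, and suppose for contradiction that $\hat T\ne S^0_\alpha$. Lemma~\ref{lem:indecomplemma*} then supplies $\sigma=s_{i_1}\cdots s_{i_r}$ with $\pi_\sigma(S^0_\alpha)=0$ while $T':=\pi_\sigma(\hat T)\ne 0$ has rank $r+\mathrm{rank}(\hat T)$. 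Applying $\pi_\sigma$ gives $0=f(\pi_\sigma(S^0_\alpha))=\pi_\sigma(f(S^0_\alpha))=\sum_T a_T\,\pi_\sigma(T)$. Any $T$ with $a_T\ne 0$ and $\pi_\sigma(T)=T'$ satisfies $r+\mathrm{rank}(\hat T)=\mathrm{rank}(T')\le r+\mathrm{rank}(T)$, so $\mathrm{rank}(\hat T)\le\mathrm{rank}(T)$; maximality forces equality, whence $\mathrm{rank}(T')=r+\mathrm{rank}(T)$ and Lemma~\ref{lem:injectivity} yields $T=\hat T$. Then the coefficient of $T'$ on the right is $a_{\hat T}$, which must vanish, a contradiction. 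Hence $f(S^0_\alpha)=a_{S^0_\alpha}S^0_\alpha$ with $a_{S^0_\alpha}\in\{0,1\}$, and since $S^0_\alpha$ generates $\mathcal{\bar V}_\alpha$, $f$ is $0$ or $\mathrm{id}$.

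The hard part, and the reason Lemma~\ref{lem:indecomplemma*} (rather than Lemma~\ref{lem:indecomplemma4}) is the right tool, is that the straightening moves must keep each intermediate tableau inside $\NSET(\alpha)$: a single move landing in $\SET(\alpha)=\mathcal{Z}_\alpha$ would become $0$ in the quotient and wreck the rank bookkeeping above. Lemma~\ref{lem:indecomplemma*} certifies this $\NSET$-invariance precisely when $\alpha$ has at least three parts $\ge 2$, since then the non-increasing column, if it is column $2$, lies in a column of length $\ge 3$ whose remaining inversions are never touched by the algorithm. The step I expect to be most delicate is the leftover case of exactly two parts $\ge 2$ with at least one part $\ge 3$, which Lemma~\ref{lem:indecomplemma*} does not literally cover: here column $2$ has length exactly $2$, so if column $2$ is the non-increasing column then $\ell+1$ cannot be right-adjacent to the entry $1$ (that would force the top of column $2$ below $\ell+1$, which is impossible in $\SIT^*(\alpha)$), and the troublesome $i=1$ branch of the straightening simply cannot arise; if instead a later column is the non-increasing one, that column is untouched by the moves. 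Checking this dichotomy carefully, so that the straightening still yields a saturated chain wholly inside $\NSET(\alpha)$, is the loose end I would nail down before asserting indecomposability uniformly for all $\alpha$ with at least two parts $\ge 2$.
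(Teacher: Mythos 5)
Your proposal follows the paper's own route: Lemma~\ref{lem:rdIquotientmodule} and Lemma~\ref{lem:extremes} for the structural claims, then Lemma~\ref{lem:indecomplemma1*}, Lemma~\ref{lem:indecomplemma*} and Lemma~\ref{lem:injectivity} to rerun the rank-maximality argument of Theorem~\ref{thm:Indecomp} for an idempotent endomorphism, exactly as the paper does when it asserts that "the argument following Lemma~\ref{lem:injectivity} carries through unchanged."

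The one place you go beyond the published argument is worth keeping: the paper's key Lemma~\ref{lem:indecomplemma*} is stated only for $\alpha$ with at least \emph{three} parts of size $\ge 2$, while the theorem claims indecomposability whenever there are at least \emph{two} such parts, and Lemma~\ref{lem:extremes}(2) disposes only of the subcase where both parts equal $2$. The intermediate case (exactly two parts $\ge 2$, at least one of size $\ge 3$) is left implicit in the paper. Your observation closes it correctly: here column $2$ of any $P\in\NSET(\alpha)\cap\SIT^*(\alpha)$ has exactly two cells, so if column $2$ is the non-increasing column its smallest entry $\ell+1$ sits in the \emph{higher} of the two rows, forcing $i\ge 2$ in the algorithms of Lemmas~\ref{lem:indecomplemma4-a}--\ref{lem:indecomplemma4-c} and ruling out the $i=1$ branch that required a third cell in column $2$; and if a later column is the non-increasing one, it is untouched by those algorithms, which only modify columns $1$ and $2$. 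This is a genuine improvement in care over the paper's terse statement, and with that verification written out your proof is complete.
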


\begin{remark}\label{rem:TODO}
The functions $\overline{\mathcal{R}\mathcal{E}}_\alpha$ cannot form a basis for $\QSym,$ since they equal zero when $\alpha\vDash n$ has at most one part greater than 1.  (Such compositions are called \textit{diving boards} in \cite{CFLSX2014}.) 
In fact the set $\{\overline{\mathcal{R}\mathcal{E}}_\alpha :\alpha \text{ has at least two parts of size $\ge 2$}\}$
need not be linearly independent.
For instance, $\overline{\mathcal{R}\mathcal{E}}_{122}=F_{311}=\overline{\mathcal{R}\mathcal{E}}_{212},$ because 
\\
$\NSET(\!122)\cap\SIT(\!122)$ has only one tableau ${\tableau{\bf{3}&\bf{4}\\2&5\\1}} ,$ 
as does $\NSET(212)\cap\SIT(212)$:$\tableau{\bf{3}&\bf{4}\\2\\1 &5}.$ 

\end{remark}

Next we observe that  each of the quasisymmetric functions of this section 
 is in fact the generating function for an appropriate  class of tableaux.

Call a row-strict immaculate tableau $T$ a \emph{row-strict semistandard immaculate tableau} if ALL the columns of $T$ are weakly increasing, bottom to top (and necessarily all the rows of $T$ are strictly increasing, left to right).   Similarly, call an immaculate tableau $S$ a \emph{column-strict semistandard immaculate tableau} if ALL the columns of $S$ are strictly increasing, bottom to top, (and necessarily all the rows of $S$ are weakly increasing, left to right).

Recall (Definition~\ref{def:rdIfunction}) that $\rdI_\alpha$ is the generating function for all tableaux of shape $\alpha$ with weakly increasing first column, bottom to top, and strict increase along rows, left to right, while $\dI_\alpha$ (Definition~\ref{def:dIfunction}) is the generating function for all tableaux of shape $\alpha$ with strictly increasing first column, bottom to top, and weak increase along rows, left to right.
\begin{proposition}\label{prop:gf-row-ext} Let $\alpha\vDash n$.  For any tableau $D$ of shape $\alpha$, as in Definition~\ref{def:dIfunction}, let $d_i$ be the number of entries equal to $i$ in $D$. 
\begin{enumerate}
\item $\mathcal{R}\mathcal{E}_\alpha=\sum x_1^{d_1}x_2^{d_2}\cdots$ where the sum is over all row-strict semistandard immaculate tableaux $D$  of shape $\alpha$.  In particular when $\alpha$ is a partition $\lambda$, $\mathcal{R}\mathcal{E}_\lambda=s_{\lambda^t}$.
\item  $\overline{\mathcal{R}\mathcal{E}}_\alpha=\sum x_1^{d_1}x_2^{d_2}\cdots$ where the sum is over all row-strict immaculate tableaux of shape $\alpha$ with at least one column that is NOT increasing. 
\item \cite[Definition~8, Theorem~11, and Section~7]{CFLSX2014} $\mathcal{E}_\alpha=\sum x_1^{d_1}x_2^{d_2}\cdots$ where the sum is over all  column-strict semistandard immaculate tableaux $D$  of shape $\alpha$.  In particular when $\alpha$ is a partition $\lambda$, $\mathcal{E}_\lambda=s_{\lambda}$.
\end{enumerate}
\end{proposition}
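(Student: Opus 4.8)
The plan is to derive all three statements from the fundamental expansions already in hand, together with the same standardization bijection used (in the omitted proof of) Theorem~\ref{thm:rsfunddecomp}. Recall from Proposition~\ref{prop:rowstrict-ext-basis} and Theorem~\ref{thm:another-bigone} that
$$\mathcal{R}\mathcal{E}_\alpha=\sum_{S\in\SET(\alpha)}F_{\comp(\Des_{\rdI}(S))},\qquad \overline{\mathcal{R}\mathcal{E}}_\alpha=\sum_{S\in\NSET(\alpha)\cap\SIT(\alpha)}F_{\comp(\Des_{\rdI}(S))},$$
so it suffices to reinterpret each sum of fundamentals as a monomial generating function over a class of fillings. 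First I would recall the standardization map $D\mapsto S$ on row-strict immaculate tableaux: process the values $1,2,\dots$ in increasing order, and within a fixed value assign the next consecutive standard labels to the relevant cells from the bottom row upward (the cells holding a fixed value lie in distinct rows because rows strictly increase). As in Theorem~\ref{thm:rsfunddecomp}, this sends $D$ to a standard immaculate tableau $S$ and sets up, for each fixed $S\in\SIT(\alpha)$, a bijection between the destandardizations of $S$ and the monomials appearing in $F_{\comp(\Des_{\rdI}(S))}$; summing over $S\in\SIT(\alpha)$ recovers $\rdI_\alpha=\sum_D x^D$ of Definition~\ref{def:rdIfunction}.

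The one genuinely new ingredient is a lemma tracking columns through standardization: \emph{a row-strict immaculate tableau $D$ has all columns weakly increasing bottom to top if and only if its standardization $S$ has all columns strictly increasing, i.e. $S\in\SET(\alpha)$; and $D$ has at least one non-increasing column if and only if $S\in\NSET(\alpha)\cap\SIT(\alpha)$.} To prove this I would examine a single vertically adjacent pair of cells, with lower value $a$ and upper value $b$. If $a\le b$ then either $a<b$, so $a$ is processed strictly before $b$ and receives the smaller label, or $a=b$, in which case both are processed together and the lower cell is assigned first and receives the smaller label; either way the entries of $S$ strictly increase up the column. Conversely if $a>b$ then $b$ is processed first, the upper cell gets the smaller label, and the column of $S$ strictly decreases at this pair. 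Hence weak increase in a column of $D$ is equivalent to strict increase in the same column of $S$, which yields the stated equivalence.

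Combining the lemma with the bijection, restricting $\sum_{S\in\SIT(\alpha)}F_{\comp(\Des_{\rdI}(S))}$ to $S\in\SET(\alpha)$ corresponds precisely to restricting $\sum_D x^D$ to those $D$ that are row-strict semistandard immaculate (all columns weakly increasing), proving (1); restricting instead to $S\in\NSET(\alpha)\cap\SIT(\alpha)$ corresponds to those $D$ with at least one non-increasing column, proving (2). For the partition specialization in (1), a row-strict semistandard immaculate tableau of shape $\lambda$ is exactly the transpose of a semistandard Young tableau of shape $\lambda^{t}$ (transposition exchanges rows and columns, turning strict rows and weak columns into weak rows and strict columns) and transposition preserves content, so the generating function is $s_{\lambda^{t}}$. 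Statement (3) is exactly \cite[Definition~8, Theorem~11, Section~7]{CFLSX2014}; alternatively it follows from the entirely analogous standardization argument applied to $\mathcal{E}_\alpha=\sum_{S\in\SET(\alpha)}F_{\comp(\Des_{\dI}(S))}$ of Theorem~\ref{thm:Assaf-Searles}, with column-strict semistandard immaculate tableaux (weak rows, strict columns) in the role of $D$, whose partition specialization is the set of semistandard Young tableaux of shape $\lambda$, giving $\mathcal{E}_\lambda=s_\lambda$.

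I expect the only delicate point to be the bookkeeping in the standardization lemma: pinning down the ``bottom-to-top within a fixed value'' convention and confirming it is simultaneously compatible with the descent set $\Des_{\rdI}$ driving the fundamental expansion and with the column-strictness correspondence. This is precisely the standardization verification already carried out for Theorem~\ref{thm:rsfunddecomp}, so the present proof reduces to checking that that bijection restricts cleanly to the three tableau subclasses.
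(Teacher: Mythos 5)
Your proof is correct and follows essentially the same route as the paper: both arguments rest on the bottom-to-top (de)standardization bijection between row-strict semistandard immaculate tableaux of content $\beta$ and tableaux $T\in \SET(\alpha)$ with $\Des_{\rdI}(T)\subseteq\set(\beta)$, with the key observation that weak column increase in the filling corresponds exactly to strict column increase in its standardization. The only difference is organizational — you restrict the global bijection for $\SIT(\alpha)$ via your column-tracking lemma, while the paper extracts the coefficient of $x^\beta$ directly over $\SET(\alpha)$ and obtains Part (2) by subtraction — but the combinatorial content is identical.
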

\begin{proof}  Part (2) is immediate from Part (1) and the fact that $\rdI_\alpha-\mathcal{R}\mathcal{E}_\alpha=\overline{\mathcal{R}{\mathcal{E}}}_{\alpha}$.
Part (3) is due to \cite{CFLSX2014}, where the authors define the shin basis $\shin_\alpha$ of $\Nsym$; their notation for our 
 $\mathcal{E}_\alpha$ is $\shin^*_\alpha$.  Their proof arrives at our definition of column-strict semistandard immaculate tableaux using Pieri rules.  However, one can also adapt the following  proof of Part (1).

Our proof follows \cite[Ex.~7.90(a)]{RPSEC21999}. From Proposition~\ref{prop:rowstrict-ext-basis} we have 
\[\mathcal{R}\mathcal{E}_\alpha=\sum_{T\in\SET(\alpha)} F_{\comp(\Des_{\rdI}(T))}  .\] 
It suffices to show that for any $\beta\vDash n$, the coefficient of the monomial $x_1^{\beta_1}x_2^{\beta_2}\cdots$  in the right-hand side is the number of row-strict semistandard immaculate tableaux of shape $\alpha$ and content $\beta$, i.e. with $\beta_i$ $i$'s.
To see this, we recall \cite{LMvW2013} that for a composition $\beta$, the  monomial $x_1^{\beta_1}x_2^{\beta_2}\cdots$ appears in $F_\gamma$ if and only if $\beta$ is a finer composition than $\gamma$, or equivalently, if and only if $\set(\gamma)\subseteq \set(\beta)$.  Hence the monomial $x_1^{\beta_1}x_2^{\beta_2}\cdots$ appears in $\mathcal{R}\mathcal{E}_\alpha$  if and only if $\Des_{\rdI}(T)\subseteq \set(\beta).$

We claim that 
$|\{T\in \SET(\alpha): \Des_{\rdI}(T)\subseteq \set(\beta)\}|$
equals the number of  row-strict semistandard immaculate tableaux of shape $\alpha$ with $\beta_i$ entries equal to $i$, for all $i\ge 1$.

By definition, in a   row-strict semistandard immaculate tableau $\tau$, the entries $1,2,\ldots,i$ appear in the first $i$ columns of $\tau$ from the left. 

Fix a composition $\beta\vDash n$.
Given such a tableau $\tau$ with $\beta_i$ $i$'s,  we associate to $\tau$ a tableau $T\in \SET(\alpha)$ obtained by replacing the $\beta_i$ entries equal to $i$ with the $\beta_i$ consecutive entries in the interval $[\beta_1+\cdots+\beta_{i-1}, \beta_1+\cdots+\beta_{i-1}+\beta_i],$ where we read all the entries equal to $i$ from \textit{bottom to top}.   Since the columns increase weakly and the rows increase strictly, no two $i$'s are in the same row, so this implies that the descents in $T$ can occur only where $i$'s change to $(i+1)$'s as we read up the columns of $\tau$, starting with the leftmost column.  Hence  
$\Des_{\rdI}(T)\subseteq \set(\beta)$.

Conversely, suppose we have $T\in \SET(\alpha)$ such that $\Des_{\rdI}(T)\subseteq \set(\beta)$.  To recover $\tau$, replace $1,2,\ldots ,\beta_1$ with $1$'s, $\beta_1+1,\ldots ,\beta_1+\beta_2$ with 2's and so on.  The condition $\Des_{\rdI}(T)\subseteq \set(\beta)$ guarantees that for all $i$, the first $\beta_i$ entries appear in the first $i$ columns of $\tau$, and hence we have a tableau  with weakly increasing columns, and strictly increasing rows.  This finishes the proof.
\end{proof}
 In his paper, Searles shows that the set $\NSET(\alpha)$ constitutes an $\hn$-invariant 
  submodule, whose basis is the set of all row-increasing tableaux of shape $\alpha$, of a larger parent module for the $\dI$-action; he then shows that the resulting \textit{quotient} module has the following properties: 
it has basis $\SET(\alpha)$ and 
 quasiymmetric characteristic $\mathcal{E}_\alpha$, specialising to the Schur function $s_\lambda$ when $\alpha$ is the partition $\lambda$, and 
it is indecomposable and cyclically generated by the top element $ S^{row}_\alpha$ (the row superstandard tableau) of the poset $P\dI_\alpha$. 
 Also, the following difference expands positively in the fundamental basis for $\QSym$:
\[ \dI_\alpha-\mathcal{E}_\alpha=\sum_{T\in \NSET(\alpha)\cap\SIT(\alpha)}  F_{\comp(\Des_{\dI}(T))}.  \]
By contrast, in this section we have shown that  $\SET(\alpha)$ constitutes an $\hn$-invariant subposet of $P\rdI_\alpha$ for the $\rdI$-action, and therefore spans an $\hn$-submodule $\mathcal{Z}_\alpha$ of $\mathcal{V}_\alpha$  with the following properties: 

$\mathcal{Z}_\alpha$ has basis $\SET(\alpha)$
 and quasisymmetric characteristic $\mathcal{R}\mathcal{E}_\alpha=\psi(\mathcal{E}_\alpha)$, 
specialising to the Schur function $s_{\lambda^t}$ when $\alpha$ is the partition $\lambda$, and 
 it is indecomposable and cyclically generated by the column superstandard element  $S^{col}_\alpha$ of the poset $P\rdI_\alpha$, which is also the bottom element of the subposet $\SET(\alpha)$. Furthermore, 
 the resulting quotient module $\mathcal{V}_\alpha/\mathcal{Z}_\alpha,$ with basis $\NSET(\alpha)\cap\SIT(\alpha)$ and characteristic 
$\overline{\mathcal{R}\mathcal{E}}_\alpha,$ is cyclically generated by $S^0_\alpha$, and is indecomposable if $\alpha$ has at least two parts greater than or equal to 2, and is zero otherwise. Finally, 
the following difference expands positively in the fundamental basis for $\QSym$:
\[\rdI_\alpha-\mathcal{R}\mathcal{E}_\alpha=\overline{\mathcal{R}{\mathcal{E}}}_{\alpha}= \sum_{T\in \NSET(\alpha)\cap\SIT(\alpha)} F_{\mathrm{comp}(\Des_{\rdI}(T))}.\]

We now observe that, similarly, the poset $P\rdI(\alpha)$  gives a quotient module of  $\mathcal{W}_\alpha$ with characteristic equal to the extended Schur function $\mathcal{E}_\alpha$. In  Figure~\ref{fig:Poset}, by reversing the arrows, one sees that  the tableaux that are NOT in $\SET(\alpha)$ form a 
  closed subset under the $\dI$-action.
\begin{proposition}\label{prop:shin-module} The set $\NSET(\alpha)\cap\SIT(\alpha)$ is a basis for a submodule $\mathcal{Y}_\alpha$ of $\mathcal{W}_\alpha$ for the $\dI$-action.   The resulting quotient module 
$\mathcal{W}_\alpha/\mathcal{Y}_\alpha$ has basis of cosets represented by the set $\SET(\alpha)$, with characteristic $\mathcal{E}_\alpha$, and is cyclically generated by (the coset represented by) $S^{row}_\alpha$.  It  is indecomposable.
\end{proposition}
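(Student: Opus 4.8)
The plan is to run the constructions of Section~\ref{sec:row-strict-ext} "upside down", exploiting the duality of the immaculate Hecke poset and the fact that the $\dI$-action moves \emph{downward} in it. First I would record that $\SET(\alpha)=[S^{col}_\alpha,S^{row}_\alpha]$ by Lemma~\ref{lem:Zcyclic}, and that since $S^{row}_\alpha$ is the unique maximal element of $P\dI_\alpha\simeq P\rdI_\alpha$ (Proposition~\ref{prop:top-elt} with Lemma~\ref{lem:sameposet}), this interval is in fact the principal filter $\{T : S^{col}_\alpha\poI T\}$. Hence its complement $\NSET(\alpha)\cap\SIT(\alpha)$ is an order ideal. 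By \eqref{eqn:dIcover} and \eqref{eqn:defn-dualImm-pi(T)}, $\pi_i^{\dI}(T)$ is either $T$, or $0$, or $s_i(T)$ with $s_i(T)\poIcover T$, so $\pi_i^{\dI}$ never raises an element in the poset. Since an order ideal is closed downward, applying any $\pi_i^{\dI}$ to a basis vector of $\NSET(\alpha)\cap\SIT(\alpha)$ yields $0$ or another element of that ideal; therefore $\mathcal{Y}_\alpha=\spam(\NSET(\alpha)\cap\SIT(\alpha))$ is an $\hn$-submodule of $\mathcal{W}_\alpha$. (Alternatively one can verify this directly on tableaux, dualizing Lemma~\ref{lem:rdIsubmodule}.)

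Because $\SIT(\alpha)=\SET(\alpha)\sqcup(\NSET(\alpha)\cap\SIT(\alpha))$, the cosets of the tableaux in $\SET(\alpha)$ form a basis of $\mathcal{W}_\alpha/\mathcal{Y}_\alpha$. For the characteristic I would fix a linear extension $\rtau_1\poI\cdots\poI\rtau_M=S^{row}_\alpha$ of the poset restricted to $\SET(\alpha)$ and set $D_i=(\NSET(\alpha)\cap\SIT(\alpha))\cup\{\rtau_1,\dots,\rtau_i\}$; each $D_i$ is again a down-set, so $\spam(D_i)$ is a submodule, and the resulting chain descends to a filtration of the quotient with one-dimensional layers spanned by $\overline{\rtau_i}$. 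On the $i$-th layer, any genuine swap $\pi_j^{\dI}(\rtau_i)=s_j(\rtau_i)$ lies in $\spam(D_{i-1})$, so $\pi_j^{\dI}$ acts as $0$ when $j\in\Des_{\dI}(\rtau_i)$ and as the identity otherwise; comparing with \eqref{eqn:HeckeIrreps}, the layer is the irreducible $[L_{\comp(\Des_{\dI}(\rtau_i))}]$. Additivity of $\chr$ then gives $\chr(\mathcal{W}_\alpha/\mathcal{Y}_\alpha)=\sum_{T\in\SET(\alpha)}F_{\comp(\Des_{\dI}(T))}=\mathcal{E}_\alpha$ by Theorem~\ref{thm:Assaf-Searles}. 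Cyclicity is then immediate: the surjection $\mathcal{W}_\alpha\to\mathcal{W}_\alpha/\mathcal{Y}_\alpha$ sends the generator $S^{row}_\alpha$ of Theorem~\ref{thm:BBSSZ2-cyclic} to a generator $\overline{S^{row}_\alpha}$.

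Indecomposability is the last and most delicate point, and I would prove it by mirroring Theorem~\ref{thm:Z-indecomp}, now anchored at the top element and the $\dI$-action. Let $f$ be an idempotent endomorphism and write $f(\overline{S^{row}_\alpha})=\sum_{T\in\SET(\alpha)}a_T\,\overline{T}$. The crux is the dual of Lemma~\ref{lem:Z-indecomplemma}: for every $P\in\SET(\alpha)$ with $P\ne S^{row}_\alpha$ there is a $j$ that fixes $S^{row}_\alpha$ but not $\overline{P}$. Since $\pi_j^{\dI}(T)=T\iff j\notin\Des_{\dI}(T)$ and $\Des_{\dI}(S^{row}_\alpha)=\set(\alpha)$, this amounts to showing $\Des_{\dI}(P)\not\subseteq\set(\alpha)$ for $P\ne S^{row}_\alpha$; equivalently, $S^{row}_\alpha$ is the \emph{only} tableau in $\SET(\alpha)$ all of whose $\dI$-descents occur at the row boundaries of $\alpha$. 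This is where I expect the real work to lie, and I would establish it by induction on the blocks of $\alpha$: the hypothesis $\Des_{\dI}(P)\subseteq\set(\alpha)$ forces $2,3,\dots$ successively to sit weakly below their predecessors, which in the presence of increasing rows and columns pins $1,\dots,\alpha_1$ into row~$1$, then $\alpha_1+1,\dots,\alpha_1+\alpha_2$ into row~$2$, and so on, recovering $S^{row}_\alpha$. Granting this lemma, for the chosen $j$ one has $\pi_j^{\dI}(\overline{P})\ne\overline{P}$ (as $P\in\SET(\alpha)$ survives in the quotient, its image $\overline{s_j(P)}$ or $0$ differs from $\overline{P}$), and the idempotency $(\pi_j^{\dI})^2=\pi_j^{\dI}$ on the quotient—via the quotient analogue of Lemma~\ref{lem:indecomplemma0}—shows $\overline{P}$ cannot appear in $\pi_j^{\dI}(f(\overline{S^{row}_\alpha}))=f(\overline{S^{row}_\alpha})$, so $a_P=0$. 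Hence $f(\overline{S^{row}_\alpha})=a_{S^{row}_\alpha}\overline{S^{row}_\alpha}$ with $a_{S^{row}_\alpha}\in\{0,1\}$, and since $\overline{S^{row}_\alpha}$ generates the module, $f=0$ or $f=\mathrm{id}$, proving indecomposability.
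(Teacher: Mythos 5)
Your proof is correct and follows essentially the same route as the paper: the submodule claim rests on the poset duality of Lemma~\ref{lem:sameposet} together with the closure of $\SET(\alpha)=[S^{col}_\alpha,S^{row}_\alpha]$ under the $\rdI$-action (Lemma~\ref{lem:rdIsubmodule}, Lemma~\ref{lem:Zcyclic}), the characteristic comes from the standard filtration, and indecomposability uses the ``fix the generator but not $P$'' method of Part~(1) of Proposition~\ref{prop:indecomp-proof-methods}. The only real difference is that where the paper simply cites \cite[Lemma~3.11]{BBSSZ2015} for the existence of a $j$ with $\pi_j^{\dI}(S^{row}_\alpha)=S^{row}_\alpha$ but $\pi_j^{\dI}(P)\ne P$, you supply a self-contained inductive proof of the restricted statement for $P\in\SET(\alpha)$ (showing $\Des_{\dI}(P)\subseteq\set(\alpha)$ forces $P=S^{row}_\alpha$), which is a correct, slightly weaker substitute that suffices here.
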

\begin{proof}
 Lemma~\ref{lem:rdIsubmodule} shows that $\NSET(\alpha)\cap\SIT(\alpha)$ spans a submodule $\mathcal{Y}_\alpha$ of $\mathcal{W}_\alpha$ for the $\dI$-action, since, by Lemma~\ref{lem:sameposet}, for $T\in \SIT(\alpha)$, $\pi_i^{\dI}(T)=S\in\SIT(\alpha)\iff T=\pi_i^{\rdI}(S)$. Clearly the resulting quotient module has basis 
 indexed by $\SET(\alpha)$, characteristic $\mathcal{E}_\alpha$, and is cyclically generated by $S^{row}_\alpha$.  It remains to check that this quotient module is indecomposable.   But this follows by invoking \cite[Lemma~3.11]{BBSSZ2014}, which states that for $P\ne  S^{row}_\alpha$ there is an $i$ such that 
$\pi_i^{\dI}$ fixes the generator $S^{row}_\alpha$ but not $P$. The argument is now identical to \cite[Theorem~3.12]{BBSSZ2014} and the proof of Theorem~\ref{thm:Z-indecomp}.
\end{proof}

\section{New 0-Hecke modules from the dual immaculate action}\label{sec:dualImmsubmodules}

In this section we re-examine the dual immaculate action $\dI$ described in  Theorem~\ref{thm:BBSSZ2dualImm}, originally defined in \cite{BBSSZ2015}, on the vector space with basis $\SIT(\alpha)$.  Recall from Section~\ref{sec:indecomp-module-and-poset} that $\SIT^*(\alpha)$ 
 is the set of standard immaculate tableaux whose first column consists of the integers in $[\ell(\alpha)]$. In Figure~\ref{fig:Poset}, these are the blue tableaux lying in the interval $[S^0_\alpha, S^{col}_\alpha]$.  

In \cite{S2020}, Searles constructs the $\hn$-module (for the dual immaculate action) whose characteristic is the extended Schur function by taking a quotient of a larger module, different from $\mathcal{W}_\alpha.$   
We showed in Proposition~\ref{prop:shin-module} how this $\hn$-module can also be obtained  by taking a quotient of $\mathcal{W}_\alpha.$
In this section we will  consider a different submodule, and resulting quotient module of $\mathcal{W}_\alpha$ itself, for the dual immaculate action.  This continues the analogy with  the previous section, where we constructed both a submodule and quotient module of $\mathcal{V}_\alpha$ for the row-strict dual immaculate action.  

Note that thanks to Lemma~\ref{lem:sameposet}, the $\dI$-Hecke action on the poset $P\dI(\alpha)$ is obtained from the $\rdI$-Hecke action on the poset $P\rdI(\alpha)$ simply by reversing the arrows in the Hasse diagram. In particular the labels remain unchanged.

Before stating the theorem, we extract the two main techniques used in this paper for establishing indecomposability of a cyclic $\hn$-module $\mathcal{H}$.  The technique of Part (1) below was originally used to prove indecomposability in \cite[Lemma~7.7, Theorem~7.8]{TvW2015}, for the  immaculate $\hn$-module $\mathcal{W}_\alpha$ in \cite[Lemma 3.11, Theorem 3.12]{BBSSZ2015}, as well as the extended immaculate module of \cite[Theorem~3.13]{S2020}; in the present paper it is applied to the row-strict extended module $\mathcal{Z}_\alpha$  in Lemma~\ref{lem:Z-indecomplemma} and Theorem~\ref{thm:Z-indecomp} (in the special case where $\sigma_v$ is a consecutive transposition).  The technique of Part (2) is used in 
\cite[Proof of Theorem~4.1]{BS2021} and in this paper for the row-strict Hecke module $\mathcal{V}_\alpha$, in the proofs of Theorem~\ref{thm:Indecomp}, as well as in Lemma~\ref{lem:indecomplemma1*} and Theorem~\ref{thm:rdI-ext-quotient-indecomp}.

\begin{proposition}\label{prop:indecomp-proof-methods} 
Let $\mathcal{H}$ be an $\hn$-module cyclically generated by $u_0,$ with $\mathbf{K}$-basis $\{v\}_{v\in B}$ for some finite set $B$ such that $u_0\in B$.  Let $f$ be an  endomorphism of $\mathcal{H}.$ Either of the following two conditions implies that $f$ is a scalar multiple of the identity.  In particular, if $f$ is idempotent, then it must be either the identity or the zero map.

\begin{enumerate}
\item For every basis element $v\in B$, $v\ne u_0$, there is a sequence of  generators   $\hat\pi_v=\pi_{i_1}\cdots\pi_{i_r}$ in $\hn$ such that 
 \[\hat\pi_{v}(u_0)=u_0, \qquad \text{but } \hat\pi_{v}(v)\ne v.\]
Here we also need to know that $v\ne\hat\pi_{v}(v)\Rightarrow  v\ne \hat\pi_{v}( w) $ for any other basis element $w$. The latter statement holds automatically when the sequence of generators  $\hat\pi_v$ is a single generator $\pi_i$, see Lemma~\ref{lem:indecomplemma0}. 
\item For every basis element $v\in B$, $v\ne u_0$, there is a sequence of generators   $\hat\pi_v=\pi_{i_1}\cdots\pi_{i_r}$ in $\hn$ such that 
 \[\hat\pi_{v}(u_0)=0, \qquad  \text{but } \hat \pi_{v}(v)\ne 0.\]
Here we also need to know that $\hat\pi_(v)=\hat\pi(v')\Rightarrow v=v'$.
\end{enumerate}
\end{proposition}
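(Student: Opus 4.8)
The plan is to exploit the cyclicity of $\mathcal{H}$ to reduce everything to understanding the single element $f(u_0)$. Since $\mathcal{H}=\hn\cdot u_0$, every $w\in\mathcal{H}$ may be written $w=h\,u_0$ for some $h\in\hn$, and because $f$ is an $\hn$-module endomorphism we have $f(w)=f(h\,u_0)=h\,f(u_0)$. Thus if we can show $f(u_0)=c\,u_0$ for some scalar $c\in\mathbf{K}$, then $f(w)=h\,(c\,u_0)=c\,w$ for every $w$, i.e. $f=c\cdot\id$. Writing $f(u_0)=\sum_{v\in B}a_v\,v$, the whole proof reduces to showing $a_v=0$ for every $v\ne u_0$. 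Once this is established, idempotence of $f$ forces $c^2=c$, hence $c\in\{0,1\}$ and $f\in\{0,\id\}$; the final assertion about indecomposability then follows from the standard criterion that a module is indecomposable precisely when its endomorphism ring has no idempotents other than $0$ and the identity \cite[Proposition~3.1]{Jacobson1989}.

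For condition~(1) I would fix $v\ne u_0$ and apply the intertwining relation $f\circ\pi_{\sigma_v}=\pi_{\sigma_v}\circ f$. Using $\pi_{\sigma_v}(u_0)=u_0$ this yields
\[
f(u_0)=\pi_{\sigma_v}\bigl(f(u_0)\bigr)=\sum_{w\in B}a_w\,\pi_{\sigma_v}(w).
\]
Since each generator sends a basis element to a basis element or to $0$, every $\pi_{\sigma_v}(w)$ is again a single basis vector or zero, so the right-hand side is an honest linear combination of basis vectors. Comparing the coefficient of $v$ on both sides: on the left it is $a_v$, while on the right $v$ can only arise from some $w$ with $\pi_{\sigma_v}(w)=v$. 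The hypothesis $\pi_{\sigma_v}(v)\ne v$ together with the no-collision clause $\bigl(\pi_{\sigma_v}(v)\ne v\Rightarrow\pi_{\sigma_v}(w)\ne v\text{ for all }w\bigr)$ guarantees that no such $w$ exists, so this coefficient is $0$ and hence $a_v=0$. When $\sigma_v$ is a consecutive transposition the no-collision clause is automatic by Lemma~\ref{lem:indecomplemma0}. This disposes of condition~(1) directly, one basis element at a time.

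For condition~(2) I would again use $f\circ\pi_{\sigma_v}=\pi_{\sigma_v}\circ f$, but now $\pi_{\sigma_v}(u_0)=0$ produces the homogeneous relation $0=\pi_{\sigma_v}(f(u_0))=\sum_{w\in B}a_w\,\pi_{\sigma_v}(w)$. The delicate point, and the main obstacle, is that $\pi_{\sigma_v}(v)$ need not survive as a basis vector in this sum, because other terms $\pi_{\sigma_v}(w)$ may coincide with it and cancel: unlike in condition~(1), $\pi_{\sigma_v}$ is genuinely non-injective in general, since individual generators can fix a basis element. I would resolve this exactly as in the proof of Theorem~\ref{thm:Indecomp}: among all $v\ne u_0$ with $a_v\ne0$, choose $\hat v$ of \emph{maximal} rank in the underlying graded poset, and take $\sigma=\sigma_{\hat v}=s_{i_1}\cdots s_{i_r}$ with $\pi_\sigma(\hat v)\ne0$. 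Because each nonzero application of a generator raises the rank by exactly one, $\pi_\sigma(\hat v)$ has rank $\mathrm{rank}(\hat v)+r$, whereas any $w$ with $a_w\ne0$ and $\pi_\sigma(w)=\pi_\sigma(\hat v)$ satisfies $\mathrm{rank}(\hat v)+r=\mathrm{rank}(\pi_\sigma(w))\le\mathrm{rank}(w)+r$, i.e. $\mathrm{rank}(w)\ge\mathrm{rank}(\hat v)$. Maximality then forces $\mathrm{rank}(w)=\mathrm{rank}(\hat v)$, so $\pi_\sigma$ adds the full length $r$ to $w$ as well, and the injectivity clause, realised concretely by Lemma~\ref{lem:injectivity}, forces $w=\hat v$. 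Hence the coefficient of the basis vector $\pi_\sigma(\hat v)$ in the homogeneous relation is precisely $a_{\hat v}$, giving $a_{\hat v}=0$ and contradicting the choice of $\hat v$. Therefore no such $\hat v$ exists, every $a_v$ with $v\ne u_0$ vanishes, and $f=c\cdot\id$, completing the proposition.
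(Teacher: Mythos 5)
Your proof is correct and follows the same route the paper takes: the paper states this proposition without a standalone proof, pointing instead to the arguments of Theorem~\ref{thm:Z-indecomp} (for Part (1)) and Theorem~\ref{thm:Indecomp} (for Part (2)), and your reconstruction reproduces exactly those arguments, including the reduction via cyclicity to $f(u_0)$ and the idempotent criterion for indecomposability. The only remark worth making is that in Part (2) the maximal-rank detour is strictly unnecessary under the literal injectivity hypothesis $\pi_{\sigma}(v)=\pi_{\sigma}(v')\Rightarrow v=v'$, since linear independence of the distinct nonzero images already forces $a_v=0$; the rank argument is what one actually needs in the paper's applications, where only the conditional injectivity of Lemma~\ref{lem:injectivity} is available rather than a hypothesis of the abstract statement.
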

\begin{definition}\label{def:top-elt-SIT*}  Define $S^{row*}_\alpha$ to be the standard immaculate tableau in $\SIT(\alpha)$ whose first column consists of $1,2,\ldots,\ell(\alpha)$, and whose remaining cells  are filled with the entries $\ell+1, \ldots, n$ in consecutive order along  rows, bottom to top and left to right.  Clearly  $S^{row*}_\alpha$ has strictly increasing columns bottom to top, and hence $S^{row*}_\alpha\in \SIT^*(\alpha)\cap \SET(\alpha)$.  Compare this with the definition of $S^{row}_\alpha$ in Definition~\ref{def:top-elt}.

For example, we have
 \[S^{row*}_{223}=S^{col}_{223}=\tableau{3 &6 &7\\2&5 \\1& 4}\!,\ 
S^{row*}_{332}=\tableau{3 &8 \\2 &6 &7 \\1 & 4 & 5}\ne S^{col}_{332},\  S^{row*}_{1323}
=\tableau{4 &8  &9 \\3 &7  \\2 & 5 & 6\\1}\ne S^{col}_{1323}.\]
\end{definition}

\begin{lemma}\label{lem:SIT*-interval} The set $\SIT^*(\alpha)$ is a bounded interval of the immaculate Hecke poset $\mathcal{P}\dI(\alpha)$, the interval $[S^0_\alpha, S^{row*}_\alpha]$.  Furthermore, for any $T\in \SIT^*(\alpha)$, 
there is a sequence $\hat\pi$ of generators  $\pi_{i_1}\cdots\pi_{i_r}$ such that, for the $\dI$-action, 
\[T=\hat\pi^{\dI}(S^{row*}_\alpha).\]
\end{lemma}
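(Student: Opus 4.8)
The plan is to prove the two assertions separately: an extremal invariant pins $\SIT^*(\alpha)$ down as a convex subset of the immaculate Hecke poset, and a straightening algorithm identifies its top element and produces the required $\dI$-word. I would first record the descent characterization already implicit in Lemma~\ref{lem:indecomplemma1}: for $T\in\SIT(\alpha)$ one has $T\in\SIT^*(\alpha)$ if and only if $\{1,\ldots,\ell-1\}\cap\Des_{\rdI}(T)=\emptyset$, since $i\notin\Des_{\rdI}(T)$ for all $i\le\ell-1$ forces $1,\ldots,\ell$ into the first column. In particular both $S^0_\alpha$ and $S^{row*}_\alpha$ lie in $\SIT^*(\alpha)$, and $S^0_\alpha$, being the global minimum of $P\rdI_\alpha$ by Proposition~\ref{prop:bot-elt}, is the minimum of $\SIT^*(\alpha)$.

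To see that $\SIT^*(\alpha)$ is convex, I would track the first-column content. For $T\in\SIT(\alpha)$ let $\Sigma(T)$ be the sum of the entries of its first column. I claim $\Sigma$ weakly increases along every upward cover $S\poRIcover T$. Such a cover swaps $i,i+1$ with $i+1$ strictly below $i$; by Lemma~\ref{lem:prelim} these are not both in column $1$, and the configuration ``$i+1$ in column $1$ with $i$ strictly above it'' is impossible, for then the column-$1$ entry in the row of $i$ would exceed $i$ while lying to its left, contradicting row-increase. Hence the only content-changing cover replaces a column-$1$ entry $i$ by $i+1$, raising $\Sigma$ by $1$, and otherwise $\Sigma$ is unchanged. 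Since $\{1,\ldots,\ell\}$ is the unique $\ell$-subset of $[n]$ of minimal sum $\binom{\ell+1}{2}$, we get $\SIT^*(\alpha)=\{T:\Sigma(T)=\binom{\ell+1}{2}\}$. Convexity then follows: if $S^0_\alpha\poRI B\poRI S^{row*}_\alpha$, monotonicity of $\Sigma$ along a saturated chain through $B$, together with $\Sigma(S^0_\alpha)=\Sigma(S^{row*}_\alpha)=\binom{\ell+1}{2}$, forces $\Sigma(B)=\binom{\ell+1}{2}$, so $B\in\SIT^*(\alpha)$. This yields $[S^0_\alpha,S^{row*}_\alpha]\subseteq\SIT^*(\alpha)$.

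For the reverse inclusion and the final statement I would give a straightening algorithm modeled on Proposition~\ref{prop:top-elt}, carrying any $T\in\SIT^*(\alpha)$ up to $S^{row*}_\alpha$ while remaining in $\SIT^*(\alpha)$. Read in columns $\ge 2$, the tableau $S^{row*}_\alpha$ is increasing in reading order (bottom to top, left to right). Proceeding in reverse reading order, I locate the top-most row where $T$ disagrees with $S^{row*}_\alpha$ and take the largest disagreeing entry $x$ there; exactly as in Proposition~\ref{prop:top-elt}, $x+1$ is then strictly below $x$, so $\pi_x^{\rdI}(T)=s_x(T)$ is an upward cover. The crucial point is that every disagreement lies in a column $\ge 2$, whence $x\ge\ell+1$ and the swap never touches column $1$; so each step stays in $\SIT^*(\alpha)$ and the iteration terminates at $S^{row*}_\alpha$. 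This shows $T\poRI S^{row*}_\alpha$, giving $\SIT^*(\alpha)\subseteq[S^0_\alpha,S^{row*}_\alpha]$ and that $S^{row*}_\alpha$ is the top element. Reversing the produced chain and applying Lemma~\ref{lem:sameposet}, which gives $\pi_i^{\rdI}(S)=T\iff S=\pi_i^{\dI}(T)$, converts the upward $\rdI$-chain into an expression $T=\pi^{\dI}_\sigma(S^{row*}_\alpha)$, establishing the ``furthermore.''

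The main obstacle will be the straightening step: one must verify simultaneously that the chosen $x+1$ is always strictly below $x$ (so that the move is a genuine cover) and that every swap involves only entries $\ge\ell+1$ (so that column $1$ is preserved and we stay in $\SIT^*(\alpha)$). Both facts rest on the reverse-reading-order bookkeeping of Proposition~\ref{prop:top-elt}, now restricted to the sub-diagram in columns $\ge 2$ with entries $\ell+1,\ldots,n$. The only other delicate point is the impossibility of the content-decreasing configuration in the monotonicity argument, which the row-increase contradiction above settles cleanly.
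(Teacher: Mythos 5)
Your proof is correct, and its engine is the same as the paper's: adapt the straightening algorithm of Proposition~\ref{prop:top-elt} to the target $S^{row*}_\alpha$, check that every swap involves only entries $\ge \ell+1$ so that the chain stays inside $\SIT^*(\alpha)$, and then apply Lemma~\ref{lem:sameposet} to convert the upward $\rdI$-chain into the required $\dI$-word. Where you genuinely diverge is in showing that $\SIT^*(\alpha)$ is the full interval rather than merely a subset of it: the paper reads this off the explicit description of the $\dI$-action in~\eqref{eqn:defn-dualImm-pi(T)}, observing that $\pi_i^{\dI}$ can never move an entry into or out of the first column $\{1,\ldots,\ell\}$, so $\SIT^*(\alpha)$ is closed under descending covers from $S^{row*}_\alpha$; you instead introduce the first-column sum $\Sigma$, prove it weakly increases along covers of $P\rdI_\alpha$ (your elimination of the configuration with $i+1$ in column $1$ strictly below $i$ is exactly the right point to check), and identify $\SIT^*(\alpha)$ as the minimum level set $\Sigma^{-1}\bigl(\binom{\ell+1}{2}\bigr)$, from which convexity is immediate. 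The paper's version is shorter because the closure is visible directly in the definition of the action; your monotone statistic buys a slightly stronger and more structural statement (convexity of the level set inside the whole poset, independent of any case analysis of the action) and would transfer to the other actions in Section~9 without change. Both routes then rest on the same delicate step, namely that the largest disagreeing entry $x$ in the top-most disagreeing row satisfies $x\ge\ell+1$ and has $x+1$ strictly below it, which you identify and verify correctly.
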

\begin{proof} Let $\ell=\ell(\alpha)$, the length of $\alpha$. The invariance of $\SIT^*(\alpha)$ under the $\dI$-action is clear from the fact (see Theorem~\ref{thm:BBSSZ2dualImm}) that 
\[\pi^{\dI}_i(T)=s_i(T)\notin \{ T,0\}\]
\[ \iff  i, i+1 \text{ are NOT in column 1 of $T$ and $i+1$ is strictly above $i$ in $T$},\] 
and hence the first column, consisting of $\{1,2,\ldots,\ell(\alpha)\}$,  is preserved under the action.  

Also $\Des_{\rdI}(S^{row*}_\alpha)=\{\ell\}$. Hence either $S^{row*}_\alpha=S^{0}_\alpha$, which occurs if $\alpha_\ell$ is the only part greater than 1, or there is exactly one $j$ such that $\pi_j^{\rdI}(S^{row*}_\alpha)=T\in\SIT(\alpha)$,  namely $j=\ell$. Moreover in that case $T\notin \SIT^*(\alpha)$.  This shows that for the $\dI$-action, if for any $j$, $\pi_j^{\dI}(T)=S^{row*}_\alpha  , T\in \SIT(\alpha)$, then necessarily $T \notin \SIT^*(\alpha)$. 

For clarity and consistency with the previous sections, we revert to the partial order defined in Section~\ref{sec:partial-order} and the poset $P\rdI(\alpha)$. The lemma will then follow upon invoking 
Lemma~\ref{lem:sameposet}.

It suffices to apply the straightening algorithm of Proposition~\ref{prop:top-elt} for the $\rdI$-action. 
Recall that the algorithm produces a saturated chain from any tableau $T\in\SIT(\alpha)$ to the top element, the row superstandard tableau $S^{row}_\alpha.$ 
We straighten a tableau $T\in \SIT^*(\alpha)$ to the tableau $S^{row*}_\alpha$ in exactly the same manner,  namely, by working on the rows from top to bottom, starting with the \textit{largest} entry of $T$ that differs from its counterpart in $S^{row}_\alpha$, and moving to the next largest entry. This ensures that the entries $\{1,2,\ldots,\ell(\alpha)\}$ are preserved in the first column. 
Exactly as in the proof of  Proposition~\ref{prop:top-elt}, it follows that if $T\in \SIT^*(\alpha)$, then there is a sequence of operators $\pi_{j_i}, $ and tableaux $T_i\in \SIT^*(\alpha),$ $i=1,\ldots, r,$ such that $\pi_{j_i}(T_{i-1})=T_{i}, i=1,2,\ldots ,r,$ where we set $T_{0}=T$ and $T_r=S^{row*}_\alpha.$ Hence we conclude 
\[ T \poRI S^{row*}_\alpha \text{ and } \pi_{j_r}\pi_{j_{r-1}}\cdots\pi_{j_1}(T)=S^{row*}_\alpha.\]
In particular $S^{row*}_\alpha$ is the unique maximal element of the subposet $\SIT^*(\alpha)$ of the poset $P\rdI_\alpha.$  
Thanks to Lemma~\ref{lem:sameposet}, by reversing the arrows, this establishes the result for the dual immaculate $\dI$-action as well.
\end{proof}

We illustrate the straightening with two examples.

\begin{example}\label{ex:SIT*-straightening-top-elt} Let $\alpha=431$ so that
\ $S^{row*}_\alpha=\tableau{3\\2 &7 & 8\\1 &4 & 5 & 6}. $
Indicating in bold the largest top-most entry that differs from that of $S^{row*}_\alpha$, we have, for $T=S^0_\alpha=\tableau{3\\2 &4 & \bf{5}\\1 &6 & 7 & 8}, $ 
 \[T_0\stackrel{\pi_5}{\longrightarrow} \tableau{3\\2 &4 &\bf{6} \\1  &5 & 7 & 8}\!\!=T_1
 \stackrel{\pi_6}{\longrightarrow}\tableau{3\\2 &4 &\bf{7} \\1  &5 & 6 & 8}\!\!=T_2
 \stackrel{\pi_7}{\longrightarrow} \tableau{3\\2 &\bf{4} &{8} \\1  &5 & 6 & 7}\!\!=T_3
 \stackrel{\pi_4}{\longrightarrow} \tableau{3\\2 &\bf{5} &{8} \\1  &4 & 6 & 7}\!\!\!=T_4,
 \]
\[\text{and } T_4\stackrel{\pi_5}{\longrightarrow} \tableau{3\\2 &\bf{6} &{8} \\1  &4 & 5 & 7}\!\!\!=T_5
\stackrel{\pi_6}{\longrightarrow} \tableau{3\\2 &\bf{7} &{8} \\1  &4 & 5 & 6}\!\!\!=T_6=S^{row*}_\alpha.\]
Similarly for $U=S^{col}_\alpha$ we have 
\[S^{col}_\alpha=\tableau{3\\2 & 5 & \bf{7}\\1 &4 &6 &8}\stackrel{\pi_7}{\longrightarrow} \tableau{3\\2 &\bf{5} &{8} \\1  &4 & 6 & 7}\!\!=U_1
\stackrel{\pi_5}{\longrightarrow} \tableau{3\\2 &\bf{6} &{8} \\1  &4 & 5 & 7}\!\!=U_2
\stackrel{\pi_6}{\longrightarrow} \tableau{3\\2 &{7} &{8} \\1  &4 & 5 & 6}\!\!=U_3=S^{row*}_\alpha.\]
\end{example}

\begin{theorem}\label{thm:dualImm-submodule-quotient} Let $\alpha\vDash n,$ and let $\mathcal{W}_\alpha$ be the $\hn$-module with basis $\SIT(\alpha)$ as in Theorem~\ref{thm:BBSSZ2dualImm}.  Then 
\begin{enumerate}
\item
$\SIT^*(\alpha)=[S^0_\alpha, S^{row*}_\alpha]$  is a basis for an  $\hn$-submodule $\mathcal{X}_\alpha$ of $\mathcal{W}_\alpha$ for the $\dI$-action, of dimension $\binom{n-\ell(\alpha)}{\alpha_1-1,\alpha_2-1,\ldots, \alpha_\ell-1}.$
\item $\mathcal{X}_\alpha$ is cyclically generated by $S^{row*}_\alpha$;  it has characteristic 
\[\chr(\mathcal{X}_\alpha)= \sum_{T\in \SIT^*(\alpha)} F_{\comp(\Des_{\dI}(T))}.\]
If $\alpha$ has at most one part greater than 1, then $\SIT^*(\alpha)$ has cardinality 1, and the module $\mathcal{X}_\alpha$ is the irreducible $\hn$-module indexed by the composition $\alpha.$
\item Assume $\alpha$ has at least two parts of size greater than 1. Then the quotient module $\mathcal{W}_\alpha/\mathcal{X}_\alpha$ is  nonzero and cyclically generated by $S^{row}_\alpha$, and furthermore it is indecomposable.  It has characteristic 
\[\chr(\mathcal{W}_\alpha/\mathcal{X}_\alpha)
= \sum_{T\in \SIT(\alpha)\setminus \SIT^*(\alpha)} F_{\comp(\Des_{\dI}(T))}.\]
If $\alpha$ has at most one part greater than 1, $\SIT(\alpha)=\SIT^*(\alpha)$ and the quotient module is zero.
\end{enumerate}
\end{theorem}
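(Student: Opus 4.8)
The plan is to dispatch the three parts in order, relying on Lemma~\ref{lem:SIT*-interval} for the poset-theoretic input, on the filtration machinery behind Theorem~\ref{the:bigone}, and on the indecomposability criterion of Proposition~\ref{prop:indecomp-proof-methods}. For Part (1), the invariance of $\SIT^*(\alpha)$ under the $\dI$-action together with the identification $\SIT^*(\alpha)=[S^0_\alpha,S^{row*}_\alpha]$ is already furnished by Lemma~\ref{lem:SIT*-interval}, so $\mathcal{X}_\alpha=\spam\langle\SIT^*(\alpha)\rangle$ is an $\hn$-submodule of $\mathcal{W}_\alpha$. To compute its dimension I would count $\SIT^*(\alpha)$ directly: the first column is forced to be $1,2,\ldots,\ell$ read bottom to top, and row $i$ carries $\alpha_i-1$ further cells, accounting for the $n-\ell$ entries $\{\ell+1,\ldots,n\}$. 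Since each of these exceeds every first-column entry, the only constraint is strict increase along rows, which fixes the order within a row once its content is chosen. Thus a tableau in $\SIT^*(\alpha)$ is the same datum as a distribution of $\{\ell+1,\ldots,n\}$ into blocks of sizes $\alpha_1-1,\ldots,\alpha_\ell-1$, giving $|\SIT^*(\alpha)|=\binom{n-\ell}{\alpha_1-1,\ldots,\alpha_\ell-1}$.

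For Part (2), cyclic generation of $\mathcal{X}_\alpha$ by $S^{row*}_\alpha$ is exactly the final assertion of Lemma~\ref{lem:SIT*-interval}, which produces for each $T\in\SIT^*(\alpha)$ a $\sigma$ with $T=\pi^{\dI}_\sigma(S^{row*}_\alpha)$. The characteristic then follows from the filtration argument of Theorem~\ref{the:bigone}: fixing a linear extension of the $\dI$-order on $\SIT^*(\alpha)$ and taking the associated order ideals yields $\pi^{\dI}$-stable submodules whose successive quotients are the one-dimensional irreducibles $[L_{\comp(\Des_{\dI}(T))}]$, so that $\chr(\mathcal{X}_\alpha)=\sum_{T\in\SIT^*(\alpha)}F_{\comp(\Des_{\dI}(T))}$. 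When $\alpha$ has at most one part exceeding $1$ the multinomial of Part (1) equals $1$, so $\mathcal{X}_\alpha$ is one-dimensional and hence irreducible, and its isomorphism type is read off from the action of the generators on the unique tableau $S^0_\alpha=S^{row*}_\alpha$.

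For Part (3), the quotient $\mathcal{W}_\alpha/\mathcal{X}_\alpha$ has a basis indexed by the cosets of $\SIT(\alpha)\setminus\SIT^*(\alpha)$. Cyclic generation by the image of $S^{row}_\alpha$ is immediate from Theorem~\ref{thm:BBSSZ2-cyclic} as soon as $S^{row}_\alpha\notin\SIT^*(\alpha)$; and indeed a part of size $\ge 2$ in a position other than the last forces the first column of $S^{row}_\alpha$ to differ from $(1,\ldots,\ell)$, which simultaneously shows $\SIT(\alpha)\ne\SIT^*(\alpha)$ and hence that the quotient is nonzero (it vanishes precisely when $\SIT(\alpha)=\SIT^*(\alpha)$). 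The characteristic is then obtained from additivity of $\chr$ on the short exact sequence $0\to\mathcal{X}_\alpha\to\mathcal{W}_\alpha\to\mathcal{W}_\alpha/\mathcal{X}_\alpha\to 0$, giving $\chr(\mathcal{W}_\alpha/\mathcal{X}_\alpha)=\dI_\alpha-\chr(\mathcal{X}_\alpha)=\sum_{T\in\SIT(\alpha)\setminus\SIT^*(\alpha)}F_{\comp(\Des_{\dI}(T))}$.

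The main obstacle is indecomposability of the quotient, which I would establish via Proposition~\ref{prop:indecomp-proof-methods}(1) with $u_0=S^{row}_\alpha$. For each coset $\bar P$ with $P\in\SIT(\alpha)\setminus\SIT^*(\alpha)$ and $P\ne S^{row}_\alpha$, \cite[Lemma~3.11]{BBSSZ2015} supplies a single generator $\pi_i^{\dI}$ that fixes $S^{row}_\alpha$ but satisfies $\pi_i^{\dI}(P)\ne P$. The crucial point is that this relation persists in the quotient: since $P\notin\SIT^*(\alpha)$ we have $\bar P\ne 0$, and whether $\pi_i^{\dI}(P)=0$ or $\pi_i^{\dI}(P)=s_i(P)$, the element $\pi_i^{\dI}(P)-P$ has nonzero coefficient on the basis vector $P$ and therefore cannot lie in $\mathcal{X}_\alpha=\spam\langle\SIT^*(\alpha)\rangle$, so $\pi_i^{\dI}(\bar P)\ne\bar P$. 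Because $\pi_i^{\dI}$ is a single generator, Lemma~\ref{lem:indecomplemma0}, applied in $\mathcal{W}_\alpha$ and pushed to the quotient by tracking the three possible values $0$, $\bar T$, $\overline{s_i(T)}$ of $\pi_i^{\dI}(\bar T)$, guarantees the uniqueness hypothesis of Proposition~\ref{prop:indecomp-proof-methods}(1): namely that $\bar P$ occurs in no $\pi_i^{\dI}(\bar T)$ with $\bar T\ne\bar P$. The idempotent argument then forces any idempotent endomorphism of $\mathcal{W}_\alpha/\mathcal{X}_\alpha$ to be $0$ or the identity, completing the proof.
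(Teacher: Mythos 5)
Your proof follows the paper's own argument essentially step for step: Lemma~\ref{lem:SIT*-interval} for the submodule structure and cyclic generation, the same multinomial count for the dimension, the standard linear-extension filtration for the characteristics, and Proposition~\ref{prop:indecomp-proof-methods}(1) together with \cite[Lemma 3.11]{BBSSZ2015} for indecomposability of the quotient. The only difference is that you spell out explicitly why the fixing/non-fixing relation and the uniqueness hypothesis descend to the quotient, details the paper subsumes under ``the rest of the proof is identical to the proof of Theorem~\ref{thm:Z-indecomp}.''
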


\begin{proof}[Proof of Part (1):]   
The first statement follows from Lemma~\ref{lem:SIT*-interval}.
The dimension count is clear, since, ignoring the constant first column, the tableau is uniquely determined by a sequence of subsets $S_1, \ldots, S_\ell,$ whose union is $[n]\setminus [\ell]$;  $S_i$ consists of the entries in  row $i$ excluding the first column, and thus $|S_i|=\alpha_i-1$. 

\textit{Proof of Part (2):}  Lemma~\ref{lem:SIT*-interval} gives the cyclic generator as well. 
 The characteristic follows from the now familiar argument:  impose a linear extension on the interval $[S^0_\alpha, S^{row*}_\alpha],$ inducing an $\hn$-equivariant filtration whose quotients are one-dimensional $\dI$-Hecke modules generated by $T\in [S^0_\alpha, S^{row*}_\alpha],$ and then sum up the individual quotient characteristics.

The last statement is clear, since $\SIT^*(\alpha)$ has cardinality 1 if and only if $\alpha$ has at most one part of size greater than 1.

\textit{Proof of Part (3):} As above, it is clear that the quotient module is nonzero if and only if $\alpha$ has at least two parts of size greater than 1.  The statement about the cyclic generator follows from the fact that the parent module $\mathcal{W}_\alpha$ is itself cyclically generated by the row superstandard tableau $S^{row}_\alpha.$   A basis for the quotient is given by the complement of the  set $\SIT^*(\alpha)$ in $\SIT(\alpha),$ and the characteristic is computed via the usual filtration arising from a linear extension.  

In order to prove indecomposability, we appeal to the method in Part (1) of Proposition~\ref{prop:indecomp-proof-methods}.   
 Following the proof of Theorem~\ref{thm:Z-indecomp}, for the quotient module our starting point is the equation 
\[f(S^{row}_\alpha)=\sum_{T\in\SIT(\alpha)\setminus\SIT^*(\alpha)} a_T T.\]
We need the analogue of Lemma~\ref{lem:Z-indecomplemma}, but this is precisely the content  of \cite[Lemma 3.11]{BBSSZ2015}, namely that for every tableau $P\ne S^{row}_\alpha,$ there is a $j\in [n-1]$ such that $\pi_j$ fixes $S^{row}_\alpha$ but does NOT fix $P.$ The rest of the proof is identical to the proof of Theorem~\ref{thm:Z-indecomp}.
\end{proof}

The functions $\chr(\mathcal{X}_\alpha)$ are not independent, as the following simple counterexample shows:
\begin{example}\label{ex:Steph-counterex} Let $n=3$ and consider the compositions 
$\alpha=12, \beta =21$.  Then $\SIT^*(\alpha)=\left\{\tableau{2 &3\\1}\right\},\SIT^*(\beta)= \left\{\tableau{2\\1 &3}\right\}.$  Clearly 
$\chr(\mathcal{X}_\alpha)=\chr(\mathcal{X}_\beta),$ since  both tableaux have the same $\dI$-descent set, namely the set $\{1\}.$
\end{example}

We have the following pleasing expression for the characteristic  of the module $\mathcal{X}_\alpha.$  
\begin{proposition}\label{prop:ElizabethSIT*-char} Let $\alpha\vDash n$ have length $\ell$, and let $\bar\alpha$ be the composition $(\alpha_1-1, \alpha_2-1, \ldots, \alpha_\ell-1)$ of $n-\ell,$ where we omit any part that is zero.  Then \[\chr(\mathcal{X}_\alpha)= 
\sum_{k\ge \ell} e_{\ell-1}(x_1,\ldots,x_{k-1}) x_k h_{\bar{\alpha}}(x_k,x_{k+1},\ldots ) ,\]
where $e_r$ is the $r$th elementary symmetric function, and $h_\beta=h_{\beta_1}h_{\beta_2}\cdots $ is the product of the homogeneous symmetric functions indexed by the parts of the composition $\beta.$
\end{proposition}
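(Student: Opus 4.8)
The plan is to begin from the fundamental expansion $\chr(\mathcal{X}_\alpha)=\sum_{T\in\SIT^*(\alpha)}F_{\comp(\Des_{\dI}(T))}$ supplied by Theorem~\ref{thm:dualImm-submodule-quotient}(2), and to re-express the right-hand side as a weight-generating function over a suitable family of (semistandard) immaculate tableaux, exactly in the spirit of the proof of Proposition~\ref{prop:gf-row-ext}. Recall from Definition~\ref{def:dIfunction} and \cite{BBSSZ2015} that $\dI_\alpha=\sum_D x^D$, summed over all immaculate tableaux $D$ of shape $\alpha$, and that this identity is realized by the standardization map $D\mapsto\stdz(D)\in\SIT(\alpha)$, under which $F_{\comp(\Des_{\dI}(T))}=\sum_{D:\,\stdz(D)=T}x^D$. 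Summing only over $T\in\SIT^*(\alpha)$ then gives $\chr(\mathcal{X}_\alpha)=\sum_{D:\,\stdz(D)\in\SIT^*(\alpha)}x^D$, so the first task is to identify precisely which immaculate tableaux $D$ destandardize into $\SIT^*(\alpha)$.

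I expect this identification to be the crux of the argument. Let $M$ denote the top (hence largest) entry of the first column of $D$, which sits in row $\ell=\ell(\alpha)$. I claim $\stdz(D)\in\SIT^*(\alpha)$ — equivalently, the first column of $\stdz(D)$ is $1,2,\dots,\ell$ — if and only if every entry of $D$ outside the first column is $\geq M$. For the forward direction, if some non-first-column cell carried a value $<M$, then the cells of value strictly below $M$ would number at least $\ell$ (the $\ell-1$ lower first-column cells together with this one), forcing $M$ to receive a standardized label exceeding $\ell$ and hence displacing $\ell$ from the top of the first column. For the converse, when all non-first-column entries are $\geq M$, the values $c_1<\dots<c_{\ell-1}<M$ of the lower first column are the only entries less than $M$ and each occurs once, so they receive labels $1,\dots,\ell-1$; and since $M$'s cell is leftmost in the topmost row among all cells of value $M$, the standardization (which orders equal entries from higher rows downward and, within a row, left to right — precisely the convention making $\Des_{\dI}$ compatible with content) assigns it the label $\ell$, yielding first column $1,\dots,\ell$.

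With this identification in hand, the remaining step is a routine generating-function computation that I do not expect to present any difficulty. Summing $x^D$ over all immaculate tableaux $D$ of shape $\alpha$ whose non-first-column entries all exceed or equal the maximal first-column entry, I condition on the value $k=M$ of that maximal entry. The lower $\ell-1$ first-column cells contribute $e_{\ell-1}(x_1,\dots,x_{k-1})$ (a strictly increasing choice of $\ell-1$ values below $k$), the top first-column cell contributes $x_k$, and for each $i$ the $\alpha_i-1$ cells of row $i$ lying beyond the first column form an independent weakly increasing word in the alphabet $\{k,k+1,\dots\}$, contributing $h_{\alpha_i-1}(x_k,x_{k+1},\dots)$; their product over $i$ is $h_{\bar\alpha}(x_k,x_{k+1},\dots)$, with rows of length one contributing the trivial factor $h_0=1$ and hence correctly omitted in $\bar\alpha$. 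Since $e_{\ell-1}(x_1,\dots,x_{k-1})=0$ whenever $k<\ell$, summing over all $k$ yields exactly $\sum_{k\geq\ell}e_{\ell-1}(x_1,\dots,x_{k-1})\,x_k\,h_{\bar\alpha}(x_k,x_{k+1},\dots)$, as claimed. As a consistency check, the total degree of each summand is $(\ell-1)+1+(n-\ell)=n$, matching $\deg\chr(\mathcal{X}_\alpha)$.
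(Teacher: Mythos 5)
Your argument is correct, but it follows a genuinely different route from the paper's. The paper proves Proposition~\ref{prop:ElizabethSIT*-char} by staying at the level of fundamental quasisymmetric functions: it peels off the first column of each $T\in\SIT^*(\alpha)$ (a column shape $(1^{\ell})$) and identifies the remainder with a standard extended tableau of a disconnected skew shape consisting of disjoint rows of lengths $\alpha_i-1$, then invokes the fundamental expansion of skew Schur functions $s_{\lambda/\mu}=\sum_{T\in\SET(\lambda/\mu)}F_{\comp(\DesI(T))}$ together with the observations that $\ell-1\in\DesI(T)$ and $\ell\notin\DesI(T)$ to split each $F_{\comp(\DesI(T))}$ into the factors $e_{\ell-1}(x_1,\ldots,x_{k-1})x_k$ and $h_{\bar\alpha}(x_k,\ldots)$. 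You instead destandardize: you rewrite $\sum_{T\in\SIT^*(\alpha)}F_{\comp(\DesI(T))}$ as a monomial generating function over semistandard immaculate tableaux, characterize exactly which ones standardize into $\SIT^*(\alpha)$ (those whose non-first-column entries are all at least the top first-column entry $M$ -- your equivalence and the standardization convention for $\Des_{\dI}$ are both correct), and then evaluate that generating function directly by conditioning on $k=M$. Both proofs are valid and of comparable length. Your version is more self-contained, since it does not rely on the skew Schur expansion over disconnected shapes, and it yields as a byproduct a combinatorial model for $\chr(\mathcal{X}_\alpha)$ as a tableau generating function in the spirit of Proposition~\ref{prop:gf-row-ext}; the paper's version makes the structural link to skew Schur functions explicit, which is what motivates the $e\cdot h$ shape of the answer in the first place. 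The one ingredient you use that the paper only states in aggregate form is the refinement $F_{\comp(\DesI(T))}=\sum_{D:\,\stdz(D)=T}x^D$ of the identity $\dI_\alpha=\sum_D x^D=\sum_T F_{\comp(\DesI(T))}$; this is standard and is exactly how that identity is proved in \cite{BBSSZ2015}, so it is a fair citation rather than a gap.
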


\begin{proof} 
\svwtwo{Recall from Theorem~\ref{thm:AS19} that for a partition $\lambda$ we have the Schur function expansion 
$$s_\lambda = \sum _{T\in \SET (\lambda)} F_{\comp (\DesI (T))}$$and this naturally generalizes (see \cite[Theorem~7.19.7]{RPSEC21999}) for partitions $\lambda, \mu$  to the skew Schur function expansion 
$$s_{\lambda/\mu} = \sum _{T\in \SET (\lambda/\mu)} F_{\comp (\DesI (T))}$$where $\lambda/\mu$ is the diagram of $\lambda$ with that of $\mu$ removed from the bottom left corner. See \cite{gessel} for further details.}

\svwtwo{Now note that the first column of every tableau $T\in \SIT ^* (\alpha)$ is in natural bijection with the unique tableau in $\SET(1^\ell)$.
 Also note that $T$ with the leftmost column removed and every remaining tableau entry (that is at least $\ell +1$) reduced by $\ell$ is in natural bijection with a standard extended tableau denoted by $D_T$,  whose shape consists of disconnected rows of length $\alpha _1 -1, \alpha _2 -1, \ldots , \alpha _\ell -1$ from bottom to top. Moreover, 
$$\{ i-\ell \suchthat i\in \DesI (T), i>\ell \} = \{ i \suchthat i\in \DesI (D_T) \}.$$ Furthermore, we have that $\ell -1 \in \DesI (T)$ but $\ell \not \in \DesI (T)$, so in the monomials appearing in $\chr(\mathcal{X}_\alpha)$ we have that the $(\ell-1)$th variable does not equal the $\ell$th variable, but the $\ell$th variable may equal the $(\ell+1)$th variable. Hence if the $\ell$th variable is $x_k$, then comparing the expressions for $\chr(\mathcal{X}_\alpha)$ and $s_{\lambda /\mu}$, the first column of $T$ contributes
$$s_{(1^{\ell -1})}(x_1,\ldots,x_{k-1})x_k=e_{\ell-1}(x_1,\ldots,x_{k-1}) x_k$$and the remainder of the shape contributes
$$s_{D_T}(x_k,x_{k+1},\ldots )= h_{\bar{\alpha}}(x_k,x_{k+1},\ldots ).$$ Here we exploit the fact that the skew shape $\lambda/\mu$ consists of disjoint pieces. Summing over all $k\geq \ell$ completes the proof.}
\end{proof}

It is not clear how to investigate the indecomposability of the module $\mathcal{X}_\alpha$.  The method in Part (2) of Proposition~\ref{prop:indecomp-proof-methods} fails,
since 
\[\pi^{\dI}_j(P)=0 \iff  j, j+1 \text{ are in column 1 of $P$ }\iff j\in [\ell-1],\]
but this is trivially true for \textit{every} $P\in \SIT^*(\alpha).$
Similarly, 
there are many examples which show that the technique of Part (1) of Proposition~\ref{prop:indecomp-proof-methods} also fails.   Recall from Theorem~\ref{thm:BBSSZ2dualImm} that for the dual immaculate action, we have 
\[\pi^{\dI}_j(P)=P \iff j+1 \text{ is weakly below $j$ in $P$.}\]

Let $\alpha=223$  as in Figure~\ref{fig:Poset}, so that  $\svw{S^{row*}_\alpha}=\tableau{3 &6 &7\\2 &5\\1 &4}.$ 

Then $\pi^{\dI}_j(S^{row*}_\alpha)=S^{row*}_\alpha \iff j\in \{3,6\}.$

However, the tableau 
$P=\tableau{3 &5 &6\\2 &7\\1 &4}$  is also fixed by both $\pi^{\dI}_3$ and $ \pi^{\dI}_6$, as well as by $\pi^{\dI}_5.$

Recall from the previous section that $
\NSET(\alpha)$ is the set of all standard fillings of $\alpha$ with increasing rows, but with at least one column that is not increasing bottom to top.  We now consider two more modules, one for the $\dI$-action, and one for the $\rdI$-action, whose basis will be the set $\SET(\alpha)\cap\SIT^*(\alpha)$. 
\begin{theorem}\label{thm:SET-intersect-SIT*} Let $\alpha\vDash n$ be of length $\ell$.
\begin{enumerate}
\item $\SET(\alpha)\cap\SIT^*(\alpha)$ is the closed interval $[S^{col}_\alpha, S^{row*}_\alpha]$ of the poset $P\rdI(\alpha)\simeq P\dI(\alpha)$.
\item The set $D(\alpha)=\NSET(\alpha)\cap \SIT^*(\alpha)$ is a basis for a $\dI$-invariant submodule $\mathcal{Y}_\alpha$ of $\mathcal{X}_\alpha$. 
\item The quotient module $\mathcal{X}_\alpha/\mathcal{Y}_\alpha$, under the $\dI$-action,  has a basis of cosets 
whose representatives constitute the set $\SET(\alpha)\cap\SIT^*(\alpha)$, and is cyclically generated by $S^{row*}_\alpha$. 
\item The set $RD(\alpha)=\SET(\alpha)\setminus \SIT^*(\alpha)$ is a basis for an $\rdI$-invariant submodule ${R\mathcal{Y}}_\alpha$ of the row-strict extended module $\mathcal{Z}_\alpha$ whose basis is $\SET(\alpha)$. 
\item The quotient module  $\mathcal{Z}_\alpha/R{\mathcal{Y}}_\alpha$, under the $\rdI$-action,has a basis of cosets 
whose representatives constitute the set  $\SET(\alpha)\cap\SIT^*(\alpha)$, and is cyclically generated by $S^{col}_\alpha$.  Furthermore, it is indecomposable.
\item The quasisymmetric characteristic of the quotient $\dI$-module $\mathcal{X}_\alpha/\mathcal{Y}_\alpha$ is 
\begin{equation}\label{eqn:Steph} \chr(\mathcal{X}_\alpha/\mathcal{Y}_\alpha)=
\sum _{k\geq \ell} e_{\ell -1} (x_1, \ldots, x_{k-1}) x_k \mathcal{E}_{\overline{\alpha}} (x_k, \ldots),
\end{equation}
where $\overline{\alpha}=(\alpha_1-1, \alpha_1-1,\ldots)$ is the composition obtained from $\alpha$ by diminishing each part by 1, and discarding parts equal to zero, and $\mathcal{E}_\beta$ is the extended Schur function of Theorem~\ref{thm:Assaf-Searles}, Equation~\eqref{eq:ext-Schur}.
\item The quasisymmetric characteristic of the quotient $\rdI$-module $\mathcal{Z}_\alpha/R\mathcal{Y}_\alpha$ is 
\begin{equation}\label{Sheila-Elizabeth2022Feb21}
 \chr(\mathcal{Z}_\alpha/R{\mathcal{Y}}_\alpha)=
\sum _{k\geq \ell} h_{\ell -1} (x_1, \ldots, x_k) x_k R\mathcal{E}_{\overline{\alpha}} (x_{k+1}, \ldots)
\end{equation}
where $\overline{\alpha}=(\alpha_1-1, \alpha_1-1,\ldots)$ is as above, and $R\mathcal{E}_\beta$ is the row-strict extended Schur function of Proposition~\ref{prop:rowstrict-ext-basis}.
\end{enumerate}
\end{theorem}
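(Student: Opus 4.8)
The plan is to first expand $\chr(\mathcal{Z}_\alpha/R{\mathcal{Y}}_\alpha)$ in the fundamental basis and then reorganize that expansion into the stated product, following the template of the proof of Proposition~\ref{prop:ElizabethSIT*-char} but with the roles of strict and weak increases---equivalently of $e$ and $h$, and of $\mathcal{E}$ and $R\mathcal{E}$---interchanged, exactly as the involution $\psi$ dictates. By part~(5) the quotient $\mathcal{Z}_\alpha/R{\mathcal{Y}}_\alpha$ has basis $\SET(\alpha)\cap\SIT^*(\alpha)=[S^{col}_\alpha,S^{row*}_\alpha]$, so imposing a linear extension of the induced order and running the filtration argument of Theorem~\ref{the:bigone} verbatim for the $\rdI$-action identifies the one-dimensional subquotients as the irreducibles indexed by $\Des_{\rdI}$, giving
\[\chr(\mathcal{Z}_\alpha/R{\mathcal{Y}}_\alpha)=\sum_{T\in\SET(\alpha)\cap\SIT^*(\alpha)}F_{\comp(\Des_{\rdI}(T))}.\]

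Next I would set up a column-removal bijection. For $T\in\SET(\alpha)\cap\SIT^*(\alpha)$ the first column is $1,2,\ldots,\ell$ read bottom to top; let $D_T$ be obtained by deleting this column and subtracting $\ell$ from each remaining entry. I would verify that $T\mapsto D_T$ is a bijection onto $\SET(\overline{\alpha})$: deleting column $1$ and sliding left carries column $c\ge 2$ of $T$ to column $c-1$ while preserving the bottom-to-top order of the cells present, so the strict row- and column-increase conditions survive, and omitting the length-one rows of $\alpha$ (which become empty) merely compresses the diagram without reordering cells within any column. Then I would locate the descents: since column $1$ increases upward, $j+1$ is strictly above $j$ for $1\le j\le\ell-1$, so $\{1,\ldots,\ell-1\}\cap\Des_{\rdI}(T)=\emptyset$; since $\ell$ is in the top row, $\ell+1$ is weakly below $\ell$, so $\ell\in\Des_{\rdI}(T)$; and column removal fixes the relative vertical positions of the larger entries, so $\{\,i-\ell:i\in\Des_{\rdI}(T),\,i>\ell\,\}=\Des_{\rdI}(D_T)$.

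The heart of the argument is the generating-function factorization. Writing $F_{\comp(\Des_{\rdI}(T))}=\sum x_{i_1}\cdots x_{i_n}$ with $i_1\le\cdots\le i_n$ and strict increase precisely at the positions in $\Des_{\rdI}(T)$, the descent analysis shows that $i_1\le\cdots\le i_\ell$ is weakly increasing, that $i_\ell<i_{\ell+1}$, and that the tail $i_{\ell+1}\le\cdots\le i_n$ reproduces the monomials of $F_{\comp(\Des_{\rdI}(D_T))}$ in the alphabet indexed from $i_{\ell+1}$ onward. Setting $k=i_\ell$, the first block contributes $h_{\ell-1}(x_1,\ldots,x_k)\,x_k$ (independently of $T$) and the tail contributes the monomials of $F_{\comp(\Des_{\rdI}(D_T))}$ in $x_{k+1},x_{k+2},\ldots$. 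Summing over the pivot value $k$ and over $D_T\in\SET(\overline{\alpha})$, interchanging the two sums, and invoking $R\mathcal{E}_{\overline{\alpha}}=\sum_{D\in\SET(\overline{\alpha})}F_{\comp(\Des_{\rdI}(D))}$ from Proposition~\ref{prop:rowstrict-ext-basis}, yields
\[\chr(\mathcal{Z}_\alpha/R{\mathcal{Y}}_\alpha)=\sum_{k\ge 1}h_{\ell-1}(x_1,\ldots,x_k)\,x_k\,R\mathcal{E}_{\overline{\alpha}}(x_{k+1},x_{k+2},\ldots),\]
which is the asserted identity.

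I expect the delicate point to be the bookkeeping of the pivot range in this last step. Unlike the $\dI$-case of Proposition~\ref{prop:ElizabethSIT*-char}, where column $1$ forces the strictly increasing run $i_1<\cdots<i_\ell$ and hence $i_\ell\ge\ell$, here column $1$ forces only a weakly increasing run, so the pivot $k=i_\ell$ may be any $k\ge 1$ (one checks on $\alpha=(2,2,3)$ that the single term $F_{(3,3,1)}$ already contains $x_1^3x_2^3x_3$, which demands $k=1$); this change of lower limit, together with $e_{\ell-1}\mapsto h_{\ell-1}$, the $x_k$-overlap becoming an $x_k$-to-$x_{k+1}$ gap, and $\mathcal{E}\mapsto R\mathcal{E}$, is precisely the shadow of $\psi$ and must be recorded with care. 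The only other step demanding attention is confirming, when $\alpha$ has parts equal to $1$, that the compression in the column-removal bijection genuinely produces a standard extended tableau of the connected composition $\overline{\alpha}$; once that is established the computation closes.
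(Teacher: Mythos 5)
Your proposal proves only Part~(7) of the seven assertions. Parts~(1)--(5) carry most of the structural content of this theorem --- the identification of $\SET(\alpha)\cap\SIT^*(\alpha)$ with the closed interval $[S^{col}_\alpha,S^{row*}_\alpha]$, the invariance of $\NSET(\alpha)\cap\SIT^*(\alpha)$ under the $\dI$-action and of $\SET(\alpha)\setminus\SIT^*(\alpha)$ under the $\rdI$-action, cyclic generation of the two quotients, and the indecomposability of $\mathcal{Z}_\alpha/R\mathcal{Y}_\alpha$ --- and in the paper these rest on re-running the straightening algorithms of Lemma~\ref{lem:Zcyclic} and Lemma~\ref{lem:SIT*-interval} inside the intersection, together with Lemma~\ref{lem:Z-indecomplemma} and Lemma~\ref{lem:indecomplemma0} for indecomposability. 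You invoke Part~(5) to get your basis but never establish it, and Part~(6) is likewise untouched; as a proof of the full statement this is a substantial gap of coverage, not merely of detail.

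Within Part~(7) your route is the paper's: the filtration yields $\sum_{T\in\SET(\alpha)\cap\SIT^*(\alpha)}F_{\comp(\Des_{\rdI}(T))}$, the first column contributes $h_{\ell-1}(x_1,\ldots,x_k)\,x_k$ because $\ell$ is the only $\rdI$-descent among $1,\ldots,\ell$, and the column-deleted tableau ranges over $\SET(\overline{\alpha})$ and contributes $R\mathcal{E}_{\overline{\alpha}}(x_{k+1},\ldots)$ since the descent at $\ell$ forces $i_{\ell+1}>i_\ell=k$. You simply execute the bijection and the descent bookkeeping more explicitly than the paper does (the paper compresses this to two sentences, by analogy with the proof of Proposition~\ref{prop:ElizabethSIT*-char}). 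Your observation about the pivot range is correct and amounts to a genuine correction of the displayed formula: because $1,\ldots,\ell-1$ are \emph{not} $\rdI$-descents, the run $i_1\le\cdots\le i_\ell$ is only weakly increasing, so $k=i_\ell$ may be any positive integer and the sum must start at $k=1$; the stated bound $k\ge\ell$ is inherited from Part~(6), where the strictly increasing first-column run does force $i_\ell\ge\ell$. Already for $\alpha=(1,2)$ one has $\chr(\mathcal{Z}_\alpha/R\mathcal{Y}_\alpha)=F_{(2,1)}$, whose monomial $x_1^2x_2$ is produced by no term with $k\ge 2$; your example $\alpha=(2,2,3)$ with the monomial $x_1^3x_2^3x_3$ of $F_{(3,3,1)}$ makes the same point. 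So for Part~(7) you have not only matched the paper's argument but repaired its statement.
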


\begin{proof} For Part (1), let $P\in \SET(\alpha)\cap\SIT^*(\alpha)$.  The straightening argument of Lemma~\ref{lem:Zcyclic} shows that $P$  can be obtained by applying a sequence of operators $\pi^{\rdI}_i$ to $S^{col}_\alpha$, which preserves the first column.  This is because the first columns already match, so we can proceed to the inductive step, which only affects the remaining columns.   Similarly, the proof of Lemma~\ref{lem:SIT*-interval} shows that $P$ can be obtained by applying a sequence of operators $\pi^{\dI}_i$ to $S^{row*}_\alpha$, again without affecting the first column.   It remains to check that the sequence of operators produces tableaux lying entirely within $\SET(\alpha)$. But if  the $\dI$-straightening went from $S^{row*}_\alpha $ to $T$ and then $T$ to $P$, where $T\notin \SET(\alpha)$, then by reversing the arrows this would mean that, with respect to the $\rdI$-action, $T$ can be obtained by applying a sequence of $\rdI$-operators to $P$. Since $P$ is in $\SET(\alpha)$, this contradicts Lemma~\ref{lem:Zcyclic}, which established that $\SET(\alpha)$ is closed under the $\rdI$-action.

For Part (2), if $S\in \NSET(\alpha)$ and $\pi_i^{\dI}(S)=T\in\SIT^*(\alpha)$, then $S=\pi_i^{\rdI}(T)$.  Hence $T\notin \SET(\alpha)$, otherwise by Lemma~\ref{lem:Zcyclic}, $S$ would also be in $\SET(\alpha)$.   Part (4) follows similarly, and Part (3), as well as the first statement of Part (5), are then 
immediate, since $S^{row*}_\alpha$ generates $\mathcal{X}_\alpha$ and $S^{col}_\alpha$ generates $\mathcal{Z}_\alpha$.

To see that the quotient module $\mathcal{Z}_\alpha/R{\mathcal{Y}}_\alpha$ in Part (5) is indecomposable under the $\rdI$-action, we observe that  the proof of Theorem~\ref{thm:Z-indecomp} goes through, although we now start with the equation 
\[ f(S^{col}_\alpha)=\sum_{T\in [S^{col}_\alpha, S^{row*}_\alpha]} a_T T;\]
this is because, as is easily verified,  Lemma~\ref{lem:Z-indecomplemma} also applies here.

For Part (6),  the  filtration argument of Section~\ref{sec:partial-order} gives 
\[\chr(\mathcal{X}_\alpha/\mathcal{Y}_\alpha)=\sum_{T\in \SET(\alpha)\cap\SIT^*(\alpha)} F_{\comp(\Des_{\dI}(T))}.\]
The first column with $1,\ldots,\ell$ gives us $e_{\ell -1}$ as in Proposition~\ref{prop:ElizabethSIT*-char}. Also clearly we do not have a descent at $\ell$.
Since columns and rows all increase in $\SET(\alpha)$, the remainder of the tableau must have $\ell+1$ in the bottom left corner and numbers $\ell+2, ..., n$ such that the columns increase and so do the rows. This gives tableaux for the generating function for the extended Schur function $\mathcal{E} _{\overline{\alpha}}$. 

Similarly for Part (7), the filtration  gives 
\[\chr(\mathcal{Z}_\alpha/R\mathcal{Y}_\alpha)=\sum_{T\in \SET(\alpha)\cap\SIT^*(\alpha)} F_{\comp(\Des_{\rdI}(T))}.\]
This is clearly just the image  of $\chr(\mathcal{X}_\alpha/\mathcal{Y}_\alpha)$ under the involution $\psi$.
Since only $\ell$ at the top of the first column is in $\Des_{\rdI}(T)$ for every $T\in \SET(\alpha)\cap\SIT^*(\alpha)$,  the first column  contributes  $h_{\ell-1}(x_1,\ldots, x_k)x_k$.  Since $\ell \in \Des_{\rdI}(T)$, there must be a strict increase in index $\ell$, so the remaining shape, being column strict in every column, generates $\mathcal{R}\mathcal{E}_{\overline{\alpha}}(x_{k+1},\ldots)$.
\end{proof}

We have observed that Lemma~\ref{lem:sameposet} allowed us to interpret the passage between the dual immaculate and row-strict dual immaculate functions, via the map $\psi$, in terms  of the partial orders induced by the respective Hecke actions on standard immaculate tableaux.  We conclude by pointing out that precisely the same relationship holds between the $\hn$-actions defined in \cite{TvW2015} and \cite{BS2021}.

\begin{remark}\label{rem:Steph-TvW-BardwellSearles}
The $\hn$-action on standard reverse composition tableaux (SRCT), as defined in \cite{TvW2015}, bears the same relationship  to the action defined by Bardwell-Searles \cite{BS2021} for row-strict Young quasisymmetric functions, in the sense of Lemma~\ref{lem:sameposet}.  First we recall the actions that are involved in these two situations.  We also need to recall the passage between \svw{standard} reverse composition tableaux and \svw{standard} Young composition tableaux \svw{of size $n$}, as described in \cite[Chapter 4, Chapter 5]{LMvW2013}, \svw{namely replace each $i$ with $n+1-i$ and reverse the composition.}
The $\hn$-action on standard reverse composition tableaux defined in \cite{TvW2015} is:
\begin{equation}\label{eqn:defn-TvW(T)}\pi_i^{TvW}(T)=\begin{cases} T, & \text{if $i$ is strictly right of $i+1$},\\
0,  & \text{if $i$ is strictly NW of, or in the same column as, $i+1$},\\
s_i(T), & \text{if $i$ is strictly SW of $i+1$}.
\end{cases}\end{equation}
The second action is the one defined by Bardwell-Searles   for row-strict Young tableaux \cite{BS2021},  
but with left and right swapped to reflect using reverse tableaux when considering row-strict quasisymmetric Schur functions:
\begin{equation}\label{eqn:defn-RBS(T)}\pi_i^{revBS}(T)=\begin{cases} T, & \text{if $i$ is weakly left of $i+1$},\\
0,  & \text{if $i$ is right-adjacent to $i+1$},\\
s_i(T), & \text{if $i$ is strictly right of $i+1$ but not in the same row}.
\end{cases}\end{equation}
\end{remark}
Then we have the following  analogue of 
Lemma~\ref{lem:sameposet}:
\begin{lemma}\label{lem:arrows} Let $S$ and $T$ be standard reverse composition tableaux. If $S\neq T$, $S\neq 0$ then
$$\pi_i^{TvW}(T)=S \quad \iff \quad \pi_i^{revBS}(S)=T.$$
\end{lemma}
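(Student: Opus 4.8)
The plan is to mirror the proof of Lemma~\ref{lem:sameposet}: the two actions \eqref{eqn:defn-TvW(T)} and \eqref{eqn:defn-RBS(T)} differ only in how they treat their respective ``swap'' configurations, so the whole statement reduces to matching the swap case of $\pi_i^{TvW}$ against the swap case of $\pi_i^{revBS}$. The hypotheses $S\neq T$ and $S\neq 0$ are exactly what forces us into those swap cases and rules out the fix and zero alternatives.

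First I would dispose of the forward implication. If $\pi_i^{TvW}(T)=S$ with $S\neq T$ and $S\neq 0$, then by \eqref{eqn:defn-TvW(T)} the only possibility is the third case, so $i$ is strictly SW of $i+1$ in $T$ and $S=s_i(T)$. Interchanging the labels $i$ and $i+1$ moves the entry $i$ from the SW cell to the NE cell, so in $S$ the entry $i$ sits strictly NE of $i+1$; in particular $i$ lies strictly to the right of $i+1$ and in a different row. By \eqref{eqn:defn-RBS(T)} this is precisely the swap case of $\pi_i^{revBS}$, whence $\pi_i^{revBS}(S)=s_i(S)=s_i(s_i(T))=T$, as required.

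For the converse I would argue symmetrically but more carefully. If $\pi_i^{revBS}(S)=T$ with $S\neq T$ and $T$ a genuine tableau (so $T\neq 0$), then \eqref{eqn:defn-RBS(T)} forces the swap case: $i$ is strictly right of $i+1$ and in a different row in $S$, and $T=s_i(S)$. After the swap $i$ lies strictly left of $i+1$ in $T$ and in a different row, so $i$ is either strictly SW or strictly NW of $i+1$ in $T$. In the SW case, \eqref{eqn:defn-TvW(T)} gives $\pi_i^{TvW}(T)=s_i(T)=S$ and we are done; the NW case would instead fall into the zero case of \eqref{eqn:defn-TvW(T)}, yielding $\pi_i^{TvW}(T)=0\neq S$ and destroying the equivalence.

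Hence the crux, and the step I expect to be the main obstacle, is to eliminate the NW possibility — equivalently, to show that in $S$ the entry $i$ can never be strictly SE of $i+1$ (strictly below and strictly to the right). This is a purely structural fact about standard reverse composition tableaux: two consecutive entries never occupy such a descending diagonal. I would prove it directly from the defining conditions of an SRCT in \cite{LMvW2013} — rows strictly decreasing left to right, first column strictly increasing top to bottom, and the triple rule — by examining the cell of $S$ lying in the row of $i$ and the column of $i+1$, which is forced to exist by left-justification and to carry a value exceeding $i+1$, and extracting a contradiction with the triple condition. Alternatively, this SE-exclusion is already implicit in the verification that \eqref{eqn:defn-TvW(T)} defines a bona fide $\hn$-action in \cite{TvW2015}, so one may simply cite it. With this exclusion in hand the converse closes, and, exactly as in Lemma~\ref{lem:sameposet}, the equivalence shows that the Hasse diagram of the $\pi^{revBS}$-poset of \cite{BS2021} is obtained from that of the $\pi^{TvW}$-poset by reversing arrows while leaving the labels intact.
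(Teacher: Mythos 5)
Your forward implication is the same as the paper's, and you have correctly isolated the one step that the paper's three\nobreakdash-line chain of equivalences does not justify: in the converse, the swap case of $\pi_i^{revBS}$ only records that $i$ is strictly right of $i+1$ and in a different row of $S$, so one must still exclude one of the two possible vertical positions before concluding that $T=s_i(S)$ falls into the swap case of $\pi_i^{TvW}$ rather than its zero case. The paper asserts the corresponding equivalence without comment, so on this point your write-up is more careful than the source.

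Unfortunately the exclusion you propose is false, in either drawing convention, so the converse cannot be closed along these lines. Consider the filling of the reverse composition diagram of $(1,3)$ whose first row is $2$ and whose second row is $4\;3\;1$: rows strictly decrease, the first column reads $2<4$, and the triple rule imposes nothing, so this is a standard reverse composition tableau; in it $1$ (row $2$, column $3$) is strictly to the right of $2$ (row $1$, column $1$) and in the later row. Next consider the filling of the diagram of $(3,2)$ whose first row is $4\;3\;1$ and whose second row is $5\;2$: again all three conditions hold (the triple rule requires $2<3$), and here $1$ (row $1$, column $3$) is strictly to the right of $2$ (row $2$, column $2$) and in the earlier row. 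Whichever of these two configurations you call ``strictly SE'', it is realized by a genuine element of $\SRCT$, so no argument from left-justification and the triple rule can eliminate it; indeed your proposed inspection of the cell in the row of $i$ and the column of $i+1$ yields the entry $4>i+1$ in both examples and no contradiction. Note moreover that applying $\pi_1^{revBS}$ to these two tableaux produces the valid tableaux with rows $1$ and $4\;3\;2$, respectively $4\;3\;2$ and $5\;1$, and in one of them $1$ ends up strictly NW of $2$ while in the other it ends up strictly SW, so the compass-point case description of $\pi_i^{TvW}$ sends exactly one of the two to $0$. The upshot is that whether the Tewari--van Willigenburg swap is legal is governed by the triple rule (equivalently, by whether $s_i(T)$ is again in $\SRCT$) and is not a function of the relative compass position of $i$ and $i+1$ alone; the converse must be argued from that characterization, and your fallback of citing the well-definedness of the action in \cite{TvW2015} cannot supply the missing exclusion, because the exclusion is simply not true.
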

\begin{proof}
\begin{align*}
\pi_i^{TvW}(T)=S &\iff \text{$i$ is strictly SW of $i+1$ in $T$}\\
&\iff \text{$i$ is strictly NE of $i+1$ in $S$}\\
&\iff \pi_i^{revBS}(S)= s_i(S)=T.
\end{align*}
The argument is now complete.
\end{proof}

A summary of our results for row-strict dual immaculate functions appears in Table~\ref{table:RSImm} at the end of the next section.  Table~\ref{table:QSymfunctions-modules} summarises the families of quasisymmetric functions, with corresponding  $0$-Hecke modules, that have appeared in the literature to date.

\section{A new descent set}\label{sec:new-descent-set}
The work of this section is motivated by a desire to complete the analysis of 0-Hecke actions on the set $\SIT(\alpha)$, by considering all the possible variations on the descent set definitions in Section~\ref{sec:Background}. There are four possible relative positions of $i$ and $i+1$: (strictly/weakly) and  (above/below), two of which give the dual immaculate and row-strict dual immaculate $\hn$-modules. 

As in Section~\ref{sec:Hecke-action-RSImm}, for each composition $\alpha$ of $n$, let $\mathcal{V}_\alpha$ denote the vector space whose basis is the set $\SIT(\alpha)$ of standard immaculate tableaux of shape $\alpha$.  In this section we define two new descent sets which are complementary, and construct 0-Hecke modules for each one.  This in turn gives two new families of quasisymmetric functions which are related by the involution $\psi$. The general framework is tantalisingly  similar to  Section~\ref{sec:Hecke-action-RSImm}, but the details are sufficiently different that once again a  careful analysis is required to ensure that one does indeed have a 0-Hecke action in each case.

\begin{definition}\label{def:new-des}  For $T\in\SIT(\alpha),$ define $\Des_{\mathcal{A}^*}(T):=\{i: i+1 \text{ is strictly below $i$ in $T$}\}.$
\end{definition}

Now we turn to the complement of this descent set.  

\begin{definition}\label{def:new-des-complement}  For $T\in\SIT(\alpha),$ define $\Des_{\bA^*}(T):=\{i: i+1 \text{ is weakly above $i$ in $T$}\}$.
\end{definition}

We will prove the following.

\begin{theorem}\label{thm:Hecke-module-A} There is a cyclic $\hn$-module $\mathcal{A}_\alpha$, generated by the bottom element  $S^0_\alpha$ of the poset $P\rdI(\alpha)$,  whose quasisymmetric characteristic is
\[ \chr({\mathcal{A}_\alpha})=
\sum_{T\in \SIT(\alpha)} F_{\comp(\Des_{\mathcal{A}^*}(T))}.\]
Here $\Des_{\mathcal{A}^*}(T)=\{i: i+1 \text{ is strictly below  } i \text{ in }T\}.$
\end{theorem}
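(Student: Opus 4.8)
The plan is to realise $\mathcal{A}_\alpha$ as the vector space with basis $\SIT(\alpha)$ carrying a new $\hn$-action attached to $\Des_{\mathcal{A}^*}$, and then to run the poset-and-filtration machinery of Sections~\ref{sec:partial-order} and~\ref{sec:indecomp-module-and-poset} essentially verbatim. First I would define, for $1\le i\le n-1$ and $T\in\SIT(\alpha)$,
\[
\pi_i^{\mathcal{A}^*}(T)=\begin{cases} T, & i\notin\Des_{\mathcal{A}^*}(T),\\ s_i(T), & i\in\Des_{\mathcal{A}^*}(T).\end{cases}
\]
The key preliminary observation, in contrast with the $\rdI$-action, is that this action has \emph{no} annihilation case: if $i\in\Des_{\mathcal{A}^*}(T)$ then $i+1$ is strictly below $i$, so $i$ and $i+1$ lie in distinct rows and, since column $1$ increases upward, cannot both lie in column $1$; exactly as in Lemma~\ref{lem:prelim} this forces $s_i(T)\in\SIT(\alpha)$. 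Hence $\pi_i^{\mathcal{A}^*}$ is well defined and fixes $T$ precisely when $i+1$ is weakly above $i$.

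Next I would prove these operators satisfy the $0$-Hecke relations, the analogue of Theorem~\ref{thm:Hecke-action-RSImm}. Idempotency is immediate: if $i\in\Des_{\mathcal{A}^*}(T)$ then after the swap $i+1$ lies strictly above $i$, so $i\notin\Des_{\mathcal{A}^*}(s_i(T))$ and $\pi_i^{\mathcal{A}^*}(s_i(T))=s_i(T)$; commutativity for $|i-j|\ge 2$ is clear since $\{i,i+1\}\cap\{j,j+1\}=\emptyset$. The braid relation $\pi_i\pi_{i+1}\pi_i=\pi_{i+1}\pi_i\pi_{i+1}$ is the main obstacle, and I would establish it by a case analysis on the relative vertical positions of $i,i+1,i+2$, paralleling the four-case argument in the proof of Theorem~\ref{thm:Hecke-action-RSImm}. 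This analysis is more uniform here because there is no $0$ outcome to propagate; the only delicate subcases are those in which the same-row (right-adjacent) configuration occurs, since this now \emph{fixes} a tableau rather than sending it to $0$, so one must track carefully when a value becomes or ceases to be right-adjacent to its predecessor after a swap.

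Once the action is validated, I would identify the resulting poset. The nontrivial moves $\pi_i^{\mathcal{A}^*}(S)=s_i(S)\ne S$ occur exactly when $i+1$ is strictly below $i$ in $S$, which is precisely the condition defining the cover relation $S\poRIcover s_i(S)$ of Equation~\eqref{eqn:rdIcover}. Thus the $\mathcal{A}^*$-poset has the identical cover relations as $P\rdI(\alpha)$, so the two posets coincide, are graded by $\ninv$ via Equation~\eqref{eq:inv}, and by Propositions~\ref{prop:bot-elt} and~\ref{prop:top-elt} have unique minimum $S^0_\alpha$ and maximum $S^{row}_\alpha$.

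Finally I would deduce cyclicity and the characteristic. Since the nontrivial $\mathcal{A}^*$-moves coincide with the $\rdI$-covers, the straightening algorithm of Proposition~\ref{prop:bot-elt} produces, for each $T\ne S^0_\alpha$, generators with $T=\pi_{j_1}^{\mathcal{A}^*}\cdots\pi_{j_r}^{\mathcal{A}^*}(S^0_\alpha)$, so $\mathcal{A}_\alpha$ is cyclic with generator $S^0_\alpha$. For the characteristic, fix a linear extension $S^0_\alpha=\rtau_1\preceq^t\cdots\preceq^t\rtau_m=S^{row}_\alpha$ of $P\rdI(\alpha)$; because every $\pi_j^{\mathcal{A}^*}$ sends a tableau to itself or to a strictly higher one (never to $0$), the upward spans $\spam\{\rtau_j\suchthat \rtau_i\preceq^t\rtau_j\}$ form a filtration by $\hn$-submodules exactly as in Lemma~\ref{lem:hnmodule}. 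Each successive quotient is the one-dimensional irreducible $[L_\beta]$ with $\beta=\comp(\Des_{\mathcal{A}^*}(\rtau_{i-1}))$, since $\pi_j^{\mathcal{A}^*}$ fixes $\rtau_{i-1}$ iff $j\notin\Des_{\mathcal{A}^*}(\rtau_{i-1})$ and otherwise raises its rank. Summing characteristics over the filtration using additivity of $\chr$ on short exact sequences yields $\chr(\mathcal{A}_\alpha)=\sum_{T\in\SIT(\alpha)}F_{\comp(\Des_{\mathcal{A}^*}(T))}$, as claimed; the only genuinely new ingredient is the braid-relation verification of the second step.
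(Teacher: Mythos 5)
Your proposal is correct and follows essentially the same route as the paper: define $\pi_i^{\mathcal{A}^*}$ with no annihilation case, verify the $0$-Hecke relations by a case analysis on the relative positions of $i,i+1,i+2$ (the paper's Theorem~\ref{thm:Hecke-action-A}), observe that the nontrivial moves coincide with the covers of $P\rdI(\alpha)$ so the straightening algorithm of Proposition~\ref{prop:bot-elt} gives cyclicity from $S^0_\alpha$, and extract the characteristic from the filtration attached to a linear extension. The only part you leave as a sketch is the braid-relation verification, which the paper carries out in four cases exactly along the lines you indicate.
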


Similarly:

\begin{theorem}\label{thm:Hecke-module-barA}  There is a cyclic $\hn$-module $\mathcal{\bA}_\alpha$, generated by the top element  $S^{row}_\alpha$ of the poset $P\rdI(\alpha)$,  whose quasisymmetric characteristic is
\[ \chr({\bA_\alpha})=
\sum_{T\in \SIT(\alpha)} F_{\comp(\Des_{\bA^*}(T))}.\]
Here $\Des_{\bA^*}(T)=\{i: i+1 \text{ is weakly above  } i \text{ in }T\}.$
\end{theorem}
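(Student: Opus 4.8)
The plan is to mimic, for the descent set $\Des_{\bA^*}$, the entire development of Sections~\ref{sec:Hecke-action-RSImm}--\ref{sec:partial-order}. First I would define, for each $1\le i\le n-1$ and each $T\in\SIT(\alpha)$, the operator
\[
\pi_i^{\bA}(T)=\begin{cases}
T, &\text{if $i+1$ is strictly below $i$ in $T$,}\\
0, &\text{if $i+1$ is right-adjacent to $i$, or $i,i+1$ both lie in column $1$,}\\
s_i(T), &\text{if $i+1$ is strictly above $i$ and $i,i+1$ are not both in column $1$,}
\end{cases}
\]
which is exactly the rule ``$\pi_i^{\bA}(T)=T$ if $i\notin\Des_{\bA^*}(T)$; otherwise swap $i,i+1$ when this yields a standard immaculate tableau, and send $T$ to $0$ when it does not.'' The key observation is that $\pi_i^{\bA}$ agrees with the dual immaculate operator $\pi_i^{\dI}$ of Theorem~\ref{thm:BBSSZ2dualImm} in every case \emph{except} the right-adjacent one: where $\pi_i^{\dI}$ fixes $T$, the operator $\pi_i^{\bA}$ annihilates it. In particular the two actions have identical nonzero off-diagonal part, i.e.\ the same $s_i$-moves.

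Second, and this is the main obstacle, I would verify that the $\pi_i^{\bA}$ satisfy the $0$-Hecke relations. The idempotency $(\pi_i^{\bA})^2=\pi_i^{\bA}$ and the commutation $\pi_i^{\bA}\pi_j^{\bA}=\pi_j^{\bA}\pi_i^{\bA}$ for $|i-j|\ge 2$ are immediate, exactly as in the proof of Theorem~\ref{thm:Hecke-action-RSImm}. The braid relation $\pi_i^{\bA}\pi_{i+1}^{\bA}\pi_i^{\bA}=\pi_{i+1}^{\bA}\pi_i^{\bA}\pi_{i+1}^{\bA}$ requires a case analysis on the relative vertical positions of $i,i+1,i+2$ parallel to the four-case argument used there. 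Because the $\bA^*$-action differs from the $\dI$-action precisely on the annihilating configurations, I cannot simply import the verification for either previous action; the delicate bookkeeping is in the configurations where $i+1$ is right-adjacent to $i$ (or $i+2$ to $i+1$), since these now yield $0$ rather than a fixed tableau, and one must check that both sides of the braid relation vanish simultaneously. I expect this to be the only genuinely technical point.

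Third, I would establish the poset and cyclicity. Since $\pi_i^{\bA}$ and $\pi_i^{\dI}$ have the same $s_i$-moves, the cover relations they generate coincide, so the partial order obtained from the $\bA^*$-action is again the immaculate Hecke poset $P\rdI(\alpha)\simeq P\dI(\alpha)$; concretely one checks the analogue of Lemma~\ref{lem:sameposet}, namely $\pi_i^{\bA}(T)=S\iff\pi_i^{\rdI}(S)=T$ for $S\neq T$ nonzero in $\SIT(\alpha)$, using that ``both in column $1$'' is a property of the swapped pair of cells and hence invariant under $s_i$. Because each nontrivial $\bA^*$-move sends $i+1$ from strictly above to strictly below $i$ and so decreases $\ninv$, the action moves \emph{down} the poset; reading the straightening algorithm of Proposition~\ref{prop:top-elt} in reverse then produces, for every $T\in\SIT(\alpha)$, a saturated chain of $\bA^*$-moves from the top element $S^{row}_\alpha$ down to $T$. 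Hence $S^{row}_\alpha$ cyclically generates $\mathcal{V}_\alpha$ under the $\bA^*$-action; I would denote this module $\bA_\alpha$.

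Finally, I would run the filtration argument of Theorem~\ref{the:bigone} essentially verbatim, but with downward-closed spans. Ordering $\SIT(\alpha)$ by a linear extension $\poRI^t$ of $P\rdI(\alpha)$ and setting $\spam\{\,\rtau_j:\rtau_j\poRI^t\rtau_i\,\}$ gives an ascending chain of $\bA^*$-submodules whose successive one-dimensional quotients are spanned by the $\rtau_i$. On the quotient spanned by $\rtau_i$, the operator $\pi_j^{\bA}$ acts as $0$ exactly when $j\in\Des_{\bA^*}(\rtau_i)$ (it either annihilates $\rtau_i$ or moves it strictly lower, into the preceding submodule) and as the identity otherwise; by \eqref{eqn:HeckeIrreps} this quotient is the irreducible $[L_\beta]$ with $\beta=\comp(\Des_{\bA^*}(\rtau_i))$, of characteristic $F_{\comp(\Des_{\bA^*}(\rtau_i))}$. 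Summing characteristics across the filtration then yields
\[
\chr(\bA_\alpha)=\sum_{T\in\SIT(\alpha)} F_{\comp(\Des_{\bA^*}(T))},
\]
as claimed.
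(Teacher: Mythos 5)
Your proposal is correct and follows essentially the same route as the paper: the same operators $\bpi_i$, the same four-case verification of the braid relation (the paper's Theorem~\ref{thm:Hecke-action-barA}, where the simultaneous vanishing in the right-adjacent/column-one configurations is handled in Cases 4a--4b), cyclic generation by $S^{row}_\alpha$ via the straightening algorithm of Proposition~\ref{prop:top-elt} (Proposition~\ref{prop:cyclic-barA}), and the filtration by downward-closed spans $\mathrm{span}([S^0_\alpha,T_i])$ with the key observation $\pi_i^{\bA^*}(T)=T\iff i\notin\Des_{\bA^*}(T)$. The one step you defer, the braid-relation case analysis, is exactly the content the paper supplies, and it goes through as you anticipate.
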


Note that, as is the case with $\dI_\alpha$ and $\rdI_\alpha$,  the two characteristics are related by the involution $\psi: \psi(\chr({\bA_\alpha}))=
 \chr({\mathcal{A}_\alpha})$.
 
The $\hn$-actions we define in Theorem~\ref{thm:Hecke-action-A} and Theorem~\ref{thm:Hecke-action-barA}, will also establish the following. The first two parts are analogues of results from  Section~\ref{sec:row-strict-ext}; see  Theorem~\ref{thm:rowstrictext-bigone} and Proposition~\ref{prop:shin-module}.  The third is the analogue of Proposition~\ref{prop:ElizabethSIT*-char} in Section~\ref{sec:dualImmsubmodules}.   Once again we have a pleasing expression for the quasisymmetric characteristic of the action of $\bA_\alpha$ on $\SIT^*(\alpha)$.

\begin{proposition}\label{prop:Z-A-X-barA} The vector space with basis 
\begin{enumerate}\item
 $\SET(\alpha)$ is a cyclic $\hn$-submodule $\mathcal{A}_{\SET(\alpha)}$ of $\mathcal{A}_\alpha$, generated by $S^{col}_\alpha$, with characteristic 
 \[\sum_{T\in \SET(\alpha)}F_{\comp(\Des_{\mathcal{A}^*}(T))};\]
\item 
$ \SET(\alpha)$ is also a quotient module $\bA_{\SET(\alpha)}$ of $\bA_\alpha$,   cyclically generated by $S^{row}_\alpha$, with characteristic
 \[\sum_{T\in \SET(\alpha)}F_{\comp(\Des_{\mathcal{\bA}^*}(T))}.\]
\item  $\SIT^*(\alpha)$ is a cyclic $\hn$-submodule $\bA_{\SIT^*(\alpha)}$ of $\bA_\alpha$ generated by $S^{row*}_\alpha$, with characteristic 
\[\sum_{T\in \SIT^*(\alpha)}F_{\comp(\Des_{\bA^*}(T))} = \sum_{k\geq \ell} e_{\ell-1}(x_1,\cdots,x_{k-1})x_k e_\beta(x_k,\ldots)e_{\alpha_\ell - 1}(x_{k+1},\ldots).\]
where $\alpha$ has length $\ell$ and $\beta = (\alpha_1-1,\ldots,\alpha_{\ell-1}-1)$ ignoring any parts of size 0.
\end{enumerate}
\end{proposition}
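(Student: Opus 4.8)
The plan is to exploit the fact that the two actions of Theorem~\ref{thm:Hecke-action-A} and Theorem~\ref{thm:Hecke-action-barA} are carried on the \emph{same} immaculate Hecke poset $P\rdI(\alpha)\simeq P\dI(\alpha)$ that underlies Sections~\ref{sec:partial-order}--\ref{sec:dualImmsubmodules}. By construction $\pi_i^{\mathcal{A}^*}$ sends $T$ to $s_i(T)$ precisely when $i+1$ is strictly below $i$ in $T$ --- exactly the nontrivial move of the $\rdI$-action in \eqref{eqn:defn-pi(T)} --- while $\pi_i^{\bA^*}$ sends $T$ to $s_i(T)$ precisely when $i+1$ is strictly above $i$ with $i,i+1$ not both in column $1$, the nontrivial move of the $\dI$-action in \eqref{eqn:defn-dualImm-pi(T)}. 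The two new actions differ from their predecessors only in the right-adjacent case (returning $0$ rather than fixing $T$), so they produce identical cover relations and the straightening algorithms of Section~\ref{sec:indecomp-module-and-poset} transfer verbatim. This reduces Parts (1) and (2) to the arguments already given in Section~\ref{sec:row-strict-ext}.

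For Part (1), since the nontrivial $\mathcal{A}^*$-moves coincide with the nontrivial $\rdI$-moves, Lemma~\ref{lem:rdIsubmodule} shows $\SET(\alpha)$ is $\mathcal{A}^*$-invariant, so $\mathcal{A}_{\SET(\alpha)}=\spam\langle\SET(\alpha)\rangle$ is an $\hn$-submodule of $\mathcal{A}_\alpha$; the straightening of Lemma~\ref{lem:Zcyclic} builds every $T\in\SET(\alpha)$ from $S^{col}_\alpha$ by a chain of such moves lying in $\SET(\alpha)$, so $S^{col}_\alpha$ generates it, and the filtration argument of Theorem~\ref{the:bigone} gives the characteristic. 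For Part (2), the nontrivial $\bA^*$-moves coincide with the nontrivial $\dI$-moves, so the proof of Proposition~\ref{prop:shin-module} applies verbatim to show that $\NSET(\alpha)\cap\SIT(\alpha)$ spans a $\bA^*$-submodule of $\bA_\alpha$ whose quotient $\bA_{\SET(\alpha)}$ has basis $\SET(\alpha)$; as $\bA_\alpha$ is generated by $S^{row}_\alpha\in\SET(\alpha)$ (Theorem~\ref{thm:Hecke-module-barA}) the quotient is generated by the image of $S^{row}_\alpha$, and the filtration again yields the stated characteristic.

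For Part (3), note that $\pi_i^{\bA^*}$ never transposes two entries of the first column (that configuration returns $0$), so the interval $\SIT^*(\alpha)=[S^0_\alpha,S^{row*}_\alpha]$ of Lemma~\ref{lem:SIT*-interval} is $\bA^*$-invariant and spans a submodule $\bA_{\SIT^*(\alpha)}$. The downward straightening of Lemma~\ref{lem:SIT*-interval}, whose nontrivial moves are $\dI$- hence $\bA^*$-moves preserving the first column, exhibits $S^{row*}_\alpha$ as a cyclic generator, and the usual filtration gives $\chr(\bA_{\SIT^*(\alpha)})=\sum_{T\in\SIT^*(\alpha)}F_{\comp(\Des_{\bA^*}(T))}$.

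The remaining and principal task is the product formula, which I would establish by destandardization, following Proposition~\ref{prop:gf-row-ext}, Proposition~\ref{prop:ElizabethSIT*-char} and Theorem~\ref{thm:SET-intersect-SIT*}(7). Every $T\in\SIT^*(\alpha)$ has $[\ell-1]\subseteq\Des_{\bA^*}(T)$ from its consecutive first column, and the same-row rule makes $\Des_{\bA^*}$ strict along rows; destandardizing the fundamental expansion therefore gives $\sum_D x^D$ over fillings $D$ of $\alpha$ with strictly increasing first column, strictly increasing rows, and all body entries at least the top column-$1$ entry $a_\ell$ (the last constraint being forced by monotonicity of the destandardization, since in $T$ the body entries are exactly those exceeding $\ell$). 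Writing $x_k$ for the variable carrying $a_\ell=k$, the first column contributes $e_{\ell-1}(x_1,\dots,x_{k-1})\,x_k$; the row-strict condition across the first two cells of the top row forces its $\alpha_\ell-1$ body cells to indices strictly above $k$, giving $e_{\alpha_\ell-1}(x_{k+1},\dots)$, whereas the bodies of the lower $\ell-1$ rows are only constrained to indices at least $k$, giving $e_{\beta}(x_k,\dots)$ with $\beta=(\alpha_1-1,\dots,\alpha_{\ell-1}-1)$. Since the body of a tableau in $\SIT^*(\alpha)$ is a disjoint union of rows with no column constraints, these pieces are independent once $k$ is fixed, and summing over $k\ge\ell$ produces the identity. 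I expect the main obstacle to be the careful verification that the standardization tie-breaking rule (labelling equal values from top to bottom, so that equal entries never create a $\bA^*$-descent) is a genuine bijection, together with justifying the asymmetry that sends the top row to indices $\ge k+1$ while the lower rows start at $k$; this boundary effect at the entry $\ell$ is exactly what distinguishes the present elementary-symmetric answer from the complete-homogeneous answer of Proposition~\ref{prop:ElizabethSIT*-char}.
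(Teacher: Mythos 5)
Your proposal is correct. For the module-theoretic claims (Parts (1), (2), and the first assertion of Part (3)) you argue exactly as the paper does: the nontrivial moves $\pi_i(T)=s_i(T)$ of $\mathcal{A}^*$ and $\bA^*$ coincide with those of $\rdI$ and $\dI$ respectively, so the immaculate Hecke poset, the invariance statements (Lemma~\ref{lem:rdIsubmodule}, Proposition~\ref{prop:shin-module}), the straightening algorithms of Lemma~\ref{lem:Zcyclic} and Lemma~\ref{lem:SIT*-interval}, and the filtration argument of Theorem~\ref{the:bigone} all transfer verbatim; this is precisely the paper's (very short) proof. One small slip: your parenthetical ``returning $0$ rather than fixing $T$'' describes the right-adjacent discrepancy correctly for $\bA^*$ versus $\dI$ but backwards for $\mathcal{A}^*$ versus $\rdI$ (there $\rdI$ kills and $\mathcal{A}^*$ fixes); since only the $s_i$-moves enter the argument, nothing is affected.

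Where you genuinely diverge is the product formula in Part (3). You prove it by a direct destandardization bijection in the style of Proposition~\ref{prop:gf-row-ext} and Theorem~\ref{thm:8flavours-tableaux}: expand in fundamentals, destandardize to fillings with strictly increasing first column and strictly increasing rows, condition on the value $x_k$ at the top of column $1$, and factor the body row by row into elementary symmetric functions, with the boundary asymmetry (top row in $x_{k+1},\ldots$ versus lower rows in $x_k,\ldots$) coming from row-strictness across the cells $(\ell,1)$, $(\ell,2)$. The paper instead follows the template of Proposition~\ref{prop:ElizabethSIT*-char}: it transposes the body rows into disconnected columns, identifies the result with a standard extended tableau of a disconnected skew shape, checks that $\Des_{\bA^*}$ transfers to $\Des_{\dI}$ of that shape, and then quotes the fundamental expansion of skew Schur functions together with $s_{(1^m)}=e_m$. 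The two arguments are closely related --- both reduce to recognizing which fundamentals occur and why the answer factors --- but yours is more self-contained (no appeal to the skew Schur $F$-expansion), at the cost of having to verify the tie-breaking rule for standardization, which you correctly identify (equal values labelled top to bottom so as not to create $\bA^*$-descents) and which does go through. Either route is acceptable.
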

\begin{proof}[Proof of the quasisymmetric characteristic in Part (3):] 

\svwtwo{Note the similarity with the proof of Proposition~\ref{prop:ElizabethSIT*-char}. Recall from Theorem~\ref{thm:AS19} that for a partition $\lambda$ we have the Schur function expansion
$$s_\lambda = \sum _{T\in \SET (\lambda)} F_{\comp (\DesI (T))}$$and this naturally generalizes (see \cite[Theorem~7.19.7]{RPSEC21999}) for partitions $\lambda, \mu$ to the skew Schur function expansion 
$$s_{\lambda/\mu} = \sum _{T\in \SET (\lambda/\mu)} F_{\comp (\DesI (T))}$$where $\lambda/\mu$ is the diagram of $\lambda$ with that of $\mu$ removed from the bottom left corner. See \cite{gessel} for further details.}

\svwtwo{Now note that the first column of every tableau $T\in \SIT ^* (\alpha)$ is in natural bijection with the unique tableau in $\SET(1^\ell)$.  Also note that $T$ with the leftmost column removed and every remaining tableau entry (that is at least $\ell +1$) reduced by $\ell$ is in natural bijection with a standard extended tableau denoted by $\tilde{D}_T\cup C$  of shape $\tilde{D}_T$, consisting of disconnected columns of length $\alpha _1 -1, \alpha _2 -1, \ldots , \alpha _{\ell -1} -1$ from bottom to top, and a further shape $C$ at the top consisting of a column of length $\alpha _{\ell} -1$. Moreover, 
$$\{ i-\ell \suchthat i\in Des_{\bA^*}  (T), i>\ell \} = \{ i \suchthat i\in \DesI (\tilde{D}_T \cup C) \}.$$It is worth noting that if $i+1$ is strictly above $i$ (so in a different row), or in the same row as $i$ in $T$, then this descent is maintained in $\tilde{D}_T \cup C$ when we switch rows to columns (with entries increasing bottom to top) so these descents all get transferred from $T$ to $\tilde{D}_T \cup C$.
Furthermore, we have that $\ell -1 \in \Des _{\bA^*}  (T)$, so in the monomials appearing in $\chr(\mathcal{X}_\alpha)$ we have that the $(\ell-1)$th variable does not equal the $\ell$th variable, but the $\ell$th variable may or may not equal the $(\ell+1)$th variable. In particular, it may, unless in $T$ the entry $\ell+1$ appears in  row  $\ell$, so $\ell \in Des_{\bA^*}(T)$, in which case the $\ell$th variable may not equal the $(\ell+1)$th variable. Hence if the $\ell$th variable is $x_k$, then comparing the expressions for $\chr(\mathcal{X}_\alpha)$ and $s_{\lambda /\mu}$, the first column of $T$ contributes
$$s_{(1^{\ell -1})}(x_1,\ldots,x_{k-1})x_k=e_{\ell-1}(x_1,\ldots,x_{k-1}) x_k$$and the remainder of the shape contributes
$$s_{\tilde{D}_T}(x_k,\ldots) s_{(1^{\alpha _\ell -1})}(x_{k+1},\ldots)= e_\beta(x_k,\ldots)e_{\alpha_\ell - 1}(x_{k+1},\ldots).$$ Again we exploit the fact that the skew shape $\lambda/\mu$ consists of disjoint pieces. Summing over all $k\geq \ell$ completes the proof.}
\end{proof}

  We also have the following analogue of Theorem~\ref{thm:SET-intersect-SIT*}, giving 
 two more modules, one for the $\mathcal{A}$-action, and one for the $\bA$-action, whose basis is the set $\SET(\alpha)\cap\SIT^*(\alpha)$. 
\begin{theorem}\label{thm:A-barA-SET-intersect-SIT*} Let $\alpha\vDash n$ be of length $\ell$.
\begin{enumerate}
\item $\SET(\alpha)\cap\SIT^*(\alpha)$ is the closed interval $[S^{col}_\alpha, S^{row*}_\alpha]$ of the poset $P\rdI(\alpha)\simeq P\dI(\alpha)$.
\item The set $D(\alpha)=\NSET(\alpha)\cap \SIT^*(\alpha)$ is a basis for a $\bA$-invariant submodule $\bA\mathcal{Y}_\alpha$ of $\bA_{\SIT^*(\alpha)}$. 
\item The quotient module $\bA_{\SIT^*(\alpha)}/\bA\mathcal{Y}_\alpha$, under the $\bA$-action,  has coset basis whose representatives constitute the set $\SET(\alpha)\cap\SIT^*(\alpha)$, and is cyclically generated by $S^{row*}_\alpha$. 
\item The set $RD(\alpha)=\SET(\alpha)\setminus \SIT^*(\alpha)$ is a basis for an $\mathcal{A}$-invariant submodule ${R\mathcal{A}\mathcal{Y}}_\alpha$ of the module $\mathcal{A}_{\SET(\alpha)}$ whose basis is $\SET(\alpha)$. 
\item The quotient module  $\mathcal{A}_{\SET(\alpha)}/R\mathcal{A}{\mathcal{Y}}_\alpha$, under the $\mathcal{A}$-action, has coset basis
 whose representatives constitute the set $\SET(\alpha)\cap\SIT^*(\alpha)$, and is cyclically generated by $S^{col}_\alpha$.
\item The quasisymmetric characteristic of the quotient $\bA$-module $\bA_{\SIT^*(\alpha)}/\bA\mathcal{Y}_\alpha$ is 

$\chr(\bA_{\SIT^*(\alpha)}/\bA\mathcal{Y}_\alpha)$
\begin{equation}\label{eqn:Elizabeth2022Feb21-barA}
=\begin{cases}
\sum _{k\geq \ell} e_{\ell -1} (x_1, \ldots, x_{k-1}) x_k \chr(\bA_{\SET(\overline{\alpha})})(x_k,x_{k+1},\ldots), 
& \alpha \neq (1^m,n-m),\\
e_n, &\text{ otherwise},
\end{cases}
\end{equation}
where $\overline{\alpha}=(\alpha_1-1, \alpha_1-1,\ldots)$ is the composition obtained from $\alpha$ by diminishing each part by 1, and discarding parts equal to zero. 
\item The quasisymmetric characteristic of the quotient $\mathcal{A}$-module $\mathcal{A}_{\SET(\alpha)}/R\mathcal{A}\mathcal{Y}_\alpha$ is

$\chr(\mathcal{A}_{\SET(\alpha)}/R{\mathcal{AY}}_\alpha)$
\begin{equation}\label{eqn:Elizabeth2022Feb21-A}
=
\begin{cases}
\sum_{k\geq \ell} h_{\ell -1} (x_1, \ldots, x_{k})x_k\chr(\mathcal{A}_{\SET(\overline{\alpha})})(x_{k+1},\ldots), & \alpha \neq (1^m,n-m),\\
h_n, \text{ otherwise}.
\end{cases}
\end{equation}
where $\overline{\alpha}=(\alpha_1-1, \alpha_1-1,\ldots)$ is as above. 
\end{enumerate}
\end{theorem}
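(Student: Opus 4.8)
The plan is to run the proof of Theorem~\ref{thm:SET-intersect-SIT*} essentially verbatim, with the $\mathcal{A}$-action playing the role of the $\rdI$-action and the $\bA$-action that of the $\dI$-action. Two ingredients make this transfer possible, and both are in hand from the construction of the two actions in Theorem~\ref{thm:Hecke-module-A} and Theorem~\ref{thm:Hecke-module-barA}: first, $\mathcal{A}$ and $\bA$ determine the very same immaculate Hecke poset $P\rdI(\alpha)\simeq P\dI(\alpha)$; and second, the adjoint relation analogous to Lemma~\ref{lem:sameposet} holds, namely for $S\neq T$ in $\SIT(\alpha)$ with $S,T\neq 0$,
\[\pi_i^{\mathcal{A}}(S)=T \iff \pi_i^{\bA}(T)=S \iff S\poRIcover T,\]
which is verified by the same one-line computation with the complementary descent sets $\Des_{\mathcal{A}^*}$ and $\Des_{\bA^*}$. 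Granting this, Part~(1) is \emph{identical} to Theorem~\ref{thm:SET-intersect-SIT*}(1), since the statement refers only to the poset, which is unchanged.

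For Parts~(2) and~(4) I would avoid a fresh straightening argument and instead intersect submodules. By Proposition~\ref{prop:Z-A-X-barA}, $\SIT^*(\alpha)$ spans a $\bA$-submodule, $\SET(\alpha)$ spans an $\mathcal{A}$-submodule, and $\NSET(\alpha)\cap\SIT(\alpha)$ (the kernel of the $\bA$-quotient $\bA_{\SET(\alpha)}$) spans a $\bA$-submodule. The adjoint relation shows that the basis-complement of any $\bA$-submodule spans an $\mathcal{A}$-submodule and conversely; in particular $\SIT(\alpha)\setminus\SIT^*(\alpha)$ spans an $\mathcal{A}$-submodule. Then Part~(2) follows because $\NSET(\alpha)\cap\SIT^*(\alpha)$ is the intersection of the two $\bA$-submodules $\NSET(\alpha)\cap\SIT(\alpha)$ and $\SIT^*(\alpha)$, and Part~(4) because $\SET(\alpha)\setminus\SIT^*(\alpha)=\SET(\alpha)\cap(\SIT(\alpha)\setminus\SIT^*(\alpha))$ is the intersection of two $\mathcal{A}$-submodules. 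Parts~(3) and~(5) are then immediate: the quotient bases are the set-complements $\SIT^*(\alpha)\setminus(\NSET(\alpha)\cap\SIT^*(\alpha))=\SET(\alpha)\cap\SIT^*(\alpha)=\SET(\alpha)\setminus(\SET(\alpha)\setminus\SIT^*(\alpha))$, and each quotient inherits its cyclic generator from its parent, namely $S^{row*}_\alpha$ for $\bA_{\SIT^*(\alpha)}$ and $S^{col}_\alpha$ for $\mathcal{A}_{\SET(\alpha)}$. Note that, unlike Theorem~\ref{thm:SET-intersect-SIT*}(5), no indecomposability is asserted here, so that portion of the earlier argument is not needed.

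The substance is in Parts~(6) and~(7). As in Theorem~\ref{the:bigone}, a linear extension of the relevant interval produces an $\hn$-equivariant filtration with one-dimensional successive quotients, giving
\[\chr(\bA_{\SIT^*(\alpha)}/\bA\mathcal{Y}_\alpha)=\sum_{T\in\SET(\alpha)\cap\SIT^*(\alpha)}F_{\comp(\Des_{\bA^*}(T))},\]
and likewise for the $\mathcal{A}$-quotient with $\Des_{\mathcal{A}^*}$. I would then factor this sum exactly as in Proposition~\ref{prop:ElizabethSIT*-char}, via the skew-Schur expansion of Theorem~\ref{thm:AS19} together with \cite[Theorem~7.19.7]{RPSEC21999}. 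Deleting the first column $1,\ldots,\ell$ and reducing the remaining entries by $\ell$ carries $T$ to a tableau of $\SET(\overline{\alpha})$: the first column contributes $e_{\ell-1}(x_1,\ldots,x_{k-1})\,x_k$ in the $\bA$-case (every consecutive pair in column~$1$ is a $\bA$-descent, forcing strict increases) and $h_{\ell-1}(x_1,\ldots,x_k)\,x_k$ in the $\mathcal{A}$-case (no such pair is an $\mathcal{A}$-descent, allowing weak increases), while the remaining shape contributes $\chr(\bA_{\SET(\overline{\alpha})})$, respectively $\chr(\mathcal{A}_{\SET(\overline{\alpha})})$, by Proposition~\ref{prop:Z-A-X-barA}. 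Summing over the variable $x_k$ recording the $\ell$th entry yields the stated formulas, and Part~(7) can alternatively be read off as the image of Part~(6) under $\psi$, since $\Des_{\mathcal{A}^*}$ and $\Des_{\bA^*}$ are complementary and $\psi(e_n)=h_n$.

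The main obstacle will be the bookkeeping of variable ranges in Parts~(6) and~(7), which is precisely where the edge case enters. For $\alpha\neq(1^m,n-m)$ the entry $\ell+1$ necessarily sits in a row strictly below $\ell$, so $\ell\notin\Des_{\bA^*}(T)$ and $\ell\in\Des_{\mathcal{A}^*}(T)$; this is what makes the remaining factor begin at $x_k$ in the $\bA$-formula but at $x_{k+1}$ in the $\mathcal{A}$-formula. Exactly when $\alpha=(1^m,n-m)$ the shape forces $\ell+1$ into the top row alongside $\ell$, the set $\SET(\alpha)\cap\SIT^*(\alpha)$ collapses to a single tableau, and a direct computation of its descent set ($\Des_{\bA^*}$ is all of $[n-1]$ while $\Des_{\mathcal{A}^*}$ is empty) returns $e_n$, respectively $h_n$. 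Verifying this dichotomy carefully, and confirming that the recursive characteristics $\chr(\bA_{\SET(\overline{\alpha})})$ and $\chr(\mathcal{A}_{\SET(\overline{\alpha})})$ splice in with the correct variables, is the one place where genuine care, rather than transcription from Section~\ref{sec:dualImmsubmodules}, is required.
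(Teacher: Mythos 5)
Your proposal is correct and follows essentially the same route as the paper, which likewise reduces Parts (1)--(5) to the corresponding arguments for Theorem~\ref{thm:SET-intersect-SIT*} (your repackaging of Parts (2) and (4) as intersections of known invariant spans is just the adjointness relation $\pi_i^{\mathcal{A}^*}(S)=T\iff\pi_i^{\bA^*}(T)=S$ that the paper applies element-wise) and obtains Parts (6)--(7) from the filtration together with the skew-Schur factorization of Proposition~\ref{prop:ElizabethSIT*-char}, with the hook case $\alpha=(1^m,n-m)$ disposed of by the same direct descent-set computation giving $e_n$ and $h_n$. I see no gaps.
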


\begin{proof} We comment only on the last two parts, since the proofs are otherwise similar to Theorem~\ref{thm:SET-intersect-SIT*}.
For Part (6), when $\alpha = (1^m,n-m)$,  $\Des_{\bA}(T) = \{1,2,\ldots, n-1\}$ and hence 
$\chr(\bA_{\SIT^*(\alpha)}/\bA\mathcal{Y}_\alpha)=e_n$.

For Part (7), note that when $\alpha = (1^m,n-m)$, the $\mathcal{A}^*$-descent set is always empty, and hence 
$\chr(\mathcal{A}_{\SET(\alpha)}/R{\mathcal{AY}}_\alpha)=h_n$.
\end{proof}

The example below shows that the quasisymmetric functions $\chr(\mathcal{A}_\alpha)$ do not form a basis for $\Qsym$, since they fail to be independent:
$\chr(\mathcal{A}_\alpha)=F_{(n)}$ for $\alpha\in\{ (1^{\ell-1},n-\ell+1), 1\le \ell\le n\}$.
\begin{example}\label{ex:new-quasisym} 
Let $\alpha=(1^{\ell-1},n-\ell+1), 1\le \ell\le n.$ There  is only one tableau $T$ in $\SIT(\alpha)$.  
The descent sets are

$\Des_{\dI}(T)=[\ell-1] \text{ or }\emptyset \text{ if } \ell=1, \Des_{\rdI}(T)=\{\ell, \ell+1,\ldots,n-1\} \text{ or }\emptyset \text{ if } \ell=n, \Des_{\mathcal{A}}(T)=\emptyset, \Des_{\bA}(T)=[n-1].$

Hence the quasisymmetric characteristics are 
\[\dI_\alpha=F_{(1^{\ell-1}, n-\ell+1)}=F_\alpha,\  \rdI_\alpha=F_{(\ell, 1^{n-\ell})},  \ 
\chr(\mathcal{A}_\alpha)=F_{(n)},\ \chr({\bA}_\alpha)=F_{(1^n)}.\]
\end{example}

\begin{remark}\label{rem:descent-inclusions} Since
\[\Des_{\mathcal{A}^*}(T)\subseteq \Des_{\rdI}(T), 
\quad \Des_{\bA^*}(T)\supseteq \Des_{\dI}(T),\]
 the fundamental quasisymmetric functions in the expansion of $\rdI_\alpha$ (resp. $\dI_\alpha$) correspond to compositions that are refinements of (resp. are coarser than) 
 those appearing in the fundamental expansion of $\chr(\mathcal{A}_\alpha)$ (resp. $\chr(\bA_\alpha)$).
 For example, the three elements in $\SIT(13)$ are 
 $T_1=\tableau{2\\1&3 &4}, T_2=\tableau{3\\1&2 &4}, T_3=\tableau{4\\1&2 &3}$, with respective descent sets 
 $\{2,3\}$, $\{1,3\}$ and $\{1,2\}$ for $\rdI_{31}$, and 
 $\{2\}$, $\{3\}$ and $\emptyset$ for $\chr(\mathcal{A}_{31})$.
 Hence $\rdI_{31}=F_{211}+F_{121}+F_{112}$, 
 $\chr(\mathcal{A}_{31})=F_{22}+F_{31}+F_4$.
\end{remark}

We turn now to constructing the appropriate $\hn$-modules for each new descent set.   The standard model in the literature for doing this is the following.  We are given some basis \svw{that} is a subset $ST(\alpha)$, say, of the set of all standard tableaux  of shape $\alpha$,  and some definition of descents.  For $T\in ST(\alpha)$, let  $s_i(T)$ be the operator  switching $i$ and $i+1$ in   $T$.  This may or may not produce a basis element in $ST(\alpha)$. Define the action of the generator $\pi_i$ on $T\in ST(\alpha)$ by
\begin{equation}\label{eqn:standard-model-Hecke-action}
\pi_i(T)=\begin{cases}
T & \text{ if $i$ is NOT a descent of $T$},\\
s_i(T)  & \text{ if $i$ is a descent of $T$ and } s_i(T)\in ST(\alpha),\\
0 &\text{ otherwise}.
\end{cases}
\end{equation}

Under suitable conditions, the generators will satisfy the 0-Hecke relations, thereby defining a 0-Hecke module. We will show that these propitious circumstances occur for $\mathcal{A}$ and $\bA$, in addition to $\dI$ and $\rdI$, and that 
 in each case, the partial order resulting from the 0-Hecke action gives the immaculate Hecke poset $P\rdI_\alpha \simeq  P\dI_{\alpha}$.

For Definition~\ref{def:new-des}, \eqref{eqn:standard-model-Hecke-action} gives us the following.

Define, for each $1\le i\le n-1$ and each standard immaculate tableau $T$ of shape $\alpha,$ the action of the generator $\pi_i$ on $T$ to be 
\[\pi_i^{\mathcal{A}^*}(T)=\begin{cases} T, & \text{if } 
i\notin \Des_{\mathcal{A}^*}(T) \\
\phantom{T} &\iff i+1 \text{ strictly above or right-adjacent to $i$ in $T$} , \\
s_i(T), &  i\in \Des_{\mathcal{A}^*}(T) \iff i+1 \text{ strictly below $i$ in $T$}. \\
\end{cases}\]
where $s_i(T)$ is the standard immaculate tableau obtained from $T$ by swapping $i$ and $i+1.$ 

The following analogue of Lemma~\ref{lem:prelim} \svw{then follows by definition.}
\begin{lemma}\label{lem:prelimA} Let $T$ be a standard immaculate tableau and let $i\in \Des_{\mathcal{A}^*}(T).$  Then 
\begin{enumerate}
\item
$i, i+1$ cannot both be in the leftmost column of $T;$
\item if $s_i(T)$ is a standard immaculate tableau, then $i\notin \mathrm{Des}_{\mathcal{A}^*}(s_i(T)).$
\end{enumerate}
\end{lemma}
 \begin{theorem}\label{thm:Hecke-action-A} The operators $\pi_i^{\mathcal{A}^*}$ define an action of the 0-Hecke algebra on the vector space $\mathcal{V}_\alpha.$ 
\end{theorem}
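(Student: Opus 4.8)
The plan is to verify that the operators $\pi_i^{\mathcal{A}^*}$ satisfy the three defining relations of $\hn$, namely $(\pi_i^{\mathcal{A}^*})^2=\pi_i^{\mathcal{A}^*}$, the commutation $\pi_i^{\mathcal{A}^*}\pi_j^{\mathcal{A}^*}=\pi_j^{\mathcal{A}^*}\pi_i^{\mathcal{A}^*}$ for $|i-j|\ge 2$, and the braid relation $\pi_i^{\mathcal{A}^*}\pi_{i+1}^{\mathcal{A}^*}\pi_i^{\mathcal{A}^*}=\pi_{i+1}^{\mathcal{A}^*}\pi_i^{\mathcal{A}^*}\pi_{i+1}^{\mathcal{A}^*}$, following the template of the proof of Theorem~\ref{thm:Hecke-action-RSImm}. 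The first observation, which simplifies matters relative to the $\rdI$-case, is that the $\mathcal{A}^*$-action has no vanishing case: whenever $i\in\Des_{\mathcal{A}^*}(T)$ the entry $i+1$ lies strictly below $i$, hence in a different row and, by Lemma~\ref{lem:prelimA}(1), not jointly with $i$ in the first column, so $s_i(T)$ is always an element of $\SIT(\alpha)$. In particular every $\pi_i^{\mathcal{A}^*}(T)$ is a single basis vector, never $0$.

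First I would dispose of the easy relations. Idempotency is immediate: if $i\notin\Des_{\mathcal{A}^*}(T)$ then $\pi_i^{\mathcal{A}^*}$ fixes $T$; if $i\in\Des_{\mathcal{A}^*}(T)$ then in $s_i(T)$ the entry $i+1$ lies strictly above $i$, so $i\notin\Des_{\mathcal{A}^*}(s_i(T))$ by Lemma~\ref{lem:prelimA}(2), whence $\pi_i^{\mathcal{A}^*}$ fixes $s_i(T)=\pi_i^{\mathcal{A}^*}(T)$. The commutation relation for $|i-j|\ge 2$ holds because $\{i,i+1\}\cap\{j,j+1\}=\emptyset$, so the two swaps act on disjoint pairs of entries and their descent conditions are independent.

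The substance of the proof is the braid relation, which I would establish by splitting into four cases according to the membership of $i$ and $i+1$ in $\Des_{\mathcal{A}^*}(T)$, exactly as in the four-case analysis for Theorem~\ref{thm:Hecke-action-RSImm}. The key device throughout is to record the cells $A,B,C$ occupied by $i,i+1,i+2$ in $T$ and to track how the three relevant swaps permute these labels among the cells; since $i,i+1,i+2$ are consecutive integers, the descent condition at each stage depends only on the relative position (strictly below versus strictly above or right-adjacent) of the cells currently holding the relevant pair. When neither $i$ nor $i+1$ is a descent, both sides fix $T$. When both $i,i+1\in\Des_{\mathcal{A}^*}(T)$, the labels occupy $A,B,C$ with $A$ strictly above $B$ strictly above $C$; every intermediate swap then occurs in a descent configuration and is therefore legal, and both sides reduce to the symmetric-group braid identity $s_is_{i+1}s_i=s_{i+1}s_is_{i+1}$ applied to $T$.

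The main obstacle is the two mixed cases, where exactly one of $i,i+1$ is a descent. Here applying the first generator can create or destroy a descent at the other index, so each splits into subcases according to whether $i+1\in\Des_{\mathcal{A}^*}(s_i(T))$ (respectively $i\in\Des_{\mathcal{A}^*}(s_{i+1}(T))$). The crux is to check that in the nontrivial subcase the final application of $\pi_i^{\mathcal{A}^*}$ (respectively $\pi_{i+1}^{\mathcal{A}^*}$) fixes the resulting tableau; the position bookkeeping shows this reduces to the statement that the cell holding $i+2$ stays strictly above or right-adjacent to the cell holding $i+1$ (respectively the cell holding $i+1$ stays strictly above or right-adjacent to the cell holding $i$) throughout, which is exactly the hypothesis $i+1\notin\Des_{\mathcal{A}^*}(T)$ (respectively $i\notin\Des_{\mathcal{A}^*}(T)$) re-read as a fixed geometric relation between two of the three cells. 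Once all four cases close, the three relations hold and the $\pi_i^{\mathcal{A}^*}$ extend to an action of $\hn$ on $\mathcal{V}_\alpha$.
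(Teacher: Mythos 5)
Your proposal is correct and follows essentially the same route as the paper: verify idempotency and commutation directly, then establish the braid relation by the four-case analysis on whether $i$ and $i+1$ lie in $\Des_{\mathcal{A}^*}(T)$, with the mixed cases resolved by sub-cases and a check that the final generator fixes the resulting tableau. Your observation that the $\mathcal{A}^*$-action never annihilates a tableau, and the uniform cell-bookkeeping that merges the paper's right-adjacent and non-adjacent sub-cases, are minor streamlinings of the same argument rather than a different approach.
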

\begin{proof} As usual for simplicity we will simply write $\pi_i$ for $\pi_i^{\mathcal{A}^*}$. Clearly from the preceding analysis, $\pi_i(T)\in \mathcal{V}_\alpha$  for every standard immaculate tableau $T$ of shape $\alpha.$ We must verify that the operators satisfy the 0-Hecke algebra relations.
\\
To show $\pi_i^2(T)=\pi_i(T),$ we need only check the case when $i+1$ is strictly below $i$ in $T$. In this case $\pi_i(T)=s_i(T),$ and $i$ is now strictly below $i+1$. Hence $\pi_i(s_i(T))=s_i(T)$ and we are done.
\\
Let $1\le i,j\le n-1$ with $|i-j|\ge 2.$ Then $\{i, i+1\}\cup \{j, j+1\}=\emptyset,$ so the actions of $\pi_i$ and $\pi_j$ are independent of each other, and hence commute.
\\
It remains to show that 
\begin{equation}\label{eqn:keyclaim*}\pi_i \pi_{i+1} \pi_i(T)=\pi_{i+1} \pi_i \pi_{i+1}(T).
\end{equation}  We examine  four separate cases.
\begin{enumerate}
\item[Case 1:] Assume $i\notin \Des_{\mathcal{A}^*}(T), i+1\notin \Des_{A^*}(T)$: 
Then $\pi_i(T)=T, \pi_{i+1}(T)=T, $ and the claim is clear.
\item[Case 2:] Assume $i\in \Des_{\mathcal{A}^*}(T), \text{ but }i+1\notin \Des_{A^*}(T)$:  Then $\pi_{i+1}(T)=T,$ 
 $\pi_i(T)=s_i(T).$  Hence \eqref{eqn:keyclaim*} becomes 
\begin{equation}\label{eqn:step1*}\pi_i \pi_{i+1} ( s_i(T))= \pi_{i+1}(s_i(T)),
\end{equation}
which we need to verify.  
\\
Assume $i+1\notin \Des_{\mathcal{A}^*}(s_i(T)).$ 
The left-hand side then equals $\pi_i(s_i(T))=s_i(T)$  by \svw{Lemma~\ref{lem:prelimA},} and this is also the right-hand side. 
\\
Finally assume $i+1\in \Des_{\mathcal{A}^*}(s_i(T)).$   We now have $i+1$ strictly below $i$ in $T,$ so that $i+1$ is strictly above $i$ in $s_i(T),$ and $i+2$ strictly below $i+1$ in $s_i(T).$   Also recall that $i+2$ was weakly above $i+1$ in $T.$ It follows that 
\begin{equation}\label{eqn:step2*} \text{In } s_i(T),  i+2 \text{ is now weakly above } i 
\text{ and strictly below }i+1. \end{equation}  
This implies $\pi_{i+1}(s_i(T))=s_{i+1}(s_i(T)),$ and in the latter we now have $i$ (weakly) below \svw{$i+1$,} which is strictly below $i+2.$ In particular $i$ is not a descent of $\pi_{i+1}(s_i(T))=s_{i+1}(s_i(T)),$ and hence the latter tableau is fixed by $\pi_i.$ Equation~\eqref{eqn:step1*} is thus verified.
\\
\item[Case 3:]  Assume $i\notin \Des_{\mathcal{A}^*}(T), \text{ but }i+1\in \Des_{\mathcal{A}^*}(T)$:   
\\
Now \eqref{eqn:keyclaim*} becomes 
\begin{equation}\label{eqn:step11*}\pi_i(s_{i+1}(T))= \pi_{i+1}\pi_i(s_{i+1}(T)),
\end{equation}
which we need to verify. 
\\
Thus  $i+2$ is strictly below $i+1$ in $T.$ Also $i$ is  weakly BELOW $i+1$ in $T$. 
\\
Suppose $i$ and $i+1$ are NOT right-adjacent in $T$.  Then  $i+1$ is strictly above both $i$ and $i+2$ in $T,$ and thus  $i+2$ is strictly above both $i,i+1$ in $s_{i+1}(T).$  Here we have  two possibilities for $s_{i+1}(T)$:
\begin{itemize}
\item
 Either $i$ is below $i+1$, which is below $i+2$, and hence 
$\pi_i$ and $\pi_{i+1}$ both fix $s_{i+1}(T);$
\item
or $i+1$ is below $i$, and $i$ is below $i+2$.  In the latter case, applying $\pi_i$ to $s_{i+1}(T)$ switches $i$ and $i+1$, so that in \svw{$\pi_i(s_{i+1}(T))$} we now have $i$ below $i+1$, and $i+1$ (still) below $i+2$.  But then $\pi_{i+1}$ fixes $\pi_i(s_{i+1}(T))$.
\end{itemize}
Now suppose $i$ and $i+1$ ARE right-adjacent in $T$.  Since $i+2$ is strictly below $i+1$ in $T$, the only possibility here for $s_{i+1}(T)$ is that $i, i+2$ are 
right-adjacent, lying strictly above $i+1.$ But then in $\pi_i(s_{i+1}(T))$, 
$i+1, i+2$ are 
right-adjacent, lying strictly above $i.$ 
\\
Hence $\pi_{i+1}$ fixes 
$\pi_i(s_{i+1}(T))$, and 
  Equation~\eqref{eqn:step11*} has been established.
\item[Case 4:] Assume $i, i+1\in \Des_{\mathcal{A}^*}(T)$:   This means $i$ is strictly above $i+1$ which is strictly above $i+2$ in $T$.
 Then Equation~\eqref{eqn:keyclaim*} becomes 
\begin{equation}\label{eqn:step4*} s_i s_{i+1} s_i(T) = s_{i+1} s_i s_{i+1}(T), \end{equation}
and it is easy to see that this is indeed true.  
\end{enumerate}
We have verified Equation~\eqref{eqn:keyclaim*} in all cases, thereby completing the proof that the action of the generators $\pi_i$ extends to an action of $H_n(0)$ on $\mathcal{V}_\alpha.$ 
\end{proof}    

This $\hn$-action on $\mathcal{V}_\alpha$ induces a partial order $\poA$ on $\SIT(\alpha)$, exactly as in Section~\ref{sec:partial-order}.
 In fact it produces the same poset $P\rdI_\alpha$, since clearly 
\[\pi_i^{{\mathcal A}^*}(T)\notin \{T, 0\} \iff \pi_i^{\rdI}(T)\notin \{T, 0\} .\]  

Moreover, we have the following:
\begin{proposition}\label{prop:cyclic-A} The module $\mathcal{A}_\alpha$ is cyclically generated by the standard immaculate tableau $S^0_\alpha$.
\end{proposition}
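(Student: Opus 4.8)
The plan is to reduce the statement directly to the straightening algorithm already established for the $\rdI$-action in Proposition~\ref{prop:bot-elt}, exploiting the fact that $\pi_i^{\rdI}$ and $\pi_i^{\mathcal{A}^*}$ agree precisely on the moves that build saturated chains in the poset, namely the genuine swaps. No new combinatorics is required; the argument is a transfer of the existing one.

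First I would recall that Proposition~\ref{prop:bot-elt} provides, for every $T\in\SIT(\alpha)$ with $T\ne S^0_\alpha$, a sequence of generators $\pi_{j_1},\ldots,\pi_{j_r}$ together with intermediate tableaux $T_0=T, T_1,\ldots,T_r=S^0_\alpha$ in $\SIT(\alpha)$ satisfying $\pi_{j_i}^{\rdI}(T_i)=T_{i-1}$ with $T_{i-1}\ne T_i$ for each $i$, so that $T=\pi_{j_1}^{\rdI}\cdots\pi_{j_r}^{\rdI}(S^0_\alpha)$. By the reformulation of the $\rdI$-action in~\eqref{eqn:defn-pi(T)}, each such nontrivial step arises exactly when $j_i+1$ is strictly below $j_i$ in $T_i$, in which case $\pi_{j_i}^{\rdI}(T_i)=s_{j_i}(T_i)=T_{i-1}$.

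The crucial observation is that whenever $j+1$ is strictly below $j$ in a tableau $U$, the definition of the $\mathcal{A}^*$-action in Theorem~\ref{thm:Hecke-action-A} also yields $\pi_j^{\mathcal{A}^*}(U)=s_j(U)$; the two actions differ only in the right-adjacent configuration, which never occurs at any step of this chain. Consequently the identical sequence of operators gives $\pi_{j_i}^{\mathcal{A}^*}(T_i)=T_{i-1}$, and hence $T=\pi_{j_1}^{\mathcal{A}^*}\cdots\pi_{j_r}^{\mathcal{A}^*}(S^0_\alpha)$. As $T$ was arbitrary, every basis element of $\mathcal{A}_\alpha$ lies in the $\hn$-submodule generated by $S^0_\alpha$, so $\mathcal{A}_\alpha$ is cyclic with generator $S^0_\alpha$, exactly paralleling Theorem~\ref{thm:cyclic-rdI}.

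I do not expect a genuine obstacle here: the essential work was already done in observing that the $\mathcal{A}^*$-action induces the same poset $P\rdI_\alpha$, via the equivalence $\pi_i^{\mathcal{A}^*}(T)\notin\{T,0\}\iff\pi_i^{\rdI}(T)\notin\{T,0\}$. Since the saturated chains of $P\rdI_\alpha$ are assembled entirely from such nontrivial swaps, and these coincide for the two actions, the cyclic generation transfers verbatim. The only point requiring mild care is the standing convention $\alpha\notin\{(1^{\ell-1},n-\ell+1):1\le\ell\le n\}$, but this is harmless, since for those hook shapes $\SIT(\alpha)=\{S^0_\alpha\}$ by Lemma~\ref{lem:bot-elt-facts}(5) and cyclicity is immediate.
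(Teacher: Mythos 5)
Your proof is correct and follows essentially the same route as the paper: the paper likewise observes that $\pi_i^{\mathcal{A}^*}(T)=s_i(T)\iff\pi_i^{\rdI}(T)=s_i(T)\iff i+1$ is strictly below $i$ in $T$, and concludes that the straightening algorithm of Proposition~\ref{prop:bot-elt} goes through unchanged. Your additional remarks on the shared poset structure and the hook-shape convention are accurate but not needed beyond what the paper records.
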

\begin{proof}  Examining  Proposition~\ref{prop:bot-elt}, we see that, since we have 
\[\pi_i^{{\mathcal A}^*}(T)=s_i(T)\iff \pi_i^{\rdI}(T)=s_i(T)\iff i+1 \text{ is strictly below $i$ in $T$},\]
the straightening algorithm goes through without change, giving the same conclusion.
\end{proof}
\begin{proof}[Proof of Theorem~\ref{thm:Hecke-module-A}]

As in Section~\ref{sec:partial-order}, 
 extend the partial order $\poA$ on $\SIT (\alpha)$ to an arbitrary total order on $\SIT (\alpha)$, denoted by $\poA^t$.   Let the elements of $\SIT (\alpha)$ under $\poA ^t$ be $$\{\rtau _1 \poA^t  \cdots \poA ^t \rtau _m  \}.$$ 
The induced filtration, as in Section~\ref{sec:indecomp-module-and-poset}, is
\[ 0\subset \mathrm{span}([T_m,S^{row}_\alpha]) 
\subset\dots\subset \mathrm{span}([T_i,S^{row}_\alpha])\subset \dots \subset 
\mathrm{span}([S^0_\alpha, S^{row}_\alpha]).\]

  The key observation here is that
 \[\svw{\pi_i^{\mathcal{A}^*}}(T)=T\iff i\notin \svw{\Des_{\mathcal{A}^*}(T),}\]
which guarantees that the successive quotients in the filtration are one-dimensional irreducible modules.
Theorem~\ref{thm:Hecke-module-A} now follows.\end{proof}

Consider next Definition~\ref{def:new-des-complement}.
Define, for each $1\le i\le n-1$ and each standard immaculate tableau $T$ of shape $\alpha,$ the action of the generator $\pi_i$ on $T$ to be 
\[\bpi_i(T)=\pi_i^{\bA^*}(T)=\begin{cases} T, & 
i\notin \Des_{\bA^*}(T)\iff i+1 \text{ strictly below } i \text{ in } T, \\
0 & i+1 \text{  right-adjacent to $i$ in $T$} \\
&\text{ or $i, i+1$ both in 1st column}, \\
s_i(T), &   i+1 \text{ strictly above $i$ in $T$}, \\
&\text{ and } i, i+1 \text{ not both in 1st column}. 
\end{cases}\]
where $s_i(T)$ is the standard immaculate tableau obtained from $T$ by swapping $i$ and $i+1.$ 

\begin{lemma}\label{lem:bA} Suppose $i, i+1$ are not both in column 1 of $T\in \SIT(\alpha)$, and $i\in \svw{\Des_{\bA ^*}}(T).$  
\svw{If $\bpi_i(T)= s_i(T)$}, then $i$ cannot be in column 1 of $T$.
\end{lemma}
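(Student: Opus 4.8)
The plan is to argue by contradiction, assuming $i$ lies in the first column of $T$ and deriving an impossible pair of inequalities from the two monotonicity conditions defining a standard immaculate tableau.

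First I would unwind the hypothesis $\bpi_i(T) = s_i(T)$. By the definition of the $\bA^*$-action immediately preceding the lemma, this value is produced precisely when $i+1$ is strictly above $i$ in $T$ and $i, i+1$ are not both in the first column (the latter already being assumed). So suppose, for contradiction, that $i$ occupies cell $(r,1)$, and let $(r',c')$ be the cell containing $i+1$, where $r' > r$ since $i+1$ is strictly above $i$.

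Next I would split into two cases according to $c'$. If $c' = 1$, then $i+1$ is in the first column; since the first column strictly increases bottom to top and $i$ sits in it, $i+1$ must occupy the cell immediately above, so $i$ and $i+1$ are both in the first column, contradicting the hypothesis. If $c' \ge 2$, I would examine the leftmost cell $(r',1)$ of the row containing $i+1$, which exists because the diagram is left-justified. The two defining constraints now pull in opposite directions: strict increase up the first column gives that the entry in $(r',1)$ is greater than $i$ (as $r' > r$), while strict increase along row $r'$ gives that same entry to be less than the entry $i+1$ sitting to its right. Thus the entry in $(r',1)$ would be a strict integer between $i$ and $i+1$, which is impossible.

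The hard part --- really the only substantive step --- is spotting this squeeze on the first entry of the row containing $i+1$; once it is identified, both cases close immediately and no further computation is needed. I note that the hypothesis $i \in \Des_{\bA^*}(T)$ is consistent with (indeed implied by) $\bpi_i(T) = s_i(T)$, so it plays no independent role, and that the argument is purely combinatorial, using only the strict monotonicity of rows and of the first column.
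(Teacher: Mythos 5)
Your proof is correct and is essentially the argument the paper intends: its one-line proof ("rows must increase left to right, and $i+1$ is strictly above $i$") is exactly the squeeze you make explicit, namely that the first-column entry of the row containing $i+1$ would have to lie strictly between $i$ and $i+1$ unless $i+1$ itself sits in column $1$, which is excluded by hypothesis. You have simply written out the details the paper leaves to the reader.
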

\begin{proof}
\svw{This follows} since rows must increase left to right, and $i+1$ is strictly above $i$ in $T$.
\end{proof}
\begin{theorem}\label{thm:Hecke-action-barA} The operators $\bpi_i$ define an action of the 0-Hecke algebra on the vector space $\mathcal{V}_\alpha.$ 
\end{theorem}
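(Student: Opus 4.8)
The plan is to verify that the operators $\bpi_i = \pi_i^{\bA^*}$ satisfy the three defining relations of $H_n(0)$, following exactly the same structure as the proof of Theorem~\ref{thm:Hecke-action-RSImm}. The idempotence relation $\bpi_i^2 = \bpi_i$ and the far-commutativity relation $\bpi_i\bpi_j = \bpi_j\bpi_i$ for $|i-j|\ge 2$ are handled routinely: for idempotence, the only nontrivial case is when $i+1$ is strictly above $i$ with $i,i+1$ not both in column 1, where $\bpi_i(T)=s_i(T)$ puts $i+1$ strictly below $i$, so $\bpi_i$ fixes $s_i(T)$; for far-commutativity the sets $\{i,i+1\}$ and $\{j,j+1\}$ are disjoint, so the two actions touch disjoint entries. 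The real content, as before, is the braid relation
\begin{equation}\label{eqn:keyclaim-barA}\bpi_i \bpi_{i+1} \bpi_i(T)=\bpi_{i+1} \bpi_i \bpi_{i+1}(T).\end{equation}

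First I would organize the verification of \eqref{eqn:keyclaim-barA} into the same four cases by the membership of $i$ and $i+1$ in $\Des_{\bA^*}(T)$, i.e.\ by the relative vertical positions of $i,i+1,i+2$. This is essentially the $\psi$-dual (complement) of the $\rdI$-action, so I would exploit Lemma~\ref{lem:sameposet}-type reasoning: $\bpi_i$ is the ``adjoint'' of $\pi_i^{\rdI}$ in that $\bpi_i(T)=s_i(T)\iff \pi_i^{\rdI}(s_i(T))=s_i(T)$ reversed. Concretely, Case 1 ($i,i+1\notin\Des_{\bA^*}$, so $i+1$ strictly below $i$ and $i+2$ strictly below $i+1$) is trivial since both sides fix $T$. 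Cases 2 and 3 (exactly one of $i,i+1$ a descent) require tracking how applying one $s_k$ changes the descent status of the other index; here I would use an analogue of Lemma~\ref{lem:prelim}(3) and the new Lemma~\ref{lem:bA} to rule out the column-1 obstruction and to decide whether the outer operator fixes, kills, or swaps. Case 4 (both $i,i+1\in\Des_{\bA^*}$, so $i+1$ strictly above $i$ and $i+2$ weakly above $i+1$) is where the two ways the swap can be blocked—right-adjacency versus column-1 coincidence—must be split into subcases exactly as in Case 4a/4b of Theorem~\ref{thm:Hecke-action-RSImm}, reducing in the generic subcase to the symmetric-group braid identity $s_i s_{i+1}s_i(T)=s_{i+1}s_i s_{i+1}(T)$.

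The main obstacle I anticipate is the bookkeeping in Case 4, and more specifically the interaction of the \emph{two distinct} ways $\bpi_i$ can return $0$: either $i+1$ is right-adjacent to $i$, or $i,i+1$ both lie in column 1. Unlike the $\rdI$-action, which has a single annihilation condition, here one must check at each intermediate tableau that neither vanishing trigger is spuriously created or destroyed when a swap is applied, and that the column-1 constraint from Lemma~\ref{lem:bA} is respected throughout. I would therefore be especially careful to confirm, in each subcase, that whenever an inner swap moves an entry into or out of column 1, the resulting descent/annihilation status matches on both sides of \eqref{eqn:keyclaim-barA}. Once all four cases (with the Case 4 subcases) are checked and shown to agree, equation~\eqref{eqn:keyclaim-barA} holds for every $T\in\SIT(\alpha)$, completing the proof that the $\bpi_i$ generate an $H_n(0)$-action on $\mathcal{V}_\alpha$.
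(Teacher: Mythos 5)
Your proposal follows essentially the same route as the paper's proof: direct verification of the three relations, with the braid relation split into the same four cases by membership of $i,i+1$ in $\Des_{\bA^*}(T)$, the same reliance on Lemma~\ref{lem:bA}, and the same subcase analysis in Case 4 distinguishing the two annihilation triggers (right-adjacency versus both entries in column 1), reducing the generic subcase to $s_is_{i+1}s_i = s_{i+1}s_is_{i+1}$. The only caveat is that your parenthetical for Case 4 should read ``$i+1$ \emph{weakly} above $i$'' rather than strictly, but your subsequent discussion of right-adjacency shows you intend the weak condition, so this is a slip of wording rather than a gap.
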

\begin{proof}  Clearly from the preceding analysis, $\bpi_i(T)\in \mathcal{V}_\alpha$  for every standard immaculate tableau $T$ of shape $\alpha.$ We must verify that the operators satisfy the 0-Hecke algebra relations.
\\
That $\bpi_i^2(T)=\bpi_i(T)$ holds is clear if $\bpi_i(T)\in \{T,0\}, $ so we need only check the case when $i+1$ is strictly above $i$ in $T$, and $i, i+1$  not in column 1. In this case $\bpi_i(T)=s_i(T),$ and $i$ is now strictly below $i+1$ in $s_i(T)$. Hence $\bpi_i(s_i(T))=s_i(T)$ and we are done.
\\
Let $1\le i,j\le n-1$ with $|i-j|\ge 2.$ Then $\{i, i+1\}\cup \{j, j+1\}=\emptyset,$ so the actions of $\bpi_i$ and $\bpi_j$ are independent of each other, and hence commute.
\\
It remains to show that 
\begin{equation}\label{eqn:keyclaim*barA}\bpi_i \bpi_{i+1} \bpi_i(T)=\bpi_{i+1} \bpi_i \bpi_{i+1}(T).
\end{equation}
 Again  there are  four separate cases.
\begin{enumerate}
\item[Case 1:] $ i, i+1\notin \Des_{\bA^*}(T)$: This case is clear as before, since $\bpi_i, \bpi_{i+1}$ both fix $T$.
\item[Case 2:] $ i\in \Des_{\bA^*}(T), i+1\notin \Des_{\bA^*}(T)$: thus $\bpi_{i+1}$ fixes $T$ and $i+2$ is strictly below $i+1$.

If $\bpi_i(T)=0$, we are done, so assume $\bpi_i(T)=s_i(T).$   By Lemma~\ref{lem:bA}, $i$ cannot be in column 1, but $i+1$ could be.  Note that Equation~\eqref{eqn:keyclaim*barA} becomes 
\begin{equation}\label{eqn:barAstep2}\bpi_i \bpi_{i+1} s_i(T)=\bpi_{i+1} s_i(T).\end{equation}

Now $i$ is strictly below $i+1$ in $T$, so either $i+2$ is below both of them, or $i+2$ is above $i$ and below $i+1$. 

In $s_i(T)=\bpi_i(T)$, we have, in the first case,  $i+2$ below $i+1$ below $i$ and so $\bpi_{i+1}$ (as well as  $\bpi_i$)  fixes $s_i(T);$ 
and in the second case  $i+1$ below $i+2$ below $i$, so $\bpi_{i+1}(s_i(T))$ has $i+2$ below $i+1$ below $i$. 

Thus in either case, $i$ is NOT a descent of $\bpi_{i+1}(s_i(T))$, which is thus fixed by $\bpi_i$.  This verifies \eqref{eqn:barAstep2}.
\item[Case 3:]  Assume $i\notin \Des_{\bA^*}(T), \text{ but }i+1\in \Des_{\bA^*}(T)$:  

\svw{This is Case 2 with the roles of $i$ and $i+1$ interchanged, and the argument follows mutatis mutandis.}

\item[Case 4:] Assume $i, i+1\in \Des_{\bA^*}(T)$:   This means $i+2$ is weakly above $i+1$ which is weakly above $i$ in $T$.  If both relations are strict, and neither of the pairs $i+2, i+1$ nor $i+1,i$ is in column 1, 
  Equation~\eqref{eqn:keyclaim*barA} becomes 
\begin{equation}\label{eqn:step4bar*} s_i s_{i+1} s_i(T) = s_{i+1} s_i s_{i+1}(T), \end{equation}
and it is easy to see that this is indeed true.  

If $\bpi_i(T)=0=\bpi_{i+1}(T)$, each side of Equation~\eqref{eqn:keyclaim*barA} is 0.

Otherwise, we have two sub-cases.

\textbf{Case 4a:} $\bpi_{i+1}(T)\ne 0$ and  $\bpi_i(T)=0$:

This makes the left side of~\eqref{eqn:keyclaim*barA} 
equal to 0.   We must show that the right side of~\eqref{eqn:keyclaim*barA} is also zero.

We have $\bpi_{i+1}(T)=s_{i+1}(T)$, and $i+2, i+1$ are not both in column 1.  The right side
 of~\eqref{eqn:keyclaim*barA} is then $\bpi_{i+1}\bpi_i(s_{i+1}(T))$. 

Since  $\bpi_i(T)=0$, either $i, i+1$ are right-adjacent in $T$ with $i+2$ strictly above them 
or $i, i+1$ are in the first column with $i+2$ above them, not in the first column.

In the first case, applying $s_{i+1}$ makes $i$ and $i+2$ right-adjacent with $i+1$ above them, and this followed by $\bpi_i$ makes $i+1$ and $i+2$ right-adjacent.  This tableau is thus sent to 0 by  $\bpi_{i+1}$.

In the second case, applying $s_{i+1}$ makes $i$ and $i+2$ adjacent in column 1 with $i+1$ above them and strictly to the right, and following this with $\bpi_i$ puts both $i+1$ and $i+2$ in column 1.  Again, this tableau is  sent to 0 by  $\bpi_{i+1}$.

\textbf{Case 4b:} $\bpi_{i+1}(T)=0$ but $\bpi_i(T)\ne 0$:

This is Case 4a with the roles of $i$ and $i+1$ interchanged, and the argument follows mutatis mutandis.

\end{enumerate}
We have verified Equation~\eqref{eqn:keyclaim*barA} in all cases, thereby completing the proof that the action of the generators $\bpi_i$ extends to an action of $H_n(0)$ on $\mathcal{V}_\alpha.$ 
\end{proof}    

The analogue of Proposition~\ref{prop:cyclic-A} is 
\begin{proposition}\label{prop:cyclic-barA} The module $\bA_\alpha$ is cyclically generated by the standard immaculate tableau $S^{row}_\alpha$.
\end{proposition}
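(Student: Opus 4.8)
The plan is to mimic the proof of Proposition~\ref{prop:cyclic-A}, but using the top-element straightening of Proposition~\ref{prop:top-elt} in place of the bottom-element straightening of Proposition~\ref{prop:bot-elt}. The essential observation is that the only genuine swapping move of the $\bA^*$-action agrees precisely with the swapping move of the $\dI$-action. Indeed, from the definition of $\bpi_i$ we read off
\[\bpi_i(T)=s_i(T)\notin\{T,0\}\iff i+1\text{ is strictly above }i\text{ in }T\text{ and }i,i+1\text{ are not both in column }1,\]
and this is exactly the condition under which $\pi_i^{\dI}(T)=s_i(T)$ in Theorem~\ref{thm:BBSSZ2dualImm}. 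The two actions differ only in the right-adjacency case, where $\bpi_i$ returns $0$ while $\pi_i^{\dI}$ fixes $T$; this discrepancy never affects a strict swap.

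First I would record the consequence for the poset. By Lemma~\ref{lem:sameposet}, a cover relation $S\poRIcover T$ of the immaculate Hecke poset is equivalent to $S=\pi_i^{\dI}(T)=s_i(T)$ for a unique $i$, which by the displayed equivalence is in turn equivalent to $S=\bpi_i(T)$. Thus each downward cover step of $P\rdI_\alpha\simeq P\dI_\alpha$ is realised by the $\bA^*$-action exactly as it is by the $\dI$-action. Note in particular that if $\pi_{j}^{\rdI}(T')=s_{j}(T')\neq 0$, then by Lemma~\ref{lem:prelim} the entries $j,j+1$ are not both in column $1$ of $T'$, hence they are not both in column $1$ of $s_j(T')$ either; consequently the reverse $\bA^*$-move on $s_j(T')$ is a genuine swap and never the zero map.

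Next I would invoke Proposition~\ref{prop:top-elt}. For any $T\in\SIT(\alpha)$ with $T\neq S^{row}_\alpha$, that proposition produces a saturated chain
\[T=T_0\poRIcover T_1\poRIcover\cdots\poRIcover T_r=S^{row}_\alpha,\qquad T_i=\pi_{j_i}^{\rdI}(T_{i-1})=s_{j_i}(T_{i-1}).\]
Reading this chain downward and translating each cover into the $\bA^*$-action via the equivalence above gives $T_{i-1}=\bpi_{j_i}(T_i)$ for every $i$, whence
\[T=\bpi_{j_1}\bpi_{j_2}\cdots\bpi_{j_r}(S^{row}_\alpha).\]
Since every element of $\SIT(\alpha)$ is therefore in the $\hn$-orbit of $S^{row}_\alpha$ under the $\bA^*$-action, the module $\bA_\alpha$ is cyclically generated by $S^{row}_\alpha$, as claimed.

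I expect the only point requiring care to be the verification that the divergence between $\bpi_i$ and $\pi_i^{\dI}$ (fix versus zero on right-adjacent pairs) is irrelevant here: it can only occur on a non-swapping move, whereas a saturated chain uses swapping moves exclusively. Once this is isolated, the argument is a direct transcription of the reasoning behind Theorem~\ref{thm:BBSSZ2-cyclic} and Proposition~\ref{prop:cyclic-A}.
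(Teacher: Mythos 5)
Your proposal is correct and follows essentially the same route as the paper: the paper's proof likewise observes that $\bpi_i(T)=s_i(T)$ exactly when $\pi_i^{\dI}(T)=s_i(T)$ (i.e.\ $i+1$ strictly above $i$ and $i,i+1$ not both in column $1$), and then notes that the straightening algorithm of Proposition~\ref{prop:top-elt} goes through unchanged. Your additional check, via Lemma~\ref{lem:prelim}, that the column-$1$ degeneracy cannot occur along a saturated chain is a worthwhile detail that the paper leaves implicit.
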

\begin{proof}  Examining  Proposition~\ref{prop:top-elt}, we see that, since we have 
\[\pi_i^{{\bA}^*}(T)=s_i(T)\iff \pi_i^{\dI}(T)=s_i(T)\]
\[\iff i+1 \text{ is strictly above $i$ in $T$, and $i, i+1$ are not both in column 1},\]
the straightening algorithm goes through without change, with the same conclusion.
\end{proof}

\begin{proof}[Proof of Theorem~\ref{thm:Hecke-module-barA}] This is proved using the filtration
of $\hn$-modules 
\[0\subset \mathrm{span}([S^0_\alpha, T_1])
\subset \dots \subset 
\mathrm{span}([S^0_\alpha, T_i])\subset \dots \subset \mathrm{span}([S^0_\alpha, S^{row}_\alpha]),\]
induced by a linear extension 
$$\{\rtau _1 \poAbar^t  \cdots \poAbar ^t \rtau _m  \}$$ 
 of the poset, exactly as before. Again the key observation  is that
 \[\pi_i^{\bA^*}(T)=T\iff i\notin \Des_{\bA^*}(T),\]
 guaranteeing that the successive quotients in the filtration are one-dimensional irreducible modules.
\end{proof}
Finally, we have:
\begin{proof}[Proof of Proposition~\ref{prop:Z-A-X-barA}]
This is proved by examining the  arguments in Lemma~\ref{lem:Zcyclic} and Lemma~\ref{lem:SIT*-interval}, which in turn rely on the straightening algorithms of Proposition~\ref{prop:bot-elt} and Proposition~\ref{prop:top-elt}, respectively, 
and observing that they go through unchanged.\end{proof}

\begin{remark}\label{rem:indecomp?}
It is not clear how to approach the question of indecomposability for the module $\mathcal{A}_\alpha$.  The method of Part (2) of Proposition~\ref{prop:indecomp-proof-methods} fails since the action of the generators is never 0 by definition.  As for Part (1), consider the  basis element $v=S^{row}_\alpha$, and note that $\Des_{\mathcal{A}^*}(S^{row}_\alpha)$ is the empty set; thus  $\pi_i^{\mathcal{A}^*}$ fixes 
the top element $S^{row}_\alpha$ for all $i$. Since the generator of the module is the bottom element $S^{0}_\alpha$, Part (1) of Proposition~\ref{prop:indecomp-proof-methods} fails for the basis element $S^{row}_\alpha$.  This discussion also applies to the module $\mathcal{A}_{\SET(\alpha)}$, since $S^{row}_\alpha\in \SET(\alpha)$. 

For the module $\bA_\alpha$,  our only recourse is Part (2) of Proposition~\ref{prop:indecomp-proof-methods}, because the cyclic generator is now $S^{row}_\alpha$ , and its $\bA$-descent set is $[n-1]$.  An example shows that this fails as well. Let $\alpha=3121$.  It is easy to check that the Hecke generators sending $S^{row}_\alpha$ to 0 coincide with those sending $P=\pi_6^{\bA}(S^{row}_\alpha)$ to 0.  
\end{remark}
 While Example~\ref{ex:new-quasisym} showed that the quasisymmetric characteristics arising from these modules are not independent, we conclude the paper by showing that they are nonetheless  combinatorially  interesting.  
In analogy with $\dI_\alpha$ and $\rdI_\alpha$, we will establish that these new characteristics are the generating functions for appropriately defined sets of  tableaux with possibly repeated entries.  See Figure~\ref{fig:4-descent-sets} for a schematic summary.

\sheilaFeb{
 Let $\alpha\vDash n$.   Given a filling $T$ of the diagram of $\alpha$ with entries $\{1,2,\ldots\}$, let $\mathrm{cm}(T)$, the \emph{content monomial} of $T$,  denote the monomial $x_1^{d_1} x_2^{d_2}\cdots$, 
where $d_i$ is the number of entries equal to $i$ in $T$. We then refer to the composition $(d_1,d_2,\ldots)$ of $n$ as the \emph{content} of the tableau $T$.
\\
Define $\mathcal{T}_\alpha({\text{1st col}<, \text{rows}\le})$ to be the set of tableaux of shape $\alpha$ whose first column entries increase strictly bottom to top, and whose rows all increase weakly left to right.
\\
Similarly, define $\mathcal{T}_\alpha({\text{1st col}\le, \text{rows}<})$ to be the set of tableaux of shape $\alpha$ whose first column entries increase weakly bottom to top, and whose rows all increase strictly left to right.
\\
From Definition~\ref{def:dIfunction} and  Definition~\ref{def:rdIfunction}, we have 
\[\dI_\alpha=\sum_{T\in  \mathcal{T}_\alpha({\text{1st col}<, \text{rows}\le})}  \mathrm{cm}(T), \text{ and } \rdI_\alpha=\sum_{T\in  \mathcal{T}_\alpha({\text{1st col}\le, \text{rows}<})}  \mathrm{cm}(T).\]
\\
Define $\mathcal{T}_\alpha({\text{cols}<, \text{rows}\le})$ to be the set of column-strict  tableaux of shape $\alpha$, whose  columns ALL increase strictly bottom to top, and whose rows all increase weakly left to right.  From Proposition~\ref{prop:gf-row-ext}, the tableau generating function for the extended Schur function is
\[\mathcal{E}_\alpha=\sum_{T\in  \mathcal{T}_\alpha({\text{cols}<, \text{rows}\le})}  \mathrm{cm}(T).\]
\\
Define $\mathcal{T}_\alpha({\text{cols}\le, \text{rows}<})$ to be the set of row-strict  tableaux of shape $\alpha$ whose  columns ALL increase weakly bottom to top, and whose rows all increase strictly left to right.
From Proposition~\ref{prop:gf-row-ext}, the tableau generating function for the row-strict extended Schur function is
\[\mathcal{R}\mathcal{E}_\alpha=\sum_{T\in  \mathcal{T}_\alpha({\text{cols}\le, \text{rows}<})}  \mathrm{cm}(T).\]
Our final result, captured in Figure~\ref{fig:4-descent-sets}, is:
\begin{theorem}\label{thm:8flavours-tableaux}  Let $\alpha\vDash n$.   
\begin{enumerate}
\item Let $\mathcal{T}_\alpha({\text{1st col}\le, \text{rows}\le})$ denote the set of tableaux of shape $\alpha$ whose first column entries increase weakly bottom to top, and whose rows all increase weakly left to right. Then 
\begin{equation*}\label{eqn:gf-A}
\sheilaFeb{\chr({\mathcal{A}_\alpha}) =\sum_{T\in \SIT(\alpha)} F_{\comp(\Des_{\mathcal{A}^*}(T))}=\sum_{T\in  \mathcal{T}_\alpha({\text{1st col}\le, \text{rows}\le})}  \mathrm{cm}(T)}.
\end{equation*}
\item Let $\mathcal{T}_\alpha({\text{1st col}<, \text{rows}<})$ denote the set of tableaux of shape $\alpha$ whose first column entries increase strictly bottom to top, and whose rows all increase strictly left to right. Then
\begin{equation*}\label{eqn:gf-barA}
\sheilaFeb{\chr(\bA_\alpha) =\sum_{T\in \SIT(\alpha)} F_{\comp(\Des_{\bA^*}(T))}=\sum_{T\in  \mathcal{T}_\alpha({\text{1st col}<, \text{rows}<})}  \mathrm{cm}(T)}.
\end{equation*}
\item Let $\mathcal{T}_\alpha({\text{cols}\le, \text{rows}\le})$ be the set of  tableaux of shape $\alpha$ having weakly increasing entries in ALL columns, bottom to top,  and all rows, left to right. Its generating function is the characteristic of the  submodule $\mathcal{A}_{\SET(\alpha)}$ of $\mathcal{A}_\alpha$. Equivalently,
\begin{equation*}\label{eqn:gf-A-SET}
\sheilaFeb{\chr(\mathcal{A}_{\SET(\alpha)})= \sum_{T\in \SET(\alpha)} F_{\comp(\Des_{\mathcal{A}^*}(T))}
=\sum_{T\in  \mathcal{T}_\alpha({\text{cols}\le, \text{rows}\le})}  \mathrm{cm}(T)}.
\end{equation*}
\item Let $\mathcal{T}_\alpha({\text{cols}<, \text{rows}<})$ be the set of  tableaux of shape $\alpha$ having strictly increasing entries in ALL columns, bottom to top,  and all rows, left to right.  Its generating function is the characteristic of the quotient module $\bA_{\SET(\alpha)}$ of $\bA_\alpha$.  Equivalently,
\begin{equation*}\label{eqn:gf-barA-SET}
\sheilaFeb{\chr(\bA_{\SET(\alpha)})=\sum_{T\in \SET(\alpha)} F_{\comp(\Des_{\bA^*}(T))} =\sum_{T\in  \mathcal{T}_\alpha({\text{cols}<, \text{rows}<})}  \mathrm{cm}(T)}.
\end{equation*}
\end{enumerate}
\end{theorem}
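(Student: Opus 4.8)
The plan is to prove Theorem~\ref{thm:8flavours-tableaux} by the standard ``standardization'' argument that establishes the fundamental expansion of a tableau generating function, carried out uniformly for all four parts. For each part, the left-hand equality (the fundamental expansion over $\SIT(\alpha)$ or $\SET(\alpha)$) is already in hand: Parts (1) and (2) follow from Theorem~\ref{thm:Hecke-module-A} and Theorem~\ref{thm:Hecke-module-barA}, while Parts (3) and (4) follow from Proposition~\ref{prop:Z-A-X-barA}(1) and (2). Thus the entire content to be proved is the right-hand equality, namely that the $F$-expansion indexed by the appropriate descent set equals the monomial generating function over the corresponding set of semistandard-type tableaux $\mathcal{T}_\alpha(\cdots)$. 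I would therefore isolate a single ``master'' standardization lemma and then apply it four times, once for each pair (descent set, inequality pattern).

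First I would fix a part, say Part (1). I would show that for each tableau $T$ in the semistandard set $\mathcal{T}_\alpha(\text{1st col}\le,\text{rows}\le)$ there is a standardization $\stdz(T)\in\SIT(\alpha)$ obtained by relabeling the entries $1,\ldots,n$ so that equal entries are broken in a prescribed reading order (reading within each value-block so as to respect the relevant strictness conditions), and conversely that each standard tableau $S\in\SIT(\alpha)$ together with each monomial appearing in $F_{\comp(\Des_{\mathcal{A}^*}(S))}$ ``destandardizes'' to a unique element of $\mathcal{T}_\alpha(\text{1st col}\le,\text{rows}\le)$ of the matching content. This is exactly the structure of the (commented-out) proof of Theorem~\ref{thm:rsfunddecomp} in Section~\ref{sec:RSImmFns}, and of \cite[Ex.~7.90(a)]{RPSEC21999} and the proof of Proposition~\ref{prop:gf-row-ext}. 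Concretely, I would recall that the monomial $x_1^{\beta_1}x_2^{\beta_2}\cdots$ appears in $F_\gamma$ exactly when $\set(\gamma)\subseteq\set(\beta)$, and then verify that
\[
|\{S\in\SIT(\alpha): \Des_{\mathcal{A}^*}(S)\subseteq \set(\beta)\}|
\]
equals the number of tableaux in $\mathcal{T}_\alpha(\text{1st col}\le,\text{rows}\le)$ of content $\beta$. The key point is that the condition $\Des_{\mathcal{A}^*}(S)\subseteq\set(\beta)$ precisely encodes which entries may be merged to equal values without violating the weak/strict inequalities defining the target tableau set: since $\Des_{\mathcal{A}^*}(S)=\{i: i+1\text{ strictly below } i\}$, its complement records where $i+1$ is weakly above $i$, which corresponds exactly to where a value may repeat under ``weak column / weak row'' filling.

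For Parts (2), (3), (4) I would run the identical argument with the descent set and strictness conditions substituted appropriately: Part (2) uses $\Des_{\bA^*}(S)=\{i:i+1\text{ weakly above } i\}$ and the all-strict set $\mathcal{T}_\alpha(\text{1st col}<,\text{rows}<)$; Part (3) uses $\Des_{\mathcal{A}^*}$ restricted to $\SET(\alpha)$ with weak columns and weak rows; Part (4) uses $\Des_{\bA^*}$ restricted to $\SET(\alpha)$ with strict columns and strict rows. In each case the verification reduces to checking that the descent condition $i\in\Des(S)$ forces a strict increase between the value-block containing $i$ and that containing $i+1$ under destandardization, while $i\notin\Des(S)$ permits equality, and that these local conditions assemble into exactly the global inequalities defining $\mathcal{T}_\alpha(\cdots)$ along the first column (or all columns, for Parts (3),(4)) and along rows.

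The main obstacle I anticipate is bookkeeping the interaction between the column condition and the row condition at the first column, especially distinguishing the ``1st col'' variants (Parts (1),(2)) from the ``all cols'' variants (Parts (3),(4)): I must confirm that reading equal entries in the correct order (bottom to top within a column, or left to right within a row, as dictated by which direction carries the weak inequality) yields a well-defined standardization whose inverse is single-valued, and that no merge permitted by $\Des(S)\subseteq\set(\beta)$ ever creates an illegal repeated entry in a column that is required to be strict. Once the reading order is pinned down so that strict directions never allow a tie and weak directions always do, the bijection between $\{S: \Des(S)\subseteq\set(\beta)\}$ and content-$\beta$ tableaux is forced, and summing over all $\beta\vDash n$ completes each part. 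Since this is the same mechanism as Proposition~\ref{prop:gf-row-ext}, I expect the four proofs to be nearly verbatim variations, and I would present one in full and indicate that the remaining three follow \emph{mutatis mutandis}.
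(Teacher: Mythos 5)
Your proposal is correct and follows essentially the same route as the paper's proof: both reduce the claim to showing that, for each $\beta\vDash n$, the standard tableaux with descent set contained in $\set(\beta)$ biject with content-$\beta$ tableaux in the relevant family $\mathcal{T}_\alpha(\cdots)$, via a standardization whose reading order is dictated by which inequalities are weak (the paper uses left-to-right, bottom-to-top for the weak variants (1),(3) and left-to-right, top-to-bottom for the strict variants (2),(4)). The one point you flag as an anticipated obstacle --- pinning down the reading order so that strict directions never permit a tie --- is exactly the only substantive content of the paper's argument, so your plan is complete once that choice is made explicit.
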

}

\begin{proof} 
\sheilaFeb{Recall that a composition $\beta$ is finer than $\alpha$ if and only if $\set(\beta)\supseteq\set(\alpha).$ 
 We prove  \eqref{eqn:gf-A} by showing that, for a fixed $\beta\vDash n$, the cardinality of the set 
\begin{center}$S_1=\{T\in \SIT(\alpha):\Des_{{\mathcal A}^*}(T)\subseteq \set(\beta)\}$\end{center}
equals the cardinality of the set 
\begin{center}$S_2=\{T\in \mathcal{T}_\alpha({\text{1st col}\le, \text{rows}\le})   : T \text{ has shape }\alpha \text{ and content } \beta\}$.\end{center} 
Given $U\in S_2$, replace the 1's in $U$ left to right, bottom to top with the consecutive entries $1,\ldots,\beta_1$, then the 2's with the next $\beta_2$ consecutive entries, 
again left to right, bottom to top, and so on.  It is clear that this produces increasing entries going up the first column of the resulting standard tableau $T$, as well as along the rows.  Also the descents can occur only where  the last $i$ of $U$ in this reading order (left to right, bottom to top) changes to an $i+1$, so 
$\set(\beta)\supseteq \Des_{{\mathcal A}^*}(T)$.
\\
\\
Conversely let $T\in S_1.$ The entries of the standard tableau $T$ constitute a labelling of the diagram of $\alpha$.  
We can then fill this diagram  with $\beta_1$ 1's,  $\beta_2$'s, etc. consecutively following the order of the labels in $T$.   
If the cell labelled $m$ is filled with entry $i$, then the cell labelled $m+1$ can be filled with $i$ or $i+1$, but it \emph{must} be filled with the label $i+1$ if $m$ is a descent of $T$. Such a filling is necessarily weakly row-increasing and also increasing in the first column.
\\
\\
In order to prove  \eqref{eqn:gf-barA} we must show that, for a fixed $\beta\vDash n$, the sets 
$S_3=\{T\in \SIT(\alpha):\Des_{{\bA}^*}(T)\subseteq \set(\beta)\}$
and 
$S_4=\{T\in \mathcal{T}_\alpha({\text{1st col}<, \text{rows}<})   : T \text{ has shape }\alpha \text{ and content } \beta\}$ are of the same cardinality.
The argument is similar,  except for the change in reading order.  Given $U\in S_4$, note first that there is at most one $i$ in each row, in view of the strict increase.  In particular, because of the strictly increasing first column, all entries equal to $i$ go from left to right  and top to bottom in $U$.  Now we replace the 1's in $U$ left to right, \textbf{top to bottom}, with the consecutive entries $1,\ldots,\beta_1$, then the 2's with the next $\beta_2$ consecutive entries, 
again  left to right,  \textbf{top to bottom}, and so on.  To go backwards from a standard tableau in $S_3$ to one with repeated entries, note that when $i+1$ is weakly above $i$, then $i$ must be in $\set(\beta)$, so if $i$ was replaced by an entry $j$, then $i+1$ must be replaced by $j+1$, guaranteeing row-strictness.  
\\
\\
The reading order for~\eqref{eqn:gf-A-SET} is the same as for~\eqref{eqn:gf-A}: left to right, bottom to top, and likewise, the reading order for~\eqref{eqn:gf-barA-SET}  is the same as for~\eqref{eqn:gf-barA}:  left to right, \textbf{top to bottom}.  We omit the details for the otherwise identical arguments.  See also the proof of Proposition~\ref{prop:gf-row-ext}.}
\end{proof}
\begin{example} Let $\alpha=122$.  Then the  immaculate Hecke poset $P\rdI(\alpha)$ is a 3-element chain. Here $\SET(\alpha)$ consists of only two tableaux, 
$S^{col}_\alpha=\tableau{3&5\\2&4\\1}$ and $S^{row}_\alpha=\tableau{4&5\\2&3\\1}$, 
while $\SIT(\alpha)$ has one additional tableau, $S^0_\alpha=\tableau{3 &4\\2&5\\1}$. 

We have $\Des_{\mathcal{A}^*}(S^{row}_\alpha)=\emptyset, \Des_{\mathcal{A}^*}(S^{col}_\alpha)=\{3\}, \Des_{\mathcal{A}^*}(S^0_\alpha)=\{4\},$ and hence 
$\chr(\mathcal{A}_{\SET(\alpha)})=F_5+F_{32},$  while 
$\chr(\mathcal{A}_{\alpha})=F_5+F_{41}+F_{32}.$

Consider $\chr(\mathcal{A}_{\SET(\alpha)})$.
Now $F_5$ will contribute all monomials with exponents $\beta\vDash 5$, while $F_{32}$ will contribute only monomials with exponents $\beta$ finer than the composition $32$. 
One such monomial that would appear in both expansions is $x_1x_2^2x_3^2$, corresponding to the composition $122$.
There are exactly two tableaux of content  $122$ with weakly increasing rows and weakly increasing columns.

Following the reading order in the above proof, left to right, bottom to top, we see that 
$S^{col}_\alpha=\tableau{3&5\\2&4\\1}$ maps to $T_1=\tableau{2&3\\2&3\\1}$, 
while $S^{row}_\alpha=\tableau{4&5\\2&3\\1}$ maps to $T_2=\tableau{3&3\\2&2\\1}$. 
Note that these are weakly increasing in all rows and all columns.  Hence $T_1$ comes from  $F_{32}$, and $T_2$ comes from $F_5$.

Now consider the module $\bA_\alpha$.  Here we have  $\Des_{\bA^*}(S^{row}_\alpha)=\{1,2,3,4\},  \Des_{\bA^*}(S^{col}_\alpha)=\{1,2,4\},$  and $ \Des_{\bA^*}(S^0_\alpha)=\{1,2,3\}, $ and hence 
$\chr(\bA_{\SET(\alpha)})=F_{1^5}+F_{1121}$, while  $\chr(\bA_\alpha)=F_{1^5}+F_{1121}+F_{1112}.$ 

$F_{1^5}$ can only contribute the unique square-free monomial of degree 5, corresponding to the tableau $S^{row}_\alpha$. 
Now $F_{1121}$ contributes to $\chr(\bA_\alpha)$  the monomial $x_1x_2 x_3^2 x_4$; following the reading order for $\bA^*$, namely  left to right and \textbf{top to bottom}, we see that 
$S^{col}_\alpha=\tableau{3&5\\2&4\\1}$ maps to $T_3=\tableau{3&4\\2&3\\1}$.  Note that $T_3$ is a non-standard tableau that is strictly increasing in all rows and all columns; also, it is the unique such tableau of content  $1121$.

Similarly $F_{1112}$ contributes the monomial  $x_1x_2 x_3 x_4^2$; again  following the reading order for $\bA^*$, namely  left to right and \textbf{top to bottom}, we see that 
$S^{0}_\alpha=\tableau{3&4\\2&5\\1}$ maps to $T_4=\tableau{3&4\\2&4\\1}$.  Note that $T_4$ is strictly increasing in all rows, but only strictly increasing in the first column.  Again, it is (necessarily) the unique such tableau of content  $1112$.
\end{example}

\begin{table}[htbp]
\caption{$\dI_\alpha$ versus the new basis $\rdI_\alpha$}
\begin{center}
\scalebox{0.8}{
\begin{tabular}{|c|c|c|l|}
\hline
\tclb{Immaculate Tableaux $\rightarrow$}  &  $\dI_\alpha\ $ \cite{BBSSZ2015} &  \tclb{$\rdI_\alpha$} \\
 &  Dual immaculate &  \tclb{ Row-strict dual imm.}\\
[2pt]\hline\hline
 \bf{1st Col} \bf{bottom to top} &strict $\nearrow$ &\tclb{weak $\nearrow$ }\\[2pt]\hline
 \bf{Rows left to right}
& weak $\nearrow$&\tclb{strict  $\nearrow$} \\[2pt]\hline
Descents (for fund. expansion) &$\{i: i+1 \text{ strictly above } i\}$
  & \tclb{$\{i: i+1 \text{ weakly below } i\}$}\\[2pt]\hline\hline
Action of $\psi$  in $\QSym$ 
& $\dI_\alpha(x_1,\ldots,x_n)$ & \tclb{$\rdI_\alpha=\psi(\dI_\alpha)$}\\[2pt]\hline
 \textcolor{blue}{$0$-Hecke action on $\SIT(\alpha)$}&\textit{Note that  standard}  &\textit{tableaux are the same} \\[2pt]\hline\hline
 $\pi_i(T)=T$ & $ i+1 \text{ weakly below } i$ 
& \tclb{$ i+1 \text{ strictly above } i$}
 \\[2pt]\hline
$\pi_i(T)=0$ & $i, i+1$ in 1st column & \tclb{$i, i+1$ in same row}\\[2pt]\hline
 $T, s_i(T)$ standard, &$ i+1 $\text{ strictly above }$ i$,& \tclb{$ i+1 \text{ strictly below } i$}\\[2pt]
and $\pi_i(T)=s_i(T)$ & $i,i+1$ NOT both in 1st column & \phantom{\tclb{ $i, i+1$ NOT in same row}}\\[2pt]\hline\hline
 \textcolor{blue}{Partial order on $\SIT(\alpha)$} &Poset $P\dI(\alpha)=[S^{0}_\alpha, S^{row}_\alpha]$&\tclb{ Poset $P\rdI(\alpha)\simeq P\dI(\alpha)$}\\[2pt]\hline\hline
Cover relation & $S\!\poIcover\!  T\!\iff\!  S=\pi_i^{\dI}(T)$ & \tclb{$S\!\poRIcover\!  T\!\iff\!  T=\pi_i^{\rdI} (S)$}\\[2pt]\hline
Imm. module generated by& top element: $\mathcal{W}_\alpha=\langle S^{row}_\alpha\rangle$  & \tclb{bottom  element: $\mathcal{V}_\alpha=\langle S^0_\alpha\rangle$} \\[2pt]\hline
 Indecomposable? & Yes \cite{BBSSZ2015} & \textcolor{blue}{Yes}\\[2pt]\hline\hline
Extended Schur fn basis & $\mathcal{E}_\alpha,$ $\mathcal{E}_\lambda=s_\lambda$\cite{AS2019} & \tclb{$\psi(\mathcal{E}_\alpha)=\mathcal{R}\mathcal{E}_\alpha$, $\mathcal{R}\mathcal{E}_\lambda=s_{\lambda^t}$}  \\[2pt]\hline
Module, Extended Schur fn & cyclic $\langle S^{row}_\alpha\rangle$, indecomp. \cite{S2020} & \tclb{cyclic $\langle S^{col}_\alpha\rangle$, indecomp.}\\[2pt]
Basis \tclb{$\SET(\alpha)$} & (quotient of larger module) & \tclb{submodule of $\mathcal{V}_\alpha$}\\[2pt]\hline
Quotient module, Ext Schur fn& None & \tclb{Yes, cyclic $\langle S^{row}_\alpha\rangle$, indecomp.} \\[2pt]
Basis \tclb{$\NSET(\alpha)\cap\SIT(\alpha)$}&  & \tclb{quotient of $\mathcal{V}_\alpha$}\\[2pt]\hline
\end{tabular}
}
\end{center}
\label{table:RSImm}
\end{table}

\begin{table}[htbp]
\caption{Various families in $\QSym$ and associated $\hn$-modules;\\\phantom{Table 2.\quad } they form a basis of $\QSym$ \textbf{unless otherwise indicated}}
\begin{center}
\scalebox{0.7}{
\begin{tabular}{|c|c|}
\hline
Quasisymmetric function indexed by $\alpha\vDash n$& $\hn$-module \\[2pt]\hline\hline
Fundamental $F_\alpha$  &  Irreducible, one-dimensional\\[2pt]\cite{gessel} & \cite{DKLT1996}  \\[2pt]   \hline   
   Dual immaculate  &   Cyclic ($ S^{row}_\alpha$)=$\mathcal{W}_\alpha$, indecomp. \T\\[2pt]
$\dI_\alpha=\sum_{T\in\SIT(\alpha)}F_{\mathrm{comp}(\Des_{\dI}(T))}$ & acts on standard immaculate tableaux $\SIT(\alpha)$\\[2pt]
\cite{BBSSZ2014} & \cite{BBSSZ2015}\\[2pt]\hline   
Row-strict dual immaculate & Cyclic ($S^0_\alpha$)=$\mathcal{V}_\alpha$, indecomp. \T \\[2pt]
$\rdI_\alpha=\textcolor{blue}{\psi(\dI_\alpha)}=\sum_{T\in\SIT(\alpha)}F_{\mathrm{comp}(\Des_{\rdI}(T))}$
& acts on $\SIT(\alpha)$\\[2pt]
  &  \\[2pt]\hline
Quasisymmetric Schur &   Indecomp. iff $\alpha$ is simple; \\[2pt]
$\check{S}_\alpha=\sum_{T\in\SRCT(\alpha)}F_{\mathrm{comp}(\Des_{\check S}({T}))}$ & acts on standard reverse \\[2pt]
$\Des_{\check S}({T}):=\{i: i+1 \text{ weakly right of } i\}$ & composition tableaux  $\SRCT(\alpha)$\\[2pt]
 \cite{HLMvW2011} & \cite{TvW2015}\\[2pt]\hline
Row-strict quasisym Schur & \\[2pt]
$\textcolor{blue}{\psi(\check S_\alpha)}=R\check S_\alpha=\sum_{T\in\SRCT(\alpha)} F_{\mathrm{comp}(\Des_{R\check S}({T}))}$ &\\[2pt]
$\Des_{R\check S}({T}):=\{i: i+1 \text{ strictly left of } i\}$ & \cite[Chapter 4]{LMvW2013}, \cite{BS2021}\\[2pt] 
  \cite{MR2014} & \\[2pt]\hline
(Column-strict) Young quasisym function  & \\[2pt]
$ \svw{\hat{S}_\alpha} = \rho(\check{S}_{\alpha^r})$ via $\rho(F_\alpha)=F_{\alpha^r}$ &
\cite[Chapter 4]{LMvW2013}, \cite{TvW2015}\\[2pt]
\cite{LMvW2013}  & \\
[2pt]\hline
Row-strict Young quasisym function  & Indecomp. iff $\alpha$ is simple;\\[2pt]
$R\hat S_\alpha = \textcolor{blue}{\psi(\hat {S}_{\alpha})}$  & acts on standard Young row-strict tableaux \\[2pt] 
 \cite{MN2015} & \cite{BS2021}\\[2pt]\hline 
 Extended Schur function & Cyclic $\langle S^{row}_\alpha \rangle$, \textit{quotient} of $\mathcal{W}_\alpha$,  indecomp.;  \T \\[2pt]
$\mathcal{E}_\alpha= \sum_{T\in \SET(\alpha)} F_{\comp(\Des_{\dI}(T))}$ & acts on $\SET(\alpha)=[S^{col}_\alpha, S^{row}_\alpha]$ \\[2pt]
 \cite{AS2019}&  \cite{S2020}\\[2pt]\hline\hline 
Row-strict Extended  Schur  & Cyclic $\langle S^{col}_\alpha \rangle$, \textit{submodule} of $\mathcal{V}_\alpha$, 
indecomp.; \T\\[2pt]
$\mathcal{R}{\mathcal{E}}_\alpha=\textcolor{blue}{\psi(\mathcal{E}_\alpha)}=\ \sum_{T\in \SET(\alpha)} F_{\mathrm{comp}(\Des_{\rdI}(T))}$ 
 & acts on $\SET(\alpha)=[S^{col}_\alpha, S^{row}_\alpha]$\\[2pt]
 \hline  
Row-strict Extended Quotient Schur  & Cyclic  $\langle S^0_\alpha \rangle$, \textit{quotient} of $\mathcal{V}_\alpha$, indecomp.; \T \\[2pt]
$ \overline{\mathcal{R}\mathcal{E}}_\alpha= \sum_{T\in \NSET(\alpha)\cap\SIT(\alpha)} F_{\mathrm{comp}(\Des_{\rdI}(T))}$ &  acts on $\NSET(\alpha)\cap\SIT(\alpha)$\\[2pt]
\textcolor{red}{*NOT a basis*}    &  \\ 
\hline
$\chr$ of Dual immaculate submodule& Cyclic $\langle  S^{row*}_\alpha \rangle$, \textit{submodule} of $\mathcal{W}_\alpha$; 
\T\\[2pt]
$\chr(\mathcal{X}_\alpha)= \sum_{T\in \SIT^*(\alpha)} F_{\comp(\Des_{\dI}(T))}$ 
& $\mathcal{X}_\alpha$ acts on $[S^0_\alpha, S^{row*}_\alpha]=\SIT^*(\alpha)$\\[2pt]
\textcolor{red}{*NOT a basis*}    &  \\ [2pt]\hline
$\chr$ of Dual immaculate quotient module& Cyclic $\langle S^{row}_\alpha\rangle$, \textit{quotient} of $\mathcal{W}_\alpha$;
\T\\[2pt]
$\chr(\mathcal{W}_\alpha/\mathcal{X}_\alpha)
= \sum_{T\in \SIT(\alpha)\setminus \SIT^*(\alpha)} F_{\comp(\Des_{\dI}(T))}$ 
& $\mathcal{W}_\alpha/\mathcal{X}_\alpha$ acts on $\SIT(\alpha)\setminus\SIT^*(\alpha)$\\[2pt]
\textcolor{red}{*NOT a basis*}     &  \\ [2pt]\hline
The five functions in Section 9: & Cyclic modules  $\langle S^0_\alpha \rangle, \langle S^{row}_\alpha \rangle$ acting on $\SIT(\alpha)$;\\[2pt]
 Theorem~\ref{thm:Hecke-module-A}, Theorem~\ref{thm:Hecke-module-barA}, Proposition~\ref{prop:Z-A-X-barA},& submodule $\langle S^{col}_\alpha\rangle$ of $\mathcal{A}_\alpha$ acting on $\SET(\alpha)$; \\[2pt]
Theorem~\ref{thm:8flavours-tableaux} and Figure~\ref{fig:4-descent-sets}& quotient module $\langle S^{row}_\alpha \rangle$ of $\bA_\alpha$ acting on $\SET(\alpha)$;\\[2pt]
 \textcolor{red}{*NOT a basis*}  & submodule  $\langle S^{row*}_\alpha\rangle$ of $ \langle  S^{row}_\alpha \rangle$ acting on $\SIT^*(\alpha)$\\[2pt]\hline
\end{tabular}
}    
\end{center}
%
\label{table:QSymfunctions-modules}
\end{table}

\bibliographystyle{plain}
\bibliography{refs}

\begin{thebibliography}{10}

\bibitem{ALvW2021}
Farid Aliniaeifard, Shu~Xiao Li, and Stephanie van Willigenburg.
\newblock Schur functions in noncommuting variables.
\newblock {\em Adv. Math.}, 406:Paper No. 108536, 37, 2022.

\bibitem{AS2019}
Sami Assaf and Dominic Searles.
\newblock Kohnert polynomials.
\newblock {\em Experimental Mathematics}, 2019.

\bibitem{BS2021}
Joshua Bardwell and Dominic Searles.
\newblock 0-{H}ecke modules for {Y}oung row-strict quasisymmetric {S}chur
  functions.
\newblock {\em European Journal of Combinatorics}, 102:103494, 2022.

\bibitem{BBSSZ2014}
Chris Berg, Nantel Bergeron, Franco Saliola, Luis Serrano, and Mike Zabrocki.
\newblock A lift of the {S}chur and {H}all-{L}ittlewood bases to
  non-commutative symmetric functions.
\newblock {\em Canad. J. Math.}, 66(3):525--565, 2014.

\bibitem{BBSSZ2015}
Chris Berg, Nantel Bergeron, Franco Saliola, Luis Serrano, and Mike Zabrocki.
\newblock Indecomposable modules for the dual immaculate basis of
  quasi-symmetric functions.
\newblock {\em Proc. Amer. Math. Soc.}, 143(3):991--1000, 2015.

\bibitem{BLvW2011}
C.~Bessenrodt, K.~Luoto, and S.~van Willigenburg.
\newblock Skew quasisymmetric {S}chur functions and noncommutative {S}chur
  functions.
\newblock {\em Adv. Math.}, 226(5):4492--4532, 2011.

\bibitem{BjBrenti2005}
Anders Bj\"{o}rner and Francesco Brenti.
\newblock {\em Combinatorics of {C}oxeter groups}, volume 231 of {\em Graduate
  Texts in Mathematics}.
\newblock Springer, New York, 2005.

\bibitem{CFLSX2014}
John Campbell, Karen Feldman, Jennifer Light, Pavel Shuldiner, and Yan Xu.
\newblock A {S}chur-like basis of {$\Nsym$} defined by a {P}ieri rule.
\newblock {\em Electron. J. Combin.}, 21(3):Paper 3.41, 19, 2014.

\bibitem{CHMMW2022}
Sylvie Corteel, Jim Haglund, Olya Mandelshtam, Sarah Mason, and Lauren
  Williams.
\newblock Compact formulas for {M}acdonald polynomials and quasisymmetric
  {M}acdonald polynomials.
\newblock {\em Selecta Math. (N.S.)}, 28(32), 2022.

\bibitem{DKLT1996}
G\'{e}rard Duchamp, Daniel Krob, Bernard Leclerc, and Jean-Yves Thibon.
\newblock Fonctions quasi-sym\'{e}triques, fonctions sym\'{e}triques non
  commutatives et alg\`ebres de {H}ecke \`a {$q=0$}.
\newblock {\em C. R. Acad. Sci. Paris S\'{e}r. I Math.}, 322(2):107--112, 1996.

\bibitem{GKLLRT1995}
Israel~M. Gelfand, Daniel Krob, Alain Lascoux, Bernard Leclerc, Vladimir~S.
  Retakh, and Jean-Yves Thibon.
\newblock Noncommutative symmetric functions.
\newblock {\em Adv. Math.}, 112(2):218--348, 1995.

\bibitem{gessel}
Ira~M. Gessel.
\newblock Multipartite {$P$}-partitions and inner products of skew {S}chur
  functions.
\newblock In {\em Combinatorics and algebra ({B}oulder, {C}olo., 1983)},
  volume~34 of {\em Contemp. Math.}, pages 289--317. Amer. Math. Soc.,
  Providence, RI, 1984.

\bibitem{HLMvW2011}
J.~Haglund, K.~Luoto, S.~Mason, and S.~van Willigenburg.
\newblock Quasisymmetric {S}chur functions.
\newblock {\em J. Combin. Theory Ser. A}, 118(2):463--490, 2011.

\bibitem{Jacobson1989}
Nathan Jacobson.
\newblock {\em Basic algebra. {II}}.
\newblock W. H. Freeman and Company, New York; Dover reprint, second edition,
  1989.

\bibitem{JL2015}
Naihuan Jing and Yunnan Li.
\newblock A lift of {S}chur's {Q}-functions to the peak algebra.
\newblock {\em J. Combin. Theory Ser. A}, 135:268--290, 2015.

\bibitem{KrobThibon1997}
Daniel Krob and Jean-Yves Thibon.
\newblock Noncommutative symmetric functions. {IV}. {Q}uantum linear groups and
  {H}ecke algebras at {$q=0$}.
\newblock {\em J. Algebraic Combin.}, 6(4):339--376, 1997.

\bibitem{LMvW2013}
Kurt Luoto, Stefan Mykytiuk, and Stephanie van Willigenburg.
\newblock {\em An introduction to quasisymmetric {S}chur functions}.
\newblock SpringerBriefs in Mathematics. Springer, New York, 2013.
\newblock Hopf algebras, quasisymmetric functions, and Young composition
  tableaux.

\bibitem{MR2014}
Sarah Mason and Jeffrey Remmel.
\newblock Row-strict quasisymmetric {S}chur functions.
\newblock {\em Ann. Comb.}, 18(1):127--148, 2014.

\bibitem{MN2015}
Sarah~K. Mason and Elizabeth Niese.
\newblock Skew row-strict quasisymmetric {S}chur functions.
\newblock {\em J. Algebraic Combin.}, 42(3):763--791, 2015.

\bibitem{Mathas1999}
Andrew Mathas.
\newblock {\em Iwahori-{H}ecke algebras and {S}chur algebras of the symmetric
  group}, volume~15 of {\em University Lecture Series}.
\newblock American Mathematical Society, Providence, RI, 1999.

\bibitem{NSvWVW2022FPSAC}
Elizabeth Niese, Sheila Sundaram, Stephanie van Willigenburg, Julianne Vega,
  and Shiyun Wang.
\newblock Row-strict dual immaculate functions and 0-{H}ecke modules.
\newblock {\em S\'{e}m. Lothar. Combin.}, 86B:Art. 6, 12, 2022.

\bibitem{NSvWVW2023}
Elizabeth Niese, Sheila Sundaram, Stephanie van Willigenburg, Julianne Vega,
  and Shiyun Wang.
\newblock Row-strict dual immaculate functions.
\newblock {\em Adv. in Appl. Math.}, 149:Paper No. 102540, 2023.

\bibitem{PamelaBromwichNorton1979}
P.~N. Norton.
\newblock {$0$}-{H}ecke algebras.
\newblock {\em J. Austral. Math. Soc. Ser. A}, 27(3):337--357, 1979.

\bibitem{S2020}
Dominic Searles.
\newblock Indecomposable {$0$}-{H}ecke modules for extended {S}chur functions.
\newblock {\em Proc. Amer. Math. Soc.}, 148(5):1933--1943, 2020.

\bibitem{RPSEC21999}
Richard~P. Stanley.
\newblock {\em Enumerative combinatorics. {V}ol. 2}, volume~62 of {\em
  Cambridge Studies in Advanced Mathematics}.
\newblock Cambridge University Press, Cambridge, 1999.
\newblock With a foreword by Gian-Carlo Rota and appendix 1 by Sergey Fomin.

\bibitem{TvW2015}
Vasu~V. Tewari and Stephanie~J. van Willigenburg.
\newblock Modules of the 0-{H}ecke algebra and quasisymmetric {S}chur
  functions.
\newblock {\em Adv. Math.}, 285:1025--1065, 2015.

\end{thebibliography}
\end{document}